\title{Derived equivalences for Rational Cherednik algebras}
\author{Ivan Losev}
\newcommand{\h}{\mathfrak{h}}
\newcommand{\Hom}{\operatorname{Hom}}
\newcommand{\Ext}{\operatorname{Ext}}
\newcommand{\B}{\mathcal{B}}
\newcommand{\OCat}{\mathcal{O}}
\newcommand{\End}{\operatorname{End}}
\newcommand{\param}{\mathfrak{p}}
\newcommand{\C}{\mathbb{C}}
\newcommand{\Z}{\mathbb{Z}}
\newcommand{\Q}{\mathbb{Q}}
\newcommand{\KZ}{\operatorname{KZ}}
\newcommand{\Res}{\operatorname{Res}}
\newcommand{\Ind}{\operatorname{Ind}}
\newcommand{\Cat}{\mathcal{C}}
\newcommand{\gr}{\operatorname{gr}}
\newcommand{\Irr}{\operatorname{Irr}}
\newcommand{\loc}{\mathsf{loc}}
\newcommand{\LS}{\operatorname{LS}}
\newcommand{\HC}{\operatorname{HC}}
\newcommand{\VA}{\operatorname{V}}
\newcommand{\Leaf}{\mathcal{L}}
\newcommand{\A}{\mathcal{A}}
\newcommand{\M}{\mathcal{M}}
\newcommand{\J}{\mathcal{J}}
\newcommand{\quo}{/\!/}
\newcommand{\g}{\mathfrak{g}}
\newtheorem{Thm}{Theorem}[section]
\newtheorem{Prop}[Thm]{Proposition}
\newtheorem{Cor}[Thm]{Corollary}
\newtheorem{Lem}[Thm]{Lemma}
\theoremstyle{definition}
\newtheorem{Rem}[Thm]{Remark}
\numberwithin{equation}{section}
\address{Department
of Mathematics, Northeastern University, Boston MA 02115 USA}
\email{i.loseu@neu.edu}
\thanks{MSC 2010: 16E99, 16G99}
\begin{document}
\begin{abstract}
Let $W$ be a complex reflection group and $H_c(W)$ the Rational Cherednik algebra for $W$ depending on
a parameter $c$. One can consider the category $\OCat$ for $H_c(W)$.
We prove a conjecture of Rouquier that the categories $\mathcal{O}$ for $H_c(W)$ and $H_{c'}(W)$
are derived equivalent provided the parameters $c,c'$ have integral difference.
Two main ingredients of the proof are a connection between the Ringel
duality and Harish-Chandra bimodules and an analog of a  deformation technique developed by the author and
Bezrukavnikov. We also show that some of the derived equivalences we construct are perverse.
\end{abstract}
\maketitle
\section{Introduction}
\subsection{Hecke algebras}
Let $W$ be the Weyl group of some connected reductive group $G$.  By $S$ we denote the set of reflections in $W$.
The group algebra $\C W$ admits a classical
deformation, the Hecke algebra $\mathcal{H}_{q}(W)$, where $q\in (\C^\times)^{S/W}$. 
The representation theory of $\mathcal{H}_q(W)$ is most interesting when $q$ has finite (and sufficiently small) order. In this case this representation theory is similar to
(but easier than) the modular representation theory of the group $W$.

The Hecke algebras still make sense when $W$ is a complex reflection group, see \cite{BMR}, we will
recall the definition below. Those are still algebras $\mathcal{H}_q(W)$ with $q\in (\C^\times)^{S/W}$.
Their structure is more complicated than in the Weyl
group case, for example, it is not known in the full generality whether $\dim \mathcal{H}_q(W)=|W|$.
However, the algebra $\mathcal{H}_q(W)$ always has the maximal finite dimensional quotient $\underline{\mathcal{H}}_q(W)$ and the dimension of this quotient is $|W|$, \cite{Hecke_dim}.
 When $q$ is Zariski generic, we have $\underline{\mathcal{H}}_q(W)\cong \C W$.

Let us point out that the algebra $\underline{\mathcal{H}}_q(W)$, in general, has infinite homological dimension
so is ``singular''. So one can ask about a ``resolution of singularities''. Such resolutions
are provided by categories $\mathcal{O}$ for Rational Cherednik algebras to be described briefly
in the next subsection.

\subsection{Cherednik algebras and their categories $\OCat$}
Let $\h$ denote the reflection representation of a complex reflection group $W$. A Rational Cherednik
algebra $H_c(W)$ is a flat deformation of the skew-group ring $S(\h\oplus \h^*)\#W$ depending
on a parameter $c\in \param:=\C^{S/W}$. We write $H_c$ instead of $H_c(W)$ if this does not
create ambiguity.

This algebra admits a triangular decomposition $H_c=S(\h^*)\otimes \C W\otimes S(\h)$
(as a vector space), where  $S(\h^*), \C W, S(\h)$ are embedded as subalgebras. So it makes
sense to consider a {\it category} $\OCat$. This is a full subcategory in $H_c\operatorname{-mod}$
consisting of all modules that are finitely generated over $S(\h^*)$ and have locally nilpotent
action of $\h$. Let us denote the category $\OCat$ by $\OCat_c$ or by $\OCat_c(W)$.
It has analogs of Verma modules, $\Delta_c(\lambda)$, parameterized by the irreducible representations $\lambda$ of $W$. Each
$\Delta_c(\lambda)$ has a unique irreducible quotient, $L_c(\lambda)$, and the assignment
$\lambda\mapsto L_c(\lambda)$ is a bijection between the sets of the irreducible $W$-modules
and the set of the irreducible objects in $\OCat_c$.

The category $\mathcal{O}_c$ has a so called highest weight structure that axiomatizes certain
upper triangularity properties similar to those of the BGG categories $\OCat$. 
We will recall a precise definition
later. One consequence of being highest weight is that $\OCat_c$ has finite homological dimension.

Moreover, there is a quotient functor $\operatorname{KZ}_c:\OCat_c\twoheadrightarrow \underline{\mathcal{H}}_q\operatorname{-mod}$ introduced in \cite{GGOR}
that is fully faithful on the projective objects (a highest weight cover in the terminology of Rouquier,
\cite[Section 4.2]{rouqqsch}). So we can view $\OCat_c$ as a ``resolution of singularities''
for $\underline{\mathcal{H}}_q\operatorname{-mod}$. Here $q$ is recovered from $c$ by some kind of exponentiation: there is a $\Z$-lattice $\param_\Z\subset \param$ such that the set of Hecke parameters is identified with
$\param/\param_\Z$ and $q=c+\param_\Z$.

\subsection{Derived equivalences}
Now let $c,c'$ be two Cherednik parameters with $c-c'\in \param_\Z$ so that $\OCat_c,\OCat_{c'}$
are two resolutions of singularities for $\underline{\mathcal{H}}_q\operatorname{-mod}$. A natural question
to ask is whether these two resolutions are derived equivalent. Rouquier conjectured that this is
so in \cite[Conjecture 5.6]{rouqqsch}. The main goal of this paper is to prove this conjecture.

\begin{Thm}\label{Thm:der_equiv}
Let $c,c'\in \param$ and $c'-c\in \param_\Z$. Then there is a derived equivalence  $D^b(\OCat_c)\xrightarrow{\sim}
D^b(\OCat_{c'})$ intertwining the functors $\KZ_c,\KZ_{c'}$.
\end{Thm}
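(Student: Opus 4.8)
The plan is to reduce the general statement to the case where $c'=c+\mathbf{1}$ for a specific integral shift realized by twisting by a one-dimensional character, and then to transport derived equivalences along a chain of such shifts. The basic strategy has two layers. First, one identifies the Ringel dual of $\OCat_c$ with a category $\OCat$ for a shifted parameter: there is a parameter $c^\vee$ (roughly $-c$ up to the action of the longest element / duality on $W$-modules) such that the Ringel dual of $\OCat_c(W)$ is equivalent, as a highest weight category, to $\OCat_{c^\vee}(W)$, compatibly with the $\KZ$ functors (the Hecke algebra side Ringel-dualizes in a controlled way, since $\underline{\mathcal H}_q$ is cellular). Composing Ringel duality with its analog for the shifted parameter produces a derived self-equivalence of $D^b(\OCat_c)$ intertwining $\KZ$; but more importantly, comparing Ringel duals for parameters differing by a lattice element gives, after a twist, a derived equivalence between $D^b(\OCat_c)$ and $D^b(\OCat_{c'})$ for $c'-c$ in a suitable sublattice.

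To get all of $\param_\Z$, not just a sublattice, I would bring in Harish-Chandra bimodules. For $c,c'$ with integral difference there is a $H_{c'}$-$H_c$-bimodule $B$ (a ``shift bimodule'', constructed from the spherical subalgebra or by quantum Hamiltonian reduction / Hamiltonian deformation) which is finitely generated over $H_{c'}\otimes H_c^{\mathrm{op}}$ and supported on the diagonal, i.e.\ an object of the category $\HC$ of Harish-Chandra bimodules. Tensoring with $B$ gives a functor $\OCat_c\to\OCat_{c'}$, and the key point is that the derived tensor product $B\otimes^L_{H_c}-$ is a derived equivalence precisely when $B$ together with a companion bimodule $B'$ (for the reverse shift) are mutually inverse up to the augmentation ideals — equivalently, when the natural maps $B'\otimes^L_{H_{c'}} B\to H_c$ and $B\otimes^L_{H_c} B'\to H_{c'}$ are isomorphisms in $D^b(\HC)$. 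This is where the connection to Ringel duality is used: the Ringel duality computation pins down the class of $B$ in the relevant Grothendieck-group / "wall-crossing" sense, and lets one verify invertibility away from a codimension-one locus, while a separate argument handles the walls.

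The deformation technique of Bezrukavnikov–Losev enters to handle those walls and, more fundamentally, to prove invertibility of $B\otimes^L_{H_c}-$ in general. One works over the base $\param$ (or a formal/analytic neighborhood of a point of the wall), forms the family $H_{\boldsymbol c}$ of Cherednik algebras and the family $\OCat$, and the family of shift bimodules $\B$ over this base. Over the generic (Zariski-dense) locus where the parameter is "spherical"/of finite homological dimension and the Hecke algebra is semisimple, $\B\otimes^L-$ is visibly an equivalence (there $\OCat$ is just $\C W$-mod via $\KZ$, and the bimodule is invertible for trivial reasons). One then argues that the cone of the unit/counit, being a coherent Harish-Chandra bimodule over the family with generically vanishing fibers, must vanish identically — the support of a coherent $\HC$-bimodule that is proper over the base cannot be concentrated over a proper closed subset if it would have to be both closed and avoid the generic point, provided one has the right control (flatness, or a weak form of it, of $\B$ over $\param$, which is itself the technical heart). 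Specializing back recovers the equivalence at the special parameter. The intertwining with $\KZ$ is built in because $\B$ is constructed compatibly with the spherical subalgebra, hence with $\KZ$, throughout.

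The main obstacle I expect is the \emph{flatness / compatibility of the shift bimodule over the parameter space} and the attendant control of its support: constructing $\B$ as a genuine family of Harish-Chandra bimodules over $\param$ that specializes correctly at every point (including the most degenerate, where $\OCat$ has many more simples) and whose derived tensor functor commutes with base change. Establishing that is what makes the "generic implies everywhere" deformation argument go through; the Ringel-duality input is then what identifies the functor one gets and shows it is the correct (and $\KZ$-intertwining) equivalence rather than merely some equivalence. A secondary subtlety is organizing the induction so that the elementary shifts one composes actually generate the full lattice $\param_\Z$ and that the resulting equivalence is independent of the chosen chain — handled by the "1-faithfulness" of the $\KZ$-cover, which forces any two $\KZ$-intertwining functors that agree on enough to agree.
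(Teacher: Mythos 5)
Your proposal names the right ingredients (Ringel duality realized by a Harish--Chandra bimodule, a family of such bimodules over the parameter space, degeneration from generic to special parameters), and this is indeed the paper's strategy. But the architecture of your degeneration step has a genuine gap. You propose to work over all of $\param$, observe that the shift bimodule is ``invertible for trivial reasons'' over the semisimple generic locus, and then conclude that the cone of the unit/counit, ``being a coherent Harish--Chandra bimodule over the family with generically vanishing fibers, must vanish identically.'' This last implication is false: the (right) support of a Harish--Chandra bimodule over the base is only a \emph{constructible} subset (Lemma \ref{Lem:HC_supp}), so vanishing of the Weil generic fiber yields vanishing on a dense Zariski open subset, not at every point. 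The interesting parameters --- those where $\OCat_c$ is not semisimple --- all lie on a countable union of rational hyperplanes, i.e.\ precisely in the locus your argument does not reach. The paper's fix is to run the degeneration \emph{inside a wall} $\param^1$: the Weil generic point of $\param^1$ is already non-semisimple, and invertibility there is not trivial --- it is exactly the content of Theorem \ref{Thm:Ring_dual_HC}, which identifies $\B_c(\psi)\otimes^L_{H_c}\bullet$ with the inverse Ringel duality at such a point (this in turn requires the chamber analysis, the restriction functors $\bullet_{\dagger,W'}$, and the rank-one quiver variety computation). Degeneration within the wall then gives the equivalence for Zariski generic points of the wall only.

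The second missing piece is how to pass from ``Zariski generic on the wall'' to an arbitrary $c$. The paper does this in Section \ref{SS_der_proof_compl} by combining: (i) the improved Rouquier equivalence theorem (Proposition \ref{Prop:Chered_equi_impr}), which shows that the abelian category $\OCat_c$ is unchanged when $c$ moves within $c+\underline{\param}_\Z$ inside a fixed chamber (so one may assume $c$ is Zariski generic); (ii) translation by elements of $\Pi_0\cap\underline{\param}_{\Z}$ along the wall to land in the good open locus; and (iii) Morita equivalences $\B_{c,\psi'}$ to reduce from $\param_\Z$ to the sublattice $\underline{\param}_\Z$ on which the KZ twist is trivial. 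None of this is addressed in your sketch, and without (i) and (ii) the degeneration argument does not reach the degenerate parameters that the theorem is really about. (Your appeal to $1$-faithfulness for independence of the chain of shifts is not needed: the theorem only asserts existence of some KZ-intertwining equivalence.)
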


%

Theorem \ref{Thm:der_equiv} was known  for $W=G(\ell,1,n)$. Recall
that this group is realized as $\mathfrak{S}_n\ltimes \mu_\ell^n$, where $\mu_\ell$ denotes
the group of $\ell$th roots of $1$, and its reflection representation is $\C^n$. In this case,
Theorem \ref{Thm:der_equiv} was proved in \cite[Section 5]{GL} and is  a consequence of the quantized
derived McKay equivalence.  Peculiarly, the proof is based on the study of  actual algebro-geometric resolutions of $(\h\oplus \h^*)/W$.

\subsection{Perverse equivalences}\label{SS_perv_intro}
We will also prove in Section \ref{S_perv} that some of the equivalences in Theorem \ref{Thm:der_equiv}
are perverse in the sense of \cite[2.6]{rouquier_ICM}. Some special cases of this were established
in \cite[Section 7]{BL}.

Let us recall the definition of a perverse equivalence. Let $\Cat^1,\Cat^2$ be two abelian categories.
Suppose $\Cat^i, i=1,2,$ is equipped with a filtration $\Cat^i=\Cat^i_0\supset \Cat^i_1\supset\Cat^i_2\supset\ldots\supset \Cat^i_q\supset \Cat^i_{q+1}=\{0\}$
by Serre subcategories. We say that an equivalence $\varphi:D^b(\Cat^1)\rightarrow D^b(\Cat^2)$
of triangulated categories is {\it perverse} with respect to the filtrations if the following holds:
\begin{itemize}
\item[(I)] $\varphi$ restricts to an equivalence of $D^b_{\Cat^1_j}(\Cat^1)$ and $D^b_{\Cat^2_j}(\Cat^2)$ for all $j=1,\ldots,q$.
Here we write  $D^b_{\Cat^i_j}(\Cat^i)$ for the full subcategory of $D^b(\Cat^i)$ consisting of all complexes
with homology  in $\Cat^i_j$.
\item[(II)] For $M\in \Cat^1_j$ we have $H_k(\varphi M)=0$ for $k<j$.
\item[(III)] The functor $M\mapsto H_j(\varphi M)$ induces an equivalence $\Cat^1_j/\Cat^1_{j+1}\xrightarrow{\sim}
\Cat^2_j/\Cat^2_{j+1}$. Moreover,  $H_k(\varphi M)\in \Cat^2_{j+1}$ for $k>j$.
\end{itemize}

The definition of filtrations on $\Cat^1:=\OCat_c, \Cat^2:=\OCat_{c'}$ making some of equivalences in
Theorem \ref{Thm:der_equiv} perverse is technical. Roughly speaking these filtrations are obtained by degenerating
the filtration by dimension of support.

\subsection{Ideas of the proof and the content}\label{SS_ideas}
Our key idea is the same as in the proof of \cite[Theorem 7.2]{BL}: we want to prove Theorem \ref{Thm:der_equiv}
at a Weil generic point of a hyperplane (if $\OCat_c$ is not semisimple, then $c$ lies in a countable union
of hyperplanes) and then to degenerate to a special point. Recall that by a Weil generic point of an algebraic
variety over $\C$ one means a point lying outside the countable union of algebraic subvarieties.
At a Weil generic point  our derived equivalence will be the Ringel duality.
However, in order to degenerate we will need to realize this functor as a product with a Harish-Chandra
bimodule. We will see that the bimodules of interest form a ``family'' and this will allow us
to degenerate.

Let us now describe the content of the paper. In Section \ref{S_prelim_O} we will recall some classical
facts about Hecke algebras, Cherednik algebras, their categories $\OCat$, KZ functors, Ringel duality,
induction and restriction functors, this section contains no new results. In Section \ref{S_prelim_HC}
we recall Harish-Chandra bimodules, the restriction functors for those and various properties
of Tor's and Ext's involving those bimodules.

In Section \ref{S_Ringel_HC}
we realize the (inverse covariant) Ringel duality as the derived tensor product with a suitable
Harish-Chandra bimodule. In Section \ref{S_derived} we prove Theorem \ref{Thm:der_equiv}.
Section \ref{S_perv} deals with perverse equivalences. Finally, in Section \ref{S_open} we state
two open problems.

{\bf Acknowledgements}. I would like to thank Yuri Berest, Roman Bezrukavnikov,
Pavel Etingof, Raphael Rouquier and Jose Simental for stimulating
discussions. I am also grateful to the referees whose comments allowed me to improve the exposition. This work was supported by the NSF  under Grant  DMS-1161584.

\section{Categories O}\label{S_prelim_O}
\subsection{Hecke algebras}
Let $W$ be a complex reflection group and $\h$ be its reflection representation. For a reflection hyperplane
$H$, the pointwise stabilizer $W_H$ is cyclic, let $\ell_H$ be the order of this group. The set of
the reflection hyperplanes will be denoted by $\mathfrak{H}$. Set $\h^{reg}:=\{x\in \h| W_x=\{1\}\}=\h\setminus
\bigcup_{H\in \mathfrak{H}}H$.

Consider the {\it braid group} $B_W$ that is, by definition, $\pi_1(\h^{reg}/W)$. The group is generated by elements
$T_H$, one for each reflection hyperplane, where, roughly speaking, $T_H$ is a loop in $\h^{reg}/W$ given
given by a rotation around $H$ by the angle $2\pi/\ell_H$. The structure of the braid groups was studied in more
detail in \cite{BMR}.

Now let us define the Hecke algebras for $W$. To each conjugacy class of reflection hyperplanes  $H$
we assign nonzero complex numbers $q_{H,1},\ldots, q_{H,\ell_H}$.
We denote the collection of $q_{H,i}$ by $q$. By definition, $\mathcal{H}_q(=\mathcal{H}_q(W))$ is the quotient
of $\C B_W$ by the relations $\prod_{i=1}^{\ell_H} (T_H-q_{H,i})$, one for each reflection hyperplane
$H$. For example, if we put $q_{H,i}=\exp(2\pi\sqrt{-1}i/\ell_H)$, then we get $\C W$.  We remark that rescaling the parameters $q_{H,i}, i=1,\ldots,\ell_H,$ by a common factor (one for each conjugacy class in $\mathfrak{H}/W$) gives rise to an isomorphic algebra, see \cite[3.3.3]{rouqqsch}. So the number of parameters for the Hecke algebra is
actually $|S/W|$, where we write $S$ for the set of complex reflections in $W$.

Let us consider two important families of examples. First, assume $W$ is a real reflection group so
that all $\ell_H$ are equal to $2$.  In the corresponding
Coxeter diagram $I$, let $m_{ij}$ be the multiplicity of the edge between vertices $i,j$. The braid group
$B_W$ is generated by elements $T_i, i\in I,$ subject to the relation $T_iT_jT_iT_j\ldots =T_j T_i T_j T_i\ldots$,
where on both sides we have $m_{ij}+2$ factors. If $W$ is irreducible, then the number of parameters $q$ is either one or two and the additional
relations for the Hecke algebra are $(T_i-q_i)(T_i+1)=0$ (we have $q_i=q_j$ if the reflections $s_i,s_j$
in $W$ are conjugate).

The second family is for the groups $W=G(\ell,1,n)$. When $\ell=1,2$, we get Weyl groups of type $A_{n-1},B_n$,
respectively.
For $\ell>1$,  the braid group $B_W$ is the affine braid group of type $A$, it is generated by elements $T_0,\ldots,T_{n-1}$
subject to the relations $T_0T_1T_0T_1=T_1T_0T_1T_0, T_iT_{i+1}T_i=T_{i+1}T_i T_{i+1}$ for $1\leqslant i\leqslant n-2$
and $T_iT_j=T_jT_i$ for $|i-j|>1$. The Hecke algebra is the quotient of  $\C B_W$  by
the relations $(T_i+1)(T_i-q)=0$ for $i>0$
and $\prod_{j=0}^{\ell-1}(T_0-Q_j)=0$. Here $Q_0,\ldots,Q_{\ell-1}$ are nonzero complex numbers, we can take
$Q_0=1$ without changing the algebra.

Let us point out that we can define the Hecke algebra $\mathcal{H}_{R,q}$ over any commutative
ring $R$ (the entries of $q$ are supposed to be invertible elements of $R$).

It was shown in \cite{Hecke_dim} that the algebra $\mathcal{H}_q(W)$ admits a maximal finite dimensional quotient
to be denoted by $\underline{\mathcal{H}}_q(W)$ whose dimension equals $|W|$. These algebras form a flat family
over $(\C^\times)^{S/W}$.   So for a $\C[(\C^\times)^{S/W}]$-algebra $R$, we have an algebra $\underline{\mathcal{H}}_{R,q}(W)$ that is a projective
$R$-module of rank $|W|$.



\subsection{Rational Cherednik algebras}
Recall that we have chosen a $W_H$-eigenvector $\alpha_H^\vee\in \h$ with nontrivial eigencharacter.
Let $\alpha^\vee_H$ denote an eigenvector for $W_H$ in $\h$ with a non-trivial eigen-character so that $H\oplus \C\alpha_H^\vee=\h$.
Pick a $W_H$-eigenvector $\alpha_H\in \h^*$ with $\langle \alpha_H,\alpha_H^\vee\rangle=2$.
For a complex reflection $s$ we write $\alpha_s,\alpha_s^\vee$ for $\alpha_H,\alpha_H^\vee$
where $H=\h^s$.   Let $c:S\rightarrow \C$
be a function constant on the conjugacy classes. The space of such functions is denoted
by $\param$, it is a vector space of dimension $|S/W|$.

By definition, \cite[Section 1.4]{EG}, \cite[Section 3.1]{GGOR},
the Rational Cherednik algebra  $H_c(=H_c(W)=H_c(W,\h))$ is the quotient of $T(\h\oplus \h^*)\# W$
by the following relations:
$$[x,x']=[y,y']=0,  [y,x]=\langle y,x\rangle-\sum_{s\in S}c(s)\langle x,\alpha_s^\vee\rangle\langle y,\alpha_s\rangle s, \quad x,x'\in \h^*, y,y'\in \h.$$
We would like to point out that $H_c$ is the specialization to
$c$  of a $\C[\param]$-algebra $H_{\param}$ defined as follows. The
space $\param^*$ has basis ${\bf c}_s$ naturally numbered
by the conjugacy classes of reflections. Then $H_{\param}$ is the quotient of $T(\h\oplus \h^*)\# W\otimes \C[\param]$
by the relations similar to the above but with $c(s)\in \C$ replaced with ${\bf c}_s\in \param^*$.
For a commutative algebra $R$ with a $W$-invariant map $c:S\rightarrow R$ (equivalently, with an algebra 
homomorphism $\C[\param]\rightarrow R$) we can consider the algebra $H_{R,c}=R\otimes_{\C[\param]}H_{\param}$. If $R=\C[\param^1]$ for an affine subspace $\param^1\subset \param$, then we write $H_{\param^1}$ instead of $H_{R,c}$.

Let us recall some structural results about $H_c$. The algebra $H_c$ is filtered with $\deg \h^*=0$, 
$\deg W=0, \deg \h=1$. The associated graded is $S(\h\oplus \h^*)\#W$, \cite[Section 1.2]{EG}. This yields
the triangular decomposition $H_c=S(\h^*)\otimes \C W\otimes S(\h)$, \cite[Section 3.2]{GGOR}. Similarly,
the algebra $H_{\param}$ is filtered. We can either set $\deg \param=1$ (this is our usual choice)
or $\deg \param=0$. The latter choice shows that $H_{\param}=S(\h^*)\otimes \C[\param]W\otimes S(\h)$
as a $\C[\param]$-module.

Consider the element $\delta:=\prod_{s}\alpha_s^{\ell_s}\in S(\h^*)^W$, where $\ell_s:=\ell_{\h^s}$. Since $\operatorname{ad}\delta$ is locally nilpotent, the quotient $H_c[\delta^{-1}]$ is well-defined. There is a natural isomorphism $H_c[\delta^{-1}]\cong D(\h^{reg})\#W$, \cite[Section 1.4]{EG},\cite[Section 5.1]{GGOR}.

Consider the averaging idempotent $e:=|W|^{-1}\sum_{w\in W}w\in \C W\subset H_c$. The {\it spherical subalgebra}
by definition is $eH_ce$, it is a deformation of $S(\h\oplus \h^*)^W$. When the algebras $eH_ce$ and $H_c$
are Morita equivalent (automatically, via the bimodule $H_ce$), we say that the parameter $c$ is {\it spherical}.

There is an {\it Euler element} $h\in H_c$ satisfying $[h,x]=x, [h,y]=-y, [h,w]=0$. It is constructed as follows.
Pick a basis $y_1,\ldots,y_n\in \h$ and let $x_1,\ldots,x_n\in \h^*$ be the dual basis. For $s\in S$, let
$\lambda_s$ denote the eigenvalue of $s$ in $\h^*$ different from $1$. Then
\begin{equation}\label{eq:Euler}
h=\sum_{i=1}^n x_i y_i+\frac{n}{2}-\sum_{s\in S} \frac{2c(s)}{1-\lambda_s}s=
\sum_{i=1}^n y_ix_i-\frac{n}{2}+\sum_{s\in S}\frac{2c(s)}{1-\lambda_s^{-1}}s.\end{equation}

\subsection{Category $\mathcal{O}$}\label{SS_cat_O}
Following \cite[Section 3.2]{GGOR}, we consider the full subcategory $\OCat_c(W)$ of $H_c\operatorname{-mod}$
consisting of all modules $M$ that are finitely generated over $S(\h^*)$ and such that
$\h$ acts on $M$ locally nilpotently. Equivalently, a module $M$ lies in $\OCat_c(W)$ if it is finitely generated
over $S(\h^*)$ and is graded in such a way that the grading is compatible with that on $H_c$
induced by $\operatorname{ad}(h)$. For example, pick an irreducible representation $\lambda$ of $W$. Then the {\it Verma
module} $\Delta_c(\lambda):=H_c\otimes_{S(\h)\#W}\lambda$ (here $\h$ acts by $0$ on $\lambda$)
is in $\mathcal{O}_c(W)$. Often we drop $W$ from the notation and just write $\OCat_c$.

To a module  $M\in\OCat_c$ we can assign its associated variety $\VA(M)$ that, by definition, is the support of $M$
(as a coherent sheaf)  in $\h$. Clearly, $\VA(M)$ is a closed $W$-stable subvariety.

A basic result about $\OCat_c$ is that it is a highest weight category. Let us recall the general
definition. Let $\Cat$ be a $\C$-linear abelian category equivalent to the category of modules
over some finite dimensional associative $\C$-algebra. Let $\Lambda$ be an indexing set for the
simples in $\Cat$, we write $L(\lambda)$ for the simple object indexed by $\lambda$ and $P(\lambda)$
for its projective cover. By a highest weight category we mean a triple $(\Cat,\leqslant, \{\Delta(\lambda)\}_{\lambda\in \Lambda})$, where $\leqslant$ is a partial order on $\Lambda$ and $\Delta(\lambda), \lambda\in \Lambda,$
is a collection of {\it standard} objects in $\Cat$ satisfying the following conditions:
\begin{itemize}
\item[(i)] $\Hom_{\Cat}(\Delta(\lambda),\Delta(\mu))\neq 0$ implies $\lambda\leqslant \mu$.
\item[(ii)] $\End_{\Cat}(\Delta(\lambda))=\C$.
\item[(iii)] There is an epimorphism $P(\lambda)\twoheadrightarrow \Delta(\lambda)$ whose kernel admits
a filtration with successive quotients of the form $\Delta(\mu)$ with $\mu>\lambda$.
\end{itemize}

Axiom (iii) allows to recover projective objects from standard objects as follows. Take a linear
ordering on $\Lambda$ refining the order $\leqslant$ above: $\lambda_1>\lambda_2>\ldots>\lambda_m$.
We construct the object $P(\lambda_k)$ inductively. Set $P_k(\lambda_k):=\Delta(\lambda_k)$.
If $P_i(\lambda_k)$ with $i\leqslant k$ is already constructed, for $P_{i-1}(\lambda_k)$
we take the universal extension of $P_i(\lambda_k)$ by $\Ext^1(P_i(\lambda_k),\Delta(\lambda_{i-1}))\otimes \Delta(\lambda_{i-1})$. Then $P(\lambda_k)=P_1(\lambda_k)$.

Let us describe a highest weight structure on $\OCat_c$, \cite[Theorem 2.19]{GGOR}.
For the standard objects we take the Verma
modules. A partial order on $\Lambda=\Irr(W)$ is introduced as follows. The element
$\displaystyle \sum_{s\in S} \frac{2c(s)}{\lambda_s-1}s\in \C W$ is central so acts by a scalar,
denoted by $c_\lambda$ (and called the $c$-function), on $\lambda$. 
We set $\lambda<\mu$ if $c_\lambda-c_\mu\in \Q_{>0}$ (we could take the coarser
order by requiring the difference to lie in $\Z_{>0}$ but we do not need this). We write $<^c$ if we want to indicate the dependence on the parameter $c$. 

Since $\OCat_c$ is a highest weight category, we see that the classes $[\Delta_c(\lambda)]$ form a basis in
$K_0(\OCat_c)$. So we can identify $K_0(\OCat_c)$ with $K_0(W\operatorname{-mod})$ by sending
$[\Delta_c(\lambda)]$ to $[\lambda]$.

We will need a construction of a projective object containing  $P(\lambda)$ as a summand, \cite[Section 2.4]{GGOR}. Namely, consider
the object $\Delta_n(\lambda):= H_c\otimes_{S(\h)\#W}(\lambda\otimes S(\h)/(\h^n))$
so that $\Delta(\lambda)=\Delta_1(\lambda)$. The module $\Delta_n(\lambda)$ is graded, $\Delta_n(\lambda)=\sum_{k\in\Z} \Delta_n(\lambda)_k$,
the grading is induced from that on $H_c$ by the eigenvalues of $\operatorname{ad}(h)$.
The graded components $\Delta_n(\lambda)_k$ are finite dimensional and are preserved by the action of $h$.
Let $\tilde{\Delta}_n(\lambda)_k$ denote the generalized eigenspace for $h$
in $\Delta_n(\lambda)_k$ with eigenvalue $k+c_\lambda$. Then $\tilde{\Delta}_n(\lambda):=\bigoplus_k \tilde{\Delta}_n(\lambda)_k$ is a submodule of $\Delta_n(\lambda)$. It is not difficult to see
that a natural surjection $\tilde{\Delta}_{n+1}(\lambda)\twoheadrightarrow \tilde{\Delta}_n(\lambda)$
is an isomorphism for $n$ large enough. Denote the stable module $\tilde{\Delta}_n(\lambda)$ by
$\tilde{\Delta}(\lambda)$. It is easy to see that this module is projective and admits
a surjection onto $\Delta(\lambda)$. As a corollary of this construction we get the following.

\begin{Lem}\label{Lem:proj_summand}
There is a direct summand in $\varprojlim_{n\rightarrow \infty} H_c/H_c\h^n$ that is a projective
generator of $\OCat_c$.
\end{Lem}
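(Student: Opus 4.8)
The plan is to locate the projective generator inside $\varprojlim_n H_c/H_c\h^n$ as the direct sum $\bigoplus_{\lambda\in\Irr W}\tilde{\Delta}(\lambda)^{\oplus\dim\lambda}$, exploiting the identification of $H_c/H_c\h^n$ with a sum of copies of the $\Delta_n(\lambda)$. The first step is to record the graded $H_c$-module isomorphism $H_c/H_c\h^n\cong\bigoplus_{\lambda\in\Irr W}\Delta_n(\lambda)^{\oplus\dim\lambda}$, compatible with the transition maps $H_c/H_c\h^{n+1}\twoheadrightarrow H_c/H_c\h^n$ and $\Delta_{n+1}(\lambda)\twoheadrightarrow\Delta_n(\lambda)$. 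This is a standard consequence of the triangular decomposition: one has the graded $(S(\h)\#W)$-module isomorphism $(S(\h)\#W)/(S(\h)\#W)\h^n\cong\bigoplus_\lambda(\lambda\otimes S(\h)/(\h^n))^{\oplus\dim\lambda}$ (coming from $\C W\cong\bigoplus_\lambda\lambda^{\oplus\dim\lambda}$ and a twist matching the two $S(\h)$-actions), and one applies the exact functor $H_c\otimes_{S(\h)\#W}(-)$; naturality in $S(\h)/(\h^n)$ yields the compatibility with $n$.

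The second step is to check that $\tilde{\Delta}_n(\lambda)$ is in fact an $H_c$-module \emph{direct summand} of $\Delta_n(\lambda)$, with complement $R_n(\lambda):=\bigoplus_k R_n(\lambda)_k$, where $R_n(\lambda)_k$ is the sum of the generalized $h$-eigenspaces in $\Delta_n(\lambda)_k$ with eigenvalue $\neq k+c_\lambda$. Both $\tilde{\Delta}_n(\lambda)$ and $R_n(\lambda)$ are $H_c$-submodules: if $v\in\Delta_n(\lambda)_k$ lies in the generalized $h$-eigenspace for $\alpha$ and $x\in H_c$ is homogeneous of $\operatorname{ad}(h)$-degree $l$, then $[h,x]=lx$ gives by induction $(h-\alpha-l)^N xv=x(h-\alpha)^N v$, so $xv\in\Delta_n(\lambda)_{k+l}$ lies in the generalized eigenspace for $\alpha+l$; thus the condition ``eigenvalue equals degree plus $c_\lambda$'' is $H_c$-stable, and so is its negation. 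The same computation shows the transition maps $\Delta_{n+1}(\lambda)\to\Delta_n(\lambda)$, being graded and $H_c$-linear hence $h$-equivariant, carry $\tilde{\Delta}_{n+1}(\lambda)$ into $\tilde{\Delta}_n(\lambda)$ and $R_{n+1}(\lambda)$ into $R_n(\lambda)$, so the decomposition is compatible with the inverse system.

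The third step is to pass to the limit. Since $\Irr W$ is finite and $\varprojlim$ commutes with finite products, the first two steps give $\varprojlim_n H_c/H_c\h^n\cong\bigoplus_\lambda(\varprojlim_n\Delta_n(\lambda))^{\oplus\dim\lambda}$ and $\varprojlim_n\Delta_n(\lambda)\cong\tilde{\Delta}(\lambda)\oplus\varprojlim_n R_n(\lambda)$, where I use that $\tilde{\Delta}_n(\lambda)$ stabilizes to $\tilde{\Delta}(\lambda)$ for $n\gg 0$. Hence $P:=\bigoplus_\lambda\tilde{\Delta}(\lambda)^{\oplus\dim\lambda}$ is a direct summand of $\varprojlim_n H_c/H_c\h^n$. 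By the construction preceding the lemma each $\tilde{\Delta}(\lambda)$ is projective, so $P$ is projective; and since $\tilde{\Delta}(\lambda)$ surjects onto $\Delta(\lambda)$ and hence onto $L(\lambda)$, every simple object of $\OCat_c$ is a quotient of $P$, so $P$ is a projective generator, which proves the lemma.

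The main obstacle is the second step: seeing that grouping the generalized $h$-eigenspaces by the value of ``eigenvalue minus $\operatorname{ad}(h)$-degree'' produces $H_c$-submodules, and in particular that $\tilde{\Delta}_n(\lambda)$ splits off $\Delta_n(\lambda)$ rather than merely sitting inside it; the role of the inverse limit is then only to replace $\tilde{\Delta}_n(\lambda)$, which need not be projective for small $n$, by the stable projective module $\tilde{\Delta}(\lambda)$. The first step is a routine PBW-type computation and the third is formal.
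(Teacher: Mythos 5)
Your proof is correct and follows exactly the route the paper intends: the lemma is stated as a corollary of the construction of $\tilde{\Delta}(\lambda)$, and you have simply made explicit the three implicit ingredients (the decomposition $H_c/H_c\h^n\cong\bigoplus_\lambda\Delta_n(\lambda)^{\oplus\dim\lambda}$, the fact that the generalized $h$-eigenspace condition ``eigenvalue $=$ degree $+\,c_\lambda$'' and its complement both cut out $H_c$-submodules so that $\tilde{\Delta}_n(\lambda)$ splits off, and the passage to the stabilized limit). Nothing is missing.
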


Recall that in any highest weight category one has costandard objects $\nabla(\lambda), \lambda \in \Lambda,$
with $\dim \Ext^i(\Delta(\lambda),\nabla(\mu))=\delta_{i,0}\delta_{\lambda,\mu}$. In the case of the category
$\OCat_c$ one can construct the costandard objects $\nabla_c(\lambda)$ as follows. Consider the parameter
$c^*$ defined by $c^*(s):=-c(s^{-1})$. There is an isomorphism $H_c(W,\h)\xrightarrow{\sim} H_{c^*}(W,\h^*)^{opp}$
that is the identity on $\h^*,\h$  and is the inversion on $W$. Take the Verma module
$\Delta_{c^*}(\lambda^*)\in \OCat_{c^*}(W,\h^*)$,  where $\lambda^*$ is the dual representation of $\lambda$. Its restricted dual $\Delta_{c^*}(\lambda^*)^*:=
\bigoplus_k \Delta_{c^*}(\lambda^*)_k^*$ is a left $H_c(W,\h)$-module and it lies in the category $\mathcal{O}$.
This module is $\nabla_c(\lambda)$. The category $\OCat_c^{opp}$ is highest weight with the same
order and with standard objects $\nabla_c(\lambda)$.

Here are some basic properties of the standard and the costandard objects.

\begin{Lem}\label{Lem:st_basic}
The following is true:
\begin{enumerate}
\item We have $[\nabla_c(\lambda)]=[\Delta_c(\lambda)]$ for all $\lambda$.
\item If $L_c(\mu)$ lies in the socle of $\Delta_c(\lambda)$, then $\VA(L_c(\mu))=\h$.
\item If $L_c(\mu)$ lies in the head of $\nabla_c(\lambda)$, then $\VA(L_c(\mu))=\h$.
\end{enumerate}
\end{Lem}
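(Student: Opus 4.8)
The plan is to dispatch the three items in order, using the fact that $[\Delta_c(\lambda)]\mapsto[\lambda]$ identifies $K_0(\OCat_c)$ with $K_0(W\operatorname{-mod})$, together with the localization isomorphism $H_c[\delta^{-1}]\cong D(\h^{reg})\#W$. For item (1), I would argue that $\nabla_c(\lambda)$ and $\Delta_c(\lambda)$ have the same class in $K_0(\OCat_c)$ by comparing graded characters: both are free of rank $\dim\lambda$ over $S(\h^*)$ (resp.\ its restricted dual), so their graded $W$-characters agree after accounting for the shift by $c_\lambda$, and in particular $[\nabla_c(\lambda)]$ and $[\Delta_c(\lambda)]$ both map to $[\lambda]$ under the identification $K_0(\OCat_c)\cong K_0(W\operatorname{-mod})$. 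Concretely, $\Delta_{c^*}(\lambda^*)\cong S(\h^*)\otimes\lambda^*$ as graded $S(\h^*)\rtimes W$-modules (with $\h$ acting by a raising operator), so its restricted dual is $S(\h)\otimes\lambda$ with a shift, which has the same $W$-character in each $S(\h^*)$-degree as $\Delta_c(\lambda)$; hence the two classes coincide.

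For item (2), suppose $L_c(\mu)\hookrightarrow\Delta_c(\lambda)$ sits in the socle. The key point is that $\Delta_c(\lambda)$ is torsion-free over $S(\h^*)$ — indeed it is free — so any nonzero submodule is also torsion-free, i.e.\ not supported on the proper closed subvariety $\bigcup_{H\in\mathfrak H}H$. Therefore $L_c(\mu)$ has full support, $\VA(L_c(\mu))=\h$. More precisely, if $\VA(L_c(\mu))\subsetneq\h$ then $L_c(\mu)$ is killed by a power of $\delta$, contradicting that it embeds in the $\delta$-torsion-free module $\Delta_c(\lambda)$ (equivalently, localizing at $\delta$ would kill $L_c(\mu)$ but not the standard module it sits inside). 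Item (3) follows from item (2) by the duality $\OCat_c(W,\h)\cong\OCat_{c^*}(W,\h^*)^{opp}$: under the restricted-dual functor, $\nabla_c(\lambda)\mapsto\Delta_{c^*}(\lambda^*)$ and heads go to socles, while the associated variety is unchanged (the dual of an irreducible $L_c(\mu)$ is $L_{c^*}(\mu^*)$, and passing to $\h^*$ does not alter whether the support is everything). So a composition factor in the head of $\nabla_c(\lambda)$ corresponds to one in the socle of a standard module for $(W,\h^*)$, which by (2) has full support.

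The main obstacle is the precise handling of associated varieties under localization and duality in item (2)–(3): one must be careful that "$\VA(L_c(\mu))=\h$" is genuinely equivalent to "$L_c(\mu)[\delta^{-1}]\neq 0$" and that this property is preserved by the restricted-dual functor, which swaps $\h$ and $\h^*$. The cleanest way around this is to observe that $\delta\in S(\h^*)^W$ acts on a module $M\in\OCat_c$ invertibly after localization iff $\VA(M)\subseteq\bigcup_H H$, so $\VA(L_c(\mu))=\h$ iff $L_c(\mu)$ is not annihilated by any power of $\delta$; this criterion is manifestly inherited by nonzero submodules of $\delta$-torsion-free modules and is self-dual once one notes that $D(\h^{reg})\#W$ is symmetric in $\h,\h^*$. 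Everything else is a direct character computation or a formal consequence of the highest-weight duality recalled before the statement.
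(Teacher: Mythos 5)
Your proposal is correct and follows essentially the same route as the paper: item (1) by matching graded $W$-characters (the paper simply cites \cite[Proposition 3.3]{GGOR} for this), item (2) from torsion-freeness of $\Delta_c(\lambda)$ over $S(\h^*)$, and item (3) by transporting (2) through the restricted-dual equivalence $\OCat_c(W,\h)\xrightarrow{\sim}\OCat_{c^*}(W,\h^*)^{opp}$. The one point worth tightening is in (3): the support of the dual module lives in $\h^*$ rather than $\h$, so rather than saying the associated variety is ``unchanged'' the clean statement (and the one the paper uses) is that the duality preserves Gelfand--Kirillov dimension, whence full support corresponds to full support.
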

\begin{proof}
(1) is a part of \cite[Proposition 3.3]{GGOR}. (2) follows from the observation that
$\Delta_c(\lambda)$ is torsion free over $S(\h^*)$. (3) is a corollary
of the fact that taking the restricted dual is a category equivalence $\OCat_c(W,\h)
\xrightarrow{\sim} \OCat_{c^*}(W,\h^*)$ preserving the Gelfand-Kirillov dimensions.
\end{proof}

As with an arbitrary highest weight category, we have tilting objects in $\OCat_c$. Recall that an object
is tilting if it is both standardly filtered and costandardly filtered. The indecomposable tilting objects
are again labeled by $\lambda$. More precisely, we have an indecomposable tilting $T(\lambda)$ such that
$\Delta(\lambda)\subset T(\lambda)$ and $T(\lambda)/\Delta(\lambda)$ has a filtration with successive
quotients $\Delta(\mu),\mu<\lambda$.

Here are some basic properties of the tilting objects.

\begin{Lem}\label{Lem:tilt_basic}
Let $T_c$ stand for a tilting generator -- the direct sum of all indecomposable tilting objects.
Then the  following is true:
\begin{enumerate}
\item If $\operatorname{Hom}(T_c,L_c(\mu))\neq 0$ or $\Hom(L_c(\mu),T_c)\neq 0$, then $\VA(L_c(\mu))=\h$.
\item If $\Ext^1(T_c,L_c(\mu))\neq 0$ or $\Ext^1(L_c(\mu),T_c)\neq 0$, then $\operatorname{codim}_\h \VA(L_c(\mu))\leqslant 1$.
\end{enumerate}
\end{Lem}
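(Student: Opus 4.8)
The plan is to reduce both parts to statements about the tilting generator $T_c$ being simultaneously standardly and costandardly filtered, and then to use Lemma \ref{Lem:st_basic}. For part (1), suppose $\Hom(T_c, L_c(\mu)) \neq 0$. Since $T_c$ is costandardly filtered, there is a surjection $T_c \twoheadrightarrow L_c(\mu)$ factoring through some costandard quotient $\nabla_c(\lambda)$ appearing at the top of the costandard filtration; more precisely, a nonzero map $T_c \to L_c(\mu)$ must be nonzero on some costandard subquotient, and because $\Hom(\nabla_c(\lambda), L_c(\mu)) = \delta_{\lambda,\mu}\C$ we get that $L_c(\mu)$ is a quotient of $\nabla_c(\mu)$, i.e. lies in its head. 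By Lemma \ref{Lem:st_basic}(3), $\VA(L_c(\mu)) = \h$. Symmetrically, if $\Hom(L_c(\mu), T_c) \neq 0$, then using that $T_c$ is standardly filtered and $\Hom(L_c(\mu), \Delta_c(\lambda)) = \delta_{\lambda,\mu}\C$ (this is where one needs that maps into a standardly filtered object land in standard subobjects, which follows from the Ext-orthogonality $\Ext^i(L_c(\mu), \nabla_c(\lambda)) = \delta_{i,0}\delta_{\lambda,\mu}\C$ applied to a long exact sequence), we conclude $L_c(\mu)$ lies in the socle of $\Delta_c(\mu)$, so by Lemma \ref{Lem:st_basic}(2) again $\VA(L_c(\mu)) = \h$.

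For part (2), the idea is the same but one dimension deeper. Suppose $\Ext^1(T_c, L_c(\mu)) \neq 0$. Pick a standard filtration $0 = F_0 \subset F_1 \subset \dots \subset F_m = T_c$ with $F_j/F_{j-1} \cong \Delta_c(\lambda_j)$. Running the long exact sequences, a nonzero class in $\Ext^1(T_c, L_c(\mu))$ forces either $\Ext^1(\Delta_c(\lambda_j), L_c(\mu)) \neq 0$ for some $j$, or $\Hom(\Delta_c(\lambda_j), L_c(\mu)) \neq 0$ for some $j$ with $\Ext^1$ vanishing failing to split — but the latter gives $\lambda_j = \mu$ and then a genuine $\Ext^1$ class in $\Ext^1(\Delta_c(\mu), L_c(\mu))$ or reduces the problem. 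In any case we are reduced to: if $\Ext^1(\Delta_c(\lambda), L_c(\mu)) \neq 0$ then $\operatorname{codim}_\h \VA(L_c(\mu)) \leqslant 1$. Dually, using the costandard filtration, one reduces the case $\Ext^1(L_c(\mu), T_c)\neq 0$ to: $\Ext^1(L_c(\mu), \nabla_c(\lambda)) \neq 0$ implies $\operatorname{codim}_\h \VA(L_c(\mu)) \leqslant 1$. To prove these, I would take a short exact sequence $0 \to K \to P_c(\mu) \to L_c(\mu) \to 0$ with $P_c(\mu)$ the projective cover; then $\Ext^1(\Delta_c(\lambda), L_c(\mu)) \hookrightarrow \Hom(\Delta_c(\lambda), L_c(\mu))^{\perp}$-type reasoning shows a nonvanishing $\Ext^1$ produces a nonsplit extension $0 \to L_c(\mu) \to E \to \Delta_c(\lambda) \to 0$, whose socle, modulo the $\VA = \h$ part coming from $\operatorname{soc}\,\Delta_c(\lambda)$, must meet $L_c(\mu)$; an argument with $S(\h^*)$-torsion then bounds $\operatorname{codim} \VA(L_c(\mu))$. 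Concretely: the composition series of such an $E$ together with the fact that $\Delta_c(\lambda)$ is $S(\h^*)$-torsion-free means any submodule of $E$ of smaller support sits inside $L_c(\mu)$, and a standard dimension-shift / Gelfand–Kirillov codimension estimate for $\Ext^1$ between modules in $\OCat_c$ (if $\Ext^1(M,N)\neq 0$ then $\operatorname{codim}\VA(N) \leqslant \operatorname{codim}\VA(M) + 1$) yields the bound once we know $\VA(\Delta_c(\lambda)) = \h$.

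The main obstacle I expect is the clean bookkeeping in part (2): passing from $\Ext^1(T_c, -)$ to $\Ext^1$ against a single standard object requires controlling both the $\Hom$ and $\Ext^1$ terms in the long exact sequences along the filtration, and one must be careful that the $\Hom$-terms that appear do not secretly produce a larger support. The cleanest route is probably to invoke the general fact that for a standardly filtered $T$ and any $L$, $\Ext^1(T, L) = 0$ unless $\Ext^1(\Delta_c(\lambda), L) \neq 0$ for some $\lambda$ with $\Delta_c(\lambda)$ a subquotient of $T$, \emph{together with} part (1) to dispose of the boundary $\Hom$-contributions (which force $\VA = \h$, hence codimension $0 \leqslant 1$). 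Then everything comes down to the homological codimension estimate $\operatorname{codim}_\h\VA(L_c(\mu)) \leqslant \operatorname{codim}_\h\VA(\Delta_c(\lambda)) + 1 = 1$ for $\Ext^1$, which is a standard property of the category $\OCat_c$ coming from the fact that $S(\h\oplus\h^*)$-modules of codimension $d$ in $\VA$ have projective dimension related to $d$ via the Auslander–Gorenstein property of $H_c$. I would cite or quickly reprove this estimate rather than grind through it.
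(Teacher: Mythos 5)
The paper does not prove this lemma; it cites \cite[Lemma 6.2]{RSVV}, so your proposal is compared against the standard argument rather than a proof in the text. Your part (1) is essentially right, although the identities $\Hom(\nabla_c(\lambda),L_c(\mu))=\delta_{\lambda\mu}\C$ and $\Hom(L_c(\mu),\Delta_c(\lambda))=\delta_{\lambda\mu}\C$ that you invoke are false (heads of costandards and socles of standards need not be simple); fortunately you do not need them: a nonzero map $T_c\to L_c(\mu)$ restricts to a surjection from some costandard subquotient, so $L_c(\mu)$ lies in the head of \emph{some} $\nabla_c(\lambda)$, and Lemma \ref{Lem:st_basic} finishes the argument (dually for $\Hom(L_c(\mu),T_c)$).

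Part (2) has a genuine gap: you filter $T_c$ on the wrong side, and the sub-lemma you reduce to is false. Take $W=\Z/2\times\Z/2$, $\h=\C^2$, $c=(1/2,1/2)$, so that $H_c=H_{1/2}(\Z/2)\otimes H_{1/2}(\Z/2)$ and $\OCat_c$ is the external tensor product of two copies of the rank-one category with a finite-dimensional simple $L^{fin}$ and a full-support simple $\Delta(\lambda^-)=L^{full}$. In rank one, $\Ext^1(\Delta(\lambda^-),L^{fin})=\C$ and $\Hom(\Delta(\lambda^+),L^{fin})=\C$, so by K\"unneth $\Ext^1\bigl(\Delta(\lambda^-)\boxtimes\Delta(\lambda^+),\,L^{fin}\boxtimes L^{fin}\bigr)\neq 0$ while $L^{fin}\boxtimes L^{fin}$ has $\operatorname{codim}_\h\VA=2$. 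So ``$\Ext^1(\Delta_c(\lambda),L_c(\mu))\neq 0\Rightarrow\operatorname{codim}_\h\VA(L_c(\mu))\leqslant 1$'' is false, as is your proposed general estimate $\operatorname{codim}\VA(N)\leqslant\operatorname{codim}\VA(M)+1$; the Auslander--Gorenstein (Ischebeck-type) bounds constrain $\Ext^i(M,N)$ through the \emph{depth of the second argument} $N$ minus the dimension of the support of $M$, not the other way around. Reducing $\Ext^1(T_c,L)$ to the standards is therefore a dead end: the sufficient vanishing condition you aim for simply fails. The correct route is to use the \emph{costandard} filtration in the first variable and the \emph{standard} filtration in the second: the long exact sequences reduce the claim to $\Ext^1(\nabla_c(\lambda),L_c(\mu))=0=\Ext^1(L_c(\mu),\Delta_c(\lambda))$ whenever $\operatorname{codim}_\h\VA(L_c(\mu))\geqslant 2$. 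The second vanishing follows from $\gr\Ext^i_{H_c}(L,\Delta(\lambda))\subset\Ext^i_{S(\h\oplus\h^*)\#W}(\gr L,\gr\Delta(\lambda))$ together with Ischebeck's theorem, since $\gr\Delta(\lambda)\cong\C[\h\times 0]\otimes\lambda$ is Cohen--Macaulay of dimension $n$ and $\dim\operatorname{Supp}\gr L\leqslant n-2$, giving vanishing in degrees $i<2$; the first follows by applying the contragredient duality $\OCat_c(W,\h)\xrightarrow{\sim}\OCat_{c^*}(W,\h^*)^{opp}$, which swaps $\Delta$ and $\nabla$ and preserves codimension of support.
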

\begin{proof}
This is a special case of \cite[Lemma 6.2]{RSVV}.
\end{proof}

Let us now discuss the right handed analog of the category $\OCat_c$. By definition, it consists
of the finitely generated (over $S(\h^*)$) right $H_c$-modules with locally nilpotent action of $\h$. We denote this category
by $\OCat^r_c$. It also has Verma modules $\Delta^r_c(\lambda):=(\bigwedge^n\h^*\otimes\lambda^*)\otimes_{S(\h)\#W}H_c$,
where $\bigwedge^n\h\otimes\lambda$ is viewed as a left $W$-module so that $\bigwedge^n\h^*\otimes\lambda^*$ is a right $W$-module. The category $\OCat^r_c$ is highest weight with order $\leqslant^{c,r}$ given as follows. The scalar by which $\displaystyle\sum_{s\in S}\frac{2c(s)}{1-\lambda_s^{-1}}s$ acts on the right $W$-module
$\bigwedge^n\h^*\otimes\lambda^*$ coincides with $c_\lambda$. Then we set $\lambda<^{c,r}\mu$ if $c_\mu-c_\lambda\in \Q_{>0}$ (we choose the sign in this way because we are dealing with right modules so the multiplication by $x$ decreases the eigenvalue for $h$ by $1$). Note that this order is opposite to the $c$-order for $\OCat_c$. We will be mostly considering the highest weight category $\OCat^{r,opp}_c$.

To finish this section, let us note that one can also define the category $\OCat_{R,c}$ for a commutative algebra $R$
and $c:S\rightarrow R$: it consists of all $H_{R,c}$-modules that are finitely generated over $R\otimes_{\C}S(\h^*)$ and
have a locally nilpotent action of $\h$. If $R$ is a complete local Noetherian $\C$-algebra, then $\OCat_{R,c}$ is still highest weight (see, e.g., \cite[Section 4.1]{rouqqsch} for the definition of a highest weight category over a ring): the order is introduced using the $c$-function for the residue field. We will need to following special case of this construction. Let $\ell\subset \param$ be a Weil generic line passing through $c\in \param$ (the meaning of ``Weil generic'' is explained in Section \ref{SS_ideas}). Let $\hbar$ be an affine coordinate on $\ell$
vanishing on $c$. We take $R=\C[[\hbar]](=\C[\ell]^{\wedge_c})$ and form the Cherednik algebra 
for the natural map $S\rightarrow R$ to be denoted by $\tilde{H}_c$. The corresponding category
$\mathcal{O} $ will be denoted by $\tilde{\OCat}_{c}$ (or $\tilde{\OCat}_{c}(W)$). 

\subsection{KZ functor}\label{SS_KZ}
Recall that $H_c[\delta^{-1}]\cong D(\h^{reg})\# W$. So we have the localization functor $\loc:
\OCat_c\rightarrow \LS^W(\h^{reg})$, where on the right hand we have the category of $W$-equivariant
local systems on $\h^{reg}$. The functor is given by $\loc(M):=M[\delta^{-1}]$. There is a standard
equivalence $\LS^W(\h^{reg})\xrightarrow{\sim} \LS(\h^{reg}/W), N\mapsto eN$. One can show that the image
of $\OCat_c$ lies in the subcategory $\LS_{rs}(\h^{reg}/W)$ of the local systems with regular singularities,
see \cite[Proposition 5.7]{GGOR}. The latter is equivalent to the category $\C B_W\operatorname{-mod}$ of the finite dimensional $B_W$-modules, the equivalence sends a local section to its fiber at a point equipped with the monodromy action.

It follows from \cite{Hecke_dim} that the essential image of the functor $\OCat_c\rightarrow
\C B_W\operatorname{-mod}$ coincides with $\underline{\mathcal{H}}_q\operatorname{-mod}$. The parameter $q$
is computed as follows. We can find elements $h_{H,j}\in \C$  with $j=0,\ldots,\ell_{H}-1$
and $h_{H,j}=h_{H',j}$ for $H'\in WH$  such that
\begin{equation}\label{eq:c_to_h}c(s)=\sum_{j=1}^{\ell-1}\frac{1-\lambda_s^j}{2}(h_{\h^s,j}-h_{\h^s,j-1})\end{equation}
Clearly, for fixed $H$, the numbers $h_{H,0},\ldots, h_{H,\ell_H-1}$ are defined up to a common summand.
We can recover the elements $h_{H,i}$ by the formula
\begin{equation}\label{eq:h_to_c} h_{H,i}=\frac{1}{\ell_H}\sum_{s\in W_H\setminus \{1\}}\frac{2c(s)}{\lambda_s-1}\lambda_s^{-i}
\end{equation}
Note that $\sum_{i=0}^{\ell_H-1}h_{H,i}=0$. We will view $h_{H,i}$ as an element of $\param^*$
whose value on $c:S\rightarrow \C$ is given by (\ref{eq:h_to_c}).

We set
\begin{equation}\label{eq:h_to_q}q_{H,j}:=\exp(2\pi\sqrt{-1}(h_{H,j}+j/\ell_H)).\end{equation}
So we get the functor $\KZ:\OCat_c\rightarrow \underline{\mathcal{H}}_q\operatorname{-mod}$. Let us list
properties of this functor obtained in \cite[Section 5]{GGOR}.

\begin{Prop}\label{Prop:KZ_prop}
The functor $\KZ$ has the following properties:
\begin{enumerate}
\item $\KZ$ is a quotient functor, its kernel consists of all modules  $M\in \OCat_c$ that are torsion over
$S(\h^*)$ ($\Leftrightarrow \VA(M)\neq \h$).
\item $\KZ$ is fully faithful on the projective objects. Also it is fully faithful on the tilting objects.
\item Suppose that we have $q_{H,i}\neq q_{H,j}$ for any reflection hyperplane $H$ and $i\neq j$.
Then $\KZ$ is fully faithful on the standardly filtered objects (=the objects admitting a filtration
with standard successive quotients).
\end{enumerate}
\end{Prop}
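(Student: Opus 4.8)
The plan is to reduce everything to the structure of $\KZ$ as a quotient functor and to the comparison of dimensions on both sides, following the strategy of \cite[Section 5]{GGOR}. First, for (1), I would exhibit $\KZ$ as the composition $\OCat_c \to \LS_{rs}(\h^{reg}/W) \xrightarrow{\sim} \underline{\mathcal H}_q\operatorname{-mod}$ with the first arrow given by $M\mapsto e M[\delta^{-1}]$ via the identification $H_c[\delta^{-1}]\cong D(\h^{reg})\# W$. Localization is exact and kills exactly the $\delta$-torsion modules, i.e.\ those $M$ with $\VA(M)\neq\h$; the essential-image statement is the result of \cite{Hecke_dim} quoted above. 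The only nontrivial point is that $\KZ$ is a \emph{quotient} functor onto $\underline{\mathcal H}_q\operatorname{-mod}$, i.e.\ it admits a fully faithful right adjoint; this is where one uses that $\OCat_c$ has enough projectives and that the localization functor, being exact with a well-understood kernel (a Serre subcategory), factors through the Serre quotient, which by the double-centralizer setup of \cite{GGOR} is equivalent to $\underline{\mathcal H}_q\operatorname{-mod}$.

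For (2), the key identity is that for a projective $P$ one has $\Hom_{\OCat_c}(P,M)=\Hom_{\underline{\mathcal H}_q}(\KZ P,\KZ M)$ for all $M$. By the quotient-functor formalism this amounts to showing $R^1(\text{right adjoint})$ vanishes on the relevant objects, equivalently that $\Ext^1_{\OCat_c}(P, N)=0$ for $N$ in the kernel of $\KZ$ — automatic since $P$ is projective — together with the fact that the unit $M\to (\text{adjoint})\KZ M$ has torsion kernel and cokernel; testing against a projective then gives full faithfulness. For tilting objects one uses that $T_c$ has no higher self-extensions and, more to the point, that by Lemma \ref{Lem:tilt_basic}(1) any simple subquotient $L_c(\mu)$ of $T_c$ with $\VA(L_c(\mu))\neq\h$ cannot occur, so $T_c$ itself has full support and $\Hom$ and $\Ext^1$ against the kernel of $\KZ$ vanish; hence the same adjunction argument applies. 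Alternatively, and perhaps more cleanly, one can embed $T_c$ into an exact sequence of standardly and costandardly filtered objects and reduce (2) for tiltings to (3).

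For (3), the hypothesis $q_{H,i}\neq q_{H,j}$ for $i\neq j$ is precisely the condition (through the exponentiation \eqref{eq:h_to_q}) under which no two distinct $c$-function values differ by the relevant integer along a single hyperplane, which forces the restricted Hecke algebra data to be ``generic enough'' that $\KZ$ sends standard objects to modules that are, in the terminology of \cite{GGOR}, the images of the $B_W$-module structure on fibers without collapse. Concretely I would argue: it suffices to show $\Hom_{\OCat_c}(\Delta_c(\lambda),\Delta_c(\mu))\to \Hom_{\underline{\mathcal H}_q}(\KZ\Delta_c(\lambda),\KZ\Delta_c(\mu))$ is an isomorphism and that $\Ext^1$ in the kernel direction vanishes; by Lemma \ref{Lem:st_basic}(2) the socle of $\Delta_c(\mu)$ has full support, so $\Hom$ from anything in $\ker\KZ$ into $\Delta_c(\mu)$ vanishes, giving injectivity of the comparison map, and the genericity hypothesis on $q$ guarantees there are no obstructing $\Ext^1$'s from $\ker\KZ$ into $\Delta_c(\mu)$, which by a standard devissage over a standard filtration of a general standardly filtered object gives the full statement. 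The main obstacle is this last vanishing: controlling $\Ext^1_{\OCat_c}(N,\Delta_c(\mu))$ for $N$ torsion over $S(\h^*)$ requires the support estimates of Lemma \ref{Lem:tilt_basic}(2) together with the observation that the $q_{H,i}\neq q_{H,j}$ condition rules out the codimension-one degenerations that would produce such extensions; this is the technical heart of \cite[Section 5]{GGOR} and the point I would expect to need the most care.
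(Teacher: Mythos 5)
This proposition is not proved in the paper at all: it is quoted from \cite[Section 5]{GGOR}, the only added content being the remark that (2) and (3), proved there for the functor to $\mathcal{H}_q\operatorname{-mod}$, persist after passing to the full subcategory $\underline{\mathcal{H}}_q\operatorname{-mod}$. So your proposal is really an attempted reconstruction of the GGOR arguments, and as such it has genuine gaps, concentrated in (2). Your ``key identity'' $\Hom_{\OCat_c}(P,M)=\Hom(\KZ P,\KZ M)$ for \emph{all} $M$ is false: take $L=L_c(\mu)$ torsion and $P=P(\mu)$ its projective cover; the left side is nonzero and the right side is $\Hom(\KZ P,0)=0$. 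Relatedly, the $\Ext$-vanishing you invoke points the wrong way. Writing $S$ for the right adjoint of $\KZ$, one has $\Hom(\KZ P,\KZ P')=\Hom(P, S\KZ P')$, and the issue is whether the unit $P'\to S\KZ P'$ is an isomorphism; its cokernel $C'$ is torsion and contributes exactly $\Hom(P,C')$ to the cokernel of the comparison map (since $\Ext^1(P,P')=0$). Because $\Hom(P,\text{torsion})\neq 0$ in general, one must show $C'=0$, i.e.\ $\Hom(N,P')=\Ext^1(N,P')=0$ for all torsion $N$ --- vanishing of $\Ext^1$ \emph{into} the projective, not out of it. The statement $\Ext^1(P,N)=0$ that you call ``automatic'' is irrelevant here. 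This saturation of projectives (equivalently, a copresentation $0\to P'\to P_{KZ}^{\oplus a}\to P_{KZ}^{\oplus b}$) is precisely the double centralizer theorem of GGOR and is the real content of (2); it is absent from your proposal.

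Two further points. For tiltings, Lemma \ref{Lem:tilt_basic}(1) controls only the head and socle of $T_c$, not all composition factors; $T(\lambda)$ certainly can have torsion subquotients (e.g.\ $L_c(\lambda)$ itself when $\VA(L_c(\lambda))\neq\h$), so your claim that torsion simples ``cannot occur'' in $T_c$ is a misreading. A correct argument is nonetheless close by: the socle condition makes $T'\to S\KZ T'$ injective, and the head condition forces $\Hom(T,C')=0$ for torsion $C'$, which together with $\Ext^1(T,T')=0$ for tiltings gives full faithfulness without ever needing $C'=0$ --- this is where the tilting case is genuinely easier than the projective case. Finally, your treatment of (3) correctly locates the difficulty (the vanishing of $\Ext^1(N,\Delta_c(\mu))$ for torsion $N$ under the hypothesis $q_{H,i}\neq q_{H,j}$) but then simply asserts it; Lemma \ref{Lem:tilt_basic}(2) concerns tiltings and does not supply this, so part (3) remains a restatement of what is to be proved rather than a proof.
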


Note that (2) and (3)  were established in \cite{GGOR} for the functor $\OCat_c\rightarrow \mathcal{H}_q\operatorname{-mod}$. But since $\underline{\mathcal{H}}_q\operatorname{-mod}$
is a full (actually, Serre) subcategory of $\mathcal{H}_q\operatorname{-mod}$, (2) and (3) are also true
for our functor.

Let $P_{KZ}$ denote the projective object in $\OCat_c(W)$ defining the functor $\KZ$ so that
there is a distinguished isomorphism $\underline{\mathcal{H}}_q(W)\xrightarrow{\sim}\End(P_{KZ})^{opp}$.
The object $P_{KZ}$ is the sum of all objects in $\OCat_c$ that are simultaneously
projective and injective (hence tilting) with suitable multiplicities.

We also have a version of $\KZ$ over rings. Namely, let $R$ be a regular complete local ring with residue field
$\C$. Then the exponential map still makes sense and we get a quotient functor $\KZ: \OCat_{R,c}(W)
\twoheadrightarrow \underline{\mathcal{H}}_{R,q}(W)\operatorname{-mod}$. The properties
(1)-(3) still hold for this functor $\KZ$.

The Hecke algebra for $R$ as in the very end of Section \ref{SS_cat_O} will be denoted 
by $\tilde{\mathcal{H}}_q(W)$.  

\subsection{Ringel duality}\label{SS_Ringel}
Let $\Cat_1,\Cat_2$ be two highest weight categories. Let $\Cat_2^\Delta, \Cat_1^\nabla$ denote the full
subcategories of standardly and costandardly filtered objects in $\Cat_2,\Cat_1$, respectively. Let
$R$ be an equivalence  $\Cat_1^\nabla\xrightarrow{\sim} \Cat_2^\Delta$ of exact categories.
Let $T$ denote the tilting generator
of $\Cat_1$, i.e., the sum of all indecomposable tilting objects. Then $\Cat_2$ gets identified
with $\End(T)^{opp}\operatorname{-mod}$ and the equivalence $R$ above becomes $\Hom(T,\bullet)$.
We also have a derived equivalence $R\Hom(T,\bullet): D^b(\Cat_1)\rightarrow D^b(\Cat_2)$. This equivalence
maps injectives to tiltings and, obviously, tiltings to projectives.  We write $\Cat_1^\vee$ for $\Cat_2$.
The functor $R$ is called the (covariant) {\it Ringel duality}, and the category $\Cat_1^\vee$
is called the {\it Ringel dual} of $\Cat_1$.

In the case when $\Cat_1=\OCat_c$ the Ringel duality was realized explicitly in \cite[Section 4.1]{GGOR}.
Namely, set $n:=\dim\h$ and consider the functor $D:=R\Hom_{H_c}(\bullet, H_c)[n]$.
It defines a derived equivalence between $D^b(\OCat_c)$ and $D^b(\OCat_c^{r,opp})$,
that maps $\Delta_c(\lambda)$ to $\Delta_c^r(\lambda)$. Hence the functor $D$ realizes
$R^{-1}$ and $\OCat_c=\OCat_c^{r,opp,\vee}$.

An important property of the functor $D$ is that it is a perverse equivalence with respect to 
filtrations by the dimension of support. Namely, set $\Cat^1:=\OCat_{c}, \Cat^2:=\OCat_c^{r,opp}$. Consider a filtration $\Cat^1:=\Cat^1_0\supset\Cat^1_1\supset \Cat^1_2\supset\ldots\supset\Cat^1_j\supset\ldots\supset \Cat^1_{n+1}:=\{0\}$,
where, by definition, $\Cat^1_j$ consists of all modules $M\in \Cat^1$ with $\dim \VA(M)\leqslant n-j$. Define the filtration $\OCat_{c}^{r,opp}=\Cat^2_0\supset \Cat^2_1\supset\ldots \supset\Cat^2_{n+1}=\{0\}$ similarly.
The definition of a perverse equivalence  was given in Section \ref{SS_perv_intro}.

\begin{Lem}\label{Lem:D_perv}
The equivalence $D: D^b(\OCat_c)\rightarrow D^b(\OCat_{c}^{r,opp})$ is perverse with respect to the filtrations 
introduced above.
\end{Lem}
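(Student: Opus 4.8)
The plan is to verify the three axioms (I), (II), (III) of a perverse equivalence directly, using the explicit description $D = R\Hom_{H_c}(\bullet, H_c)[n]$ together with the homological properties of standard modules and the behavior of $D$ on the standard objects. The key structural input is that $D$ sends $\Delta_c(\lambda)$ to $\Delta_c^r(\lambda)$ (placed in cohomological degree $0$), that it is a derived equivalence, and that $\Ext^i_{H_c}(\Delta_c(\lambda), M)$ detects the support of $M$ in an appropriate graded sense. A convenient way to package the support filtration is: $\Cat^1_j$ is the Serre subcategory generated by the simples $L_c(\mu)$ with $\dim \VA(L_c(\mu)) \le n-j$, and similarly on the right-module side.

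First I would establish axiom (I), that $D$ restricts to an equivalence $D^b_{\Cat^1_j}(\OCat_c) \xrightarrow{\sim} D^b_{\Cat^2_j}(\OCat_c^{r,opp})$. For this it suffices to show that $D$ carries a simple $L_c(\mu)$ with $\dim\VA(L_c(\mu)) = n-j$ to a complex whose cohomology lies in $\Cat^2_j$, and symmetrically for $D^{-1}$; since $D$ is an equivalence and the subcategories $D^b_{\Cat^i_j}$ are generated (as triangulated subcategories) by such simples, this is enough. The crucial point is a duality/support estimate: applying $R\Hom_{H_c}(\bullet, H_c)$ to a module supported on a subvariety of $\h$ of dimension $n-j$ produces a complex whose cohomology is again supported in dimension $\le n-j$ — this is the standard Gelfand-Kirillov/codimension estimate for $\Ext$'s against the regular bimodule, reflecting that grade (depth of an annihilator ideal) bounds the dimension of support. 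One must check that this grade estimate for $H_c$-modules matches the $\VA$-dimension; this follows because the associated graded of $H_c$ is $S(\h \oplus \h^*)\#W$ and $\VA(M)$ lives in the $\h$-factor, so the relevant duality is the Grothendieck-Serre duality on that factor with a shift by $n = \dim \h$. The shift $[n]$ in the definition of $D$ is exactly what makes degree $0$ the leading term, which will be used in (II) and (III).

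Next, axioms (II) and (III): for $M \in \Cat^1_j$ I must show $H_k(DM) = 0$ for $k < j$, that $M \mapsto H_j(DM)$ induces an equivalence $\Cat^1_j/\Cat^1_{j+1} \xrightarrow{\sim} \Cat^2_j/\Cat^2_{j+1}$, and $H_k(DM) \in \Cat^2_{j+1}$ for $k > j$. I would first prove this for simples $L = L_c(\mu)$ with $\dim \VA(L) = n-j$ and then extend to all of $\Cat^1_j$ by devissage. For the simple, the grade estimate above gives $\Ext^i_{H_c}(L, H_c) = 0$ for $i < j$ (since the grade of a module of support-dimension $n-j$ is $\ge j$), hence $H_k(DL) = R^{k+n}\Hom_{H_c}(L,H_c) = 0$ for $k < j$ after the shift; similarly the cohomology in degrees $> j$ is "deeper", i.e.\ has strictly smaller support dimension, landing in $\Cat^2_{j+1}$. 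The leading cohomology $H_j(DL) = \Ext^j_{H_c}(L,H_c)$ (suitably shifted) is the "top" Ext, which is a maximal Cohen-Macaulay–type dual and is again simple-by-$\Cat^2_{j+1}$: concretely one checks it is nonzero, has support of dimension exactly $n-j$, and that the induced map on subquotient categories is an exact functor that is essentially surjective and faithful, hence an equivalence of the semisimple-up-to-$\Cat^2_{j+1}$ quotients. A clean route to the equivalence in (III) is to observe that $D$ already is a derived equivalence, so $D^{-1}$ exists and satisfies the analogous estimates; then the functors $H_j(D\bullet)$ and $H_j(D^{-1}\bullet)$ on the subquotient categories are mutually inverse up to the identification, by a spectral sequence argument comparing $D D^{-1} = \mathrm{id}$ filtered by support.

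The main obstacle I expect is the support/grade estimate — precisely matching the homological grade of an $H_c$-module (the smallest $i$ with $\Ext^i_{H_c}(M,H_c) \ne 0$) with the codimension $n - \dim\VA(M)$ of its associated variety in $\h$, and controlling the supports of the higher $\Ext$'s. This requires a careful filtered/graded argument: pass to $\gr H_c = S(\h\oplus\h^*)\#W$, where the analogous statement is Grothendieck-Serre duality (the ring is a polynomial ring crossed with a finite group, so it is regular up to the group action, and the dualizing complex is a shift by $2n$), then note that $\VA(M)$ only sees the $\h$-directions because $\h$ acts locally nilpotently on objects of $\OCat_c$, which effectively halves $2n$ to $n$; the spectral sequence from $\gr$ to the filtered algebra then transports the estimate. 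Once this codimension-grade dictionary is in place, axioms (I)–(III) follow by the devissage and equivalence arguments sketched above; I would present those as routine given the estimate, and spend the bulk of the proof on the duality computation over $\gr H_c$ and its comparison with $H_c$.
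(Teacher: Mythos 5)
Your proposal is correct and follows essentially the same route as the paper: the key step in both is the codimension--grade estimate over $\gr H_c = S(\h\oplus\h^*)\#W$ (noting that the support of $\gr M$ sits inside the $\h$-factor, so a module with $\dim\VA(M)=n-j$ has grade $n+j$, which the shift $[n]$ normalizes to $j$), transported to $H_c$ by the Rees-algebra/filtered argument giving $\gr\Ext^i_{H_c}(M,H_c)\subset\Ext^i_{\gr H_c}(\gr M,\gr H_c)$, which yields (I) and (II), while (III) follows from biduality ($D^2=\operatorname{id}$) and the spectral sequence for a composition of derived functors.
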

\begin{proof}
Pick $M\in \OCat_c$ and equip it with a good filtration. Let $H_{\hbar,c}$ denote the Rees algebra,
and $M_\hbar$ be the Rees module over $H_{\hbar,c}$ constructed from the filtration on $M$.
The right $H_{\hbar,c}$-module $\Ext^i_{H_{\hbar,c}}(M_\hbar,H_{\hbar,c})$ is graded and
$\Ext^i_{H_{\hbar,c}}(M_\hbar,H_{\hbar,c})/(\hbar-1)=\Ext^i_{H_c}(M,H_c)$. So $\Ext^i_{H_c}(M,H_c)$
is equipped with a filtration. The module $\Ext^i_{H_{\hbar,c}}(M_\hbar,H_{\hbar,c})$ is finitely generated
so the filtration is good. Further, we have a standard short exact sequence
$$\Ext^i_{H_{\hbar,c}}(M_\hbar,H_{\hbar,c})\xrightarrow{\hbar} \Ext^i_{H_{\hbar,c}}(M_\hbar,H_{\hbar,c})
\rightarrow \Ext^i_{H_{\hbar,c}}(M_\hbar,H_{\hbar,c}/(\hbar))$$
Note that the last term is naturally identified with $\Ext^i_{S(\h\oplus \h^*)\#W}(\gr M, S(\h\oplus \h^*)\#W)$.
We conclude that $\gr\Ext_{H_c}^i(M,H_c)\subset \Ext^i_{\gr H_c}(\gr M,\gr H_c)$.

By standard Commutative algebra results, if the support of $\gr M$ has codimension $n+s$,
then $\Ext^k_{S(\h\oplus \h^*)}(\gr M,S(\h\oplus \h^*))=0$ for $k<n+s$, and $\Ext^k_{S(\h\oplus \h^*)}(\gr M,S(\h\oplus \h^*))$ has support of codimension larger than $n+s$ for $k>n+s$. Since $\gr H_c=S(\h\oplus \h^*)\#W$, we have
that $\Ext^k_{\gr H_c}(\gr M,\gr H_c)= \Ext^k_{S(\h\oplus \h^*)}(\gr M, S(\h\oplus \h^*)^{\oplus |W|})^W$.
So (I) and (II) in the definition of a perverse equivalence hold for $D$. (III) follows from $D^2=\operatorname{id}$
and a standard spectral sequence for the composition of derived functors.
\end{proof}

We see that the inverse Ringel duality $R^{-1}$ is perverse.

The homological duality extends to the deformations $\tilde{\OCat}_{c}, \tilde{\OCat}_{c}^{r,opp}$, this is
the derived equivalence $D:=R\Hom_{\tilde{H}_{c}}(\bullet, \tilde{H}_{c}):D^b(\tilde{\OCat}_{c})\xrightarrow{\sim}
D^b(\tilde{\OCat}_{c}^{r,opp})$. This equivalence is again perverse with respect to the filtrations
by dimension of support, this is proved exactly as in the undeformed setting.

\subsection{Remarks on  orderings and parameterizations}\label{SS_param}
We consider the $\Z$-lattice and the $\Q$-lattice $\param^*_\Z\subset \param^*_{\Q}\subset \param^*$
spanned by the elements $h_{H,i}-h_{H,j}$ and the dual lattices $\param_{\Z}\subset \param_{\Q}\subset \param$.
So $c-c'\in \param_{\Z}$ if and only if there are nonzero
scalars $\gamma_H, H\in \mathfrak{H}/W,$ such that $q'_{H,i}=\gamma_H q_{H,i}$ for all $H$
and $i$ (and so the algebras $\mathcal{H}_q$ and $\mathcal{H}_{q'}$ are identified).

We will need a certain sublattice  in  $\param_{\Z}$.  In \cite[Section 7.2]{BC}, Berest and Chalykh 
established a group homomorphism $\mathsf{tw}:\param_\Z\rightarrow \operatorname{Bij}(\Irr W)$ 
called the {\it KZ twist}. Set $\underline{\param}_{\Z}:=\ker \mathsf{tw}$.

We will use another spanning set for $\param_{\Z}$. We can assign an element in $\param_{\Z}$ to a one-dimensional character of $W$ as follows. There is a homomorphism  $\operatorname{Hom}(W,\C^\times)
\rightarrow \prod_{H\in \mathfrak{H}/W} \operatorname{Irr}(W_H)$  given by
the restriction. It turns out that this map is an isomorphism, see \cite[3.3.1]{rouqqsch}. So to an arbitrary
collection of elements $(a_H)$ with $0\leqslant a_H\leqslant \ell_H-1$ we can assign the character of $W$
that sends $s$ to $\lambda_s^{-a_H}$. To a character $\chi$ given in this form we assign the element
$\bar{\chi}\in \param$ given by $h_{H,i}(\bar{\chi})=1-\frac{a_H}{\ell_H}$ if $i\geqslant \ell-a_H$
and $-\frac{a_H}{\ell_H}$ if $i<\ell-a_H$. The motivation behind this definition will
be explained in Section \ref{SS_HC_basic}. Clearly, the  elements of the form $\bar{\chi}$
span $\param_{\Z}$.

Let us proceed to orders.

\begin{Lem}\label{Lem:c_fun_int}
The function $c\mapsto c_\lambda$ is rational on $\param_{\Q}$.
\end{Lem}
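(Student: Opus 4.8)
The plan is to write the linear functional $c\mapsto c_\lambda$ explicitly as a $\Q$-linear combination of the functionals $h_{H,i}$, and then to observe that each $h_{H,i}$ itself lies in $\param^*_\Q$.

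First, taking the trace on $\lambda$ of the central element $\sum_{s\in S}\frac{2c(s)}{\lambda_s-1}s$ gives $c_\lambda=\frac{1}{\dim\lambda}\sum_{s\in S}\frac{2c(s)}{\lambda_s-1}\chi_\lambda(s)$, where $\chi_\lambda$ is the character of $\lambda$; in particular $c\mapsto c_\lambda$ is linear, hence an element of $\param^*$. I would then group the sum over $S$ by reflection hyperplanes: for $H\in\mathfrak{H}$ the reflections with hyperplane $H$ are the nontrivial elements of the cyclic group $W_H$ of order $\ell_H$. Write $\epsilon_H$ for the character of $W_H$ acting on $\C\alpha_H\subset\h^*$, so that $\lambda_s=\epsilon_H(s)$ for $s\in W_H\setminus\{1\}$, and decompose $\lambda|_{W_H}=\bigoplus_{j}m_{H,j}(\lambda)\,\epsilon_H^{\,j}$ with $m_{H,j}(\lambda)\in\Z_{\geqslant 0}$ (indices mod $\ell_H$). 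Then $\chi_\lambda(s)=\sum_{j}m_{H,j}(\lambda)\lambda_s^{\,j}$ for $s\in W_H\setminus\{1\}$, and by (\ref{eq:h_to_c}) one has $\sum_{s\in W_H\setminus\{1\}}\frac{2c(s)}{\lambda_s-1}\lambda_s^{\,j}=\ell_H h_{H,-j}$. Since $\ell_H$, and (for a compatible choice of the $\alpha_H$ within a $W$-orbit) $\epsilon_H$ and hence $m_{H,j}(\lambda)$, depend only on the orbit $WH$, summing over $H\in\mathfrak{H}/W$ gives
\[
c_\lambda=\frac{1}{\dim\lambda}\sum_{H\in\mathfrak{H}/W}|WH|\,\ell_H\sum_{j=0}^{\ell_H-1}m_{H,j}(\lambda)\,h_{H,-j},
\]
and every coefficient $\frac{|WH|\,\ell_H\,m_{H,j}(\lambda)}{\dim\lambda}$ is a rational number.

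Finally I would note that $h_{H,i}\in\param^*_\Q$: since $\sum_{i=0}^{\ell_H-1}h_{H,i}=0$, we have $h_{H,i}=\frac{1}{\ell_H}\sum_{j=0}^{\ell_H-1}(h_{H,i}-h_{H,j})$, a $\Q$-combination of the elements $h_{H,i}-h_{H,j}$ spanning $\param^*_\Q$. Together with the displayed formula this gives $c_\lambda\in\param^*_\Q$, which is precisely the claim. The real content here is simply that the individually irrational quantities $\lambda_s$ and $(\lambda_s-1)^{-1}$ recombine through the character $\chi_\lambda$ into rational data; there is no genuine obstacle, and the only thing demanding care is the bookkeeping in the hyperplane-by-hyperplane grouping, i.e.\ keeping $\epsilon_H$, $m_{H,j}(\lambda)$ and $h_{H,i}$ mutually consistent within a $W$-orbit of hyperplanes, which is exactly the compatibility already built into the choices of Section~\ref{S_prelim_O} and the text around (\ref{eq:c_to_h}).
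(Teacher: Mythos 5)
Your proof is correct and takes essentially the same route as the paper's: both reduce $c_\lambda$ to a $\Q$-linear combination of the functionals $\ell_H h_{H,-j}$ by decomposing $\lambda|_{W_H}$ into $W_H$-isotypic components and using (\ref{eq:h_to_c}). The paper states this in one line; your version merely spells out the orbit bookkeeping and the final observation that each $h_{H,i}$ lies in $\param^*_{\Q}$ because $\sum_i h_{H,i}=0$.
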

\begin{proof}
The action of the element $\varphi_{H}=\sum_{s\in W_H\setminus \{1\}} \frac{2c(s)}{\lambda_s-1}s$
on the $W_H$-isotypic component corresponding to the character $s\mapsto \lambda_s^j$ is by the scalar
$\ell_H h_{H,-j}$. The claim follows.
\end{proof}

Define an equivalence relation $\sim$ on $\Irr(W)$ by setting
$\lambda\sim \lambda'$ if $c_{\lambda}=c_{\lambda'}$ for every parameter $c$. Note that different one-dimensional
representations cannot be equivalent. Now if $\lambda\not\sim \mu$,
then we have the hyperplane $\Pi_{\lambda,\mu}$ in $\param$ given by $c_{\lambda}=c_{\mu}$.
All the hyperplanes $\Pi_{\lambda,\mu}$  are rational.

Fix a coset $c+\param_{\Z}$ and consider $c'$ in this coset.
We write $c\prec c'$ if $\lambda\leqslant^c \lambda'$ implies $\lambda\leqslant^{c'}\lambda'$.
We write $c\sim c'$ if $c\prec c'$
and $c'\prec c$. The equivalence classes are relative interiors in the cones defined by the hyperplane
arrangement $\{\Pi_{\lambda,\mu}, \lambda\not\sim \mu, c_\lambda-c_\mu\in \Q\}$
on $c+\param_{\Z}$. We are mostly interested in the
open cones. Below the open cones in this stratification will be called {\it open chambers}.
For each open chamber we have its opposite chamber, where the order is opposite.
Note that if $c$ is Weil generic in $\param$,
we have just one open chamber because there are no non-equivalent pairs $\lambda,\mu\in \Irr(W)$
with $c_\lambda-c_\mu\in \Q$ (the locus where such a pair exists is the countable union of
hyperplanes, and we just take $c$ outside of this union). Similarly, for a Weil generic $c$ on a rational hyperplane parallel to $\Pi_{\lambda,\mu}$ we have exactly two  open cones that are opposite to each other.

\subsection{Rouquier equivalence theorem}
In \cite[Section 4.2]{rouqqsch}, Rouquier established some tools to prove an equivalence of categories $\OCat_{c},
\OCat_{c'}$ for different parameters $c,c'$.

We start with a general setting. Let $A_\hbar$ be a $\C[[\hbar]]$-algebra that is  free of finite rank as a module
over $\C[[\hbar]]$. Assume that $A_{\hbar}[\hbar^{-1}]$ is split semisimple. Let $B_\hbar$ be another
$\C[[\hbar]]$-algebra (free of finite rank) and let $P_\hbar$ be a projective $B_\hbar$-module with a fixed
isomorphism
$\End_{B_\hbar}(P_\hbar)^{opp}\cong A_\hbar$. Assume that $P_\hbar[\hbar^{-1}]$ is a projective generator
of $B_\hbar[\hbar^{-1}]\operatorname{-mod}$. So we have an exact functor $\pi_\hbar=\Hom_{B_\hbar}(P_\hbar,\bullet): B_\hbar\operatorname{-mod}
\twoheadrightarrow A_\hbar\operatorname{-mod}$ that is an equivalence after inverting $\hbar$. Next, suppose that
$B_\hbar\operatorname{-mod}$ is a highest weight category over $\C[[\hbar]]$ with $\Lambda$ being an indexing
set of simples.

Let $B,A,\pi$ be the specializations of $B_\hbar,
A_\hbar,\pi_\hbar$ to $\hbar=0$. Note that functor $\pi_\hbar$ defines a bijection between $\Lambda$ and the set of
simple $A_\hbar[\hbar^{-1}]$-modules  given by $\lambda\mapsto \Delta_\hbar(\lambda)[\hbar^{-1}]$.

We say that $\pi$ is $0$-faithful if it is fully faithful on standardly
filtered objects.

The following result is due to Rouquier, \cite[Proposition 4.42, Theorem 4.49]{rouqqsch}.

\begin{Prop}\label{Prop:abstr_equiv}
Let $(B_\hbar, P_\hbar, \pi_\hbar)$ and $(B'_\hbar, P'_\hbar, \pi'_\hbar)$ be two triples as above. Suppose that
the following hold:
\begin{enumerate}
\item The functors $\pi,\pi'$ are $0$-faithful.
\item There is an order on $\operatorname{Irr}(B)\cong \operatorname{Irr}(A_\hbar[\hbar^{-1}])\cong
\operatorname{Irr}(B')$ that is highest weight for both $B\operatorname{-mod}$ and $B'\operatorname{-mod}$.
\end{enumerate}
Then there is an equivalence $B_\hbar\operatorname{-mod}\xrightarrow{\sim} B'_{\hbar}\operatorname{-mod}$
that intertwines the functors $\pi_\hbar,\pi'_\hbar$.
\end{Prop}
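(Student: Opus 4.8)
The plan is to carry out Rouquier's comparison of highest weight covers, whose main steps I sketch. Fix linear refinements of the given order and use the universal-extension construction recalled in Section \ref{SS_cat_O} to build a $\Delta$-filtered projective generator $Q_\hbar$ of $B_\hbar\operatorname{-mod}$ (the iterated universal extension of the standard objects $\Delta_\hbar(\lambda)$), and likewise $Q'_\hbar$ for $B'_\hbar$; set $E_\hbar:=\End_{B_\hbar}(Q_\hbar)^{\mathrm{opp}}$ and $E'_\hbar:=\End_{B'_\hbar}(Q'_\hbar)^{\mathrm{opp}}$, so that $B_\hbar\operatorname{-mod}\cong E_\hbar\operatorname{-mod}$ and $B'_\hbar\operatorname{-mod}\cong E'_\hbar\operatorname{-mod}$. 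It is then enough to produce an isomorphism $E_\hbar\cong E'_\hbar$ matching the idempotents that cut out the summands $P_\hbar\mid Q_\hbar$ and $P'_\hbar\mid Q'_\hbar$: since $\pi_\hbar=\Hom_{B_\hbar}(P_\hbar,-)$ and $\pi'_\hbar=\Hom_{B'_\hbar}(P'_\hbar,-)$, such a matching makes the resulting Morita equivalence intertwine $\pi_\hbar$ and $\pi'_\hbar$, which is the assertion. So the real content is the isomorphism of endomorphism algebras, carrying the distinguished idempotents to one another.

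The next step is a double centralizer property: the natural map $E_\hbar\to \End_{A_\hbar}(\pi_\hbar Q_\hbar)^{\mathrm{opp}}$ is an isomorphism (and similarly with primes). This needs more than $0$-faithfulness of the special-fibre functor $\pi$, so one first upgrades as follows. Because $\pi_\hbar[\hbar^{-1}]$ is an equivalence onto the split semisimple category $A_\hbar[\hbar^{-1}]\operatorname{-mod}$, the generic fibre $B_\hbar[\hbar^{-1}]\operatorname{-mod}$ is semisimple; hence for $\Delta$-filtered $M,N$ the $\C[[\hbar]]$-module $\Ext^1_{B_\hbar}(M,N)$ is $\hbar$-torsion, the comparison map to $\Ext^1_{A_\hbar}(\pi_\hbar M,\pi_\hbar N)$ is controlled on the special fibre, and there $0$-faithfulness of $\pi$ — full faithfulness on $\Delta$-filtered objects, together with the long exact sequences attached to the kernel of $\pi$ — forces it to be an isomorphism. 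Thus $\pi_\hbar$ is $1$-faithful over $\C[[\hbar]]$. Granting $1$-faithfulness, the double centralizer isomorphism is standard, and, more importantly, the universal-extension construction of $Q_\hbar$ transports through $\pi_\hbar$: since $\pi_\hbar$ is exact and the relevant groups agree, $\pi_\hbar Q_\hbar$ is the iterated universal extension, computed inside $A_\hbar\operatorname{-mod}$, of the ``standard lattices'' $\pi_\hbar\Delta_\hbar(\lambda)$, governed by $\Ext^1_{A_\hbar}(-,\pi_\hbar\Delta_\hbar(\mu))\cong\Ext^1_{B_\hbar}(-,\Delta_\hbar(\mu))$. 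Consequently $E_\hbar\cong \End_{A_\hbar}(\pi_\hbar Q_\hbar)^{\mathrm{opp}}$ is reconstructed purely from $A_\hbar$, the fixed order, and the lattices $\pi_\hbar\Delta_\hbar(\lambda)$ — and likewise $E'_\hbar$.

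The final step, which I expect to be the main obstacle, is to identify the standard lattices of the two covers: $\pi_\hbar\Delta_\hbar(\lambda)\cong\pi'_\hbar\Delta'_\hbar(\lambda)$ as $A_\hbar$-modules for every $\lambda$. Each is a $\C[[\hbar]]$-lattice in the simple $A_\hbar[\hbar^{-1}]$-module indexed by $\lambda$ under the fixed bijection $\Irr(A_\hbar[\hbar^{-1}])\cong\Lambda$, and $0$-faithfulness by itself pins these lattices down only after inverting $\hbar$; the point is that the highest weight structure over $\C[[\hbar]]$ removes the remaining freedom once the order is fixed. Indeed $\Delta_\hbar(\lambda)$ is characterised intrinsically as the largest quotient of the projective cover $P_\hbar(\lambda)$ all of whose special-fibre composition factors are indexed by $\mu\leqslant\lambda$, and applying the exact, $1$-faithful quotient functor $\pi_\hbar$ converts this into a description of $\pi_\hbar\Delta_\hbar(\lambda)$ depending only on $A_\hbar$ and $\leqslant$; a Nakayama/base-change argument takes care of the comparison on the special fibre. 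Combined with the reconstruction of $\pi_\hbar Q_\hbar$ from the $\pi_\hbar\Delta_\hbar(\lambda)$ and the order, this yields an isomorphism $E_\hbar\cong E'_\hbar$ respecting the idempotents for $P_\hbar,P'_\hbar$, hence the desired equivalence $B_\hbar\operatorname{-mod}\xrightarrow{\sim}B'_\hbar\operatorname{-mod}$ intertwining $\pi_\hbar,\pi'_\hbar$. For the precise execution one follows \cite[Proposition 4.42, Theorem 4.49]{rouqqsch}.
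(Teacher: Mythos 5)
Your proposal follows essentially the same route as the paper: identify the images of the standard objects under the two quotient functors, upgrade $0$-faithfulness to $1$-faithfulness, and recover the (images of the) indecomposable projectives by iterated universal extensions, deferring to Rouquier \cite[Proposition 4.42, Lemma 4.48, Theorem 4.49]{rouqqsch} for the details --- exactly as the paper does. The one caveat is that your heuristic for matching the standard lattices (characterizing $\Delta_\hbar(\lambda)$ as the largest quotient of $P_\hbar(\lambda)$ with small composition factors) is circular as written, since the identification of the projectives is what the whole argument is building towards; the cited Lemma 4.48 avoids this and, as the paper notes, uses only the order hypothesis (2).
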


The proof goes as follows. First, one notices that $\pi_\hbar(\Delta_\hbar(\lambda))\cong \pi'_\hbar(\Delta'_\hbar(\lambda))$, this follows
from \cite[Lemma 4.48]{rouqqsch} and  uses only (2). Then one shows that the functors $\pi_\hbar, \pi'_\hbar$ are 1-faithful, i.e.,
preserve both Hom's and $\Ext^1$'s between standardly filtered objects, \cite[Proposition 4.42]{rouqqsch},
this follows from (1).  Finally,  one uses a construction of the indecomposable projectives
in $B_\hbar\operatorname{-mod}$ recalled in Section \ref{SS_cat_O}, and gets $\pi_\hbar(P_\hbar(\lambda))\cong
\pi'_\hbar(P'_\hbar(\lambda))$. This completes the proof.

Rouquier applied this result to  Cherednik categories $\OCat$, \cite[Theorem 5.5]{rouqqsch} (condition
(ii) was missing in {\it loc.cit.}).

\begin{Prop}\label{Prop:Chered_equi}
Suppose that $c,c'\in \param$ satisfy the following conditions:
\begin{itemize}
\item[(i)] $c-c'\in \param_\Z$.
\item[(ii)] $\mathsf{tw}(c'-c)=\operatorname{id}$.
\item[(iii)] The ordering $\leqslant^c$ refines $\leqslant^{c'}$.
\item[(iv)] For each hyperplane $H$ and different $i,j$, we have $q_{H,i}\neq q_{H,j}$.
\end{itemize}
Then there is an equivalence $\OCat_c\xrightarrow{\sim} \OCat_{c'}$ of highest weight
categories mapping $\Delta_c(\lambda)$ to $\Delta_{c'}(\lambda)$ that intertwines
the KZ functors $\OCat_c,\OCat_{c'}\twoheadrightarrow \underline{\mathcal{H}}_q\operatorname{-mod}$.
\end{Prop}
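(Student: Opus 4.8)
The plan is to obtain the equivalence by specializing at $\hbar=0$ an equivalence of deformed categories produced by the abstract criterion of Proposition \ref{Prop:abstr_equiv}. First I would fix a Weil generic direction $v\in\param$ and, with $R=\C[[\hbar]]$, form the Cherednik algebras attached to $c+\hbar v$ and $c'+\hbar v$ and their categories $\OCat$, denoted $\tilde{\OCat}_c$ and $\tilde{\OCat}_{c'}$ as at the end of Section \ref{SS_cat_O}. Since a rational affine hyperplane contains all of $c+\C v$ only if it passes through $c$ and $v$ is parallel to it, and there are only countably many such hyperplanes, the lines $c+\C v$ and $c'+\C v$ are both Weil generic, so the generic fibres $\tilde{\OCat}_c[\hbar^{-1}]$ and $\tilde{\OCat}_{c'}[\hbar^{-1}]$ are semisimple. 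By (i) and the description of $\param_\Z$ in Section \ref{SS_param}, the Hecke parameters of $c+\hbar v$ and $c'+\hbar v$ differ by a coordinatewise rescaling, so the two deformed Hecke algebras are identified; call this common $\C[[\hbar]]$-algebra $A_\hbar$, and note that $A_\hbar[\hbar^{-1}]\cong\C((\hbar))W$ is split semisimple.

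Next I would set up the two triples for Proposition \ref{Prop:abstr_equiv}: let $B_\hbar\operatorname{-mod}=\tilde{\OCat}_c$, realized as modules over a finite $\C[[\hbar]]$-algebra via the deformed form of Lemma \ref{Lem:proj_summand}; let $P_\hbar=\tilde{P}_{KZ}$, the projective object of Section \ref{SS_KZ} with $\End_{B_\hbar}(P_\hbar)^{opp}\cong A_\hbar$; and let $\pi_\hbar=\KZ$ for $\tilde{\OCat}_c$. Define $(B'_\hbar,P'_\hbar,\pi'_\hbar)$ symmetrically from $\tilde{\OCat}_{c'}$. The structural hypotheses ($A_\hbar[\hbar^{-1}]$ split semisimple; $P_\hbar[\hbar^{-1}]$ a projective generator; $B_\hbar\operatorname{-mod}$ highest weight over $\C[[\hbar]]$) hold by the previous paragraph and the over-a-ring versions of the highest weight structure and of $\KZ$ recorded in Sections \ref{SS_cat_O}--\ref{SS_KZ}. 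It remains to check the two conditions of Proposition \ref{Prop:abstr_equiv}. For (1): the $\hbar=0$ specializations of $\pi_\hbar,\pi'_\hbar$ are $\KZ_c,\KZ_{c'}$, and these are $0$-faithful by Proposition \ref{Prop:KZ_prop}(3), whose hypothesis $q_{H,i}\neq q_{H,j}$ is exactly condition (iv). For (2): identifying $\Irr(B)$ and $\Irr(B')$ with $\Irr(W)$ via standard objects, the resulting bijection through $\Irr(A_\hbar[\hbar^{-1}])$ is (up to inversion) the KZ twist $\mathsf{tw}(c'-c)$, which is the identity by (ii); and with this identification $\leqslant^c$ is a highest weight order for $\OCat_c$ by definition and for $\OCat_{c'}$ because it refines $\leqslant^{c'}$ by (iii) -- a refinement of a highest weight order is again one.

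Proposition \ref{Prop:abstr_equiv} now gives an equivalence $\tilde{\OCat}_c\xrightarrow{\sim}\tilde{\OCat}_{c'}$ intertwining the two $\KZ$ functors. As it is a Morita-type equivalence implemented by a bimodule free over $\C[[\hbar]]$, reduction mod $\hbar$ yields an equivalence $\OCat_c\xrightarrow{\sim}\OCat_{c'}$ intertwining $\KZ_c,\KZ_{c'}$; and because Rouquier's construction matches $P_\hbar(\lambda)$ with $P'_\hbar(\lambda)$ (each built inductively from the standard objects, as recalled after Proposition \ref{Prop:abstr_equiv}), the resulting functor sends $\Delta_c(\lambda)$ to $\Delta_{c'}(\lambda)$ and is an equivalence of highest weight categories. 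I expect the one genuinely fiddly point to be the bookkeeping that identifies the two deformed Hecke algebras -- this is precisely where the rescaling invariance of Hecke algebras and the definition of $\param_\Z$ are used -- together with the verification via the KZ twist that the induced labelling of simples agrees on the two sides, which is what makes condition (ii) necessary (it is absent from \cite[Theorem 5.5]{rouqqsch}).
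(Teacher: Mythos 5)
Your proposal is correct and is essentially the intended argument: the paper quotes this statement as Rouquier's application of Proposition \ref{Prop:abstr_equiv}, and your verification of its hypotheses (deformation along a Weil generic line, identification of the deformed Hecke algebras via (i), $0$-faithfulness from Proposition \ref{Prop:KZ_prop}(3) using (iv), the common highest weight order from (iii), and the labelling check via the KZ twist using (ii)) is exactly the scheme the paper itself follows when it reruns this argument for the improved version in Section \ref{SS_impr_equi}, where the only extra ingredient is the intermediate quotient $\hat{A}_\hbar$ used to dispense with (iv).
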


Below we will see that  assumption (iv) is not necessary.

\subsection{Induction and restriction functors for category $\mathcal{O}$}\label{SS_ind_res_O}
Let $W'\subset W$ be a parabolic subgroup.
We have a natural homomorphism $\underline{\mathcal{H}}_q(W')\rightarrow \underline{\mathcal{H}}_q(W)$
that gives rise to the restriction functor ${}^{\mathcal{H}}\Res_{W}^{W'}:\underline{\mathcal{H}}_q(W)
\operatorname{-mod}\rightarrow \underline{\mathcal{H}}_q(W')\operatorname{-mod}$.
When we write $q$ in $\mathcal{H}_q(W')$ we mean
the parameter $q'$ given by $q'_{H,i}=q_{H,i}$ for every reflection hyperplane of $W'$.
The functor ${}^{\mathcal{H}}\Res_{W}^{W'}$ has  left (induction) ${}^{\mathcal{H}}\Ind_{W'}^{W}$
and right (coinduction) ${}^{\mathcal{H}}\operatorname{Coind}_{W'}^{W}$ adjoint functors.

On the other hand, in \cite[Section 3.5]{BE}, Bezrukavnikov and Etingof defined induction and restriction
functors for the Cherednik categories $\mathcal{O}$. Namely, we have the restriction
functor ${}^{\OCat}\Res_{W}^{W'}:\OCat_c(W)\rightarrow \OCat_{c}(W')$ and its left adjoint,
the induction functor ${}^{\OCat}\Ind_{W'}^W:\OCat_{c}(W')\rightarrow \OCat_{c}(W)$.

Let us recall the construction of ${}^{\OCat}\operatorname{Res}_W^{W'}$ from \cite{BE}.
Pick $b\in \h$ such that $W_b=W'$. We can consider the completion $H_c^{\wedge_b}:=\C[\h/W]^{\wedge_b}\otimes_{\C[\h/W]}H_c$, where $\C[\h/W]^{\wedge_b}$ is
the completion of $\C[\h/W]$ with respect to the maximal ideal defined by $b$.
The completion $H_c^{\wedge_b}$ is a filtered algebra. Similarly, we can consider the
completion $H_c(W',\h)^{\wedge_0}:=\C[\h/W']^{\wedge_0}\otimes_{\C[\h/W']}H_c(W',\h)$
(where we write $H_c(W',\h)$ for $H_c(W')\otimes D(\h^{W'})$)
and form the centralizer algebra $Z(W,W',H_c(W',\h)^{\wedge_0})$ as in \cite[Section 3.2]{BE},
as an algebra, this is just $\operatorname{Mat}_{|W/W'|}(H_c(W',\h)^{\wedge_0})$.
Bezrukavnikov and Etingof, \cite[Section 3.3]{BE}, produced an explicit filtration preserving isomorphism $\theta_b: H_c(W,\h)^{\wedge_b}\xrightarrow{\sim} Z(W,W', H_c(W',\h)^{\wedge_0})$. Form the completion $M^{\wedge_b}:=
\C[\h/W]^{\wedge_b}\otimes_{\C[\h/W]}M$. This gives rise to a functor from $\OCat_c$ to the category
$\OCat(H_c^{\wedge_b})$ of all $H_c^{\wedge_b}$-modules finitely generated over $S(\h^*)^{\wedge_b}$.
Then we take elements in $M':=e(W')\theta_{b*}(M^{\wedge_b})$
that are finite for the action of the Euler element of $H_c(W',\h)$ (here $e(W')$ is a primitive
idempotent in $Z(W,W', \C W')$ defining a Morita equivalence between $Z(W,W',\C W')$ and $\C W'$). Let $M'_{fin}$ be the resulting
$H_c(W',\h)$-module. Then $M'_{fin}=\C[\h^{W'}]\otimes {}^{\OCat}\Res_{W}^{W'}(M)$. Note that
the functor ${}^{\OCat}\Res_W^{W'}$ is the composition of $M\mapsto M^{\wedge_b}$
and a category equivalence $\OCat(H_c^{\wedge_b})\cong \OCat_c(W')$.

It was shown in \cite{fun_iso}, see also \cite[Section 2.4]{Shan} for the same result but under additional
restrictions, that the functors ${}^{\OCat}\Res_{W}^{W'}$ and ${}^{\OCat}\Ind_W^{W'}$ are biadjoint.
Moreover, Shan has checked in \cite[Theorem 2.1]{Shan} that the restriction functors intertwine the KZ functors:
$\KZ'\circ {}^\OCat\Res_{W}^{W'}={}^{\mathcal{H}}\Res_{W}^{W'}\circ \KZ$.
Here $\KZ'$ stands for the KZ functor $\OCat_c(W')\rightarrow \mathcal{H}_q(W')\operatorname{-mod}$.

It is clear from the construction in \cite[Section 3.5]{BE} that the functor ${}^{\OCat}\Ind_{W'}^W$ maps the category of the torsion $S(\h_{W'}^*)^{W'}$-modules to the category of the torsion $S(\h^*)^W$-modules. So it descends
to the functor $\underline{\mathcal{H}}_q(W')\operatorname{-mod}\rightarrow \underline{\mathcal{H}}_q(W)\operatorname{-mod}$. It follows that ${}^{\mathcal{H}}\Res_{W}^{W'}$ admits a biadjoint functor, so ${}^{\mathcal{H}}\Ind_{W'}^{W}\cong {}^{\mathcal{H}}\operatorname{Coind}_{W'}^{W}$.
We also note that the  induction functors  intertwine the KZ functors.

Let us note that the induction and restriction functors extends to the deformed categories:
we have biadjoint functors ${}^\OCat\Res_{W}^{W'}:\tilde{\OCat}_{c}(W)\rightarrow \tilde{\OCat}_{c}(W')$
and ${}^\OCat\Ind_{W'}^{W}:\tilde{\OCat}_{c}(W')\rightarrow \tilde{\OCat}_{c}(W)$. They intertwine
the KZ functors.

For $M\in \OCat_c(W)$, the associated variety $\VA(M)$ is the union of the strata of the form
$W\h^{W'}$, where $W'$ is a parabolic subgroup. A stratum $W\h^{W'}$ is the union of irreducible
components of $\VA(M)$ if and only if ${}^{\OCat}\Res_{W}^{W'}(M)$ is finite dimensional and nonzero.

\section{Harish-Chandra bimodules}\label{S_prelim_HC}
\subsection{Harish-Chandra bimodules}\label{SS_HC_basic}
In this section, we recall the definition and some basic results about Harish-Chandra (HC) bimodules over the algebras $H_{R,c}$ and $eH_{R,c}e$ (we write $H_c$ for $H_c(W)$, etc.).

A definition of a HC $H_{c'}$-$H_{c}$-bimodule was introduced in \cite[Section 3]{BEG}. A HC bimodule, by definition,  is a finitely generated $H_{c'}$-$H_c$-bimodule, where the adjoint actions of $S(\h^*)^W, S(\h)^W$ are locally nilpotent.
Note that, by the definition, the Harish-Chandra bimodules form a Serre subcategory inside the category of
all $H_{c'}$-$H_c$-bimodules.

Here are some basic properties of HC bimodules.

\begin{Prop}\label{Prop_HC_prop} Let $M$ be a HC $H_{c'}$-$H_c$-bimodule. Then the following is true:
\begin{enumerate}
\item $M$ is finitely generated as a left $H_{c'}$-module, as a right $H_{c}$-module and as a $S(\h^*)^W\otimes S(\h)^W$-module (with $S(\h^*)^W\subset H_{c'}, S(\h)^W\subset H_c$).
\item If $N$ is a HC $H_{c''}$-$H_{c'}$-bimodule, then $N\otimes_{H_{c'}}M$ is also HC.
\item If $N\in \OCat_c$, then $M\otimes_{H_c}N\in \OCat_{c'}$.
\end{enumerate}
\end{Prop}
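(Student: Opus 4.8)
The plan is to establish each of the three statements using the defining finiteness/local-nilpotency conditions together with the triangular decomposition of $H_c$ and the PBW-type filtration. Throughout, I would exploit that $S(\h^*)^W$ (resp.\ $S(\h)^W$) is a finitely generated algebra over which $S(\h^*)$ (resp.\ $S(\h)$), and hence $H_c$, is a finite module, by the Chevalley--Shephard--Todd theorem. Let me sketch the three parts in order.

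\textbf{Part (1).} The key point is a standard ``locally nilpotent + finitely generated $\Rightarrow$ finite'' argument. Since $M$ is a finitely generated $H_{c'}$-$H_c$-bimodule, it is finitely generated over $H_{c'}\otimes H_c^{opp}$; pick generators $m_1,\dots,m_r$. The adjoint action of the (commutative, finitely generated) algebra $S(\h^*)^W$ embedded diagonally is locally nilpotent, so each $m_i$ is killed by some power of the augmentation ideal of $S(\h^*)^W$ acting adjointly; likewise for $S(\h)^W$. Using the triangular decomposition $H_c=S(\h^*)\otimes\C W\otimes S(\h)$ and the analogous one for $H_{c'}$, together with the fact that $\operatorname{ad}(S(\h^*)^W)$-nilpotence lets one ``move'' the left $S(\h^*)^W$-action to the right one modulo lower-order terms (and symmetrically for $S(\h)^W$), one reduces the bimodule generation to generation over $S(\h^*)^W\otimes_{\C}\C W\otimes_{\C}\C W\otimes_{\C}S(\h)^W$ acting on finitely many vectors, hence over $S(\h^*)^W\otimes S(\h)^W$. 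Then finite generation as a one-sided module follows because $H_{c'}$ is finite over $S(\h^*)^W$ (as $\gr H_{c'}=S(\h\oplus\h^*)\#W$ is finite over $S(\h^*)^W$, using a good filtration and the standard $\gr$-argument) — so $M$, being finite over $S(\h^*)^W\subset H_{c'}$, is a fortiori finite over $H_{c'}$, and symmetrically over $H_c$ via $S(\h)^W$.

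\textbf{Part (2).} Given a HC $H_{c''}$-$H_{c'}$-bimodule $N$, consider $N\otimes_{H_{c'}}M$. By part (1), $N$ is finitely generated as a right $H_{c'}$-module and $M$ is finitely generated as a left $H_{c'}$-module and as a right $H_c$-module; hence $N\otimes_{H_{c'}}M$ is finitely generated as an $H_{c''}$-$H_c$-bimodule (indeed images of $n_i\otimes m_j$ for finite generating sets suffice). It remains to check that $\operatorname{ad}(S(\h^*)^W)$ and $\operatorname{ad}(S(\h)^W)$ act locally nilpotently on $N\otimes_{H_{c'}}M$. For an element $p\in S(\h^*)^W\subset H_{c''}\cap H_{c'}$ one writes, on $n\otimes m$, the adjoint action via Leibniz: $p\cdot(n\otimes m)-(n\otimes m)\cdot p = (pn)\otimes m - n\otimes(mp)$, and inserting $\pm\, n p\otimes m = \pm\, n\otimes p m$ in the middle (using that $p\in S(\h^*)^W$ centralizes the tensor-over-$H_{c'}$ slot) gives $(\operatorname{ad}_p n)\otimes m + n\otimes(\operatorname{ad}_p m)$. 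By induction $\operatorname{ad}_p^{k}(n\otimes m)=\sum_j\binom{k}{j}(\operatorname{ad}_p^{j}n)\otimes(\operatorname{ad}_p^{k-j}m)$, which vanishes once $k$ exceeds the sum of the nilpotency orders on $n$ and $m$; the same argument works for $S(\h)^W$. Hence $N\otimes_{H_{c'}}M$ is HC.

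\textbf{Part (3).} For $N\in\OCat_c$ and a HC bimodule $M$, consider $M\otimes_{H_c}N$. It is an $H_{c'}$-module. Since $N$ is finitely generated over $S(\h^*)\subset H_c$ and $M$ is finitely generated as a right $H_c$-module and as a left module over $S(\h^*)^W\otimes S(\h)^W$ (part (1)), one shows $M\otimes_{H_c}N$ is finitely generated over $S(\h^*)\subset H_{c'}$: indeed $M$ is finite over $S(\h^*)^{W}\otimes S(\h)^{W}$, and $S(\h)^W$ acts locally nilpotently adjointly on $M$ while $\h$ acts locally nilpotently on $N$, so one can bound the contribution of the $S(\h)^W$-slot. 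Concretely, take a good filtration on $M$ and a good filtration on $N$ compatible with the $H_c$-filtration; then $\gr(M\otimes_{H_c}N)$ is a quotient of $\gr M\otimes_{\gr H_c}\gr N$, a finitely generated $S(\h\oplus\h^*)\#W$-module which is finite over $S(\h^*)$ because $\gr N$ is (as $\VA(N)\subset\h$) and $\gr M$ is supported on a Lagrangian-type locus whose projection to $\h$ is proper over $\h$. Finally, $\h$ acts locally nilpotently on $M\otimes_{H_c}N$ by the Leibniz argument exactly as in part (2): for $y\in\h$ and $m\otimes n$, $y(m\otimes n)=(ym)\otimes n - m\otimes(yn) + m\otimes(yn)$ rewritten as $(\operatorname{ad}_y m)\otimes n + m\otimes(yn)$, and since $\operatorname{ad}(\h)$ acts locally nilpotently on $M$ (as $\operatorname{ad}(S(\h)^W)$ does and $S(\h)$ is finite over $S(\h)^W$) and $\h$ acts locally nilpotently on $N$, a binomial expansion shows $y$ acts locally nilpotently. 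Therefore $M\otimes_{H_c}N\in\OCat_{c'}$.

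The main obstacle is the finiteness over $S(\h^*)^W\otimes S(\h)^W$ in part (1): turning the a priori weaker two-sided finite generation together with the two separate adjoint local-nilpotency hypotheses into the strong statement that a single finitely generated commutative subalgebra acts with $M$ finite over it. The trick — that $\operatorname{ad}$-nilpotence for $S(\h^*)^W$ lets one replace left multiplication by elements of $S(\h^*)^W$ with right multiplication up to strictly filtration-decreasing error, so that one can successively push the ``position'' of a generating set into a fixed finite-dimensional slice of $\C W\otimes\C W$ — is the technical heart and is exactly what makes HC bimodules behave like finite-dimensional objects in the two ``invariant'' directions; everything else (parts (2) and (3)) then follows formally from part (1) plus Leibniz-rule binomial expansions.
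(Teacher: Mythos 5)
Your parts (1) and (2) follow the standard route (the paper simply cites [BEG, Lemma 3.3] for (1) and declares (2) straightforward), and the Leibniz/binomial argument in (2) is correct. However, two of your justifications are genuinely false as written.

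In (1), you deduce one-sided finite generation from the claim that $H_{c'}$ is finite over $S(\h^*)^W$ because ``$\gr H_{c'}=S(\h\oplus\h^*)\#W$ is finite over $S(\h^*)^W$''. This is not true: $S(\h\oplus\h^*)\#W$ is finite over $S(\h\oplus\h^*)^W$, not over $\C[\h]^W$; nor is $M$ finite over $S(\h^*)^W$ alone (take $M=H_c$). The correct way to pass from finiteness over $S(\h^*)^W\otimes S(\h)^W$ to finite generation as a left $H_{c'}$-module is to absorb the right $S(\h)^W$-factor into the left action, writing $R_a=L_a-\operatorname{ad}(a)$ for $a\in S(\h)^W$ and using local nilpotency of $\operatorname{ad}(a)$ --- i.e.\ the same trick you invoke earlier in that paragraph, applied once more.

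More seriously, in (3) you assert that $\operatorname{ad}(y)$ is locally nilpotent on $M$ for $y\in\h$, ``as $\operatorname{ad}(S(\h)^W)$ is and $S(\h)$ is finite over $S(\h)^W$''. Both the implication and the conclusion are false: on the regular HC bimodule $M=H_c$ one computes $\operatorname{ad}(y)^k(w)=(y-{}^w y)^k w\neq 0$ for all $k$ whenever $w\in W$ does not fix $y$. Local nilpotency of the adjoint action is only guaranteed for the invariants, and finiteness of $S(\h)$ over $S(\h)^W$ does not transfer it, since $\operatorname{ad}$ is not an algebra action. The repair --- which is what the paper does --- is to run your binomial expansion only for $a\in S(\h)^W_+$ (where $\operatorname{ad}(a)$ is locally nilpotent on $M$ by the definition of HC and $a$ acts locally nilpotently on $N\in\OCat_c$), concluding that $S(\h)^W_+$ acts locally nilpotently on $M\otimes_{H_c}N$; one then upgrades to local nilpotency of $\h$ using that the coinvariant algebra $S(\h)/S(\h)S(\h)^W_+$ is finite dimensional, so that $(S(\h)_+)^N\subset S(\h)\cdot S(\h)^W_+$ for some $N$. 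With these two corrections your write-up gives a complete proof.
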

\begin{proof}
(1) is a part of  \cite[Lemma 3.3]{BEG}. (2) is straightforward. In (3) notice that $M\otimes_{H_c}N$ is finitely
generated over $S(\h^*)^W$ thanks to (1) and has locally nilpotent action of the augmentation ideal $S(\h)^W_+\subset S(\h)^W$. The latter easily implies that the action of $\h$ is also locally nilpotent.
\end{proof}

We can give an analogous definition for $H_{\param}$-bimodules. Namely, we say that an $H_{\param}$-bimodule
$M$ is HC if there is $\psi\in \param$ such that $pm=m(p-\langle\psi,p\rangle)$ for any $p\in \param^*$
and the adjoint actions of $S(\h^*)^W, S(\h)^W$ are locally nilpotent. Let $\operatorname{HC}(H_{\param},\psi)$ denote the category of such HC bimodules. We could  relax the condition on the compatibility between the left and the  right $\C[\param]$-actions but this is technical. Also we can speak about HC bimodules over the spherical subalgebras.

Let us provide an important example. Let $\chi$ be a character of $W$, $e_\chi\in \C W$ be the corresponding idempotent, $\bar{\chi}$ be the  element in $\param_\Z$ constructed in Section \ref{SS_param}.
According to \cite[Section 5.4]{BC}, there is an isomorphism
$\varphi: e H_{\param}e\xrightarrow{\sim} e_\chi H_{\param} e_{\chi}$ that maps $p\in \param^*$ to
$p+\langle\bar{\chi},p\rangle$.

\begin{Lem}\label{Lem:shift_HC} $e H_{c+\bar{\chi}} e_{\chi}$  is a HC $eH_{c+\bar{\chi}}e$-$eH_ce$-bimodule.
\end{Lem}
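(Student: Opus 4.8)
The plan is to exploit the Berest--Chalykh isomorphism $\varphi: eH_{\param}e \xrightarrow{\sim} e_\chi H_{\param} e_\chi$ recalled just before the statement, together with the structure of $e_\chi H_{\param} e_\chi$ as a bimodule over $eH_{\param}e$ and $eH_ce$ on the two sides. First I would work at the level of the universal algebra $H_{\param}$ and its bimodule $e H_{\param} e_\chi$. This is naturally a left $eH_{\param}e$-module and a right $e_\chi H_{\param} e_\chi$-module. Using $\varphi$ to transport the right action, $eH_{\param}e_\chi$ becomes an $eH_{\param}e$-$eH_{\param}e$-bimodule in which the left $\C[\param]$-action and the right $\C[\param]$-action differ by the shift $p \mapsto p + \langle \bar\chi, p\rangle$; that is, $e H_{\param}e_\chi$ (with the twisted right action) is an object of $\operatorname{HC}(eH_{\param}e, \bar\chi)$ in the sense of the definition given in the excerpt, once I check the two local nilpotence conditions. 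The adjoint action of $S(\h^*)^W$ (resp. $S(\h)^W$) being locally nilpotent on $e H_{\param} e_\chi$ follows from the same property of $H_{\param}$ itself as a bimodule over itself: $S(\h^*)^W$ and $S(\h)^W$ act ad-nilpotently on $H_{\param}$ because of the triangular decomposition and the fact that these are the invariants (this is exactly the kind of statement underlying $\OCat$ being defined by a locally nilpotent condition), and multiplying by the idempotents $e, e_\chi$ and passing to a sub-bimodule preserves ad-nilpotence since Harish-Chandra bimodules form a Serre subcategory (Proposition \ref{Prop_HC_prop} and the remark preceding it). Finite generation over $eH_{\param}e$ on each side is clear since $H_{\param}$ is Noetherian and $e H_{\param}e_\chi$ is a sub-bimodule of the finitely generated bimodule $H_{\param}$.

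Next I would specialize. Setting the left $\C[\param]$-parameter to $c + \bar\chi$ forces, via the shift relation $pm = m(p - \langle\bar\chi,p\rangle)$ built into the twisted right action, the right $\C[\param]$-parameter to specialize to $c$. Concretely, $e H_{c+\bar\chi} e_\chi := (eH_{\param}e_\chi) \otimes_{\C[\param]} \C_{c+\bar\chi}$ with respect to the left action, and the computation of how $\varphi$ moves $\param^*$ shows that the right action then factors through $eH_c e$. So $e H_{c+\bar\chi} e_\chi$ is genuinely an $eH_{c+\bar\chi}e$-$eH_c e$-bimodule. Specialization of a finitely generated module stays finitely generated, and local nilpotence of the adjoint $S(\h^*)^W$- and $S(\h)^W$-actions is inherited under the quotient $H_{\param} \twoheadrightarrow H_{c+\bar\chi}$ on the left and $H_{\param}\twoheadrightarrow H_c$ on the right. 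This gives exactly the three conditions in the definition of a HC $eH_{c+\bar\chi}e$-$eH_c e$-bimodule.

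I expect the main obstacle to be bookkeeping around the two $\C[\param]$-actions and the twist $\varphi$: one has to be careful that $\varphi$ shifts $\param^*$ by $+\langle\bar\chi,p\rangle$ (as stated) so that the left parameter $c+\bar\chi$ indeed pairs with the right parameter $c$, and not, say, $c - \bar\chi$ or $c+2\bar\chi$; getting the sign conventions consistent with the definition $pm = m(p-\langle\psi,p\rangle)$ of $\operatorname{HC}(H_{\param},\psi)$ requires care. A secondary point is verifying the ad-nilpotence of $S(\h^*)^W$ and $S(\h)^W$ on $e H_{\param} e_\chi$ rather than just asserting it: the cleanest route is to note that $H_{\param}$ itself is a HC $H_{\param}$-bimodule (with $\psi = 0$), that $e H_{\param} e_\chi$ is a sub-bimodule after twisting one side by the algebra automorphism coming from $\varphi$ — hence still HC, being in a Serre subcategory — and then specialize. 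Everything else is routine functoriality of specialization and the standard fact that $H_{\param}$ is Noetherian.
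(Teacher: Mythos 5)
Your proposal follows the same general skeleton as the paper's proof (transport the right action through the Berest--Chalykh isomorphism $\varphi$, sort out the parameter shift, check finite generation and local nilpotence), but the one step that carries all the content is disposed of by an argument that does not work. You claim that $eH_{\param}e_\chi$, with its right action twisted by $\varphi$, is ``a sub-bimodule after twisting one side by the algebra automorphism coming from $\varphi$ --- hence still HC, being in a Serre subcategory.'' This is a non sequitur: the Serre-subcategory property says that subquotients of a HC bimodule \emph{with a fixed bimodule structure} are HC; it says nothing about what happens when you precompose the right action with an isomorphism. After the twist, the adjoint action of $a\in S(\h)^W$ on $m\in eH_{c+\bar{\chi}}e_\chi$ is $m\mapsto (ae)m-m\,\varphi(ae)$, which is \emph{not} the commutator $[a,m]$ computed inside $H_{c+\bar{\chi}}$ unless $\varphi(ae)=ae_\chi$, and that is false in general. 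Twisting a bimodule structure by an arbitrary isomorphism can destroy local nilpotence: twist the regular bimodule of the Weyl algebra $\C\langle x,\partial\rangle$ by the Fourier automorphism and the ``adjoint'' action of $x$ becomes $m\mapsto xm-m\partial$, which is nowhere near locally nilpotent. So local nilpotence of the twisted adjoint actions genuinely has to be proved; it is not inherited formally from $H_{\param}$ being HC over itself.

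What is missing is precisely the input the paper takes from the construction of $\varphi$ in [BC, Section 5.4]: $\varphi$ preserves the filtration with $\deg\h=1$, $\deg\h^*=\deg W=0$, as well as the internal grading by $\operatorname{ad}(h)$, and the induced map on associated graded algebras $S(\h\oplus\h^*)^W\to S(\h\oplus\h^*)^W$ is the \emph{identity}. Hence $\gr\bigl(eH_{c+\bar{\chi}}e_\chi\bigr)$ is the bimodule $S(\h\oplus\h^*)^{W,\chi^{-1}}$ on which the left and right $S(\h\oplus\h^*)^W$-actions coincide, so for homogeneous $a\in S(\h)^W$ the twisted commutator kills the associated graded and therefore strictly lowers filtration degree, giving local nilpotence; for $b\in S(\h^*)^W$ one first retwists the filtration using the grading (as in Remark \ref{Rem:diff_filt}) so that $\h^*$ acquires positive degree. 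Without the statement $\gr\varphi=\operatorname{id}$, or some other control on $\varphi(ae)-ae_\chi$, the lemma does not follow. Your remaining points --- the sign and shift bookkeeping for the two $\C[\param]$-actions, and finite generation of $eH_{\param}e_\chi$ over the spherical subalgebras --- are fine and consistent with the intended argument.
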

\begin{proof}
According to the construction of the isomorphism $\varphi$ in \cite[Section 5.4]{BC}, this isomorphism preserves
the filtrations given by $\deg \h,\deg W=0, \deg \h^*=1$ and the gradings induced by $\operatorname{ad}h$.
Moreover, the associated graded isomorphism $S(\h\oplus \h^*)^W=\gr eH_ce\xrightarrow{\sim}\gr e_\chi H_{c+\bar{\chi}} e_\chi=S(\h\oplus \h^*)^W$ is the identity. The associated graded of $e H_{c+\bar{\chi}}e_\chi$ is  the $S(\h\oplus\h^*)^W$-bimodule $S(\h\oplus \h^*)^{W,\chi^{-1}}$. Pick a homogeneous element $a\in S(\h)^W$.
The operator induced by $[a,\cdot]$ on $\operatorname{gr}e H_{c+\bar{\chi}}e_\chi=
S(\h\oplus \h^*)^{W,\chi^{-1}}$ is zero hence $[a,\cdot]$ is locally nilpotent. Now let us pick a homogeneous
element $b\in S(\h^*)^W$ and prove that $[b,\cdot]$ is locally nilpotent on $e H_{c+\bar{\chi}}e_\chi$.
The bimodule $e H_{c+\bar{\chi}}e_\chi$ is graded and the grading is compatible with the filtration.
Then we can twist the filtration using the grading, compare to Remark \ref{Rem:diff_filt}
below, so that the multiplication by $\h^*$ preserves the filtration, and this does not change the
associated graded. This shows that $[b,\cdot]$  is locally nilpotent.
\end{proof}

So we get an $H_{c+\bar{\chi}}$-$H_c$ bimodule $$\mathcal{B}_{c,\bar{\chi}}:=
H_{c+\bar{\chi}}e\otimes_{eH_{c+\bar{\chi}}e}e H_{c+\bar{\chi}} e_{\chi}\otimes_{eH_ce}eH_c.$$
Similarly, we get the $H_{c}$-$H_{c+\bar{\chi}}$ bimodule $\mathcal{B}_{c+\bar{\chi},-\bar{\chi}}$.
These bimodules are HC. We also can define the objects $\B_{\param,\bar{\chi}}\in \HC(H_{\param},\bar{\chi}),
\B_{\param,-\bar{\chi}}\in \HC(H_{\param}, -\bar{\chi})$. That these bimodules are HC follows
from Lemma \ref{Lem:shift_HC}: if the adjoint actions of $S(\h)^W, S(\h^*)^W$ are locally nilpotent
on all fibers over $\param$, then they are locally nilpotent on the whole bimodule.

There is an alternative definition of HC bimodules given in \cite[Section 3.4]{sraco}. Equip the algebra $H_{\param}$
with a filtration, $H_\param=\bigcup_{i\geqslant 0}\operatorname{F}_{i}H_\param$, by setting $\deg \h=\deg\param=1, \deg \C W=\deg \h^*=0$.
The algebra $\gr H_{\param}$ is finite over its center
denoted
by $Z_{\param}$ (recall the Satake isomorphism from \cite[Theorem 3.1]{EG},
$Z_{\param}\cong e(\gr H_{\param})e$,
given by $z\mapsto ze$). By a Harish-Chandra $H_{\param}$-bimodule
we mean a bimodule $M$ that can be equipped with an increasing filtration $M=\bigcup_{i\geqslant m} M_{\leqslant i}$
such that $\gr M$ is finitely generated over $\gr H_{\param}$ and, moreover, the left and the right
actions of $Z_{\param}$ coincide. Such a filtration is called {\it good}.
One can give a definition of a HC $eH_{\param}e$-bimodule in a similar fashion.

\begin{Rem}\label{Rem:diff_filt}
Let us remark that we used a different filtration, denote it here by $\operatorname{F}'$, in \cite[Section 3.4]{sraco}.
The filtrations are related as follows:
$$\operatorname{F}'_i H_\param=\bigoplus_{k}(\operatorname{F}_k H_\param)\cap \{a\in H_\param| [h,a]=(i-2k)a\}.$$
It follows that our present definition is equivalent to a more technical definition from \cite[Section 3.4]{sraco}.
\end{Rem}

We have checked in \cite[Proposition 5.4.3]{sraco} that any HC bimodule in the sense of \cite{BEG} is also HC in the sense
of \cite{sraco}. Conversely, let $M$ be a HC bimodule in the sense of \cite{sraco} such that
$pm=m(p-\langle \psi,p\rangle)$. Then $M\in \HC(H_{\param},\psi)$, see
\cite[Proposition 5.4.1]{sraco}.

To $M\in \HC(H_{\param},\psi)$ we can assign its associated variety, $\VA(M)\subset (\h\oplus \h^*)/W$. By definition, this
is the support of $\gr M/ \param \gr M$, where the associated graded is taken with respect to
a good filtration.

There is one important property of HC bimodules that is easy to see from the definition in \cite{sraco}.
Namely, for a HC $H_{\param}$-bimodule $M$ we can consider its specialization $M_c:= M\otimes_{\C[\param]}\C_c$.
By the right support of $M$ we mean $\operatorname{Supp}^r(M):=\{c\in \param| M_c\neq 0\}$.
The following lemma is proved completely analogously to \cite[Lemma 5.7, Corollary 5.8]{BL}.

\begin{Lem}\label{Lem:HC_supp}
Let $\param^1\subset\param$ be an affine subspace, $\psi\in \param$,  and let $M\in \HC(H_{\param^1},\psi)$. Then  the following is true:
\begin{enumerate}
\item There is $f\in \C[\param^1]$ such that $M\otimes_{\C[\param^1]}\C[\param^1_f]$ is a
free module over $\C[\param^1_f]$. Here $\param^1_f$ is the principal open subset in
$\param^1$ defined by $f$.
\item The support $\operatorname{Supp}^r(M)$ is a constructible subset of $\param^1$.
\end{enumerate}
\end{Lem}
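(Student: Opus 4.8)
The plan is to reduce both assertions to standard commutative algebra by passing to the associated graded of a good filtration, exactly along the lines of \cite[Lemma 5.7, Corollary 5.8]{BL}. By \cite[Proposition 5.4.3]{sraco}, the bimodule $M\in\HC(H_{\param^1},\psi)$ is Harish-Chandra in the sense of \cite{sraco}: it admits a good filtration $M=\bigcup_{i\geqslant m}M_{\leqslant i}$, which is exhaustive and bounded below, such that $\gr M$ is finitely generated over $\gr H_{\param^1}=S(\h\oplus\h^*)\#W\otimes\C[\param^1]$, a finitely generated $\C[\param^1]$-algebra. (Here I use the filtration of Remark \ref{Rem:diff_filt} with $\deg\h=\deg\param=1$, $\deg\C W=\deg\h^*=0$, which is interchangeable with the one of \cite{sraco}.) In particular $\gr M$ is a finitely generated module over the Noetherian ring $\gr H_{\param^1}$.

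For part (1): since $\param^1$ is an affine subspace, $\C[\param^1]$ is a polynomial ring, in particular a Noetherian domain. Applying the generic freeness theorem to the finitely generated $\gr H_{\param^1}$-module $\gr M$ over the base $\C[\param^1]$, we obtain $f\in\C[\param^1]\setminus\{0\}$ such that $(\gr M)\otimes_{\C[\param^1]}\C[\param^1_f]$ is free over $\C[\param^1_f]$. Localization is exact and commutes with $\gr$, so this free module is the associated graded of $M\otimes_{\C[\param^1]}\C[\param^1_f]$ for the induced filtration. Choosing homogeneous generators of $(\gr M)\otimes_{\C[\param^1]}\C[\param^1_f]$ (the $\operatorname{ad}(h)$-grading, along which $\C[\param^1]$ sits in degree $0$, lets one arrange the free basis to be homogeneous, as in \cite[Lemma 5.7]{BL}) and lifting them to elements of $M\otimes_{\C[\param^1]}\C[\param^1_f]$ of the same degree, we obtain a $\C[\param^1_f]$-linear map from a free module that induces an isomorphism on associated gradeds; since the good filtration is exhaustive and bounded below, this map is itself an isomorphism. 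Hence $M\otimes_{\C[\param^1]}\C[\param^1_f]$ is free over $\C[\param^1_f]$.

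For part (2): we argue by Noetherian induction on $\operatorname{Spec}\C[\param^1]$, the case $\dim\param^1=0$ being trivial, exactly as \cite[Corollary 5.8]{BL} is deduced from \cite[Lemma 5.7]{BL}. By part (1), on $\param^1_f$ the bimodule $M$ is free over $\C[\param^1_f]$, so $\operatorname{Supp}^r(M)\cap\param^1_f$ equals $\param^1_f$ if $M\otimes_{\C[\param^1]}\C[\param^1_f]\neq 0$ and is empty otherwise; in either case it is open in $\param^1$, hence constructible. On the closed complement $V(f)=\operatorname{Spec}\C[\param^1]/(f)$, the quotient bimodule $M/fM$ is finitely generated with locally nilpotent adjoint $S(\h^*)^W$- and $S(\h)^W$-actions, hence Harish-Chandra over $H_{\C[\param^1]/(f)}$ in the sense of \cite{sraco} (the image of a good filtration of $M$ is a good filtration, and the argument of part (1) and of this step works verbatim over an arbitrary finitely generated commutative $\C[\param]$-algebra, not just over coordinate rings of affine subspaces). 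Moreover $M_c=(M/fM)\otimes_{\C[\param^1]/(f)}\C_c$ for every $c\in V(f)$, so $\operatorname{Supp}^r(M)\cap V(f)=\operatorname{Supp}^r(M/fM)$, which is constructible in $V(f)$ by the inductive hypothesis. A union of two constructible sets being constructible, this proves (2).

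The only genuine subtlety is that a Harish-Chandra bimodule is typically not finitely generated over $\C[\param^1]$, so Chevalley's theorem cannot be applied to $M$ directly; replacing $M$ by $\gr M$, which \emph{is} finitely generated over the finite-type algebra $\gr H_{\param^1}$, is precisely the device that makes the commutative-algebra input available, and is where the definition of Harish-Chandra bimodule from \cite{sraco} (rather than from \cite{BEG}) is used. The remaining points — that the two filtrations of Remark \ref{Rem:diff_filt} are compatible, that localization commutes with $\gr$, that one can arrange a homogeneous free basis so as to upgrade flatness to freeness, and that the Noetherian induction runs over all finitely generated $\C[\param]$-algebra quotients of $\C[\param^1]$ — are routine and are carried out in \cite[Lemma 5.7, Corollary 5.8]{BL}.
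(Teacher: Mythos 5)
Your proof is correct and is essentially the paper's: the paper gives no details beyond stating that the lemma "is proved completely analogously to [BL, Lemma 5.7, Corollary 5.8]," and your argument (pass to $\gr M$ for a good filtration so that the [sraco]-definition makes $\gr M$ finitely generated over the finite-type $\C[\param^1]$-algebra $\gr H_{\param^1}$, apply graded generic freeness, lift a homogeneous basis, then get constructibility of $\operatorname{Supp}^r$ by Noetherian induction using generic freeness on each stratum) is exactly that argument. The only point to keep straight in a fully written-out version is the interplay between the filtration degree and the $\operatorname{ad}(h)$-grading when choosing the homogeneous basis, which you correctly flag and which is the content of Remark \ref{Rem:diff_filt}.
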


Let us deduce some corollaries from Lemma \ref{Lem:HC_supp}.

\begin{Cor}\label{Cor:spherical}
The following is true:
\begin{enumerate}
\item There is a Zariski open subset of $\param$ consisting of spherical parameters.
\item Let $\chi$ be a character of $W$. Then there is a Zariski open subset of parameters
$c$ such that $\mathcal{B}_{c,\bar{\chi}}$ and $\mathcal{B}_{c+\bar{\chi},-\bar{\chi}}$
are mutually inverse Morita equivalences.
\end{enumerate}
\end{Cor}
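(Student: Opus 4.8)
The plan is to deduce both parts of Corollary~\ref{Cor:spherical} from Lemma~\ref{Lem:HC_supp} applied to suitable families of Harish-Chandra bimodules over $H_{\param}$ (or over the appropriate $\C[\param]$-form). The basic principle is that a property holding at Zariski-generic $c$ in each case, together with constructibility of the ``failure locus'' coming from the support of a Harish-Chandra bimodule, forces the property to hold on a Zariski-open set.

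For part~(1), the key remark is that $c$ is spherical if and only if the natural bimodule maps $H_c e\otimes_{eH_ce}eH_c\to H_c$ and $eH_c\otimes_{H_c}H_ce\to eH_ce$ realize the $(H_c,eH_ce)$-Morita equivalence, equivalently the cone (cokernel) of $H_ce\otimes_{eH_ce}eH_c\to H_c$ vanishes. This cokernel is the specialization at $c$ of a Harish-Chandra $H_{\param}$-bimodule $Q$ (with $\psi=0$): indeed $H_{\param}e\otimes_{eH_{\param}e}eH_{\param}$ and $H_{\param}$ are both Harish-Chandra, and the category of Harish-Chandra bimodules is a Serre subcategory, so the cokernel $Q$ is Harish-Chandra. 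By Lemma~\ref{Lem:HC_supp}(2), $\operatorname{Supp}^r(Q)$ is constructible. It is known (e.g.\ from the existence of spherical parameters, which holds for $c$ outside a countable union of hyperplanes, or already for Zariski-generic $c$) that $Q_c=0$ for generic $c$; hence $\operatorname{Supp}^r(Q)$ is a constructible proper subset, so its complement contains a nonempty Zariski-open set. One should also handle the cokernel of the other multiplication map the same way (or argue that vanishing of one cokernel together with flatness of $H_ce$ suffices); both give constructible loci, and intersecting the two Zariski-open complements proves~(1).

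For part~(2), recall from Lemma~\ref{Lem:shift_HC} and the paragraph after it that $\mathcal{B}_{\param,\bar\chi}\in \HC(H_{\param},\bar\chi)$ and $\mathcal{B}_{\param,-\bar\chi}\in \HC(H_{\param},-\bar\chi)$ are Harish-Chandra, and that they are built from the Berest--Chalykh isomorphism $\varphi: eH_{\param}e\xrightarrow{\sim}e_\chi H_{\param}e_\chi$, which is an \emph{isomorphism} of algebras over $\C[\param]$. Consequently, for \emph{spherical} $c$ (from part~(1), applied to both $c$ and $c+\bar\chi$) the bimodules $eH_{c+\bar\chi}e_\chi$ and $e_\chi H_{c+\bar\chi}e$ are already mutually inverse Morita equivalences between $eH_ce$ and $eH_{c+\bar\chi}e$ (being, via $\varphi$, the identity bimodule of $e_\chi H_{c+\bar\chi}e_\chi$), and then $\mathcal{B}_{c,\bar\chi}$, $\mathcal{B}_{c+\bar\chi,-\bar\chi}$ become mutually inverse Morita equivalences $H_c\operatorname{-mod}\leftrightarrow H_{c+\bar\chi}\operatorname{-mod}$ by composing with the spherical Morita equivalences on both ends. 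Thus the desired conclusion holds on the Zariski-open set of $c$ such that both $c$ and $c+\bar\chi$ are spherical; alternatively, and more robustly, one considers the cones of the two adjunction maps $\mathcal{B}_{c+\bar\chi,-\bar\chi}\otimes_{H_{c+\bar\chi}}\mathcal{B}_{c,\bar\chi}\to H_c$ and $\mathcal{B}_{c,\bar\chi}\otimes_{H_c}\mathcal{B}_{c+\bar\chi,-\bar\chi}\to H_{c+\bar\chi}$, which are specializations of Harish-Chandra $H_{\param}$-bimodules with $\psi=0$, apply Lemma~\ref{Lem:HC_supp}(2) to get constructible vanishing loci, note the maps are isomorphisms at generic $c$ (via the spherical argument above), and take the intersection of the two Zariski-open complements.

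The main obstacle is not any single hard estimate but rather bookkeeping: one must check that the relevant cokernels/cones really are Harish-Chandra $H_{\param}$-bimodules (so that Lemma~\ref{Lem:HC_supp} applies), which uses the Serre-subcategory property of $\HC$ together with Proposition~\ref{Prop_HC_prop}, and that they vanish at genuinely Zariski-generic $c$ rather than merely outside a countable union of hyperplanes. For the latter it is cleanest to invoke that $H_{\param}e$ and $eH_{\param}e$ are flat over $\C[\param]$ with $\gr$ the commutative Satake picture, so that genericity of the Morita property can be read off from the generic fiber; the constructibility in Lemma~\ref{Lem:HC_supp}(2) then upgrades ``generic'' to ``Zariski-open''.
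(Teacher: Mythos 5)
Your part (1) is essentially the paper's argument: the cokernel of $H_{\param}e\otimes_{eH_{\param}e}eH_{\param}\to H_{\param}$ is exactly $H_{\param}/H_{\param}eH_{\param}$, which is the HC bimodule the paper feeds into Lemma~\ref{Lem:HC_supp}(2); the generic vanishing is justified in the paper by the simplicity of $H_c$ for Weil generic $c$, which you invoke only implicitly (``it is known''), but this is acceptable. The second multiplication map $eH_c\otimes_{H_c}H_ce\to eH_ce$ is automatically an isomorphism, so that half of your argument is vacuous but harmless.

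Part (2) contains a genuine gap in the input you need for the constructibility argument, namely the claim that the adjunction maps are isomorphisms at (Weil) generic $c$. You assert that for $c$ and $c+\bar\chi$ spherical, $eH_{c+\bar\chi}e_\chi$ is ``via $\varphi$, the identity bimodule of $e_\chi H_{c+\bar\chi}e_\chi$.'' This is false: $\varphi$ is an algebra isomorphism $eH_ce\xrightarrow{\sim}e_\chi H_{c+\bar\chi}e_\chi$, and it is used only to make $eH_{c+\bar\chi}e_\chi$ a right $eH_ce$-module; the bimodule itself remains the corner bimodule $eAe_\chi$ for $A=H_{c+\bar\chi}$ joining the two \emph{different} corner algebras $eAe$ and $e_\chi Ae_\chi$, not a regular bimodule. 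For $eAe_\chi$ to be a Morita equivalence bimodule one needs \emph{both} idempotents $e$ and $e_\chi$ to be full in $A$; sphericity of $c$ and $c+\bar\chi$ only gives fullness of $e$, and fullness of $e_\chi$ genuinely fails on a different (nonempty, for suitable $W$) locus. Since your ``more robust'' alternative for (2) --- which otherwise coincides with the paper's proof --- cites ``the spherical argument above'' as the reason the maps are generically isomorphisms, the gap propagates. The fix is either to add the Zariski-open condition that $e_\chi$ be full (the failure locus is the support of the HC bimodule $H_{\param}/H_{\param}e_\chi H_{\param}$, so the same constructibility argument applies), or to argue as the paper does: the maps $\B_{\param,-\bar\chi}\otimes_{H_\param}\B_{\param,\bar\chi}\to H_\param$ and $\B_{\param,\bar\chi}\otimes_{H_\param}\B_{\param,-\bar\chi}\to H_\param$ become isomorphisms after inverting $\delta$ (by the explicit description $e\B_{c,\bar\chi}e[\delta^{-1}]\cong D(\h^{reg})^{W,\chi}$), and simplicity of $H_c$ for Weil generic $c$ then forces them to be isomorphisms before localization.
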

\begin{proof}
The algebra $H_c$ is simple for a Weil generic $c$, see, e.g., \cite[Section 4.2]{sraco}, so such $c$ is
spherical. (1) follows from (2) of Lemma \ref{Lem:HC_supp} applied to the bimodule
$H_{\param}/H_{\param}eH_{\param}$.

Let us proceed to (2). Note that we have  natural homomorphisms
$$\B_{\param,-\bar{\chi}}\otimes_{H_\param}\B_{\param,\bar{\chi}}\rightarrow H_{\param}, \B_{\param,\bar{\chi}}\otimes_{H_{\param}}\B_{\param,-\bar{\chi}}\rightarrow H_{\param}.$$
Their specializations to Weil generic $c$ are isomorphisms because  they are always isomorphisms
after inverting $\delta$ and $H_c$ is simple. So they are also isomorphisms for a  Zariski
generic $c$.
\end{proof}

%
%
\begin{Rem}\label{Rem:supp_O}
It is easy to see that a direct analog of Lemma \ref{Lem:HC_supp} holds for the category $\mathcal{O}_{\param^1}$.
\end{Rem}

\subsection{Restriction functors for HC bimodules: construction}\label{SS_dag_constr}
Pick a parabolic subgroup  $W'\subset W$. Set $\Xi:=N_W(W')/W'$. Let $\h_{W'}$ denote the unique $W'$-stable complement to $\h^{W'}$, the spaces $\h_{W'},\h^{W'}$ are $N_W(W')$-stable. Form the algebra $H_{\param}(W')$ for
the $W'$-action on $\h_{W'}$. By definition $H_{\param}(W'):=\C[\param]\otimes_{\C[\param^1]}H_{\param^1}(W')$,
where $\param^1$ is the parameter space for $W'$, it comes with a natural linear map $\param\rightarrow \param^1$
induced by the restriction from $S$ to $S\cap W'$. We consider the category of $\Xi$-equivariant $H_{\param}(W')$-modules,
by definition, it consists of the HC $H_{\param}(W')$-bimodules $N$ equipped with a $N_W(W')$-action that
\begin{itemize}
\item restricts to the adjoint $W'$-action,
\item and makes the structure map $H_{\param}(W')\otimes N\otimes H_{\param}(W')\rightarrow N$
equivariant for the $N_W(W')$-action.
\end{itemize}
We denote the category of $\Xi$-equivariant HC $H_{\param}(W')$-bimodules by $\HC^\Xi(H_{\param}(W'))$.

In \cite[Section 3.6]{sraco}, we have introduced a functor $\bullet_{\dagger,W'}: \HC(H_{\param}(W),\psi)\rightarrow
\HC^\Xi(H_{\param}(W'),\psi)$. Here we are going to explain a construction of this functor that is  equivalent to
but simpler than the one given in \cite{sraco}.

Set $Y:=\{b\in \h| W_b=W'\}, \h^{reg-W'}=\{b\in \h| W_b\subset W'\}$ so that $Y\subset \h^{reg-W'}/W'$
is closed and $\h^{reg-W'}\subset \h$ is a principal open subset. Set $$H_{\param}^{\wedge_Y}:=\C[\h^{reg-W'}/W']^{\wedge_Y}\otimes_{\C[\h/W]}H_{\param},$$
where $\C[\h^{reg-W'}/W']^{\wedge_Y}$ is the usual completion along a closed  subvariety,
note that this algebra is \'{e}tale over $\C[\h/W]$.
The space $H_{\param}^{\wedge_Y}$ is easily seen to be an algebra and this algebra is
filtered. Moreover, the group $\Xi$ acts on $H_{\param}^{\wedge_Y}$ by filtration preserving algebra
isomorphisms. So we can introduce a notion of a $\Xi$-equivariant HC $H_{\param}^{\wedge_Y}$-bimodule:
it is a $\Xi$-equivariant $H_{\param}^{\wedge_Y}$-bimodule $M'$ equipped with a filtration
such that $\gr M'$ is a finitely generated $\C[\h^{reg-W'}/W']^{\wedge_Y}\otimes_{\C[\h/W]}Z_{\param}(W)$-module.
We have a functor $\mathcal{F}': \HC(H_{\param},\psi)\rightarrow \HC^\Xi(H_{\param}^{\wedge_Y},\psi)$
given by $M\mapsto \C[\h^{reg-W'}/W']^{\wedge_Y}\otimes_{\C[\h/W]}M$. That the space 
$\mathcal{F}'(M)$ is an $H_{\param}^{\wedge_Y}$-bimodule is checked similarly to
\cite[Section 3.6]{sraco}. 

On the other hand, we can form the algebra $H_{\param}(W',\h)^{\wedge_Y}:= \C[\h^{reg-W'}/W']^{\wedge_Y}\otimes_{\C[\h/W']} H_{\param}(W',\h)$. The algebra $H_{\param}(W',\h)^{\wedge_Y}$ is  filtered. Further,  form the  centralizer
algebra $Z(W,W', H_{\param}(W',\h)^{\wedge_Y})$ from \cite[Section 3.2]{BE}. The group $\Xi$ acts on $Z(W,W', H_{\param}(W',\h)^{\wedge_Y})$ as explained in \cite[Section 2.3]{sraco}. 
Then, similarly to \cite[Section 3.2]{BE}, there is a filtration preserving isomorphism  \begin{equation}\label{eq:compl_iso}H_\param^{\wedge_Y}\cong Z(W,W', H_{\param}(W',\h)^{\wedge_Y})\end{equation} that
coincides with a natural isomorphism $$\C[\h^{reg-W'}/W']^{\wedge_Y}\otimes_{\C[\h/W]}(\C[\h]\#W)\cong Z(W,W', \C[\h^{reg-W'}]^{\wedge_Y}\#W')$$ on the filtration zero components, similarly to \cite[Section 3.3]{BE}. The isomorphism (\ref{eq:compl_iso}) is $\C[\param]$-linear and $\Xi$-equivariant. It is induced by an isomorphism from \cite[Section 2.13]{sraco} by passing to $\C^\times$-finite elements and then taking the quotient by $\hbar-1$, compare with \cite[Section 2.3]{fun_iso}. Note that the isomorphism we use does not need to be given by formulas in \cite[Section 3.3]{BE}.

The isomorphism (\ref{eq:compl_iso})
gives rise to an equivalence $$\HC^\Xi(H_{\param}^{\wedge_Y},\psi)\xrightarrow{\sim}
\HC^\Xi(H_{\param}(W',\h)^{\wedge_Y},\psi)$$ given by the push-forward under (\ref{eq:compl_iso}) followed by the
multiplication by a suitable $\Xi$-invariant primitive idempotent $e(W')\in Z(W,W',\C W')$, compare with Section \ref{SS_ind_res_O}. Let $\mathcal{F}:\HC(H_{\param},\psi)\xrightarrow{\sim} \HC^\Xi(H_{\param}(W',\h)^{\wedge_Y},\psi)$ be the resulting functor, it is exact.

On the other hand, we have a functor $\mathcal{G}: \HC^\Xi(H_{\param}(W'),\psi)\rightarrow
\HC^\Xi(H_{\param}(W',\h)^{\wedge_Y},\psi)$ given by $N\mapsto \C[Y\times \h_{W'}/W']^{\wedge_Y}\otimes_{\C[Y\times \h_{W'}/W']}(D(Y)\otimes N)$.

\begin{Lem}\label{Lem:restr_fun_defi}
The functor $\mathcal{G}$ is a full embedding whose image contains that of $\mathcal{F}$.
The functor $\mathcal{G}^{-1}\circ \mathcal{F}: \HC(H_{\param},\psi)\rightarrow \HC^\Xi(H_{\param}(W'),\psi)$
  coincides with the functor $\bullet_{\dagger}$ from \cite{sraco}.  
\end{Lem}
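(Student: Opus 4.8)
\textbf{Proof proposal for Lemma \ref{Lem:restr_fun_defi}.}

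The plan is to analyze the three functors $\mathcal{F}'$, $\mathcal{F}$, $\mathcal{G}$ in turn and identify their essential images, reducing everything to a statement about completed algebras and idempotents. First I would unwind the definition of $\mathcal{G}$: the algebra $H_{\param}(W',\h)^{\wedge_Y}$ is, by construction, obtained from $D(Y)\otimes H_{\param}(W')$ (roughly, $H_{\param}(W',\h) = H_{\param}(W')\otimes D(\h^{W'})$, and restricting the base to the formal neighborhood of $Y\subset \h^{reg-W'}/W' \cong Y\times(\h_{W'}/W')$ turns $D(\h^{W'})$ into $D(Y)^{\wedge_Y}$). Thus $\mathcal{G}(N)$ is the extension of $D(Y)\otimes N$ along the étale-and-then-completed map; since tensoring with the Weyl algebra $D(Y)$ is a Morita equivalence and completion along $Y$ is faithfully flat on the relevant modules, $\mathcal{G}$ is a full embedding. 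Its essential image consists exactly of those $\Xi$-equivariant HC $H_{\param}(W',\h)^{\wedge_Y}$-bimodules that are, in the $D(Y)$-direction, ``induced'' from $Y$ — equivalently, those on which the $D(Y)$-adjoint action is suitably locally finite / that are complete and flat in the $\C[Y]^{\wedge_Y}$-direction. This is the same bookkeeping as in \cite[Section 3.6]{sraco}.

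Next I would observe that the image of $\mathcal{F}$ lies in the image of $\mathcal{G}$. The functor $\mathcal{F}$ is the composite of $\mathcal{F}'$ with the equivalence induced by the isomorphism (\ref{eq:compl_iso}) and multiplication by the idempotent $e(W')$. For $M\in\HC(H_{\param},\psi)$, the bimodule $\mathcal{F}'(M) = \C[\h^{reg-W'}/W']^{\wedge_Y}\otimes_{\C[\h/W]}M$ is, by construction, extended from the base $\C[\h/W]$ along the map factoring through $\C[Y\times\h_{W'}/W']^{\wedge_Y}$; after transporting through (\ref{eq:compl_iso}) and cutting by $e(W')$, the $\h^{W'}$-directions (now the $D(Y)$-directions of $H_{\param}(W',\h)^{\wedge_Y}$) are visibly obtained by base change from $Y$, because the isomorphism (\ref{eq:compl_iso}) restricts on filtration-degree-zero components to the natural isomorphism displayed after (\ref{eq:compl_iso}), which is $\C[\h^{reg-W'}/W']^{\wedge_Y}$-linear in the obvious way. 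Hence $\mathcal{F}(M)$ satisfies the membership condition for $\operatorname{im}\mathcal{G}$ identified in the previous step, so $\mathcal{G}^{-1}\circ\mathcal{F}$ is well-defined.

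Finally I would identify $\mathcal{G}^{-1}\circ\mathcal{F}$ with the functor $\bullet_{\dagger}$ of \cite[Section 3.6]{sraco}. The cleanest route is to check that this construction satisfies the characterizing properties of $\bullet_{\dagger}$ established in \cite{sraco}: $\C[\param]$-linearity and exactness (both clear, since $\mathcal{F}$ is exact and $\mathcal{G}^{-1}$ is an inverse equivalence), compatibility with the associated-graded/support picture on $(\h\oplus\h^*)/W$, and — the decisive point — agreement on the level of the defining completions, i.e. that $(\mathcal{G}^{-1}\circ\mathcal{F})(M)$ recovers the same $\Xi$-equivariant $H_{\param}(W')$-bimodule that \cite[Section 3.6]{sraco} extracts from $M^{\wedge_Y}$ via $\theta$ and the $\C^\times$-finiteness (Euler-finiteness) condition. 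Since both our isomorphism (\ref{eq:compl_iso}) and the one used in \cite{sraco} agree on filtration-zero components with the canonical isomorphism on $\C[\h^{reg-W'}/W']^{\wedge_Y}\otimes(\C[\h]\#W)$, and HC bimodules are determined by their good filtrations together with this completed data, the two functors coincide. The main obstacle I anticipate is precisely this last comparison: two a priori different isomorphisms of the completed algebras are used in the two constructions, and one must argue that the resulting functors are nonetheless equal — this is handled, as the excerpt notes, by the remark that ``the isomorphism we use does not need to be given by formulas in \cite[Section 3.3]{BE},'' i.e. any filtered, $\C[\param]$-linear, $\Xi$-equivariant lift of the degree-zero isomorphism produces the same functor, because the discrepancy between two such lifts is an inner automorphism that is absorbed when one passes to isomorphism classes of bimodules.
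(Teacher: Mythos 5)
Your treatment of the two halves of the lemma diverges from the paper's in ways that matter, and the decisive computation is missing.

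For the claim that $\mathcal{G}$ is a full embedding, the paper does not invoke Morita equivalence plus faithful flatness of completion; it constructs an explicit left inverse: take the centralizer of $D(Y)$ inside $\mathcal{G}(N)$ (which recovers $\C[\h_{W'}/W']^{\wedge_0}\otimes_{\C[\h_{W'}/W']}N$) and then pass to the vectors locally finite for the Euler element $h'$ of $H_{\param}(W')$, which recovers $N$. Your version gestures at the right idea but never explains how to undo the completion; ``faithfully flat on the relevant modules'' is exactly the point that needs an argument, and the Euler-finiteness step is what supplies it.

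The more serious gap is in the identification of $\mathcal{G}^{-1}\circ\mathcal{F}$ with $\bullet_{\dagger,W'}$. You propose to verify that your construction satisfies ``the characterizing properties'' of $\bullet_\dagger$, but no such axiomatic characterization is available: $\bullet_\dagger$ in \cite{sraco} is defined by an explicit procedure (Rees bimodule, completion along the symplectic leaf $\Leaf_{W'}\subset(\h\oplus\h^*)/W$, untwisting, $e(W')$, centralizer of the Weyl algebra, $\C^\times$-finite part, $\hbar=1$), and the properties listed in Section \ref{SS_dag_prop} do not pin it down uniquely. The actual content of the comparison is that $\mathcal{F}$ completes only in the $\h$-direction (along $Y\subset\h/W$) whereas \cite{sraco} completes along the leaf in $(\h\oplus\h^*)/W$; reconciling these requires showing that $R_\hbar(\mathcal{F}(M))$ equals the $\C^\times$-finite part of the sections of the lift of $M_\hbar^{\wedge_{\Leaf_{W'}}}$ over $Y\times\h^{*W'}$, i.e.\ that the $\C^\times$-finite part of $\C[\h^{reg-W'}/W']^{\wedge_Y}\widehat{\otimes}_{\C[\h/W]}M_\hbar^{\wedge_\hbar}$ is exactly $\C[\h^{reg-W'}/W']^{\wedge_Y}\otimes_{\C[\h/W]}M_\hbar$. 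This rests on the fact that $\gr M$ is finitely generated over $\C[\h]^W\otimes S(\h)^W\otimes\C[\param]$ and that $\h$ and $\param$ both have filtration degree $1$, so each graded component of $M_\hbar$ is finitely generated over $\C[\h]^W$ alone. None of this appears in your proposal. Your closing remark about two lifts of the degree-zero isomorphism differing by something harmless is a fair observation, but it addresses a side issue rather than the comparison of the two completions, which is where the proof actually lives. Incidentally, this same computation is also what establishes the first half of your second paragraph (that $\operatorname{im}\mathcal{F}\subset\operatorname{im}\mathcal{G}$), which you currently assert as ``visible.''
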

\begin{proof}
Let us show that the functor $\mathcal{G}$ is a full embedding by producing a left inverse functor.
First, take the centralizer of $D(Y)$ in $\mathcal{G}(N)$. The result is $\C[\h_{W'}/W']^{\wedge_0}\otimes_{\C[\h_{W'}/W']}N$. Then take the elements that are locally
finite for the Euler element $h'\in H_{\param}(W')$. The resulting bimodule is $N$. So we have
constructed a left inverse functor for $\mathcal{G}$.

The remaining two claims are proved simultaneously. In \cite[Section 3.6]{sraco} the functor $\bullet_{\dagger}$
was constructed as follows. Pick $M\in \HC(H_{\param},\psi)$, equip it with a good filtration
and form the Rees bimodule $M_\hbar$. Then we complete the bimodule $M_{\hbar}$ with respect
to the symplectic leaf $\Leaf_{W'}$ corresponding to $W'$, this leaf is given by $\{(x,y)\in \h\oplus \h^*| W_{(x,y)}=W'\}/\Xi$. The corresponding completion $R_\hbar(H_{\param})^{\wedge_{\Leaf_{W'}}}$
was shown in \cite[Section 2]{sraco} to be isomorphic to a {\it twist} of
$$Z\left(W,W', R_\hbar(H_{\param}(W',\h))^{\wedge_{\tilde{\Leaf}_{W'}}}\right)^{\Xi},$$
where we write $\tilde{\Leaf}_{W'}$ for $\{(x,y)\in \h\oplus \h^*| W_{(x,y)}=W'\}$.
By a twist we mean a sheaf whose sections on open affine subsets are the same but gluing
maps are different. In order to construct $\bullet_{\dagger,W'}$, we first lift $M_\hbar^{\wedge_{\Leaf_{W'}}}$
(viewed as a sheaf on $\Leaf_{W'}$) to a sheaf on $\tilde{\Leaf}_{W'}$, the resulting sheaf is given by
$$Z\left(W,W', R_\hbar(H_{\param}(W',\h))^{\wedge_{\tilde{\Leaf}_{W'}}}\right)^{tw}\otimes_{Z\left(W,W', R_\hbar(H_{\param}(W',\h))^{\wedge_{\tilde{\Leaf}_{W'}}}\right)^{tw,\Xi}}M_\hbar^{\wedge_{\Leaf_{W'}}}.$$
 Second, we untwist the lift getting a bimodule over
the sheaf $Z\left(W,W', R_\hbar(H_{\param}(W',\h)\right)^{\wedge_{\tilde{\Leaf}_{W'}}})$. Applying $e(W')$,
we get a  $R_\hbar(H_{\param}(W',\h))^{\wedge_{\tilde{\Leaf}_{W'}}}$-bimodule, say $\mathcal{N}_\hbar$. Next, we
show that the localization of the homogenized Weyl algebra of $(\h\oplus \h^*)^{W'}$
to $\tilde{\Leaf}_{W'}$ splits as a tensor factor of $\mathcal{N}_\hbar$. We take the centralizer of this localization
in $\mathcal{N}_\hbar$ getting a $R_\hbar(H_{\param}(W'))^{\wedge_0}$-bimodule $\mathcal{N}_\hbar^0$, take the finite vectors
(for the $\C^\times$-action induced by the dilations on $\h\oplus \h^*$)
in $\mathcal{N}_\hbar^0$ and mod out $\hbar-1$. The resulting bimodule is $\M_{\dagger,W'}$.

Thanks to that construction, what we need to show is that   $R_\hbar(\mathcal{F}(M))$ coincides  with
the $\C^\times$-finite part of  the global sections of the lift of $M_\hbar^{\wedge_{\Leaf_{W'}}}$  to the open subset
$Y\times \h^{*W'}\subset\tilde{\Leaf}_{W'}$ (here we consider the $\C^\times$-action that is trivial on $\h^*$
and by the dilations on $\h$). The global sections of interest is nothing else but
$\C[\h^{reg-W'}/W']^{\wedge_Y}\widehat{\otimes}_{\C[\h/W]}M_{\hbar}^{\wedge_\hbar}$ and we need to show that the
$\C^\times$-finite part coincides with $\C[\h^{reg-W'}/W']^{\wedge_Y}\otimes_{\C[\h/W]}M_{\hbar}$.
Recall that $\gr M$ is finitely generated as a $\C[\h]^W\otimes S(\h)^W\otimes \C[\param]$-module. Since both $\h$ and $\param$ have degree $1$, it follows that
the degree $n$ part in $M_\hbar^{\wedge_\hbar}$ is finitely generated over $\C[\h]^W$. The coincidence we need easily follows.
\end{proof}


\subsection{Restriction functors for HC bimodules: properties}\label{SS_dag_prop}
Let us quote some properties of the functor $\bullet_{\dagger,W'}$  established mostly in \cite{sraco}.

1)  The functor $\bullet_{\dagger,W'}$ is exact and $\C[\param]$-linear. This follows directly from the definition.

2) The functor $\bullet_{\dagger,W'}$ intertwines the tensor product functors.

3)  It is known (and easy to show) that the associated variety of a HC bimodule is
a union of the symplectic leaves in $(\h\oplus \h^*)/W$.  On the level of associated
varieties the functor $\bullet_{\dagger,W'}$ behaves as follows. Let the associated variety of
$M\in \HC(H_{\param},\psi)$ be the union of the leaves $\Leaf_{W_i}$ corresponding to the conjugacy classes of the parabolic subgroups
$W_1,\ldots,W_k$. Then the associated variety of $M_{\dagger,W'}$ is the union of the leaves
corresponding to all parabolic subgroups in $W'$ conjugate to one of $W_i$. This is established in
\cite[Proposition 3.6.5]{sraco}.

4) For a HC bimodule $M\in \HC(H_c,\psi)$, we can define its generic rank to be the generic rank of
$e\gr M$, where the associated graded is taken with respect to any good filtration.
We claim that $\bullet_{\dagger,W'}$ preserves the generic ranks (or sends a bimodule
to $0$). Indeed, the completion of $e'\gr(M_{\dagger,W'})$ at $0$ coincides with
the restriction of $e\gr(M)$ to a formal slice to $\mathcal{L}_{W'}$.  This coincidence (that 
follows from the construction of $\bullet_{\dagger,W'}$ above) gives the equality of generic ranks.

5) We can define the functor $\bullet_{\dagger,W'}$ for the HC bimodules over the spherical algebras
because (\ref{eq:compl_iso}) induces an isomorphism $eH_\param^{\wedge_Y}e\cong \left(e_{W'}H_{\param}(W',\h))^{\wedge_Y}e_{W'}\right)^{tw}$.
It can easily be seen from the definition that $(e\B e)_{\dagger,W'}\cong e_{W'}(\B_{\dagger,W'})e_{W'}$,
where $e_{W'}$ denotes the averaging idempotent in $\C W'$.

6) Consider the categories $\HC_{\partial \Leaf_{W'}}(H_{\param},\psi)\subset\HC_{\overline{\Leaf}_{W'}}(H_{\param},\psi)$
consisting of all HC bimodules $\B$ with $\VA(\B)$ contained in the boundary of $\Leaf_{W'}$ and in
the closure of $\Leaf_{W'}$, respectively. Let $\HC_{\Leaf_{W'}}(H_{\param},\psi)$ denote the quotient
category. Then $\bullet_{\dagger,W'}$ gives rise to a well-defined functor $\bullet_{\dagger,W'}:
\HC_{\Leaf_{W'}}(H_{\param},\psi)\rightarrow \HC_0^{\Xi}(H_{\param}(W'),\psi)$ that is a full embedding
with image closed under taking subquotients. This is a part of \cite[Theorem 3.4.5]{sraco}.

7) The functor $$\bullet_{\dagger,W'}: \HC_{\overline{\Leaf}_{W'}}(H_{\param},\psi)
\rightarrow \HC^\Xi_{0}(H_{\param}(W'),\psi)$$ admits a right adjoint  to be denoted
by $\bullet^{\dagger,W'}$. Both kernel and cokernel of the adjunction functor morphism $M\rightarrow
(M_{\dagger,W'})^{\dagger,W'}$ are supported on $\partial \Leaf_{W'}$. This is also a part of
\cite[Theorem 3.4.5]{sraco}.

8) We have a natural isomorphism  $\B_{\dagger,W'}\otimes_{H_{\param}(W')}\Res_W^{W'}(M)\xrightarrow{\sim}
\Res_{W}^{W'}(\B\otimes_{H_{\param}}M)$ for $\B\in \HC(H_{\param},\psi), M\in \OCat_{\param}$.
This is an easy consequence of \cite[Section 5.5]{sraco}.

\subsection{Tor's and Ext's}\label{SS_Tor_Ext}
Here we will investigate various Tor's and Ext's involving HC bimodules.

\begin{Prop}\label{Prop:HC_Ext}
Let $\mathcal{B}_1,\mathcal{B}_2$ be  HC $H_{\param}$-bimodules and $N\in \OCat_{\param}$. Then $\operatorname{Ext}^i_{H_{\param}}(\mathcal{B}_1,\mathcal{B}_2)$ and $
\operatorname{Tor}_i^{H_\param}(\mathcal{B}_1,\mathcal{B}_2)$ are HC bimodules, while $\operatorname{Ext}^i_{H_{\param}}(\mathcal{B}_1,N), \operatorname{Tor}_i^{H_\param}(\mathcal{B}_1,N)$
are in $\OCat_{\param}$.
\end{Prop}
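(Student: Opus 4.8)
The strategy is to compute these Tor's and Ext's by means of explicit resolutions that are compatible with the Harish-Chandra structure, and then to show that the relevant local nilpotency conditions are inherited. The basic principle is that each of $S(\h^*)^W$ and $S(\h)^W$ acts on an $H_\param$-bimodule $\B$ from two sides (the left copy sits inside the left $H_\param$, the right copy inside the right $H_\param$), and the Harish-Chandra condition says that the \emph{difference} of the two actions is locally nilpotent; this difference is what one must track through the resolution. First I would fix on one of the two alternative definitions of a HC bimodule recalled above — the filtered one from \cite{sraco} is the most convenient here, since Harish-Chandra-ness becomes the statement that $\gr\B$ is finitely generated over $\gr H_\param$ with the two actions of the center $Z_\param$ agreeing. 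I would also reduce to a single statement: once one knows that $\operatorname{Tor}$ and $\operatorname{Ext}$ of two HC bimodules are HC, the assertion for $N\in\OCat_\param$ follows by the same bookkeeping, replacing the finite-generation-over-$\gr H_\param$ condition by finite generation over $S(\h^*)$ together with local nilpotency of $\h$, exactly as in the proof of part (3) of Proposition \ref{Prop_HC_prop}.

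The key steps, in order, are as follows. (1) Equip $\B_1$ with a good filtration and take a resolution $P_\bullet\to \B_1$ by free $H_\param$-bimodules (equivalently free left $H_\param\otimes H_\param^{opp}$-modules) that is filtered, i.e. induced from a free resolution of $\gr\B_1$ over $\gr(H_\param\otimes H_\param^{opp})=\gr H_\param\otimes(\gr H_\param)^{opp}$; such a resolution exists because $\gr H_\param$ is Noetherian and finite over its center. (2) Compute $\operatorname{Tor}^{H_\param}_i(\B_1,\B_2)$ (resp. $\operatorname{Ext}^i$) as the homology of $P_\bullet\otimes_{H_\param}\B_2$ (resp. $\operatorname{Hom}_{H_\param}(P_\bullet,\B_2)$), with the outstanding $H_\param$-bimodule structure coming from the second (untouched) tensor/Hom slot together with the bimodule structure on $\B_2$. (3) Observe that the complex $P_\bullet\otimes_{H_\param}\B_2$ carries a filtration whose associated graded is $\gr P_\bullet\otimes_{\gr H_\param}\gr\B_2$ (using flatness coming from freeness of $P_\bullet$), so that $\gr\operatorname{Tor}^{H_\param}_i(\B_1,\B_2)$ is a subquotient of $\operatorname{Tor}^{\gr H_\param}_i(\gr\B_1,\gr\B_2)$; since $\gr H_\param$ is a finitely generated module over the commutative Noetherian ring $Z_\param$ and $\gr\B_1,\gr\B_2$ are finitely generated over $\gr H_\param$, the Tor (and similarly the Ext, using that $\gr H_\param$ is Noetherian on both sides) is again finitely generated over $\gr H_\param$. (4) Check that the two actions of $Z_\param$ on the resulting bimodule coincide: on $\gr\operatorname{Tor}^{\gr H_\param}_i(\gr\B_1,\gr\B_2)$ the left $Z_\param$-action from the free resolution side and the right one from $\gr\B_2$ agree because they agree already on $\gr\B_1$ and on $\gr\B_2$ and $Z_\param$ is central, and this descends to the subquotient. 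This establishes that the output is HC in the filtered sense. For the $\OCat_\param$ assertion, in step (3) one instead filters by $\h^*$-degree to see $\gr$ of the answer is finitely generated over $S(\h^*)$, and the local nilpotency of $\h$ follows because $\operatorname{ad}(\h)$ acts locally nilpotently on $P_\bullet$ (inherited from $H_\param$) and $\h$ acts locally nilpotently on $N$.

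The main obstacle I anticipate is step (3)–(4): making the passage to associated graded genuinely compatible with the two-sided module structure, so that one really controls $\gr$ of the Tor/Ext complex rather than merely each term. The subtlety is that a free resolution of $\gr\B_1$ over $\gr H_\param\otimes(\gr H_\param)^{opp}$ need not be the associated graded of a free resolution of $\B_1$ over $H_\param\otimes H_\param^{opp}$ on the nose — one must choose the filtered resolution carefully (degreewise lifting generators of $\gr\B_1$ and of the successive syzygies, which is possible by Noetherianity) and then argue that the spectral sequence of the filtered complex $P_\bullet\otimes_{H_\param}\B_2$ degenerates enough to give the subquotient bound on $\gr\operatorname{Tor}_i$. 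I expect this to go through by a now-standard filtered-ring argument (the same mechanism as in the proof of Lemma \ref{Lem:D_perv} above, where $\gr\operatorname{Ext}^i_{H_c}(M,H_c)\subset\operatorname{Ext}^i_{\gr H_c}(\gr M,\gr H_c)$ is obtained in exactly this way), but it is where all the real content sits; once the subquotient bound is in place, HC-ness of the output is essentially formal.
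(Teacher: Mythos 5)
Your proposal is correct and follows essentially the same route as the paper's proof: the paper works with Rees algebras and Rees bimodules rather than filtered free resolutions, but the content is identical --- finite generation of the Ext/Tor over the Rees algebra together with the exact sequence in multiplication by $\hbar$ give exactly your subquotient bound $\gr\Ext^i_{H_\param}(\B_1,\B_2)\subset \Ext^i_{\gr H_\param}(\gr\B_1,\gr\B_2)$, from which the Harish-Chandra property follows. (One phrasing slip at the end: $\operatorname{ad}(\h)$ is \emph{not} locally nilpotent on $H_\param$ itself, so it is not ``inherited'' by $P_\bullet$; for the $\OCat_\param$ statements the local nilpotency of $\h$ should be derived, as you indicate earlier in the paragraph, exactly as in part (3) of Proposition \ref{Prop_HC_prop}, from local nilpotency of $\operatorname{ad}(S(\h)^W)$ on $\B_1$ and of $S(\h)^W_+$ on $N$.)
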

\begin{proof}
The proofs of the claims involving HC bimodules  are similar, we will do the case of $\operatorname{Ext}^i_{H_{\param}}(\mathcal{B}_1,\mathcal{B}_2)$.

Let us equip $\mathcal{B}_1,\mathcal{B}_2$ with good filtrations. Let $H_{\param,\hbar}$
denote the Rees algebra, and let $\mathcal{B}_{1,\hbar},\mathcal{B}_{2,\hbar}$ be the Rees
bimodules. Then $\Ext^i_{H_{\param,\hbar}}(\mathcal{B}_{1,\hbar},\mathcal{B}_{2,\hbar})$
is a finitely generated graded $H_{\param,\hbar}$-bimodule. Moreover,
$\Ext^i_{H_{\param,\hbar}}(\mathcal{B}_{1,\hbar},\mathcal{B}_{2,\hbar})/(\hbar-1)=
\Ext^i_{H_\param}(\mathcal{B}_1,\mathcal{B}_2)$. So it remains to prove that,
for an element $a\in H_{\param,\hbar}$ that lies in $Z_{\param}$ modulo $\hbar$,
the operator $[a,\bullet]$ maps $\Ext^i_{H_{\param,\hbar}}(\mathcal{B}_{1,\hbar},\mathcal{B}_{2,\hbar})$
to $\hbar\Ext^i_{H_{\param,\hbar}}(\mathcal{B}_{1,\hbar},\mathcal{B}_{2,\hbar})$.
We have an exact sequence $$\Ext^i_{H_{\param,\hbar}}(\mathcal{B}_{1,\hbar},\mathcal{B}_{2,\hbar})\xrightarrow{\hbar}
\Ext^i_{H_{\param,\hbar}}(\mathcal{B}_{1,\hbar},\mathcal{B}_{2,\hbar})\rightarrow
\Ext^i_{H_{\param,\hbar}}(\mathcal{B}_{1,\hbar},\mathcal{B}_{2,\hbar}/(\hbar))$$
The last term coincides with $\Ext^i_{\gr H_{\param}}(\mathcal{B}_{1,\hbar}/(\hbar), \mathcal{B}_{2,\hbar}/(\hbar))$ so the operator $[a,\bullet]$ acts trivially on that term.
This implies our claim.

The proofs of the claims involving category $\mathcal{O}$ are similar and are based on the observation that
the objects of the category $\OCat_{\param}$ are precisely the graded modules $M$
whose associated varieties (in $(\h\oplus \h^*)/W$) lie in $\h/W$.
\end{proof}

Proposition \ref{Prop:HC_Ext} immediately extends to HC bimodules in $\HC(H_{\param^1},\psi)$,
and Tor's/Ext's taken over $H_{\param^1}$ or $H_{\param^1+\psi}$.

Now let us investigate derived tensor products of Harish-Chandra bimodules with a projective generator $P_c$
of $\OCat_c$.

\begin{Lem}\label{Lem:tens_HC_proj}
We have $\operatorname{Tor}_i^{H_c}(\mathcal{B},P_c)=0$ for $i>0$.
\end{Lem}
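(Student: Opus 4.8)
The statement to prove is that $\operatorname{Tor}_i^{H_c}(\mathcal{B}, P_c) = 0$ for $i > 0$, where $\mathcal{B}$ is a Harish-Chandra $H_{c'}$-$H_c$-bimodule and $P_c$ is a projective generator of $\OCat_c$. The plan is to exploit the concrete realization of a projective generator given in Lemma~\ref{Lem:proj_summand}: $P_c$ is a direct summand of $\varprojlim_n H_c/H_c\h^n$, so it suffices to control $\operatorname{Tor}_i^{H_c}(\mathcal{B}, H_c/H_c\h^n)$ uniformly in $n$ and then pass to the inverse limit. For the individual modules $H_c/H_c\h^n$ one has the Koszul-type resolution coming from the triangular decomposition: since $H_c = S(\h^*)\otimes \C W\otimes S(\h)$ as a vector space, $H_c$ is free as a right module over $S(\h)\#W$ (indeed over $S(\h)$), and $H_c/H_c\h^n \cong H_c\otimes_{S(\h)\#W}(\C W\otimes S(\h)/(\h^n))$. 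So I would resolve the finite-dimensional $S(\h)\#W$-module $\C W\otimes S(\h)/(\h^n)$ by a complex of free $S(\h)\#W$-modules (the Koszul complex on $\h$, tensored with $\C W$), induce up to $H_c$ to get a resolution of $H_c/H_c\h^n$ by sums of copies of $H_c$ — i.e., a free resolution of the right $H_c$-module $H_c/H_c\h^n$ of length $n'=\dim\h$. Then $\operatorname{Tor}_i^{H_c}(\mathcal{B}, H_c/H_c\h^n)$ is computed by applying $\mathcal{B}\otimes_{H_c}(-)$ to this complex, i.e. it is the homology of the Koszul-type complex $\mathcal{B}\otimes_{S(\h)\#W}(\C W\otimes S(\h)/(\h^n))$, which is just the Koszul homology of $\h$ acting on $\mathcal{B}/\mathcal{B}\h^n$ (with the right action).

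\textbf{Key steps.} First I would set up the free resolution of $H_c/H_c\h^n$ as above and identify $\operatorname{Tor}_i^{H_c}(\mathcal{B}, H_c/H_c\h^n)$ with the $i$-th Koszul homology $H_i(\h, \mathcal{B}/\mathcal{B}\h^n)$ for the right $\h$-action (using $[x,y']=0$ for $y'\in\h$ so that $\h$ generates a polynomial subalgebra acting on the right). Second, and this is the crucial input, I would use that $\mathcal{B}$ is Harish-Chandra: by Proposition~\ref{Prop_HC_prop}(1) $\mathcal{B}$ is finitely generated as a right $H_c$-module, hence (restricting further) finitely generated over $S(\h^*)$ on the left — but more to the point, the adjoint action of $S(\h)^W$ is locally nilpotent, and one shows the right action of $\h$ on $\mathcal{B}$ is ``locally finite / locally nilpotent modulo the left structure'' in the sense that makes $\mathcal{B}$ behave like a module in category $\OCat$ for the right structure. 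Concretely: $\mathcal{B}$, as a right $H_c$-module, lies in the right-handed category $\OCat^r_{c}$-type situation after a twist, or at least the right $\h$-action is such that $\varprojlim_n \mathcal{B}/\mathcal{B}\h^n = \mathcal{B}^{\wedge}$ with the higher derived inverse limits vanishing and with $\mathcal{B}$ flat enough over $S(\h)$. The cleanest route: $\mathcal{B}$ is free (or at least flat) as a right $S(\h)$-module — this follows from the good-filtration description in \cite{sraco}, where $\gr\mathcal{B}$ is finitely generated over $\gr H_c = S(\h\oplus\h^*)\#W$ with left and right $Z$-actions agreeing, forcing $\mathcal{B}$ to be torsion-free, hence flat, over the polynomial ring $S(\h)$ sitting on the right. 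Given flatness of $\mathcal{B}$ over right-$S(\h)$, the Koszul complex $\mathcal{B}\otimes_{S(\h)}\Lambda^\bullet\h$ computing $H_\bullet(\h,\mathcal{B}/\mathcal{B}\h^n)$ is acyclic in positive degrees because the Koszul complex of $S(\h)/(\h^n)$ over $S(\h)$ is, after tensoring with a flat module, still a resolution. Third, I would pass to the limit over $n$: since the transition maps are surjective and the relevant complexes are ML (Mittag-Leffler) — each term finitely generated over the Noetherian $\varprojlim H_c/H_c\h^n$ — $\varprojlim^1$ vanishes and $\operatorname{Tor}_i^{H_c}(\mathcal{B},\varprojlim_n H_c/H_c\h^n) = \varprojlim_n \operatorname{Tor}_i^{H_c}(\mathcal{B}, H_c/H_c\h^n) = 0$ for $i>0$. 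Finally, $P_c$ is a direct summand of this inverse limit, and Tor commutes with direct summands, giving $\operatorname{Tor}_i^{H_c}(\mathcal{B},P_c)=0$.

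\textbf{Main obstacle.} The delicate point is the interchange of $\operatorname{Tor}$ with the inverse limit $\varprojlim_n H_c/H_c\h^n$, since Tor does not commute with infinite inverse limits in general; one needs the finiteness/completeness coming from Harish-Chandra-ness (finite generation of $\mathcal{B}$ over $S(\h^*)^W\otimes S(\h)^W$ and the resulting Noetherianity of $\widehat{H_c} = \varprojlim H_c/H_c\h^n$ and Artin--Rees type control of the $\h$-adic filtration on $\mathcal{B}$) to justify the $\varprojlim^1 = 0$ statement and the commutation. An alternative that sidesteps this: avoid the limit entirely by using instead the explicit projective summand $\tilde\Delta(\lambda)$ of $\Delta_n(\lambda)$ constructed in Section~\ref{SS_cat_O} for $n\gg 0$, noting $P_c$ is a summand of $\bigoplus_\lambda \tilde\Delta(\lambda)$ which is a summand of $\bigoplus_\lambda \Delta_n(\lambda) = \bigoplus_\lambda H_c\otimes_{S(\h)\#W}(\lambda\otimes S(\h)/(\h^n))$; then one only needs $\operatorname{Tor}_i^{H_c}(\mathcal{B},\Delta_n(\lambda))=0$ for $i>0$, which by the same Koszul argument reduces to flatness of $\mathcal{B}$ over the right $S(\h)$ — no inverse limit needed. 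I would likely present this second version as the main argument and remark on the first. Either way, the real content is: \emph{a Harish-Chandra $H_{c'}$-$H_c$-bimodule is flat as a module over $S(\h)\subset H_c$ acting on the right}, which follows from the good-filtration definition in \cite{sraco} since $\gr\mathcal{B}$ is a finitely generated graded module over $S(\h\oplus\h^*)\#W$ on which the left and right $S(\h)^W$ agree, making $\mathcal{B}$ $S(\h)$-torsion-free, hence (over the polynomial ring, using that torsion-free finitely-generated-in-each-degree graded modules over a polynomial ring in the appropriate sense are flat after the relevant localization) flat.
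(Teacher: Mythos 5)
Your reduction to the explicit projectives is fine, but the claim you yourself isolate as ``the real content'' --- that a Harish--Chandra $H_{c'}$-$H_c$-bimodule is flat over the right copy of $S(\h)$ --- is false, and the argument collapses with it. Two independent problems. First, ``torsion-free $\Rightarrow$ flat'' already fails over a polynomial ring in more than one variable (the ideal $(x,y)\subset\C[x,y]$ is torsion-free but not flat), and no graded/good-filtration refinement repairs this. Second, and more decisively, HC bimodules need not even be torsion-free on the right: the HC bimodules form a category closed under subquotients and it contains bimodules with proper associated variety, e.g.\ finite-dimensional ones such as $H_c/I$ for an ideal $I$ of finite codimension (these exist whenever $H_c$ has finite-dimensional representations). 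For such a $\B$ the right action of $\h$ is nilpotent, so the top Koszul homology $\operatorname{Tor}^{S(\h)}_{\dim\h}(\B,\C)$ is the right $\h$-socle of $\B$, which is nonzero; since $H_c$ is free as a right $S(\h)\#W$-module, this gives $\operatorname{Tor}^{H_c}_{\dim\h}(\B,\Delta_c(\lambda))\neq 0$ for some $\lambda$. In particular the stronger vanishing you propose, $\operatorname{Tor}_i^{H_c}(\B,\Delta_n(\lambda))=0$ for $i>0$, fails already for $n=1$; had it held for every HC bimodule, $\B\otimes^L_{H_c}\bullet$ would be $t$-exact on all of $\OCat_c$, contradicting the fact that the paper's wall-crossing functors are genuinely perverse and not abelian equivalences. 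The vanishing in the lemma is special to projectives and cannot be obtained one Verma-like piece at a time.

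The paper's mechanism puts the flatness on the other side. One works with $H_c^{\wedge_\h}=\varprojlim_n H_c/H_c\h^n$ (of which $P_c$ is a summand) and shows that $\bullet\otimes_{H_c}H_c^{\wedge_\h}$, restricted to the category $\mathcal{C}$ of finitely generated $S(\h^*)^W$-$H_c$-bimodules with locally nilpotent adjoint action of $S(\h^*)^W$, coincides with the right $\h$-adic completion; objects of $\mathcal{C}$ are finitely generated over the Noetherian commutative ring $S(\h^*)^W\otimes S(\h)$, where adic completion is exact by Artin--Rees. This gives the vanishing of $\operatorname{Tor}_1$ on $\mathcal{C}$, and the higher $\operatorname{Tor}$'s are killed by dimension shifting inside $\mathcal{C}$ (after filtering $\B$ so that successive quotients are generated by elements commuting with $S(\h^*)^W$, hence are quotients of $H_c^{\oplus k}$ in $\mathcal{C}$). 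Your Mittag--Leffler variant inherits the same gap: the groups $\operatorname{Tor}_i^{H_c}(\B,H_c/H_c\h^n)$ are individually nonzero in general, and what one actually needs is that the inverse system is pro-zero --- an Artin--Rees statement that is precisely what the completion argument packages.
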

\begin{proof}
Recall that $P_c$ is a direct summand in  $H_c^{\wedge_\h}:=\varprojlim_{n\rightarrow \infty} H_c/H_c\h^n$.
So it is enough to show that $\operatorname{Tor}_i^{H_c}(\mathcal{B},H_c^{\wedge_\h})=0$ for $i>0$.

Consider the category $\mathcal{C}$ of all finitely generated $S(\h^*)^W$-$H_c$-bimodules
such that the adjoint action of $S(\h^*)^W$ is locally nilpotent.  We will prove that $\operatorname{Tor}_i^{H_c}(\mathcal{B},H_c^{\wedge_\h})=0$ for $i>0$ and any $\B\in \mathcal{C}$. 
Let us note that every  bimodule in $\mathcal{C}$ is finitely generated over $S(\h^*)^W\otimes S(\h)$, 
where we consider the action of $S(\h^*)^W$ by left multiplications and the action of $S(\h)$ by 
right multiplications, the proof of this repeats that of \cite[Lemma 3.3,(ii)]{BEG}.

Assume that we already know that $\operatorname{Tor}_j^{H_c}(\mathcal{B}, H_c^{\wedge_\h})=0$ for $j=1,\ldots,i-1$
and any $\mathcal{B}\in \mathcal{C}$. Since $S(\h^*)^W\otimes H_c$ is Noetherian, we see that there is a finite filtration on $\mathcal{B}$ such that the successive quotients are generated by elements commuting with $S(\h^*)^W$. So it is enough to prove that $\operatorname{Tor}_i^{H_c}(\mathcal{B}, H_c^{\wedge_\h})=0$ for  $\mathcal{B}\in
\mathcal{C}$ generated by elements commuting with $S(\h^*)^W$. We have an epimorphism $H_c^{\oplus k}\twoheadrightarrow \mathcal{B}$ of $S(\h^*)^W$-$H_c$-bimodules, let $K$ denote the kernel. Of course, $K$ is still in $\mathcal{C}$. 
Then we have an exact sequence
$\operatorname{Tor}^{H_c}_i(H_c^{\oplus k},H_c^{\wedge_\h})\rightarrow \operatorname{Tor}_i^{H_c}(\mathcal{B}, H_c^{\wedge_\h})\rightarrow \operatorname{Tor}_{i-1}^{H_c}(K, H_c^{\wedge_\h})$. If $i>1$, we are done by the inductive assumption.

Let us consider the case $i=1$.  It is enough to show that the functor $\bullet^{\wedge_\h}:=\bullet\otimes_{H_c}H_c^{\wedge_\h}$
is exact on $\mathcal{C}$. This functor coincides with the $\h$-adic completion on the right
that is exact on the even larger category of finitely generated $S(\h^*)^W\otimes S(\h)$-modules
by  standard results in  Commutative algebra.
\end{proof}

\begin{Lem}\label{Lem:tens_HC_prgen}
Let $M$ be a HC $H_{c'}$-$H_c$-bimodule. If $M\otimes_{H_c}P_c=0$, then $M=0$.
\end{Lem}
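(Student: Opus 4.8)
The plan is to show that the right $H_c$-module $P_c$ is a faithfully flat-type object from the point of view of Harish-Chandra bimodules: tensoring with $P_c$ detects whether $M$ vanishes. The key geometric input is that $P_c$ contains, as a direct summand, the completed module $H_c^{\wedge_\h}=\varprojlim H_c/H_c\h^n$ (Lemma \ref{Lem:proj_summand}), and that the associated variety $\VA(M)$ of a nonzero HC bimodule is a nonempty closed union of symplectic leaves in $(\h\oplus\h^*)/W$; in particular it meets $\h/W$, since the closure of every symplectic leaf contains the leaf $\{0\}$ (equivalently $\h^{reg}$, with $W'=W$, is not the only leaf — the zero-dimensional leaf lies in every closure). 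So $\VA(M)\cap \h/W\neq\emptyset$.

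First I would reduce to checking that $M^{\wedge_\h}:=M\otimes_{H_c}H_c^{\wedge_\h}\neq 0$, since $P_c$ is a summand of $H_c^{\wedge_\h}$ and by Lemma \ref{Lem:tens_HC_proj} the higher Tor's vanish, so $M\otimes_{H_c}P_c$ is a direct summand of $M^{\wedge_\h}$; thus it suffices to see $M^{\wedge_\h}=0$ forces $M=0$. As noted in the proof of Lemma \ref{Lem:tens_HC_proj}, $\bullet\otimes_{H_c}H_c^{\wedge_\h}$ is just the $\h$-adic completion of the right module structure, which (by Artin–Rees / standard commutative algebra, using that $M$ is finitely generated over $S(\h^*)^W\otimes S(\h)$ by Proposition \ref{Prop_HC_prop}(1)) is faithful on finitely generated modules: $M^{\wedge_\h}=0$ implies $M/M\h^n=0$ for all $n$, and since the right action of $S(\h)$ on $M$ is \emph{not} assumed locally nilpotent this needs a small argument — I would instead complete at the augmentation ideal $S(\h)_+$ and use that $M$ is finitely generated over the Noetherian ring $S(\h^*)^W\otimes S(\h)$, so $\bigcap_n M\,S(\h)_+^n$ is the zero module iff $M=0$ when $M$ is supported (as an $S(\h)$-module) appropriately.

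The honest way to see faithfulness, and the step I expect to be the main obstacle, is to pin down where $M$ is supported as a right $S(\h)$-module. Here I would use the associated variety: $\VA(M)\subset(\h\oplus\h^*)/W$ is nonempty and, being a union of leaf closures, contains the zero leaf, which forces the image of $\gr M$ under the projection to $\h^*/W$ (the "momentum" direction controlling the right $S(\h)$-action) to contain $0$. Concretely, equipping $M$ with a good filtration, $\gr M$ is a finitely generated module over $\gr H_c=S(\h\oplus\h^*)\#W$ whose support contains a point with $\h^*$-coordinate zero; hence $\gr M/\gr M\cdot S(\h)_+\neq 0$, so $M/M\cdot S(\h)_+\neq 0$, so $M^{\wedge_{S(\h)_+}}\neq 0$, and a fortiori $M^{\wedge_\h}\neq 0$. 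Combining: $M\otimes_{H_c}P_c$ is, up to a direct summand, $M^{\wedge_\h}$ together with its idempotent, and a short check (the idempotent cutting out $P_c$ from $H_c^{\wedge_\h}$ is the one producing a projective generator of $\OCat_c$, hence is nonzero on any nonzero object of $\OCat_c$, and $M^{\wedge_\h}$ does lie in the relevant category of modules over $H_c^{\wedge_\h}$) shows $M\otimes_{H_c}P_c\neq 0$. This contrapositive gives the lemma. The delicate point throughout is the interplay between the two completions and making sure the good filtration argument really places $0$ in the $\h^*$-support of $\gr M$; I would cite property (3) of $\bullet_{\dagger,W'}$ and Lemma \ref{Lem:st_basic}-style support considerations if a cleaner reference is needed, but the leaf-closure fact should suffice.
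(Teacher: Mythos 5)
Your overall strategy is genuinely different from the paper's: you try to prove directly that the right $\h$-adic completion of a nonzero HC bimodule is nonzero, via the associated variety, whereas the paper localizes the problem using the restriction functors $\bullet_{\dagger,W'}$ (it picks $W'$ with $M_{\dagger,W'}$ finite dimensional and nonzero, shows ${}^{\OCat}\Res_W^{W'}P_c$ is a projective generator of $\OCat_c(W')$ using the Shan--Vasserot non-vanishing of induction, and concludes via the compatibility ${}^{\OCat}\Res_W^{W'}(M\otimes_{H_c}P_c)\cong M_{\dagger,W'}\otimes_{H_c(W')}{}^{\OCat}\Res_W^{W'}P_c$). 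Unfortunately your route has a genuine gap at exactly the step you flag as delicate. The correct reduction is: $M\otimes_{H_c}P_c=0$ forces $M\otimes_{H_c}N=0$ for every $N\in\OCat_c$ (each such $N$ is a quotient of $P_c^{\oplus k}$ and $\otimes$ is right exact), in particular $M/M\h^n=M\otimes_{H_c}(H_c/H_c\h^n)=0$ for all $n$, i.e.\ $M=M\cdot S(\h)_+$. (Your direct-summand/idempotent formulation of this reduction is backwards as written --- knowing $M\otimes_{H_c}P_c$ is a summand of $M^{\wedge_\h}$ does not let you pass from $M\otimes_{H_c}P_c=0$ to $M^{\wedge_\h}=0$ --- but it is repairable as above.) What is not repairable as written is the key inference: from $0\in\VA(M)$ you correctly get $\gr M\neq \gr M\cdot S(\h)_+$ (since $\gr M\cdot S(\h)^W_+\subseteq Z_+\cdot\gr M$ and Nakayama applies at $0$), but you then conclude $M\neq M\cdot S(\h)_+$, and this implication is false for general filtered modules. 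Since $\h$ has filtration degree $1$, one only has $\gr(M\cdot S(\h)_+)\supseteq \gr M\cdot S(\h)_+$, so $\gr M/\gr(M\cdot S(\h)_+)$ is a \emph{quotient} of the nonzero module $\gr M/\gr M\cdot S(\h)_+$ and may vanish. Concretely, $M=\C[x]/(x-1)$ with the degree filtration on $\C[x]$ ($\deg x=1$) satisfies $M=Mx$ while $\gr M\cong\C_0$ is supported at the origin; the support of $\gr M$ is a conical degeneration of the support of $M$ and can acquire the origin even when the origin is not in the support of $M$.

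To close the gap one must use the Harish--Chandra structure (local nilpotency of the adjoint $S(\h)^W$-action, or finite generation over $S(\h^*)^W\otimes S(\h)$ together with some control of where $M$, not $\gr M$, is supported as a right $S(\h)$-module) in an essential way; citing ``property (3) of $\bullet_{\dagger,W'}$'' does not do this, since that property is again a statement about $\gr M$. The paper's restriction-functor argument is precisely the mechanism that converts the associated-variety information into an actual non-vanishing of a tensor product, so if you want to keep your completion framework you should expect to reprove something equivalent to it.
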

\begin{proof}
Assume $M\neq 0$. Recall that there is a parabolic subgroup $W'\subset W$ such that
$M_{\dagger,W'}$ is a nonzero finite dimensional bimodule. So there is a finite dimensional
$H_c(W')$-module $L'$ such that $M_{\dagger,W'}\otimes_{H_c(W')}L'\neq \{0\}$. Note that
${}^{\OCat}\operatorname{Res}_{W}^{W'}P_c$ is a projective generator of $\OCat_c(W')$.
That the module is projective is a consequence of the existence of a biadjoint functor
to ${}^{\OCat}\operatorname{Res}_{W}^{W'}$.
As was checked in \cite[Proposition 2.7]{shanvasserot}, the induction functor ${}^{\OCat}\operatorname{Ind}_{W'}^W$
does not annihilate any nonzero module. So ${}^{\OCat}\operatorname{Res}_{W}^{W'}P_c$
is a generator. We deduce that $M_{\dagger,W'}\otimes_{H_c(W')}{}^{\OCat}\operatorname{Res}_{W}^{W'}P_c\neq \{0\}$.
But the left hand side is ${}^{\OCat}\operatorname{Res}_{W}^{W'}(M\otimes_{H_c}P_c)\neq \{0\}$.
This contradiction finishes the proof.
\end{proof}

Finally, let us investigate the compatibility of Tor's with the restriction functors.

\begin{Lem}\label{Lem:dag_Tor}
We have  natural isomorphisms
$$\operatorname{Tor}^{H_{\param}}_i(\mathcal{B}^1,\mathcal{B}^2)_{\dagger,W'}\cong
\operatorname{Tor}^{H_{\param}(W')}_i(\B^1_{\dagger,W'}, \B^2_{\dagger,W'}),
\operatorname{Ext}_{H_{\param}}^i(\mathcal{B}^1,\mathcal{B}^2)_{\dagger,W'}\cong
\operatorname{Ext}_{H_{\param}(W')}^i(\B^1_{\dagger,W'}, \B^2_{\dagger,W'}).$$
\end{Lem}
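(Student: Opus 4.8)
The plan is to reduce both isomorphisms to the construction of $\bullet_{\dagger,W'}$ recalled in Section~\ref{SS_dag_constr}, namely the factorization $\bullet_{\dagger,W'}=\mathcal{G}^{-1}\circ\mathcal{F}$. Since $\mathcal{F}$ is built out of the \'etale base change $M\mapsto \C[\h^{reg-W'}/W']^{\wedge_Y}\otimes_{\C[\h/W]}M$, the push-forward under the isomorphism (\ref{eq:compl_iso}), and multiplication by the idempotent $e(W')$, while $\mathcal{G}$ is built out of tensoring with $D(Y)$ and an \'etale base change in the $W'$-variables, all of these operations are flat (the completions along closed subvarieties are flat base changes, and tensoring with the Weyl algebra $D(Y)$ and applying an idempotent are exact). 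So the first step is to observe that for a free resolution $P_\bullet\to \B^1$ by HC bimodules (which exists by Proposition~\ref{Prop:HC_Ext} and the Noetherianity of $H_{\param}$, say the bar-type resolution or any resolution by sums of shifts of $H_{\param}$), applying $\mathcal{F}$ term-by-term yields a complex of $H_{\param}(W',\h)^{\wedge_Y}$-bimodules computing $\mathcal{F}(\B^1)$ in degree $0$ with no higher homology, because $\mathcal{F}$ is exact.

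The second step is to commute $\operatorname{Tor}$ (resp.\ $\operatorname{Ext}$) past $\mathcal{F}$. Concretely one shows
$$\mathcal{F}\!\left(\operatorname{Tor}^{H_{\param}}_i(\B^1,\B^2)\right)\cong
\operatorname{Tor}^{H_{\param}(W',\h)^{\wedge_Y}}_i\!\left(\mathcal{F}(\B^1),\mathcal{F}(\B^2)\right),$$
and similarly for $\operatorname{Ext}$ (here one uses finite generation to move $\operatorname{Ext}$ past the flat, in fact \'etale, base change $\C[\h^{reg-W'}/W']^{\wedge_Y}\otimes_{\C[\h/W]}\bullet$, which is a standard flatness/finiteness argument). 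The point is that tensoring a resolution $\mathcal{F}(P_\bullet)$ against $\mathcal{F}(\B^2)$ over $H_{\param}(W',\h)^{\wedge_Y}$ is the same, after untwisting via (\ref{eq:compl_iso}) and applying $e(W')$, as the base change of $P_\bullet\otimes_{H_\param}\B^2$, since base change, idempotent truncation, and untwisting are all exact and commute with one another. This handles the $W'$-modules-with-the-extra-$D(Y)$-factor picture.

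The third step is to pass from $\HC^\Xi(H_{\param}(W',\h)^{\wedge_Y},\psi)$ back down to $\HC^\Xi(H_{\param}(W'),\psi)$ via $\mathcal{G}^{-1}$. Here $\mathcal{G}^{-1}$ is, as recalled in the proof of Lemma~\ref{Lem:restr_fun_defi}, ``take the centralizer of $D(Y)$, then take $h'$-locally-finite vectors.'' Taking the centralizer of $D(Y)$ simply strips off the Weyl-algebra tensor factor and is exact (it is an equivalence onto its image by the Morita-type argument), and taking $h'$-finite vectors is exact on HC bimodules because the $h'$-action is semisimple with finite-dimensional generalized eigenspaces on each graded piece. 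Since the formation of $\operatorname{Tor}$ and $\operatorname{Ext}$ over $H_{\param}(W',\h)^{\wedge_Y}\cong D(Y)^{\wedge}\widehat{\otimes} H_{\param}(W')^{\wedge_0}$ is compatible with the K\"unneth-type splitting of the $D(Y)$ factor (which has no higher Tor against itself) and with the completion along $0$ in the $W'$-variables (again a flat base change), one gets
$$\mathcal{G}\!\left(\operatorname{Tor}^{H_{\param}(W')}_i(\B^1_{\dagger,W'},\B^2_{\dagger,W'})\right)\cong
\operatorname{Tor}^{H_{\param}(W',\h)^{\wedge_Y}}_i\!\left(\mathcal{F}(\B^1),\mathcal{F}(\B^2)\right),$$
and combining the three steps and applying $\mathcal{G}^{-1}$ gives the desired natural isomorphisms; for $\operatorname{Ext}$ one argues identically, using the finite generation of the resolution terms to push $\operatorname{Ext}$ through the flat base changes.

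The main obstacle I expect is bookkeeping around the \emph{twist} appearing in (\ref{eq:compl_iso}) and in the centralizer-algebra picture: one must check that the twist is harmless for the purpose of computing $\operatorname{Tor}$ and $\operatorname{Ext}$, i.e.\ that the twisted and untwisted sheaves give canonically isomorphic Tor/Ext sheaves (this is true because the gluing data differ by an automorphism of the structure sheaf, so the derived functors are computed by the same local data), and that all identifications are compatible with the $\Xi$-equivariant structures. A secondary technical point is to justify moving $\operatorname{Ext}^i$ past the completions $\C[\h^{reg-W'}/W']^{\wedge_Y}\otimes_{\C[\h/W]}\bullet$ and $\C[\h_{W'}/W']^{\wedge_0}\otimes\bullet$: this uses that these are flat, that $\B^1$ admits a resolution by finitely generated projective (indeed free) $H_{\param}$-bimodules, and that completion commutes with $\operatorname{Ext}$ out of a finitely presented module over a Noetherian ring. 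None of these steps is deep, but assembling them with all the equivariance and twist data carried along is where the care is needed.
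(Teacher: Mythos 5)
Your proposal is correct and follows essentially the same route as the paper: the paper's proof also reduces the statement to showing that the functors $\mathcal{F}$ and $\mathcal{G}$ from Lemma \ref{Lem:restr_fun_defi} intertwine Tor's and Ext's, with the key point being flatness of $\C[\h^{reg-W'}/W']^{\wedge_Y}$ over $\C[\h/W]$ (and the analogous flatness for $\mathcal{G}$). Your write-up simply spells out in more detail the bookkeeping (resolutions, the twist, the idempotent, the $D(Y)$ factor) that the paper leaves implicit.
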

\begin{proof}
In the notation of Lemma \ref{Lem:restr_fun_defi}, it suffices to show that the functors
$\mathcal{F},\mathcal{G}$ intertwine Tor's and Ext's. For $\mathcal{F}$, this follows from the observation
that  $\C[\h^{reg-W'}/W']^{\wedge_Y}$ is a flat $\C[\h/W]$-module. The proof for $\mathcal{G}$
is similar.
\end{proof}

Similarly, we see that ${}^{\OCat}\Res_{W}^{W'}(\operatorname{Tor}^{H_{\param}}_i(\mathcal{B},M))\cong
\operatorname{Tor}_i^{H_{\param}(W')}(\mathcal{B}_{\dagger,W'}, {}^{\OCat}\Res_{W}^{W'}(M))$.

\subsection{Relation to quantized quiver varieties}\label{SS_bimod_rk1}
Here we deal with the case when $W$ is a cyclic group. We will need an interpretation of the spherical subalgebras
$eH_ce$ as quantized quiver varieties due to Holland, \cite{Holland}, and some constructions and results from
\cite{BL}.  The results of this subsection will be used in
Section \ref{SS_Ringel_proof}.

Let $W=\Z/\ell \Z$.  Consider the space $R:=\C^\ell$ and the group $G:=(\C^\times)^\ell$ acting on
$R$ via $(t_1,\ldots,t_\ell).(x_1,\ldots,x_\ell)=(t_2x_1t_1^{-1},t_3x_2 t_2^{-1},\ldots, t_1 x_\ell t_\ell^{-1})$.
The induced action of $G$ on $T^*R=R\oplus R^*$ is Hamiltonian with moment map $\mu((x_i,y_i)_{i=1}^\ell)=(x_2y_2-x_1y_1, x_3y_3-x_2y_2,\ldots, x_1y_1-x_\ell y_\ell)$. It is easy to see that $\mu^{-1}(0)\quo G$ is identified with $\C^2/W$.

There is a quantum analog of this isomorphism originally due to Holland. Consider the Weyl algebra $\mathbb{A}(R\oplus R^*)$. We have a (symmetrized) quantum comoment map $\Phi: \g\rightarrow \mathbb{A}(R\oplus R^*)$ given by
$\epsilon_i\mapsto \frac{1}{2}(x_{i+1}y_{i+1}+y_{i+1}x_{i+1}-x_iy_i-y_ix_i)$, where $\epsilon_i, i=1,\ldots,\ell,$
is an element of the tautological basis in $\g=\C^n$. Then, for $\lambda\in \g^*$, we can form the quantum Hamiltonian reduction
$\A_\lambda:=[\mathbb{A}(R\oplus R^*)/\mathbb{A}(R\oplus R^*)\{x-\langle\lambda,x\rangle\}]^G$. This is a filtered
algebra (we consider the filtration by the order of a differential operator) with
$\gr\A_\lambda=\C[\h\oplus \h^*]^W$. Here $\lambda$ is recovered from $c$ by the following formulas
$$\sum_{i=1}^\ell \lambda_i=0, \lambda_i=\frac{1}{\ell}(1-2\sum_{j=1}^{\ell-1}c_j \exp(2\pi\sqrt{-1}ij/\ell)), i=1,\ldots,\ell-1.$$

We will also need resolutions of singularities of $\mu^{-1}(0)\quo G$ and their quantizations. Pick
$\theta\in \Z^{\ell}\cong \Hom(G,\C^\times)$ that satisfies $\sum_{i=1}^\ell \theta_i=0, \theta_i\neq \theta_j$
for $i\neq j$. Then we can form the $\theta$-semistable locus $(T^*R)^{\theta-ss}$ and the corresponding
GIT reduction $X^\theta:=\mu^{-1}(0)^{\theta-ss}\quo G$. The variety $X^\theta$ is a smooth symplectic variety
(in fact, independent of $\theta$ up to an isomorphism) with a resolution of singularities
morphism $\rho:X^\theta\rightarrow \C^2/W$. This variety can be quantized by a microlocal sheaf
of algebras, $\A_\lambda^\theta$, that is also constructed by quantum Hamiltonian reduction.
We  microlocalize $\mathbb{A}(T^*R)$ to a sheaf in conical topology on $T^*R$ so that the
restriction $\mathbb{A}(T^*R)|_{(T^*R)^{\theta-ss}}$ makes sense. Then we set
$\A_\lambda^\theta:=[\mathbb{A}(T^*R)|_{(T^*R)^{\theta-ss}}/\mathbb{A}(T^*R)|_{(T^*R)^{\theta-ss}}\{x-\langle \lambda,x\rangle\}]^G$,
this is a sheaf of filtered algebras on $X^\theta$ in conical topology with $\gr\A_\lambda^\theta=\mathcal{O}_{X^\theta}$.
We have $\Gamma(\A_\lambda^\theta)=\A_\lambda$, while the higher cohomology groups of $\A_\lambda^\theta$ vanish.

We can consider the category $\A_\lambda^\theta\operatorname{-Mod}$ of the quasi-coherent $\A_\lambda^\theta$-modules.
Then we have the global section functor $\Gamma:\A_\lambda^\theta\operatorname{-Mod}\rightarrow
\A_\lambda\operatorname{-Mod}$. There is a criterium for this functor to be an equivalence,
see \cite{Boyarchenko} (the formalism of $\Z$-algebras used in Boyarchenko's paper is equivalent
to the formalism we use by \cite[Section 5.3]{BPW}).
Namely, let us consider the permutation $\sigma$ of $\{1,2,\ldots,\ell\}$ such that
$\theta_{\sigma(1)}>\theta_{\sigma(2)}>\ldots>\theta_{\sigma(\ell)}$. Then the functor $\Gamma$
is an equivalence if and only if $\lambda_{\sigma(i)}-\lambda_{\sigma(j)}\not\in \Z_{\leqslant 0}$
for $i<j$. If $\Gamma:\A_\lambda^\theta\operatorname{-Mod}\rightarrow \A_\lambda\operatorname{-Mod}$
is an equivalence, then we say that $(\lambda,\theta)$ satisfies the abelian localization.

Let us now construct  some HC bimodules. Let $\varphi$ be a character of $G$.
We  consider the $\A_{\lambda+\varphi}^\theta$-$\A_\lambda^\theta$-bimodule
$$\A_{\lambda,\varphi}^\theta:=[\mathbb{A}(T^*R)|_{(T^*R)^{\theta-ss}}/\mathbb{A}(T^*R)|_{(T^*R)^{\theta-ss}}\{x-\langle \lambda,x\rangle\}]^{G,\varphi},$$ where the superscript $G,\varphi$ indicates that we take $(G,\varphi)$-semiinvariants.
Also we consider the $\A_{\lambda+\varphi}$-$\A_\lambda$-bimodule $\A_{\lambda,\varphi}^{(\theta)}:=\Gamma(\A^\theta_{\lambda,\chi})$.

We need to realize the inverse Ringel duality functor $R^{-1}$ as $\A^{(\theta)}_{\lambda,\varphi}\otimes^L_{\A_\lambda}\bullet$.
Namely, assume that $(\lambda,-\theta),(\lambda+\varphi,\theta)$ satisfy the abelian localization. This implies,
in particular, that the algebras $\A_\lambda,\A_{\lambda+\varphi}$ have finite homological dimension, hence
the corresponding Cherednik parameters are spherical, see \cite[Theorem 5.5]{etingof_affine}.
So it makes sense to speak about the categories $\OCat$
for $\A_\lambda,\A_{\lambda+\varphi}$, those categories are highest weight.

\begin{Lem}\label{Lem:ring_rk1}
There is an equivalence $\OCat(\A_{\lambda+\varphi})^\vee\cong \OCat(\A_\lambda)$ such that
$R^{-1}:D^b(\OCat(\A_\lambda))\xrightarrow{\sim} D^b(\OCat(\A_{\lambda+\varphi}))$ gets
identified with $\A^{(\theta)}_{\lambda,\varphi}\otimes^L_{\A_\lambda}\bullet$.
\end{Lem}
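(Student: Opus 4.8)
The plan is to verify that the functor $\Phi:=\A^{(\theta)}_{\lambda,\varphi}\otimes^L_{\A_\lambda}\bullet$ is a derived equivalence $D^b(\OCat(\A_\lambda))\xrightarrow{\sim} D^b(\OCat(\A_{\lambda+\varphi}))$ that sends standardly filtered objects to costandardly filtered ones and tiltings to injectives; by the general characterization of the (inverse, covariant) Ringel duality recalled in Section \ref{SS_Ringel} this forces $\Phi$ to be $R^{-1}$ and identifies $\OCat(\A_{\lambda+\varphi})$ with $\OCat(\A_\lambda)^\vee$ up to the obvious relabeling. The geometric input is that $\A^\theta_{\lambda,\varphi}$ is a line-bundle twist on the smooth symplectic resolution $X^\theta$, hence locally free, so tensoring with it is an equivalence $\A^\theta_\lambda\operatorname{-Mod}\xrightarrow{\sim}\A^\theta_{\lambda+\varphi}\operatorname{-Mod}$ of sheaf categories, with quasi-inverse the twist by the opposite character.

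\textbf{Key steps, in order.} First I would use the two abelian localization hypotheses: $(\lambda,-\theta)$ gives $\Gamma:\A^{-\theta}_\lambda\operatorname{-Mod}\xrightarrow{\sim}\A_\lambda\operatorname{-Mod}$, equivalently (after the standard sign flip $X^{-\theta}\cong X^\theta$ with reversed stability) a derived equivalence between $D^b(\A^\theta_\lambda\operatorname{-Mod})$ and $D^b(\A_\lambda\operatorname{-Mod})$; likewise $(\lambda+\varphi,\theta)$ gives $\Gamma:\A^\theta_{\lambda+\varphi}\operatorname{-Mod}\xrightarrow{\sim}\A_{\lambda+\varphi}\operatorname{-Mod}$. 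Second, on the resolution the bimodule $\A^\theta_{\lambda,\varphi}=[\mathbb{A}(T^*R)|_{(T^*R)^{\theta-ss}}/\ldots]^{G,\varphi}$ is an invertible $\A^\theta_{\lambda+\varphi}$-$\A^\theta_\lambda$-bimodule, so $\A^\theta_{\lambda,\varphi}\otimes_{\A^\theta_\lambda}\bullet$ is an exact equivalence of abelian categories $\A^\theta_\lambda\operatorname{-Mod}\xrightarrow{\sim}\A^\theta_{\lambda+\varphi}\operatorname{-Mod}$. Third, I would check the compatibility $\Gamma(\A^\theta_{\lambda,\varphi}\otimes_{\A^\theta_\lambda}\Gamma^{-1}(\bullet))\cong\A^{(\theta)}_{\lambda,\varphi}\otimes^L_{\A_\lambda}\bullet$, i.e. that under the two localization equivalences the geometric twist intertwines with the derived tensor product by global sections; this is where the two differently-signed abelian localization assumptions are both needed (one to identify $D^b(\OCat(\A_\lambda))$ with sheaves on $X^\theta$, the other to push the twisted sheaf back down). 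Composing the three equivalences yields that $\Phi$ is a derived equivalence $D^b(\OCat(\A_\lambda))\xrightarrow{\sim} D^b(\OCat(\A_{\lambda+\varphi}))$.

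\textbf{Identifying it with $R^{-1}$.} To pin down which equivalence we have produced, I would track the highest weight structure: category $\OCat$ for a quantization of $X^\theta$ (with respect to the contracting $\C^\times$-action) is highest weight, the standard objects are the costandardized modules supported on the attracting loci of the fixed-point components, and a line-bundle twist on $X^\theta$ permutes these in a controlled way — at the level of $K_0$ it is the identity after the natural identification with $K_0(W\operatorname{-mod})$, and it sends standards to standards for the \emph{twisted} highest weight order, which is the opposite of the order for $\OCat(\A_{\lambda+\varphi})$. Hence on the base $\Phi$ carries standardly filtered objects to costandardly filtered objects, and (being an equivalence of derived categories arising from an exact equivalence upstairs) it carries the tilting generator of $\OCat(\A_\lambda)$ to the injective generator of $\OCat(\A_{\lambda+\varphi})$. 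By the defining property of the Ringel dual in Section \ref{SS_Ringel} this means $\OCat(\A_{\lambda+\varphi})\cong\OCat(\A_\lambda)^\vee$ and $\Phi\cong R^{-1}$, which is the assertion of the Lemma.

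\textbf{Main obstacle.} The delicate point is the third step: matching the geometric twist functor on $X^\theta$ with the algebraic $\A^{(\theta)}_{\lambda,\varphi}\otimes^L_{\A_\lambda}\bullet$ downstairs, and checking that $\A^{(\theta)}_{\lambda,\varphi}$ really computes the derived global sections of the invertible sheaf (so that no higher-cohomology correction terms appear). This rests on the vanishing $H^{>0}(\A^\theta_{\lambda,\varphi})=0$ and on the two abelian localization hypotheses being exactly the right ones to make both $\Gamma$'s exact equivalences; once those are in place the compatibility is a base-change/projection-formula argument, but setting it up carefully — including checking that everything is compatible with the $\C^\times$-equivariant structures defining the categories $\OCat$ — is the technical heart of the proof. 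I would lean on the machinery of \cite{BL} (and \cite{BPW}) for these localization and quantization statements rather than reproving them.
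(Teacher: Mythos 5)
Your first three steps (abelian localization on both sides, invertibility of $\A^\theta_{\lambda,\varphi}$ upstairs, compatibility of the geometric twist with $\A^{(\theta)}_{\lambda,\varphi}\otimes^L_{\A_\lambda}\bullet$) are a plausible way to see that $\Phi$ is a derived equivalence, although you should note that you also need \emph{derived} localization for $(\lambda,\theta)$ itself, not merely abelian localization for $(\lambda,-\theta)$, to realize $D^b(\OCat(\A_\lambda))$ on $X^\theta$; this is available from \cite{BL} but is not automatic. The genuine gap is in your identification step. Everything reduces to the claim that $\Phi$ restricts to an exact equivalence from standardly filtered objects of $\OCat(\A_\lambda)$ to costandardly filtered objects of $\OCat(\A_{\lambda+\varphi})$ (your ``tiltings to injectives'' would then follow for free, tiltings being the Ext-injectives among standardly filtered objects). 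But that claim is essentially the content of the lemma, and the justifications you offer do not establish it: ``a line-bundle twist sends standards to standards for the twisted highest weight order'' is asserted, not argued, and is in any case not the same as sending standards to costandards; and ``being an equivalence arising from an exact equivalence upstairs, it carries the tilting generator to the injective generator'' is a non sequitur, since exactness of the twist upstairs says nothing about tiltings or injectives downstairs once the non-exact localization/section functors are composed in. Determining what the twist does to the standard objects (supported on attracting loci) is exactly the hard point, and your proposal assumes it.

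The paper's argument is different and sidesteps this computation entirely: it shows that both $\varphi:=\A^{(\theta)}_{\lambda,\varphi}\otimes^L_{\A_\lambda}\bullet$ and $\varphi':=R^{-1}$ are \emph{perverse} equivalences onto $D^b(\OCat(\A_{\lambda+\varphi}))$ with respect to the same two-step filtration by dimension of support (all modules $\supset$ finite-dimensional modules $\supset 0$). Perversity of $R^{-1}$ is Lemma \ref{Lem:D_perv}; perversity of $\varphi$ follows from \cite[Proposition 4.1]{BL} together with the fact, immediate from the construction of $\A^{(\theta)}_{\lambda,\varphi}$, that $\varphi$ induces an abelian equivalence modulo the finite-dimensional subcategories. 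Since the two filtrations on the common target coincide, the composition $\varphi^{-1}\circ\varphi'$ is $t$-exact (the paper proves this by induction on the filtration using injectives of the subquotients), which is precisely the asserted identification. To salvage your route you would have to actually prove that the wall-crossing functor sends standards to costandards; the perversity argument is what replaces this in the paper.
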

\begin{proof}
%
%
Set $\Cat^1:=\OCat(\A_\lambda), \Cat'^1:=\OCat(\A_{\lambda+\varphi})^\vee, \Cat^2:=\OCat(\A_{\lambda+\varphi})$.
We have perverse equivalences $\varphi:=\A^{(\theta)}_{\lambda,\varphi}\otimes^L_{\A_\lambda}\bullet:
D^b(\Cat^1)\xrightarrow{\sim}D^b(\Cat^2), \varphi':=R^{-1}:D^b(\Cat'^1)\xrightarrow{\sim} D^b(\Cat^2)$
with respect to the filtrations $\Cat^1_i, \Cat'^1_i, \Cat^2_i$ by dimension of support.
The filtration components in this case are zero for $i=2$ and are the subcategories of all finite dimensional
modules for $i=1$.  The perversity of $\varphi'$ was established in Section \ref{SS_Ringel}.
The construction of $\A^{(\theta)}_{\lambda,\varphi}$ implies that 
$\A^{(\theta)}_{\lambda,\varphi}\otimes^L_{\A_\lambda}\bullet$ restricts to an
abelian equivalence $\Cat^1/\Cat^1_1\xrightarrow{\sim}\Cat^2/\Cat^2_1$.
The perversity of $\varphi$ now follows from \cite[Proposition 4.1]{BL}. Since the filtrations on
$\Cat^2$ for $\varphi,\varphi'$ coincide, the composition $\iota:=\varphi^{-1}\circ\varphi':
D^b(\Cat'^1)\xrightarrow{\sim} D^b(\Cat^1)$ preserves the $t$-structures. This seems
to be a well-known fact, but we were not able to find a reference, so we provide
the proof for reader's convenience.

We prove that $H_j(\iota M')=0, j\neq 0,$ for $M'\in \Cat'^1_i$ using the decreasing induction
on $i$. Suppose that we know $H_j(\iota'\underline{M}')=H_j(\iota^{-1}\underline{M})=0, j\neq 0,$
for any $\underline{M}'\in \Cat'^1_{i+1}, \underline{M}\in \Cat^1_{i+1}$
and want to show that $H_j(\iota M')=H_j(\iota^{-1}M)=0$  for $M'\in \Cat'^1_i,
M\in \Cat^1_i$. Note that since the filtrations on $\Cat^2$ making $\varphi,\varphi'$
perverse, coincide, we get  $H_j(\iota M')\in \Cat^1_{i+1}, H_j(\iota^{-1}M)\in \Cat'^1_{i+1}$
for $j\neq 0$. Let $j$ be the maximal positive index such that $H_{-j}(\iota M')\neq 0$
and let $I$ be an injective in $\Cat^1_{i+1}$. Then $\Hom_{D^b(\Cat'^1)}(\iota M'[j], I)=
\Hom_{D^b(\Cat^1)}(M'[j], \iota^{-1}I)$. Since $\iota^{-1}$ gives an abelian equivalence
$\Cat^1_{i+1}\xrightarrow{\sim}\Cat'^1_{i+1}$, $\iota^{-1}I$ is injective in $\Cat'^1_{i+1}$.
So we see that   $\Hom_{D^b(\Cat^1)}(M'[j], \iota^{-1}I)=0$. Since this is true for
any injective $I$, we see that $H_{-j}(\iota M')=0$. Similarly, we see that
$H_j(\iota M'), H_{-j}(\iota^{-1}M), H_{j}(\iota^{-1}M)$ vanish. This completes
the proof of the induction step and the proof of the lemma.
\end{proof}

\section{Ringel duality via HC bimodules}\label{S_Ringel_HC}
\subsection{Main result}
The goal of this section is to prove that the Ringel duality functor is realized as the derived tensor product
with a HC bimodule. More precisely, we are going to prove the following.

\begin{Thm}\label{Thm:Ring_dual_HC}
Let $c\in \param, \psi\in \underline{\param}_{\Z}$ be such that the parameters $c,c-\psi$ lie in opposite open chambers
and are spherical.
Then there is  a labeling preserving equivalence $\OCat_c\cong \OCat_{c-\psi}^\vee$ and a HC $H_{c-\psi}$-$H_c$-bimodule
$\B_c(\psi)$ such that $R^{-1}\cong \B_c(\psi)\otimes^L_{H_c}\bullet$.
\end{Thm}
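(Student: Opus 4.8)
The plan is to compare two objects of the derived category of $H_{c-\psi}$-$H_c$-bimodules: the ``abstract'' inverse Ringel duality kernel and a naturally constructed Harish-Chandra bimodule, and to show they agree. First I would fix the labeling-preserving equivalence $\OCat_c\cong\OCat_{c-\psi}^{\vee}$. Since $c,c-\psi$ lie in opposite open chambers, the order $\leqslant^{c-\psi}$ is opposite to $\leqslant^c$, hence equals the order on the Ringel dual side; because $\psi\in\underline{\param}_\Z$ the KZ twist is trivial, so by Proposition \ref{Prop:Chered_equi} (or rather the observation right after it, that (iv) is not needed) one gets a highest weight equivalence $\OCat_c\xrightarrow{\sim}\OCat_{c-\psi}$ sending $\Delta_c(\lambda)\mapsto\Delta_{c-\psi}(\lambda)$, and composing with the Ringel duality of $\OCat_{c-\psi}$ identifies $\OCat_c\cong\OCat_{c-\psi}^\vee$ compatibly with labels. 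Under this identification $R^{-1}$ becomes the functor $D^b(\OCat_c)\to D^b(\OCat_{c-\psi})$ which on $K_0$ sends $[\Delta_c(\lambda)]$ to $[\Delta_{c-\psi}(\lambda)]$ and sends tiltings to tiltings (it is the inverse of $\Hom(T_{c-\psi},\bullet)$).

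Next I would produce the candidate bimodule $\B_c(\psi)$. Both $H_c$-mod and $H_{c-\psi}$-mod are derived equivalent to $H_c\operatorname{-mod}$ via tensoring with the projective generator $P_c$ of $\OCat_c$ (Lemma \ref{Lem:proj_summand}); write $E_c:=\End_{H_c}(P_c)^{opp}$, so $\OCat_c\simeq E_c\operatorname{-mod}$, and similarly $E_{c-\psi}$. The functor $R^{-1}$ corresponds under these equivalences to tensoring with some $E_{c-\psi}$-$E_c$-bimodule; transporting back along $P_{c-\psi},P_c$ it is given by a complex $\mathcal{B}_c(\psi)$ of $H_{c-\psi}$-$H_c$-bimodules, namely $\mathcal{B}_c(\psi):=R^{-1}(P_c)\otimes^L_{E_c}(\text{something})$ — more precisely one sets $\B_c(\psi)$ to be the cohomology-concentrated bimodule $\operatorname{Hom}_{\OCat_{c-\psi}}(P_{c-\psi}, R^{-1}P_c)\otimes_{E_c}(\text{bimodule })$; the cleanest formulation is $\B_c(\psi)\otimes_{H_c}^L P_c \cong R^{-1}(P_c)$ as $H_{c-\psi}$-modules compatibly with the right $E_c$-action. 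The point is to check $\B_c(\psi)$ is concentrated in degree $0$ and is Harish-Chandra. Degree-$0$ concentration: $R^{-1}$ sends the injective-minus-tilting\,/\,projective objects appropriately, and one uses that $R^{-1}(P_c)$ is a tilting object of $\OCat_{c-\psi}$ (projectives of $\OCat_c$ go to tiltings of $\OCat_{c-\psi}$ under $R^{-1}$, since $D$ sends injectives to tiltings and projectives to injectives is the statement for $R$, so for $R^{-1}$ projectives go to tiltings), hence lies in degree $0$; then $\operatorname{Tor}^{H_c}_{>0}(\B_c(\psi),P_c)=0$ forces $\B_c(\psi)$ itself to sit in degree $0$, using Lemma \ref{Lem:tens_HC_proj} in reverse together with the fact that $P_c$ is a projective generator (a bounded complex of bimodules whose derived tensor with a projective generator is a module, is quasi-isomorphic to a module). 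To see $\B_c(\psi)$ is Harish-Chandra I would use the spherical picture: after applying $e$ and using the Morita-type isomorphisms $eH_ce$ vs. $H_c$ (valid since $c,c-\psi$ are spherical), the Ringel duality $R^{-1}$ on the spherical side is a self-homological-duality-type functor, and its kernel is a HC bimodule over $eH_{c-\psi}e$-$eH_ce$ by general properties of homological duality on filtered algebras finite over their centers (the argument of Lemma \ref{Lem:D_perv} / Proposition \ref{Prop:HC_Ext} shows the relevant $\operatorname{Ext}$/$\operatorname{Tor}$ between HC-type objects stays HC); then induce back up to $H_{c-\psi}$-$H_c$ using $H_ce$.

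Finally I would verify that $\B_c(\psi)\otimes^L_{H_c}\bullet$ really is (isomorphic to) $R^{-1}$ as functors $D^b(\OCat_c)\to D^b(\OCat_{c-\psi})$: by construction the two functors agree on $P_c$ together with its $E_c=\End(P_c)^{opp}$-action, and since $P_c$ is a projective generator of $\OCat_c$ and both functors are (triangulated and) given by tensoring with a complex of bimodules, they agree on all of $D^b(\OCat_c)$; one should also note $\B_c(\psi)\otimes^L_{H_c}\bullet$ indeed lands in $D^b(\OCat_{c-\psi})$ by Proposition \ref{Prop_HC_prop}(3) and Proposition \ref{Prop:HC_Ext}. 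The main obstacle I anticipate is the \emph{Harish-Chandra} property of $\B_c(\psi)$: producing the bimodule abstractly from the equivalence is formal, but controlling the local nilpotence of the adjoint $S(\h^*)^W$- and $S(\h)^W$-actions requires either passing to the spherical algebras where homological duality is manifestly ``bimodule-like'' (and then checking the Morita transfer preserves HC, which should follow as in Lemma \ref{Lem:shift_HC}), or a direct filtration argument showing $R^{-1}$ is realized on $H_{\hbar,c}$ by an honest graded bimodule whose fiber over $\hbar=0$ is a $\gr H_c=S(\h\oplus\h^*)\#W$-bimodule supported on the diagonal — this is essentially the content of the proof of Lemma \ref{Lem:D_perv}, and the work is to package it as a bimodule statement rather than a module-by-module one.
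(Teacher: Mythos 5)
Your proposal inverts the logic of the paper's argument in a way that leaves the central difficulty unresolved. You define $\B_c(\psi)$ abstractly by transporting the kernel of $R^{-1}$ from $\End(P_{c-\psi})^{opp}$-$\End(P_c)^{opp}$-bimodules back to $H_{c-\psi}$-$H_c$-bimodules, and you correctly identify that the Harish--Chandra property is then the main obstacle --- but neither of your two suggested fixes works. The spherical homological duality only realizes the contravariant functor $D=R\Hom_{H_c}(\bullet,H_c)[n]$, which relates $\OCat_c$ to $\OCat_c^{r,opp}$ over the \emph{same} parameter; the passage from $\OCat_c^{r,opp}$ to $\OCat_{c-\psi}$ is an abstract Rouquier-type equivalence, and there is no general mechanism forcing an abstract equivalence of the categories $\OCat$ to be given by a bimodule on which $\operatorname{ad}S(\h)^W$ and $\operatorname{ad}S(\h^*)^W$ act locally nilpotently (the HC condition sees the full algebras $H_c$, $H_{c-\psi}$, not just the categories $\OCat$). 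The paper proceeds in the opposite direction: $\B_c(\psi)$ is constructed \emph{explicitly} in Lemma \ref{Lem:B_def} as the unique simple subquotient with full associated variety of a product of Berest--Chalykh shift bimodules $\B_{c',\pm\bar\chi}$, so it is HC by construction, and the content of the proof is then to show $\B_c(\psi)\otimes_{H_c}P_c\cong T_{c-\psi}$. That step rests on Corollary \ref{Cor:loc_coinc} ($\loc(P_c(\lambda))\cong\loc(T_{c-\psi}(\lambda))$), on Lemma \ref{Lem:tilt_basic} to control kernels and cokernels by codimension of support, and crucially on the restriction to rank-one parabolics: Proposition \ref{Prop:rank1} identifies $e_{W_H}\B_c(\psi)_{\dagger,W_H}e_{W_H}$ with the wall-crossing bimodule $\A^{(\theta)}_{\lambda,\lambda^{opp}-\lambda}$ on a quantized quiver variety, and Lemma \ref{Lem:ring_rk1} proves the rank-one case via abelian localization. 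None of this appears in your plan, and without it the identification of your abstractly defined object with a genuine HC bimodule has no route to completion.

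There is also an error in your construction of the labeling-preserving equivalence $\OCat_c\cong\OCat_{c-\psi}^\vee$. You invoke Proposition \ref{Prop:Chered_equi} to get a highest weight equivalence $\OCat_c\xrightarrow{\sim}\OCat_{c-\psi}$ sending $\Delta_c(\lambda)$ to $\Delta_{c-\psi}(\lambda)$, but condition (iii) of that proposition requires $\leqslant^c$ to refine $\leqslant^{c-\psi}$, whereas by hypothesis the two orders are \emph{opposite} (the parameters lie in opposite open chambers), so no such standard-preserving abelian equivalence exists in general. What the paper actually proves (Proposition \ref{Prop:Ring_dual}) is a labeling-preserving equivalence $\OCat_c^{r,opp}\cong\OCat_{c-\psi}$ --- both of which carry the order opposite to $\leqslant^c$ --- and this requires setting up the functor $\KZ^{ro}=\KZ\circ D^{-1}$ together with its compatibility with restriction functors (Lemma \ref{Lem:D_vs_Res}, Corollary \ref{Cor:KZro_vs_Res}) before the Rouquier-type argument can be run. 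So both halves of the theorem need substantially more than what you have outlined.
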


\subsection{Improved equivalence theorem}
\label{SS_impr_equi}
Our goal here is to prove an improved version of Proposition \ref{Prop:Chered_equi} with condition
(iv) omitted. 

\begin{Prop}\label{Prop:Chered_equi_impr}
Suppose that $c,c'\in \param$ satisfy the following conditions:
\begin{itemize}
\item[(i)] $c-c'\in \param_\Z$.
\item[(ii)] $\mathsf{tw}(c'-c)=\operatorname{id}$.
\item[(iii)] The ordering $\leqslant^c$ refines $\leqslant^{c'}$.
\end{itemize}
Then there is an equivalence $\OCat_c\xrightarrow{\sim} \OCat_{c'}$ of highest weight
categories mapping $\Delta_c(\lambda)$ to $\Delta_{c'}(\lambda)$ that intertwines
the KZ functors $\OCat_c,\OCat_{c'}\twoheadrightarrow \underline{\mathcal{H}}_q\operatorname{-mod}$.
\end{Prop}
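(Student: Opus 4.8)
The plan is to run the same argument that proves Proposition~\ref{Prop:Chered_equi}, feeding a suitable input into Rouquier's abstract equivalence theorem (Proposition~\ref{Prop:abstr_equiv}), while observing that hypothesis (iv) entered there \emph{only} through Proposition~\ref{Prop:KZ_prop}(3), to guarantee that the $\KZ$ functors are $0$-faithful. So first I would deform: pick a Weil generic line $\ell\ni c$ with parameter $\hbar$ vanishing at $c$, let $\ell'=\ell+(c'-c)$ be the parallel line through $c'$, set $R=\C[[\hbar]]$, and form the deformed categories $\tilde{\OCat}_c,\tilde{\OCat}_{c'}$ over $R$ with their $\KZ$ functors. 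Since $c'-c\in\param_\Z$ the Hecke parameters along $\ell$ and $\ell'$ agree up to a fixed rescaling, so both functors land in $\underline{\tilde{\mathcal{H}}}_q\operatorname{-mod}$ for one and the same $R$-algebra $\underline{\tilde{\mathcal{H}}}_q$, free of rank $|W|$ over $R$, and at the generic point the parameter is Weil generic, so $\underline{\tilde{\mathcal{H}}}_q[\hbar^{-1}]$ is split semisimple; in fact $\tilde{\OCat}_c[\hbar^{-1}]$ and $\tilde{\OCat}_{c'}[\hbar^{-1}]$ are semisimple and $\KZ$ identifies each with $\underline{\tilde{\mathcal{H}}}_q[\hbar^{-1}]\operatorname{-mod}$. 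Hypothesis (iii) supplies a common highest weight order ($\leqslant^c$ refines $\leqslant^{c'}$, and a refinement of a highest weight order is again highest weight), and hypothesis (ii), $\mathsf{tw}(c'-c)=\operatorname{id}$, is exactly what makes the tautological labelling of $\Irr\underline{\tilde{\mathcal{H}}}_q[\hbar^{-1}]$-modules by $\Irr(W)$ agree on the two sides.

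The real work is that at the special point $\KZ_c$ need \emph{not} be $0$-faithful (already $W=\Z/2\Z$ with $c\in\tfrac12+\Z$ is a counterexample, since $\Hom(\Delta_c(\mathrm{triv}),\Delta_c(\mathrm{sign}))=0$ while both go to the unique simple over $\C[\epsilon]/\epsilon^2$), so Proposition~\ref{Prop:abstr_equiv} cannot be quoted verbatim and its proof must be reproduced. Recall its three steps: (a) $\KZ(\tilde\Delta_c(\lambda))\cong\KZ(\tilde\Delta_{c'}(\lambda))$, which uses only the labelling and order, i.e.\ (ii) and (iii); (b) $\KZ$ over $R$ is $1$-faithful on standardly filtered objects; (c) reconstruct the indecomposable projectives from the $\tilde\Delta$'s via the iterated universal extensions of \S\ref{SS_cat_O}, obtaining $\KZ(\tilde P_c(\lambda))\cong\KZ(\tilde P_{c'}(\lambda))$, after which full faithfulness of $\KZ$ on projectives (which holds unconditionally, also over $R$) gives $\End_{\tilde{\OCat}_c}(\bigoplus_\lambda\tilde P_c(\lambda))\cong\End_{\tilde{\OCat}_{c'}}(\bigoplus_\lambda\tilde P_{c'}(\lambda))$ and hence an equivalence over $R$, which one specialises at $\hbar=0$. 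The substitute for (b) is an induction on $|W|$: all one actually needs is that for standardly filtered $M,N$ over $R$ the map $\Ext^i_{\tilde{\OCat}_c}(M,N)\to\Ext^i_{\underline{\tilde{\mathcal{H}}}_q}(\KZ M,\KZ N)$ be an isomorphism for $i\leqslant 1$, and the failure of this is governed by the subquotients of $M,N$ whose associated varieties meet the boundary of $\h$; restricting to proper parabolics $W'\subsetneq W$ and invoking the inductive equivalences $\OCat_c(W')\cong\OCat_{c'}(W')$ --- compatibly with $\KZ'$ and with the biadjoint restriction functors, via Shan's identity $\KZ'\circ{}^{\OCat}\Res={}^{\mathcal{H}}\Res\circ\KZ$ --- pins down these boundary contributions, while on the open part $\KZ$ is already an equivalence and so faithful enough.

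The main obstacle is precisely this $\Ext^1$-comparison: once (iv) is dropped, $\KZ_c$ genuinely loses information between standardly filtered objects, and the only leverage available is that $\KZ$ is always fully faithful on projectives and on tiltings (Proposition~\ref{Prop:KZ_prop}(2)) together with the compatibility of $\KZ$ with parabolic restriction. Turning these into an honest induction --- checking that the syzygies appearing in the relevant standardly and costandardly filtered coresolutions restrict well to proper parabolics, and that the open stratum of the support filtration contributes nothing new --- is where essentially all the effort lies; everything else (the deformation, the specialisation at $\hbar=0$, and the bookkeeping with labels and orders) is routine.
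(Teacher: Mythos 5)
You have correctly located where hypothesis (iv) enters — only through the $0$-faithfulness of $\KZ$ — and your deformation setup (the Weil generic line $\ell$, $R=\C[[\hbar]]$, the common algebra $\underline{\tilde{\mathcal{H}}}_q$, the roles of (ii) and (iii)) matches the paper. But the core of your argument, the ``induction on $|W|$'' that is supposed to replace the $1$-faithfulness of $\KZ$, is exactly the part you leave unproved, and it is not how the difficulty is actually resolved. The paper does not try to repair the Ext-comparison for $\KZ$ itself. Instead it factors $\pi_\hbar=\underline{\pi}_\hbar\circ\hat{\pi}_\hbar$ through a strictly larger quotient: $\hat{\pi}_\hbar=\Hom(\hat{P}_\hbar,\bullet)$, where $\hat{P}_\hbar$ is the sum of the deformed indecomposable projectives $P_\hbar(\lambda)$ with $\operatorname{codim}_\h\VA(L_c(\lambda))\leqslant 1$. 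The decisive input is Lemma \ref{Lem:0_faith} (a special case of \cite[Lemma 2.8]{RSVV}): this intermediate functor $\hat{\pi}$ is $0$-faithful \emph{unconditionally}, with no hypothesis on the $q_{H,i}$. Rouquier's Proposition \ref{Prop:abstr_equiv} is then applied to the triples $(B_\hbar,\hat{P}_\hbar,\hat{\pi}_\hbar)$ and $(B'_\hbar,\hat{P}'_\hbar,\hat{\pi}'_\hbar)$ — i.e.\ with target $\hat{A}_\hbar$ rather than the Hecke algebra — so no new faithfulness statement about $\KZ$ between standardly filtered objects is ever needed.

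What this shift creates is a new obligation your sketch does not anticipate: one must show that the two intermediate quotients $\hat{A}_\hbar\operatorname{-mod}$ and $\hat{A}'_\hbar\operatorname{-mod}$ are equivalent compatibly with the residual functors $\underline{\pi}_\hbar,\underline{\pi}'_\hbar$ down to $A_\hbar\operatorname{-mod}$. This is Lemma \ref{Lem:iso_images}, proved by exhibiting explicit progenerators of $\hat{A}_\hbar\operatorname{-mod}$ of the form $\hat{\pi}_\hbar\bigl({}^{\OCat}\Ind_{W_H}^W P^H_\hbar\bigr)$ for the rank-one parabolics $W_H$, using the compatibility of induction with $\KZ$, and then a direct rank-one computation: within a block of $\mathcal{H}_q(W_H)$ the labels are linearly ordered by the $c$-function with integral differences, which pins down the images of the projectives in $A_\hbar\operatorname{-mod}$ uniquely. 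Your proposal gestures at parabolic restriction and at controlling ``boundary contributions,'' but without the interposed codimension-$\leqslant 1$ quotient and the RSVV lemma there is no mechanism to make the Ext-comparison close up, and your own text concedes that this is where all the effort lies. As written, the proof has a genuine gap at precisely its central step.
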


This proposition is proved in the remainder of the section.

Choose a generic line $\ell$ through $c$. Recall that we consider the algebra $\tilde{H}_c(W):=\C[[\hbar]]\otimes_{\C[\param]}H_{\param}$, where the homomorphism $\C[\param]\rightarrow \C[[\hbar]]$
is given by restricting to the formal neighborhood of $0$ in $\ell$. Form an analogous algebra $\tilde{H}_{c'}(W)$
(for the line $\ell+c'-c$). Let $\tilde{\OCat}_c,\tilde{\OCat}_{c'}$ be the corresponding categories
 $\mathcal{O}$ and let $B_\hbar,B'_\hbar$ be the endomorphism algebras (with opposite multiplications)
of  projective generators in the  categories $\tilde{\OCat}_c,\tilde{\OCat}_{c'}$.

Let us write $A_\hbar$ for $\End(P_{KZ,\hbar})^{opp}$, where $P_{KZ,\hbar}\in \tilde{\OCat}_c$
is the deformation of $P_{KZ}\in \OCat_c$. Note that $A_\hbar=\tilde{\mathcal{H}}_q(W)$.  

Define a projective $B_\hbar$-module $\hat{P}_\hbar$ as follows. Take all  $\lambda_1,\ldots,\lambda_k
\in \Irr(W)$ such that  $\operatorname{codim}_\h\VA(L_c(\lambda_i))\leqslant 1$. Set
$\hat{P}_\hbar:=\bigoplus_{i=1}^k P_\hbar(\lambda_i)$, where $P_\hbar(\lambda_i)$ stands for
the deformation of the projective $B$-module $P(\lambda_i)$ to an automatically projective
$B_\hbar$-module. Let us point out that if (iii) of Proposition \ref{Prop:Chered_equi} holds,
then  the indecomposable summands of $\hat{P}=\hat{P}_\hbar/\hbar \hat{P}_\hbar$ are precisely those of $P_{KZ}$.

Set $\hat{A}_\hbar:=\End_{B_\hbar}(\hat{P}_\hbar)^{opp}$ and let $\hat{\pi}_\hbar$ be the natural
quotient functor $B_\hbar\operatorname{-mod}\twoheadrightarrow \hat{A}_\hbar\operatorname{-mod}$.
We write $\hat{A}, \hat{\pi}$ for the specializations to $\hbar=0$.

The following lemma is a special case of \cite[Lemma 2.8]{RSVV},
a different proof is given in \cite[Section 8.1]{VV_proof}.

\begin{Lem}\label{Lem:0_faith}
The functor $\hat{\pi}$ is 0-faithful.
\end{Lem}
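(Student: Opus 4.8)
The goal is to show that $\hat\pi$, the quotient functor $B\text{-mod}\twoheadrightarrow \hat A\text{-mod}$ attached to the projective $\hat P$ whose indecomposable summands are the $P(\lambda_i)$ with $\operatorname{codim}_\h \VA(L_c(\lambda_i))\le 1$, is $0$-faithful, i.e.\ fully faithful on standardly filtered objects. Since $B\text{-mod}\cong\OCat_c$ and standardly filtered objects have a filtration by the $\Delta_c(\lambda)$, a standard dévissage reduces the claim to showing: for all $\lambda,\mu$, the map $\Hom_{\OCat_c}(\Delta_c(\lambda),\Delta_c(\mu))\to\Hom_{\hat A}(\hat\pi\Delta_c(\lambda),\hat\pi\Delta_c(\mu))$ is an isomorphism. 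Equivalently, writing $\hat\pi=\Hom_{\OCat_c}(\hat P,\bullet)$, one must show that the two-term complex $\hat P_{-1}\to\hat P_0$ giving a presentation of $\Delta_c(\mu)$ by summands of $\hat P$ computes $\Hom$ and $\Ext^1$ correctly — concretely, that $\Hom_{\OCat_c}(\Delta_c(\lambda), C)=0$ for $C$ the cokernel of $\hat P_0\twoheadrightarrow\Delta_c(\mu)$, and an analogous vanishing one step further. The mechanism, exactly as in Rouquier's theory of highest weight covers (\cite[Section 4.2]{rouqqsch}), is that $\hat\pi$ is $0$-faithful iff for every $\lambda$ there is an exact sequence $0\to\Delta_c(\lambda)\to X^0\to X^1$ with $X^0,X^1\in\operatorname{add}(\hat P)$ (a ``$(-1)$-approximation extended one more step''); in fact $1$-faithfulness needs a $2$-step resolution by $\operatorname{add}(\hat P)$, but here only $0$-faithfulness is asserted.

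The key cohomological input is the support estimate for tilting objects, Lemma~\ref{Lem:tilt_basic}: if $\Hom(T_c,L_c(\mu))\ne 0$ or $\Hom(L_c(\mu),T_c)\ne 0$ then $\VA(L_c(\mu))=\h$, and if $\Ext^1$ between $T_c$ and $L_c(\mu)$ is nonzero then $\operatorname{codim}_\h\VA(L_c(\mu))\le 1$. Dually (via the costandard picture and the homological duality $D$ of Section~\ref{SS_Ringel}), one gets the same statements with $T_c$ replaced by the projective-injective $P_{KZ}$; and more to the point $\hat P$ is built to contain precisely the $P(\lambda_i)$ with $\operatorname{codim}_\h\VA(L_c(\lambda_i))\le 1$, so $\operatorname{add}(\hat P)$ contains $P_{KZ}$ and all indecomposable projectives whose simple quotients have support of codimension $\le 1$. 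The plan is: first, take a minimal projective presentation $P^1\to P^0\twoheadrightarrow\Delta_c(\mu)$ in $\OCat_c$; second, apply the homological duality / Ringel duality and the support computation in Lemma~\ref{Lem:D_perv} (perversity of $D$ with respect to the support filtration) to show that the simple constituents of $\ker(P^0\to\Delta_c(\mu))/\operatorname{im}P^1$-type cokernels, and of the relevant $\Ext$-modules, all have support of codimension $\le 1$, hence their projective covers lie in $\operatorname{add}(\hat P)$; third, conclude that replacing $P^0,P^1$ by their $\operatorname{add}(\hat P)$-parts still yields the correct $\Hom$ and one more term of exactness. Since $\Delta_c(\mu)$ is standardly filtered this globalizes by dévissage to all standardly filtered modules, giving $0$-faithfulness of $\hat\pi$.

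The main obstacle is the precise bookkeeping of supports: one must verify that the cokernel of the $\operatorname{add}(\hat P)$-approximation $X^0\twoheadrightarrow\Delta_c(\mu)$ has all composition factors $L_c(\nu)$ with $\operatorname{codim}_\h\VA(L_c(\nu))\le 1$ killed, and that no factor with larger-codimension support obstructs full faithfulness — this is exactly where Lemma~\ref{Lem:tilt_basic}(2), together with the fact that $\hat P$ has \emph{all} projective covers of simples supported in codimension $\le 1$, is used: any $\Ext^1(\Delta_c(\lambda),\bullet)$ that could interfere is detected by a simple of codimension $\le 1$ support, whose projective cover is already a summand of $\hat P$, so the approximation absorbs it. An alternative, and probably cleaner, route — the one I would actually write down — is simply to invoke \cite[Lemma 2.8]{RSVV} verbatim (which is precisely this statement in the general highest-weight setting, with the ``codimension $\le 1$ support'' condition), noting that the hypotheses there are met because Lemma~\ref{Lem:tilt_basic} supplies the needed control of $\Hom$ and $\Ext^1$ between tiltings and simples; the second proof in \cite[Section 8.1]{VV_proof} gives an independent verification. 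So the proof is: \emph{This is \cite[Lemma 2.8]{RSVV}; the hypotheses of that lemma hold here by Lemma~\ref{Lem:tilt_basic}.} I would then add one sentence recalling that $0$-faithfulness means fully faithful on standardly filtered objects and that, by dévissage, it suffices to check the $\Hom$-isomorphism on pairs of standard objects, which is what the cited lemma delivers.
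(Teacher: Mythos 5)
Your proposal lands exactly where the paper does: the paper's entire ``proof'' of Lemma \ref{Lem:0_faith} is the citation of \cite[Lemma 2.8]{RSVV} (with \cite[Section 8.1]{VV_proof} noted as an alternative), which is precisely the route you settle on at the end. The preliminary sketch of the approximation mechanism and the role of Lemma \ref{Lem:tilt_basic} is a reasonable account of why the cited lemma applies, but it is not needed for, and does not change, the argument.
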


Set $\underline{P}_\hbar:=\hat{\pi}_\hbar(P_{KZ,\hbar})$ and let $\underline{\pi}_\hbar$
denote the functor $\Hom_{\hat{A}_\hbar}(\underline{P}_\hbar,\bullet):\hat{A}_\hbar\operatorname{-mod}
\rightarrow A_\hbar\operatorname{-mod}$. It follows
that $\pi_\hbar=\underline{\pi}_\hbar\circ  \hat{\pi}_\hbar$.

We construct the algebras $\hat{A}'_\hbar,A'_\hbar$ and the functors $\hat{\pi}'_\hbar: \B'_\hbar\operatorname{-mod}\rightarrow \hat{A}'_\hbar\operatorname{-mod},
\underline{\pi}'_\hbar: \hat{A}'_\hbar\operatorname{-mod}\rightarrow A'_\hbar\operatorname{-mod}$
similarly to the above. Note that by condition (i) of the proposition, $A_\hbar=A'_\hbar$.

\begin{Lem}\label{Lem:iso_images}
There are progenerators in $\hat{A}_\hbar\operatorname{-mod},
\hat{A}'_\hbar\operatorname{-mod}$ whose images under $\underline{\pi}_\hbar,\underline{\pi}'_\hbar$
are isomorphic.
\end{Lem}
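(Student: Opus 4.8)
The plan is to apply Rouquier's abstract equivalence criterion, Proposition~\ref{Prop:abstr_equiv}, not to the triple $(B_\hbar,P_{KZ,\hbar},\pi_\hbar)$ and its primed analogue (whose quotient functors $\pi_\hbar=\KZ_\hbar$ need not be $0$-faithful once (iv) is dropped), but to the intermediate triples $(\hat{A}_\hbar,\underline{P}_\hbar,\underline{\pi}_\hbar)$ and $(\hat{A}'_\hbar,\underline{P}'_\hbar,\underline{\pi}'_\hbar)$. If these satisfy the hypotheses of that proposition, one gets an equivalence $F\colon\hat{A}_\hbar\operatorname{-mod}\xrightarrow{\sim}\hat{A}'_\hbar\operatorname{-mod}$ intertwining $\underline{\pi}_\hbar$ and $\underline{\pi}'_\hbar$; then, choosing any projective generator $Q_\hbar$ of $\hat{A}_\hbar\operatorname{-mod}$ and putting $Q'_\hbar:=F(Q_\hbar)$, one obtains projective generators with $\underline{\pi}'_\hbar(Q'_\hbar)=\underline{\pi}'_\hbar(F(Q_\hbar))\cong\underline{\pi}_\hbar(Q_\hbar)$, which is the assertion.

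First I would verify the standing hypotheses of Proposition~\ref{Prop:abstr_equiv} for these triples, which are mostly routine. Because $\hat{\pi}_\hbar$ is $0$-faithful (Lemma~\ref{Lem:0_faith}), the Serre quotient $\hat{A}_\hbar\operatorname{-mod}$ of the highest weight category $\tilde{\OCat}_c$ is again highest weight over $\C[[\hbar]]$, with labelling set $\Lambda^{1}:=\{\mu\in\Irr W\mid\operatorname{codim}_\h\VA(L_c(\mu))\leqslant 1\}$ and with the order restricted from $\leqslant^{c}$, and likewise for $\hat{A}'_\hbar\operatorname{-mod}$. One checks that $\Lambda^{1}$ is the same set read off from $c$ or from $c'$: $\VA(L(\mu))=\h$ is equivalent to $\KZ(L(\mu))\neq 0$, hence controlled by $q$, which is unchanged by (i); and $\operatorname{codim}_\h\VA(L(\mu))=1$ is equivalent to ${}^{\OCat}\Res_W^{W_H}(L(\mu))$ being nonzero and finite dimensional for some rank-one parabolic $W_H$, which, $W_H$ being cyclic, is again a function of $q$. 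Condition (iii) gives a common highest weight order on $\Lambda^{1}$ (with (ii) matching the labellings), and (i) identifies the common target $A_\hbar=A'_\hbar=\tilde{\mathcal{H}}_q(W)$, which is split semisimple after inverting $\hbar$ since the generic point of $\ell$ is Weil generic in $\param$. Finally $\underline{P}_\hbar[\hbar^{-1}]=\hat{\pi}_\hbar(P_{KZ,\hbar})[\hbar^{-1}]$ is a projective generator of the semisimple category $\hat{A}_\hbar[\hbar^{-1}]\operatorname{-mod}$: indeed, for a Weil generic parameter $\OCat$ is semisimple with no torsion objects, so $\KZ[\hbar^{-1}]=\Hom(P_{KZ,\hbar}[\hbar^{-1}],\bullet)$ is an equivalence and $P_{KZ,\hbar}[\hbar^{-1}]$ is a progenerator of $\tilde{\OCat}_c[\hbar^{-1}]$; the quotient functor $\hat{\pi}_\hbar[\hbar^{-1}]$ between semisimple categories then sends a progenerator to a progenerator.

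The main obstacle is the last hypothesis, condition (1) of Proposition~\ref{Prop:abstr_equiv}: that the specialization $\underline{\pi}$ of $\underline{\pi}_\hbar$ at $\hbar=0$ is $0$-faithful. This is exactly the point at which one improves on Proposition~\ref{Prop:Chered_equi}, and it cannot be reduced to Lemma~\ref{Lem:0_faith}, since $\underline{\pi}\circ\hat{\pi}=\KZ$ is the very functor that may fail to be $0$-faithful when (iv) is absent. I expect this to be handled by localizing the question onto the rank-one parabolics $W_H$: the simples in the kernel of $\underline{\pi}$ — those surviving $\hat{\pi}$ but annihilated by $\underline{\pi}$ — and the $\Ext^{1}$-groups against standardly filtered objects that measure the defect of faithfulness are seen through the restriction functors ${}^{\OCat}\Res_W^{W_H}$, which are exact, biadjoint to ${}^{\OCat}\Ind_{W_H}^{W}$, and compatible with the $\KZ$ functors (\cite[Theorem 2.1]{Shan}) as well as with $\hat{\pi}_\hbar$ and $\underline{\pi}_\hbar$; this reduces the claim to the corresponding statement for the cyclic groups $W_H$, where it holds because along the generic line $\ell$ the Hecke parameters $q_{H,0}(\hbar),\dots,q_{H,\ell_H-1}(\hbar)$ become pairwise distinct for $\hbar\neq 0$, so that condition (iv), and hence Proposition~\ref{Prop:KZ_prop}(3), is available in the deformed rank-one problem. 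Once $0$-faithfulness of $\underline{\pi}$ and $\underline{\pi}'$ is in hand, Proposition~\ref{Prop:abstr_equiv} yields the equivalence $F$ intertwining $\underline{\pi}_\hbar$ and $\underline{\pi}'_\hbar$, and the lemma follows as in the first paragraph.
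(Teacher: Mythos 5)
Your argument stands or falls on the claim that the specializations $\underline{\pi},\underline{\pi}'$ at $\hbar=0$ are $0$-faithful, and that claim is false in general. Already in rank one every simple has $\operatorname{codim}_\h\VA(L(\lambda))\leqslant 1$, so $\hat{P}_\hbar$ is a full projective generator, $\hat{\pi}_\hbar=\operatorname{id}$ and $\underline{\pi}=\pi=\KZ$; thus your base case asserts that $\KZ$ is $0$-faithful for a cyclic group without condition (iv), which fails. Concretely, for $W=\Z/2$ and $c=1/2$ one has $\Hom(\Delta_c(\operatorname{triv}),\Delta_c(\operatorname{sgn}))=0$ (the head of $\Delta_c(\operatorname{triv})$ is the finite-dimensional $L_c(\operatorname{triv})$), while both Verma modules map under $\KZ$ to the unique simple of $\underline{\mathcal{H}}_q\cong\C[\epsilon]/(\epsilon^2)$, so the map on $\Hom$'s is not surjective. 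More structurally: since $\pi=\underline{\pi}\circ\hat{\pi}$ with $\hat{\pi}$ $0$-faithful, $0$-faithfulness of $\underline{\pi}$ would force $0$-faithfulness of $\KZ$ itself, i.e.\ would make hypothesis (iv) of Proposition~\ref{Prop:KZ_prop}(3) superfluous. Your appeal to the fact that the $q_{H,i}(\hbar)$ become pairwise distinct for $\hbar\neq 0$ is a non sequitur here: $0$-faithfulness is a property of the $\hbar=0$ fibre, which is exactly where (iv) breaks down.

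The actual proof does not try to make $\underline{\pi}$ $0$-faithful; it sidesteps faithfulness entirely by \emph{exhibiting} matching progenerators. One takes $\widehat{P}_\hbar:=\bigoplus_H\hat{\pi}_\hbar\bigl({}^{\OCat}\Ind_{W_H}^W P^H_\hbar\bigr)$, where $P^H_\hbar$ is the sum of the indecomposable projectives of the deformed rank-one category for $W_H$; this is a progenerator of $\hat{A}_\hbar\operatorname{-mod}$, and $\underline{\pi}_\hbar(\widehat{P}_\hbar)={}^{\mathcal{H}}\Ind_{W_H}^W(\pi^H_\hbar(P^H_\hbar))$ by the compatibility of induction with $\KZ$. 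The problem is thereby reduced to showing $\pi^H_\hbar(P^H_\hbar)\cong\pi'^H_\hbar(P'^H_\hbar)$ in rank one, and there the input is not faithfulness but a uniqueness statement: within each block of $\mathcal{H}_q(W_H)$ the labels are linearly ordered by the $c$-function with integral differences, and this characterizes the images of the indecomposable projectives in $A_\hbar\operatorname{-mod}$ uniquely (\cite[Section 7.2]{VV_proof}, \cite[2.4.4]{RSVV}). So the rank-one reduction via parabolic induction is the right instinct, but the statement you need to prove there is this uniqueness of images of projectives, not $0$-faithfulness of $\KZ$.
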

Let us first explain how this lemma implies the claim of the proposition. After that
we will prove the lemma.

The functors $\underline{\pi}_\hbar, \underline{\pi}'_\hbar$ are fully faithful
on the projectives because the functors $\pi_\hbar, \pi'_\hbar$ are.
So Lemma \ref{Lem:iso_images} gives an equivalence
\begin{equation}\label{eq:equiv_interm_quot}\hat{A}_\hbar\operatorname{-mod}\xrightarrow{\sim}
\hat{A}'_\hbar\operatorname{-mod} \end{equation}
that intertwines the functors $$\underline{\pi}_\hbar:\hat{A}_\hbar\operatorname{-mod}
\twoheadrightarrow A_\hbar\operatorname{-mod}, \underline{\pi}'_\hbar: \hat{A}'_\hbar\operatorname{-mod}
\twoheadrightarrow A_\hbar\operatorname{-mod}.$$

We claim that the bijection
\begin{equation}\label{eq:bijection}\operatorname{Irr}(W)\xrightarrow{\sim} \operatorname{Irr}(\hat{A}_\hbar[\hbar^{-1}])\xrightarrow{\sim}
\operatorname{Irr}(\hat{A}'_\hbar[\hbar^{-1}])\xrightarrow{\sim} \operatorname{Irr}(W)\end{equation}
induced by the equivalence (\ref{eq:equiv_interm_quot}) is the identity.
Since the functors $\pi_\hbar,\pi_\hbar'$  factor through $\underline{\pi}_\hbar, \underline{\pi}'_\hbar$,
the bijection (\ref{eq:bijection}) coincides with
\begin{equation}\label{eq:bijection1}\operatorname{Irr}(W)\xrightarrow{\sim} \operatorname{Irr}(A_\hbar[\hbar^{-1}])\xrightarrow{\sim} \operatorname{Irr}(W).\end{equation}
By condition (ii) of the proposition combined with \cite[Theorem 7.11]{BC}, we see that
(\ref{eq:bijection1}) is the identity.

Now the existence of an equivalence $B_\hbar\operatorname{-mod}\xrightarrow{\sim} B'_\hbar\operatorname{-mod}$
that intertwines the functors $\pi_\hbar,\pi'_\hbar$ and preserves the labels follows from
Proposition \ref{Prop:abstr_equiv}. Indeed, $\hat{\pi}$ is 0-faithful by Lemma \ref{Lem:0_faith},
and $\hat{\pi}'$ is 0-faithful for the same reasons, which is (i) of Proposition \ref{Prop:abstr_equiv}.
(ii) of that proposition follows from (iii) of the present proposition combined with
the claim that (\ref{eq:bijection}) is the identity.

\begin{proof}[Proof of Lemma \ref{Lem:iso_images}]
We need to show that there are projective generators $\widehat{P}_\hbar$ of $\hat{A}_\hbar\operatorname{-mod}$
and $\widehat{P}'_\hbar$ of $\hat{A}_\hbar'\operatorname{-mod}$ such that $\underline{\pi}_\hbar(\widehat{P}_\hbar)\cong
\underline{\pi}'_\hbar(\widehat{P}'_\hbar)$.

First, let us deal with the case $\dim \h=1$. For $i,j\in \{1,\ldots,\ell_H\}$ we write $i\sim j$ if $q_{i}=q_j$
equivalently, $h_i-h_j+\frac{i-j}{\ell}\in \Z$. The category $\mathcal{H}_q(W)\operatorname{-mod}$ splits
into the sum of blocks, one per each equivalence class in $\{1,\ldots,\ell\}$. The labels $\lambda_i$
(we write $\lambda_i$ for the label corresponding to the character $z\mapsto z^{-i}$)
belonging to the same block have pairwise different values of the $c$-function with integral pairwise
differences (recall that the values of the $c$-function is $\ell h_i$, see the proof
of Lemma \ref{Lem:c_fun_int}).
So  they are ordered linearly in a highest weight order. This characterizes
the images of the projectives in $A_\hbar\operatorname{-mod}$  uniquely, see \cite[Section 7.2]{VV_proof}
or \cite[2.4.4]{RSVV}.

Now let us deal with the general case. Let us decorate the objects related to $W_H$ with the superscript ``$H$'', e.g.,
for a character $\lambda$ of $W_H$ let $c^H_\lambda$ denote the value of the $c$-function for $W_H$.
For $\widehat{P}_\hbar$ we take the sum of all objects of the form ${}^\mathcal{O}\Ind_{W_H}^W P^H_\hbar$
(or more precisely the image of this object under $\hat{\pi}_\hbar$), where $H$ runs  over the
conjugacy classes of the reflection hyperplanes and $P^H_\hbar$ is the sum of the indecomposable projectives
in the deformed category $\OCat_c(W_H)$. The object $\hat{P}_\hbar$ is a projective generator in $\hat{A}_\hbar\operatorname{-mod}$. Moreover,
$\pi_\hbar({}^\mathcal{O}\Ind_{W_H}^W P^H_\hbar)={}^\mathcal{H}\Ind_{W_H}^W(\pi^H_\hbar(P^H_\hbar))$
by what was recalled in Section \ref{SS_ind_res_O}. Note that, for two characters $\lambda,\mu$ of $W_H$ (that extend
to characters of $W$ as was recalled in Section \ref{SS_param}), we
have $c^H_\lambda-c^H_\mu=\frac{1}{N}(c_\lambda-c_\mu)$, where $N$ is the number of hyperplanes in the conjugacy
class of $H$. From the previous paragraph, it follows that $\pi^H_{\hbar}(P^H_\hbar)=\pi'^H_{\hbar}(P'^H_\hbar)$.
This completes the proof.
\end{proof}

The proof of Proposition \ref{Prop:Chered_equi_impr} is now complete.

\subsection{KZ vs Ringel duality}\label{SS_KZ_Ringel}
Pick a parameter $c$ in an open chamber. Let $\psi\in \underline{\param}_{\Z}$ be such that
$c-\psi$ lies in the opposite chamber. The main result of this subsection  is the following proposition.

\begin{Prop}\label{Prop:Ring_dual}
There is an identification $\OCat_{c-\psi}^\vee\cong \OCat_c$ such that the Ringel duality
$R^{-1}:D^b(\OCat_{c})\rightarrow D^b(\OCat_{c-\psi})$ maps $\Delta_c(\lambda)$
to $\nabla_{c-\psi}(\lambda)$ and  intertwines the KZ functors.
\end{Prop}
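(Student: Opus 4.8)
The plan is to combine the improved equivalence theorem (Proposition \ref{Prop:Chered_equi_impr}) with the explicit description of the homological/Ringel duality $D$ given in Section \ref{SS_Ringel}. Recall that $D = R\Hom_{H_c}(\bullet, H_c)[n]$ gives a derived equivalence $D^b(\OCat_c) \xrightarrow{\sim} D^b(\OCat_c^{r,opp})$ realizing $R^{-1}$ and sending $\Delta_c(\lambda)$ to $\Delta_c^r(\lambda)$, hence $\nabla$-objects to $\nabla$-objects under the appropriate dictionary. The key point is to identify the target highest weight category $\OCat_c^{r,opp}$ with $\OCat_{c-\psi}$.

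First I would recall why $\psi \in \underline{\param}_{\Z}$ and the opposite-chamber hypothesis put us in position to apply Proposition \ref{Prop:Chered_equi_impr}. The category $\OCat_c^{r,opp}$ is highest weight with standard objects $\Delta_c^r(\lambda)$ and the order $\leqslant^{c,r}$ which, as noted in Section \ref{SS_cat_O}, is \emph{opposite} to the $c$-order on $\OCat_c$. Since $c-\psi$ lies in the chamber opposite to that of $c$, the order $\leqslant^{c-\psi}$ is (a refinement compatible with) the opposite of $\leqslant^c$, so $\leqslant^{c,r}$ and $\leqslant^{c-\psi}$ are compatible in the sense required by hypothesis (iii). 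Next, $c^* - (c-\psi)$ should lie in $\param_\Z$ (this is hypothesis (i)); here one uses that the right-handed category $\OCat_c^{r,opp}$ is, via the isomorphism $H_c(W,\h) \cong H_{c^*}(W,\h^*)^{opp}$ recalled before Lemma \ref{Lem:st_basic}, equivalent to a left-handed category for a parameter in the same $\param_\Z$-coset as $c-\psi$. Finally, the condition $\psi \in \underline{\param}_\Z = \ker \mathsf{tw}$ is exactly what guarantees hypothesis (ii), $\mathsf{tw}$ of the relevant difference is the identity — this is where the choice of lattice $\underline{\param}_\Z$ rather than all of $\param_\Z$ is essential.

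With these verifications in hand, Proposition \ref{Prop:Chered_equi_impr} produces a highest weight equivalence between $\OCat_c^{r,opp}$ and $\OCat_{c-\psi}$ matching standard objects $\Delta_c^r(\lambda) \leftrightarrow \Delta_{c-\psi}(\lambda)$ and intertwining the $\KZ$ functors into $\underline{\mathcal{H}}_q\operatorname{-mod}$ (note $q$ is unchanged since we stay in the same $\param_\Z$-coset). Composing the homological duality $D: D^b(\OCat_c) \xrightarrow{\sim} D^b(\OCat_c^{r,opp})$ with this equivalence gives the desired $R^{-1}: D^b(\OCat_c) \xrightarrow{\sim} D^b(\OCat_{c-\psi})$, and the identification $\OCat_{c-\psi}^\vee \cong \OCat_c$ follows formally from $\OCat_c = \OCat_c^{r,opp,\vee}$ stated in Section \ref{SS_Ringel}. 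Tracking through the two functors: $D$ sends $\Delta_c(\lambda)$ to $\Delta_c^r(\lambda)$, which the highest weight equivalence sends to $\Delta_{c-\psi}(\lambda)$; but since $R^{-1}$ exchanges the roles of standard and costandard objects (the covariant Ringel duality $R$ sends $\nabla$ to $\Delta$, so $R^{-1}$ sends $\Delta$ to $\nabla$), the upshot is that the composite sends $\Delta_c(\lambda)$ to $\nabla_{c-\psi}(\lambda)$, which is the stated conclusion. The intertwining of $\KZ$ functors is inherited from both $D$ (which one checks is compatible with $\KZ$, e.g. since $D$ corresponds to $\Hom(T_c, \bullet)$ and $\KZ$ factors through projectives/tiltings) and from the highest weight equivalence.

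The main obstacle I anticipate is not the formal composition but the careful bookkeeping of parameters: namely verifying precisely that $\OCat_c^{r,opp}$, after passing through the $H_c \cong H_{c^*}(W,\h^*)^{opp}$ isomorphism, is the category $\OCat$ for a parameter whose difference with $c-\psi$ lies in $\param_\Z$ and has trivial $\mathsf{tw}$. One must chase the definition $c^*(s) = -c(s^{-1})$ through the formulas \eqref{eq:h_to_c}, \eqref{eq:h_to_q} relating $c$ to the Hecke parameters, confirm the Hecke parameter for the right-handed side matches $q$ (so that the $\KZ$ functors land in the same target), and check the order-reversal statements about chambers genuinely line up with the opposite-chamber hypothesis. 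A secondary subtlety is confirming that $\psi \in \underline{\param}_\Z$ (as opposed to merely $\param_\Z$) translates correctly into the $\mathsf{tw}$-triviality hypothesis (ii) after the duality twist — here one invokes \cite[Theorem 7.11]{BC} and the compatibility of $\mathsf{tw}$ with the operations involved, exactly as in the proof of Proposition \ref{Prop:Chered_equi_impr}.
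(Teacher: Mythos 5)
Your overall skeleton matches the paper's: realize $R^{-1}$ as the homological duality $D$ followed by a labeling-preserving highest weight equivalence $\OCat_c^{r,opp}\xrightarrow{\sim}\OCat_{c-\psi}$, obtained by a Rouquier-type uniqueness argument. But the key step --- actually producing that equivalence --- is where your proposal has a genuine gap. You propose to identify $\OCat_c^{r,opp}$ with a left-handed category via $H_c(W,\h)\cong H_{c^*}(W,\h^*)^{opp}$ and then quote Proposition \ref{Prop:Chered_equi_impr} as a black box. That proposition, however, compares two categories of the form $\OCat_?(W,\h)$ for the \emph{same} reflection representation; the $c^*$-isomorphism lands you in a category of modules over $H_{c^*}(W,\h^*)$ (and, because of which polynomial subalgebra acts finitely, not even the natural category $\OCat$ there), so a further Fourier-type identification is needed, after which every input of the Rouquier argument --- a $0$-faithful KZ-type quotient functor to $\underline{\mathcal{H}}_q\operatorname{-mod}$ on the right-handed side, its compatibility with induction/restriction (needed for the Lemma \ref{Lem:iso_images}-style reduction to rank one), and the identification of labels --- must be re-verified. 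You flag this as ``the main obstacle'' but do not resolve it, and it is precisely the content of the proposition. The paper avoids the $c^*$ detour entirely: it defines $\KZ^{ro}:=\KZ\circ D^{-1}$, proves it is exact and a quotient functor onto $\underline{\mathcal{H}}_q\operatorname{-mod}$ (Lemma \ref{Lem:KZ_ro}), proves $D$ commutes with the restriction functors (Lemma \ref{Lem:D_vs_Res}, Corollary \ref{Cor:KZro_vs_Res}), and then reruns the argument of Section \ref{SS_impr_equi} with $\KZ^{ro}$ in place of $\KZ$; the label matching uses $\tilde{\KZ}^{ro}(\tilde{\nabla}_c^{ro}(\lambda))=\tilde{\KZ}(\tilde{\Delta}_c(\lambda))$ together with $\mathsf{tw}(\psi)=\operatorname{id}$, an ingredient your write-up has no analog of.

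A secondary point: your bookkeeping of standard versus costandard objects in $\OCat_c^{r,opp}$ is off. Since $\OCat_c^{r,opp}$ is the \emph{opposite} of $\OCat_c^r$, the object $\Delta_c^r(\lambda)$ is the \emph{costandard} object of the highest weight category $\OCat_c^{r,opp}$; a highest weight equivalence $\OCat_c^{r,opp}\cong\OCat_{c-\psi}$ therefore sends it directly to $\nabla_{c-\psi}(\lambda)$. Your text first asserts it goes to $\Delta_{c-\psi}(\lambda)$ and then patches this with an appeal to ``$R^{-1}$ exchanges standards and costandards,'' which is circular as written. The conclusion is right, but the reasoning should be cleaned up.
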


We will establish a labeling preserving  equivalence of $\OCat_{c-\psi}$ with $\OCat_{c}^{r,opp}$
preserving the associated varieties.
For this, we need to establish an analog of the KZ functor $\KZ^{ro}: \OCat_c^{r,opp}\rightarrow
\underline{\mathcal{H}}_q(W)\operatorname{-mod}$ (together with its deformed version, meaning deformations
over $\C[[\hbar]]$) and establish
its compatibility with the Ringel duality. Then we will use an argument that is completely analogous
to the previous subsection.

We set $\KZ^{ro}:=\KZ\circ D^{-1}$.

\begin{Lem}\label{Lem:KZ_ro}
For $M\in \OCat_c^{r,opp}$, the object $\KZ^{ro}(M)$ is in  homological degree $0$. The functor $\KZ^{ro}$ defines
an equivalence of $\OCat_c^{r,opp}/(\OCat_c^{r,opp})^{tor}$ and $\underline{\mathcal{H}}_q(W)\operatorname{-mod}$.
\end{Lem}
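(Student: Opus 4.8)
The plan is to unwind the definition $\KZ^{ro}:=\KZ\circ D^{-1}$ and use the known properties of the homological duality $D$ (equivalently $R^{-1}$) recalled in Section \ref{SS_Ringel} together with the properties of $\KZ$ from Proposition \ref{Prop:KZ_prop}. Recall that $D:D^b(\OCat_c)\xrightarrow{\sim} D^b(\OCat_c^{r,opp})$ is the Ringel duality $R^{-1}$ for the highest weight category $\OCat_c^{r,opp}$ (so $D^{-1}$ is the covariant Ringel duality $R$ from $\OCat_c^{r,opp}$ to its Ringel dual $\OCat_c$), and that $D$ maps $\Delta_c(\lambda)$ to $\Delta_c^r(\lambda)$; dually $D^{-1}$ maps tiltings of $\OCat_c^{r,opp}$ to projectives of $\OCat_c$ and, more to the point, maps projectives of $\OCat_c^{r,opp}$ to tiltings of $\OCat_c$ (this is the general property of $R=\Hom(T,\bullet)$: it sends the tilting generator to a projective generator and sends injectives to tiltings, hence the inverse sends projectives to tiltings).

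First I would prove the homological-degree-$0$ statement. Let $M\in\OCat_c^{r,opp}$ and take a finite resolution $P^\bullet\to M$ by projective objects of $\OCat_c^{r,opp}$ (possible since the category has finite homological dimension). Then $D^{-1}(M)$ is computed by $D^{-1}(P^\bullet)$, and each $D^{-1}(P^i)$ is a tilting object of $\OCat_c$, in particular acyclic for $\KZ$ and with $\KZ$ fully faithful on it by Proposition \ref{Prop:KZ_prop}(2). So $\KZ^{ro}(M)=\KZ(D^{-1}(M))$ is computed by the complex $\KZ(D^{-1}(P^\bullet))$ of $\underline{\mathcal H}_q$-modules. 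To see this complex has cohomology only in degree $0$, I would argue as follows: $\KZ$ kills exactly the torsion objects (those with $\VA\neq\h$) by Proposition \ref{Prop:KZ_prop}(1), and the functor $\KZ\circ D^{-1}$ on $D^b(\OCat_c^{r,opp})$, when applied to $M\in\OCat_c^{r,opp}$, lands in degree $0$ precisely because $D^{-1}$ of an object of an abelian category, composed with the quotient functor $\KZ$, is the derived functor of the composite left-exact-ish assignment — more carefully, one checks that the naive (nonderived) functor $M\mapsto \KZ(H_0(D^{-1}(M)))$ agrees with $\KZ^{ro}$ and that the higher terms $\KZ(H_i(D^{-1}(M)))$ vanish. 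For the latter, use Lemma \ref{Lem:D_perv}: $D$ (hence $D^{-1}$) is perverse with respect to the filtration by dimension of support, so for $M\in\OCat_c^{r,opp}=\Cat^2_0$ the complex $D^{-1}(M)$ has $H_k=0$ for $k<0$ and $H_k(D^{-1}M)\in\Cat^1_1$ (i.e.\ $\VA\neq\h$) for $k>0$; applying $\KZ$, which kills $\Cat^1_1$, leaves only $H_0$. This gives the first assertion, and simultaneously identifies $\KZ^{ro}(M)\cong\KZ(H_0(D^{-1}M))$.

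Next I would prove the equivalence statement. The functor $\KZ^{ro}$ is exact on $\OCat_c^{r,opp}$ (it is the composite of the now-established degree-$0$ functor $M\mapsto H_0(D^{-1}M)$, which by the perversity statement (III) of Lemma \ref{Lem:D_perv} induces an equivalence on the top graded piece $\Cat^2_0/\Cat^2_1\xrightarrow{\sim}\Cat^1_0/\Cat^1_1$, with $\KZ$, an exact quotient functor). Its kernel is the full preimage of the torsion subcategory under $H_0(D^{-1}\cdot)$; since $H_0(D^{-1}\cdot)$ induces $\OCat_c^{r,opp}/(\OCat_c^{r,opp})^{tor}\xrightarrow{\sim}\OCat_c/\OCat_c^{tor}$ (this is exactly (III) of the perverse structure, noting $\Cat^i_1$ is the torsion subcategory in each case) and $\KZ$ induces $\OCat_c/\OCat_c^{tor}\xrightarrow{\sim}\underline{\mathcal H}_q\operatorname{-mod}$ by Proposition \ref{Prop:KZ_prop}(1), the composite $\KZ^{ro}$ induces an equivalence $\OCat_c^{r,opp}/(\OCat_c^{r,opp})^{tor}\xrightarrow{\sim}\underline{\mathcal H}_q(W)\operatorname{-mod}$, as claimed.

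The main obstacle I anticipate is the careful bookkeeping in the first paragraph's argument: making rigorous the passage from ``$\KZ^{ro}(M)$ is a complex built from tiltings'' to ``its cohomology is concentrated in degree $0$,'' i.e.\ correctly invoking Lemma \ref{Lem:D_perv} to control the homology sheaves $H_k(D^{-1}M)$ for $M$ an honest object (not just showing the complex is acyclic off degree $0$ abstractly, but pinning down which homological convention and direction of the filtration is in force, since $D^{-1}$ and $D$ shift the roles of $\Cat^1,\Cat^2$). Everything else is a formal consequence of properties already recorded for $\KZ$ and for the perverse Ringel duality $D$.
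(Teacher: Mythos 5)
Your proof is correct and is essentially the paper's own argument, which simply cites the perversity of the homological duality (Lemma \ref{Lem:D_perv}, applicable to $D^{-1}$ since it is again a homological duality, as $D^2=\operatorname{id}$) together with the fact that $\KZ$ induces an equivalence $\OCat_c/\OCat_c^{tor}\cong\underline{\mathcal{H}}_q(W)\operatorname{-mod}$. The only slip is the side remark that $D^{-1}$ sends projectives of $\OCat_c^{r,opp}$ to tiltings of $\OCat_c$ (the correct general statement is that the covariant Ringel duality sends injectives to tiltings and tiltings to projectives), but this is never used, since your actual argument runs entirely through the perversity statement.
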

\begin{proof}
This is a direct consequence of Lemma \ref{Lem:D_perv} and the claim that $\KZ$ induces an equivalence
$\OCat_c/\OCat_c^{tor}\cong \underline{\mathcal{H}}_q(W)\operatorname{-mod}$.
\end{proof}

%
%
%
%
We still can consider the restriction functors ${}^{\OCat,r}\Res_W^{W'}$
for $\OCat_c^r(W)$. They are compatible with the homological
dualities in the following sense. Let $D_{W'}$ denote the homological duality $D^b(\OCat_c(W'))\rightarrow D^b(\OCat_c^{r}(W'))$.

\begin{Lem}\label{Lem:D_vs_Res}
We have a functor isomorphism $D_{W'}\circ {}^{\OCat}\Res_{W}^{W'}\cong {}^{\OCat,r}\Res_W^{W'}\circ D$.
\end{Lem}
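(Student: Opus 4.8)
The plan is to prove the isomorphism $D_{W'}\circ {}^{\OCat}\Res_{W}^{W'}\cong {}^{\OCat,r}\Res_W^{W'}\circ D$ by unwinding the definitions of both the homological duality $D=R\Hom_{H_c}(\bullet,H_c)[n]$ and the Bezrukavnikov--Etingof restriction functor, and showing that the two constructions commute because the restriction functor is built out of operations (completion along $b\in\h$, the isomorphism $\theta_b$, passing to an idempotent summand, and taking $\mathfrak{h}(W')$-locally finite vectors) each of which is compatible with $R\Hom$ into the algebra. First I would recall that $D$ is an honest derived functor, so it suffices to produce a natural isomorphism of functors $D^b(\OCat_c(W))\to D^b(\OCat_c^{r}(W'))$; since both sides send $\OCat_c(W)$ into a single cohomological degree on suitable subcategories (indeed ${}^{\OCat}\Res$ is exact and $D$ is perverse by Lemma \ref{Lem:D_perv}), it is enough to check the isomorphism after composing with the forgetful functors and on the level of the underlying complexes of bimodules.

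The key technical point is that $R\Hom_{H_c}(\bullet, H_c)$ localizes well. Concretely, for $M\in\OCat_c(W)$ and $b\in\h$ with $W_b=W'$, completion $M\mapsto M^{\wedge_b}=\C[\h/W]^{\wedge_b}\otimes_{\C[\h/W]}M$ is flat over $\C[\h/W]$, hence
\[
R\Hom_{H_c}(M,H_c)^{\wedge_b}\cong R\Hom_{H_c^{\wedge_b}}(M^{\wedge_b},H_c^{\wedge_b}),
\]
and similarly on the right-module side. Next, the Bezrukavnikov--Etingof isomorphism $\theta_b:H_c(W,\h)^{\wedge_b}\xrightarrow{\sim} Z(W,W',H_c(W',\h)^{\wedge_0})=\operatorname{Mat}_{|W/W'|}(H_c(W',\h)^{\wedge_0})$ is filtration preserving, and $R\Hom$ into a matrix algebra over a ring $A$ is computed, via the Morita equivalence cutting out by the idempotent $e(W')$, by $R\Hom_A(\bullet, A)$ applied to the corresponding $A$-module, up to the bimodule $\operatorname{Hom}(\C^{|W/W'|},\C^{|W/W'|})$ which is self-dual. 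The shift by $n=\dim\h$ versus $\dim\h_{W'}$ is reconciled by the extra tensor factor $D(\h^{W'})$ appearing in $H_c(W',\h)=H_c(W')\otimes D(\h^{W'})$: since $D(\h^{W'})$ is the Weyl algebra of $\h^{W'}$, which is its own homological dual up to the shift $\dim\h^{W'}$ (and $n=\dim\h_{W'}+\dim\h^{W'}$), the degree shift matches on the nose. Finally, taking $\h(W')$-Euler-locally-finite vectors, which is the last step producing ${}^{\OCat}\Res$ from the $H_c^{\wedge_b}$-module, commutes with $R\Hom_{H_c(W')}(\bullet,H_c(W'))$ because the Euler element on the $\Ext$-module is induced from the Euler element acting on $M$ via the graded structure, exactly as in the proof of Lemma \ref{Lem:D_perv}.

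Assembling these compatibilities: completion intertwines $D$ with $D^{\wedge_b}$; $\theta_b$ together with the idempotent reduction intertwines $D^{\wedge_b}$ with the homological duality on $H_c(W',\h)^{\wedge_0}$-modules; the Weyl-algebra factor accounts for the shift discrepancy; and the locally-finite-vectors step intertwines the latter with $D_{W'}$ on $\OCat_c(W')$. Composing the four intertwiners gives $D_{W'}\circ {}^{\OCat}\Res_{W}^{W'}\cong {}^{\OCat,r}\Res_W^{W'}\circ D$, where one checks that the right-handed version of restriction ${}^{\OCat,r}\Res$ arises from exactly the same sequence of operations applied to right modules (using the anti-isomorphism $H_c(W,\h)\cong H_{c^*}(W,\h^*)^{opp}$ is not even needed, as the construction is symmetric in left/right). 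The main obstacle I expect is bookkeeping the grading/filtration shifts carefully so that the Weyl-algebra contribution $D(\h^{W'})$ produces precisely the shift $[\dim\h^{W'}]$ with the correct sign and trivializes (its homological dual bimodule is $\bigwedge^{\dim\h^{W'}}\h^{W'}\otimes(\text{something})$, which must be identified with the discrepancy between the ``$\bigwedge^n\h^*\otimes\lambda^*$'' twist in the definition of $\Delta^r$ for $W$ versus for $W'$); once this identification is pinned down the rest is formal flatness and Morita-invariance of $R\Hom$.
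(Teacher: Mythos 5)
Your proof is correct and follows essentially the same route as the paper: flatness of $H_c^{\wedge_b}$ over $H_c$ gives compatibility of $R\Hom_{H_c}(\bullet,H_c)$ with completion, the isomorphism $\theta_{b*}$ and the idempotent $e(W')$ obviously intertwine the dualities, and the $D(\h^{W'})$ tensor factor absorbs the shift discrepancy. The only cosmetic difference is that the paper handles the last step by showing that the quasi-inverse equivalence $N\mapsto(\C[\h^{W'}]\otimes N)^{\wedge_0}$ commutes with the dualities, rather than arguing directly with the Euler-locally-finite-vectors functor as you do.
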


Here is an immediate corollary of Lemma \ref{Lem:D_vs_Res}, the definition of $\KZ^{ro}$
and the claim that the usual restriction functors intertwine $\KZ$. Here
${\KZ^{ro}}'$ is the KZ functor $\OCat_c^{r}(W')^{opp}\twoheadrightarrow \mathcal{H}_q(W')\operatorname{-mod}$.

\begin{Cor}\label{Cor:KZro_vs_Res}
We have ${\KZ^{ro}}'\circ {}^{\OCat,r}\Res_{W}^{W'}\cong {}^{\mathcal{H}}\Res_W^{W'}\circ \KZ^{ro}$.
\end{Cor}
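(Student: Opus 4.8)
The claim $\KZ^{ro'}\circ {}^{\OCat,r}\Res_{W}^{W'}\cong {}^{\mathcal{H}}\Res_{W}^{W'}\circ \KZ^{ro}$ is asserted to be ``an immediate corollary'' of Lemma~\ref{Lem:D_vs_Res}, the definition $\KZ^{ro}:=\KZ\circ D^{-1}$, and Shan's theorem that $\KZ'\circ {}^\OCat\Res_W^{W'}\cong {}^{\mathcal{H}}\Res_W^{W'}\circ \KZ$. So the plan is simply to chase these isomorphisms through, being careful about which ambient algebra ($H_c(W)$ versus $H_c(W')$) each functor lives over and about the degree shifts in the homological dualities $D$ and $D_{W'}$.

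\textbf{Plan.} First I would unwind the definitions on both sides. On the right-hand side, ${}^{\mathcal{H}}\Res_W^{W'}\circ \KZ^{ro} = {}^{\mathcal{H}}\Res_W^{W'}\circ \KZ\circ D^{-1}$, where $D^{-1}: D^b(\OCat_c^{r,opp}(W))\to D^b(\OCat_c(W))$ is the inverse of the homological duality for $W$ and $\KZ$ is the KZ functor for $W$. On the left-hand side, $\KZ^{ro'}\circ {}^{\OCat,r}\Res_W^{W'} = \KZ'\circ D_{W'}^{-1}\circ {}^{\OCat,r}\Res_W^{W'}$, where now $D_{W'}^{-1}: D^b(\OCat_c^{r}(W'))\to D^b(\OCat_c(W'))$ is the homological duality for $W'$ and $\KZ'$ is the KZ functor for $W'$.

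\textbf{Key steps.} Starting from the right-hand side and working toward the left: (1) Apply Shan's theorem \cite[Theorem 2.1]{Shan}, as recalled in Section~\ref{SS_ind_res_O}, to commute $\KZ$ past ${}^{\OCat}\Res_W^{W'}$: this gives $\KZ\circ {}^{\OCat}\Res_W^{W'}\cong \KZ'\circ {}^{\OCat}\Res_W^{W'}$ --- more precisely ${}^{\mathcal{H}}\Res_W^{W'}\circ \KZ\cong \KZ'\circ {}^\OCat\Res_W^{W'}$ as functors $\OCat_c(W)\to \mathcal{H}_q(W')\operatorname{-mod}$. (2) Feed this the object $D^{-1}(M)$ for $M\in \OCat_c^{r,opp}(W)$, so that ${}^{\mathcal{H}}\Res_W^{W'}\circ \KZ\circ D^{-1}\cong \KZ'\circ {}^\OCat\Res_W^{W'}\circ D^{-1}$. (3) Apply Lemma~\ref{Lem:D_vs_Res}, which states $D_{W'}\circ {}^{\OCat}\Res_W^{W'}\cong {}^{\OCat,r}\Res_W^{W'}\circ D$; inverting and rearranging gives ${}^{\OCat}\Res_W^{W'}\circ D^{-1}\cong D_{W'}^{-1}\circ {}^{\OCat,r}\Res_W^{W'}$ as functors $D^b(\OCat_c^{r,opp}(W))\to D^b(\OCat_c(W'))$. (4) Substitute to obtain $\KZ'\circ D_{W'}^{-1}\circ {}^{\OCat,r}\Res_W^{W'} = \KZ^{ro'}\circ {}^{\OCat,r}\Res_W^{W'}$, which is the left-hand side. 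One small point to verify: by Lemma~\ref{Lem:KZ_ro}, $\KZ^{ro}(M)$ is concentrated in homological degree $0$ for $M\in \OCat_c^{r,opp}$, so all these a priori derived-category isomorphisms actually descend to honest functor isomorphisms on the abelian categories, as the statement intends; and ${}^{\OCat,r}\Res$ and ${}^{\mathcal{H}}\Res$ are exact so they do not disturb the degree bookkeeping.

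\textbf{Main obstacle.} The genuinely substantive input is Lemma~\ref{Lem:D_vs_Res}, $D_{W'}\circ {}^\OCat\Res_W^{W'}\cong {}^{\OCat,r}\Res_W^{W'}\circ D$, which has not yet been proved in the excerpt; granting it, the corollary is a formal diagram chase. If I had to prove Lemma~\ref{Lem:D_vs_Res} as well, the hard part would be matching the homological duality $D=R\Hom_{H_c}(\bullet,H_c)[n]$ with its $W'$-analogue $D_{W'}=R\Hom_{H_c(W',\h)}(\bullet,H_c(W',\h))[n]$ across the Bezrukavnikov--Etingof completion isomorphism $\theta_b: H_c(W,\h)^{\wedge_b}\xrightarrow{\sim} Z(W,W',H_c(W',\h)^{\wedge_0})$: one must check that $\theta_b$ intertwines the duality functors up to the appropriate Morita equivalence and degree shift, using that ${}^\OCat\Res_W^{W'}$ is the composite of $\bullet^{\wedge_b}$ with a category equivalence and that $R\Hom$ commutes with the flat base change $\C[\h/W]\to \C[\h/W]^{\wedge_b}$ (the analogue of the flatness argument used in Lemma~\ref{Lem:dag_Tor} for the $\bullet_{\dagger,W'}$ functors). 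But for the corollary itself, no obstacle: it is the routine composition of three known isomorphisms.
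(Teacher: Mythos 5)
Your proposal is correct and is exactly the argument the paper intends: the corollary is stated as an immediate consequence of Lemma \ref{Lem:D_vs_Res}, the definition $\KZ^{ro}=\KZ\circ D^{-1}$, and Shan's compatibility of $\KZ$ with restriction, and your diagram chase composes these three isomorphisms in precisely the expected way. Your identification of Lemma \ref{Lem:D_vs_Res} as the only substantive input (proved separately in the paper via flatness of the completion) is also accurate.
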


\begin{proof}[Proof of Lemma \ref{Lem:D_vs_Res}]
The homological duality commutes with the completion functor $M\mapsto M^{\wedge_b}$.
Indeed, $$R\operatorname{Hom}_{H_c}(M,H_c)\otimes_{H_c}H_c^{\wedge_b}\xrightarrow{\sim}
 R\operatorname{Hom}_{H_c}(M,H^{\wedge_b}_c) \xrightarrow{\sim} R\operatorname{Hom}_{H_c^{\wedge_b}}(M^{\wedge_b}, H_c^{\wedge_b})$$
because $H_c^{\wedge_b}$ is flat as a right and as a left $H_c$-module.
Also the homological duality obviously commutes with $\theta_{b*}$ and with multiplying by $e(W')$.
So the functor $M\mapsto e(W')\theta_{b*}(M^{\wedge_b})$ commutes with the homological duality.
On the other hand, the functor $N\mapsto (\C[\h^{W'}]\otimes N)^{\wedge_0}$  commutes with the homological
dualities as well. The latter functor is an equivalence of $\OCat_c(W')$ and the category
$\OCat(H_c^{\wedge_0}(W',\h))$ of all $H^{\wedge_0}(W',\h)$-modules that are finitely
generated over $\C[\h]^{\wedge_0}$, see \cite[Section 3.5]{BE}. Under the identification
$\OCat_c(W')\cong \OCat(H_c^{\wedge_0}(W',\h))$, the functor $\Res_W^{W'}$ becomes
$M\mapsto e(W')\theta_{b*}(M^{\wedge_b})$. As we have seen the latter commutes
with the homological dualities, and this proves the lemma.
\end{proof}

Let us make two remarks about the functors above. First of all, the functor ${}^{\OCat,r}\Res_{W}^{W'}$
admits a left and, simultaneously, a right adjoint functor, ${}^{\OCat,r}\Ind_{W'}^{W}$, the proof is the same as for the left-handed analogs. Second of all, we can define straightforward analogs of the functors above for the deformed categories,
for example, $\tilde{\KZ}^{ro}: \tilde{\OCat}_c^{r,opp}\twoheadrightarrow \tilde{\underline{\mathcal{H}}}_q(W)\operatorname{-mod}$
defined as $\tilde{\KZ}^{ro}:=\tilde{\KZ}\circ D^{-1}$.  Straightforward analogs of
Lemmas \ref{Lem:KZ_ro},\ref{Lem:D_vs_Res} and also of Corollary \ref{Cor:KZro_vs_Res}
still hold. The proofs of these analogs repeat the proofs above.

\begin{proof}[Proof of Proposition \ref{Prop:Ring_dual}]
We  adopt the argument of Section \ref{SS_impr_equi} to prove that there is
a label preserving highest weight equivalence $\OCat_c^{r,opp}\xrightarrow{\sim}
\OCat_{c-\psi}$. For this we note that the $c$-orders for both categories are opposite
to the $c$-order for $\OCat_c$. Also the identification of the labels in $\OCat_{c-\psi}(W)$
and $\OCat_c^r(W)^{opp}$ is the identity. This is because $\tilde{\KZ}(\tilde{\Delta}_{c-\psi}(\lambda))[\hbar^{-1}]\cong
\tilde{\KZ}(\tilde{\Delta}_c(\lambda))[\hbar^{-1}]$ 
and $\tilde{\KZ}^{ro}(\tilde{\nabla}_c^{ro}(\lambda))=
\tilde{\KZ}(\tilde{\Delta}_c(\lambda))$ (we use the notation like $\tilde{\Delta},\tilde{\nabla}$ for the standard
and costandard modules in the  categories $\tilde{\mathcal{O}}$). The former isomorphism holds because $\mathsf{tw}(\psi)=\operatorname{id}$,  the latter is a consequence of the definition of
$\tilde{\KZ}^{ro}$. So we see that the direct analogs of (i)-(iii) in Proposition \ref{Prop:Chered_equi}
hold.

We can now  get rid of (iv) of Proposition \ref{Prop:Chered_equi} as in the proof of Lemma \ref{Lem:iso_images}.
Namely, we first show that $\tilde{\OCat}_c^{r,opp}(W_H)\xrightarrow{\sim}
\tilde{\OCat}_{c-\psi}(W_H)$ (an equivalence intertwining the KZ functors)
for all reflection hyperplanes $H$. Thanks to (the deformed version of) Corollary \ref{Cor:KZro_vs_Res}, the deformed
$\KZ^{ro}$-functors intertwine (again deformed) induction functors.
This allows to establish an equivalence of the quotients of
$\tilde{\OCat}_c^{r,opp}(W), \tilde{\OCat}_{c-\psi}(W)$ defined
analogously to $\hat{A}_\hbar\operatorname{-mod}, \hat{A}'_\hbar\operatorname{-mod}$,
again in the same way as in Lemma \ref{Lem:iso_images}.

A labeling preserving
equivalence $\OCat_c^{r,opp}\cong \OCat_{c-\psi}$ is established, it intertwines the functors $\KZ^{ro}$
and $\KZ$.
\end{proof}

Let us also point out that the equivalence $\OCat_c^{r,opp}\cong \OCat_{c-\psi}$ preserves the supports.
This follows from Corollary \ref{Cor:KZro_vs_Res} combined with \cite[6.4.9]{GL}.

\subsection{Bimodule $\B_c(\psi)$}
Let $c$ be a parameter and $\psi\in \param_{\Z}$.

\begin{Lem}\label{Lem:B_def}
There is a unique  HC $H_{c-\psi}$-$H_c$-bimodule $\mathcal{B}_c(\psi)$ with the following properties:
\begin{enumerate}
\item $\mathcal{B}_{c}(\psi)$ is simple.
\item $\mathcal{B}_{c}(\psi)[\delta^{-1}]$ is the regular $D(\h^{reg})\#W$-bimodule.
\end{enumerate}
\end{Lem}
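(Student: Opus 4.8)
The plan is to realise $\mathcal{B}_c(\psi)$ as a simple subquotient of a product of shift bimodules, and to use the simplicity of $D(\h^{reg})\# W$ at every turn. Throughout I would use: $H_{c-\psi}[\delta^{-1}]\cong H_c[\delta^{-1}]\cong D(\h^{reg})\# W$ (\cite[Section 5.1]{GGOR}); $\delta$ is a non-zero-divisor in $H_{c-\psi}$, in $H_c$ and in $D(\h^{reg})\# W$; and $D(\h^{reg})\# W$ is a simple ring, since $W$ acts freely on $\h^{reg}$, so this algebra is Morita equivalent (via $D(\h^{reg})$) to $D(\h^{reg}/W)$, which is simple because $\h^{reg}/W$ is a smooth irreducible affine variety. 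For existence, recall from Section \ref{SS_param} that $\param_\Z$ is $\Z$-spanned by the elements $\bar\chi$, $\chi\in\Hom(W,\C^\times)$; writing $-\psi$ as a $\Z$-combination of these and composing the corresponding shift bimodules $\mathcal{B}_{\bullet,\pm\bar\chi}$ (which stay Harish-Chandra under tensor product by Proposition \ref{Prop_HC_prop}(2)) I would obtain a HC $H_{c-\psi}$-$H_c$-bimodule $\mathcal{B}'$. Each factor $\mathcal{B}_{\bullet,\pm\bar\chi}[\delta^{-1}]$ is the \emph{regular} $D(\h^{reg})\# W$-bimodule: over $\h^{reg}$ the Berest--Chalykh isomorphism $\varphi$ of Lemma \ref{Lem:shift_HC} becomes conjugation by the semiinvariant unit $\delta_\chi=\prod_H\alpha_H^{a_H}\in\C[\h^{reg}]^\times$ (so that $e_\chi=\delta_\chi e\delta_\chi^{-1}$), hence $eH_\bullet e_\chi[\delta^{-1}]$ is $D(\h^{reg})^W$ as a bimodule, and tensoring with the Morita bimodules $(D(\h^{reg})\# W)e$ and $e(D(\h^{reg})\# W)$ yields the regular bimodule; since localisation commutes with the relevant tensor products, $\mathcal{B}'[\delta^{-1}]$ is the regular bimodule as well.

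To turn $\mathcal{B}'$ into a \emph{simple} bimodule, I would pass to the Serre quotient $\mathcal{Q}$ of $\HC(H_{c-\psi},H_c)$ by the bimodules whose associated variety is properly contained in $(\h\oplus\h^*)/W$; this is exactly the kernel of $\bullet[\delta^{-1}]$. By property (6) of Section \ref{SS_dag_prop} applied to the trivial parabolic $W'=\{1\}$ (so $\Xi=W$ and $H_\param(W')$ specialises to $\C$), the functor $\bullet_{\dagger,\{1\}}$ identifies $\mathcal{Q}$ with a subquotient-closed full subcategory of $\operatorname{Rep}(W)$, so $\mathcal{Q}$ is semisimple with finitely many simple objects. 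The image $\overline{\mathcal{B}'}$ is nonzero (as $\mathcal{B}'$ has full associated variety), hence has a simple summand, and I would let $\mathcal{B}_c(\psi)$ be the simple HC bimodule of full associated variety corresponding to that summand under the bijection between simples of $\mathcal{Q}$ and simple HC bimodules not killed by $\bullet[\delta^{-1}]$. Since $\bullet[\delta^{-1}]$ factors through $\mathcal{Q}$, the bimodule $\mathcal{B}_c(\psi)[\delta^{-1}]$ is a nonzero direct summand of $\mathcal{B}'[\delta^{-1}]$, i.e.\ of the regular bimodule; as $D(\h^{reg})\# W$ is simple, it equals the regular bimodule. Thus $\mathcal{B}_c(\psi)$ satisfies (1) and (2).

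For uniqueness: if $M$ is any simple HC $H_{c-\psi}$-$H_c$-bimodule with $M[\delta^{-1}]\cong D(\h^{reg})\# W$, then the kernel of $M\to M[\delta^{-1}]$ is a proper sub-bimodule, hence $0$ by simplicity, so $M$ embeds into $D(\h^{reg})\# W$ with image localising back to all of it. Given two such, $M,M'\hookrightarrow D(\h^{reg})\# W$, the intersection of the two images is a sub-bimodule of each, hence Harish-Chandra (the HC bimodules form a Serre subcategory); it is nonzero, because for $0\ne d\in D(\h^{reg})\# W$, the powers of $\delta$ being a two-sided Ore set and both images localising to everything, some $\delta^k d$ lies in both images, and $\delta^k d\ne 0$ by absence of $\delta$-torsion. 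By simplicity of $M$ and $M'$ the intersection equals each image, so $M\cong M'$.

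I do not expect a deep obstacle. The two points that must be handled with care are: (a) that $\mathcal{B}'[\delta^{-1}]$ really is the \emph{regular} bimodule and not some nontrivial twist of it — the twisting automorphism coming from $\varphi$ is inner over $\h^{reg}$ precisely because $\delta_\chi$ becomes invertible there, which is also the reason the lemma holds for all $\psi\in\param_\Z$ rather than only for $\psi\in\underline{\param}_\Z$; and (b) extracting from $\mathcal{B}'$ a \emph{simple} HC bimodule whose associated variety is still all of $(\h\oplus\h^*)/W$ (so that it does not localise to $0$), for which the semisimplicity of $\mathcal{Q}$ furnished by Section \ref{SS_dag_prop} is the essential input.
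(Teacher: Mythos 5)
Your proof is correct. The existence half follows the paper's own route: you build the same product $\mathcal{B}_{c,\psi}$ of shift bimodules, check that it localizes to the regular $D(\h^{reg})\#W$-bimodule (your remark that the Berest--Chalykh twist becomes inner over $\h^{reg}$ is exactly the paper's ``isomorphisms are given by multiplying by suitable products of $\alpha_s$''), and extract the unique simple constituent surviving localization; your detour through the semisimplicity of the Serre quotient via $\bullet_{\dagger,\{1\}}$ is harmless but not needed, since exactness of localization plus simplicity of the ring $D(\h^{reg})\#W$ already isolates that constituent. The uniqueness half is where you genuinely diverge: the paper deduces from property (2) that $\mathcal{B}_c(\psi)_{\dagger,\{1\}}$ is the trivial $W$-module $\mathbf{1}_{c,c-\psi}$ and then embeds any candidate into $\mathbf{1}_{c,c-\psi}^{\dagger,\{1\}}$ using the adjunction of property 7) of Section \ref{SS_dag_prop}, whereas you embed both candidates directly into $D(\h^{reg})\#W$ (kernel of localization is proper, hence zero by simplicity) and intersect the images, using the Ore property of the powers of $\delta$ and the absence of $\delta$-torsion to see the intersection is a nonzero HC subbimodule of each. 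Your argument is more elementary in that it avoids $\bullet^{\dagger,\{1\}}$ entirely; the paper's version has the advantage of producing the canonical hull $\mathbf{1}_{c,c-\psi}^{\dagger,\{1\}}$, which is reused immediately afterwards (e.g.\ in Proposition \ref{Lem:B_socle} and in the construction of $\mathcal{B}_{\param^1}(\psi)$ in Section \ref{SS_HC_fam}).
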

\begin{proof}
Let us recall that, for a character $\chi$ of $W$, we have the HC $H_{c'+\bar{\chi}}$-$H_{c'}$-bimodule
$\B_{c',\bar{\chi}}$ and the HC $H_{c'}$-$H_{c'+\bar{\chi}}$-bimodule $\B_{c'+\bar{\chi},-\bar{\chi}}$.
There is a sequence of characters $\chi_1,\ldots,\chi_k$ of $W$ such that $-\psi=\epsilon_1\bar{\chi}_1+\ldots+\epsilon_k\bar{\chi}_k$, where $\epsilon_i=\pm 1$,
and we set $c_i:=c+\sum_{j=1}^i \epsilon_j\bar{\chi}_j$.
Consider the $H_{c-\psi}$-$H_c$-bimodule  $$\B_{c,\psi}:=\B_{c_{k-1}, \epsilon_k\bar{\chi}_k}\otimes_{H_{c_{k-1}}}\ldots\otimes_{H_{c_1}}\B_{c_0,\epsilon_1\bar{\chi}_1}$$ (the notation $\B_{c,\psi}$ is ambiguous as the bimodule depends
on the choice $\chi_i,\epsilon_i$ but this is not important for us).
By the construction, $e\B_{c',\bar{\chi}}e[\delta^{-1}]=D(\h^{reg})^{W, \chi},
e\B_{c'+\bar{\chi},-\bar{\chi}}e[\delta^{-1}]=D(\h^{reg})^{W, \chi^{-1}}$. These bimodules
are isomorphic to $D(\h^{reg})^{W}$, isomorphisms are given by multiplying
by suitable products of elements $\alpha_s$.
Therefore $e \B_{c,\psi}e[\delta^{-1}]=D(\h^{reg})^W$.
So $\B_{c,\psi}[\delta^{-1}]=D(\h^{reg})\#W$. The algebra $D(\h^{reg})\#W$ is simple.
So there is a unique simple composition factor of $\B_{c,\psi}$ that does not vanish
under inverting $\delta$. We take this composition factor for $\B_c(\psi)$.

Let us prove the uniqueness of $\B_c(\psi)$. From (2) and the construction of $\bullet_{\dagger,\{1\}}$
recalled in Section \ref{SS_dag_constr} it follows that $\B_{c}(\psi)_{\dagger,\{1\}}$
is the trivial $W$-module, let us write $\mathbf{1}_{c,c-\psi}$ for this bimodule.
So we get a homomorphism $\B_c(\psi)\rightarrow {\bf 1}_{c,c-\psi}^{\dagger,\{1\}}$ whose kernel and cokernel
have proper associated varieties by 7) of Section \ref{SS_dag_prop}. Now
(1) and (2) determine $\B_{c}(\psi)$ uniquely.
\end{proof}

We will need an equivalent formulation of (2).

\begin{Lem}\label{Lem:loc_equiv}
Let $\B$ be a  HC $H_{c-\psi}$-$H_c$-bimodule. Then the following two conditions are equivalent:
\begin{enumerate}
\item $\B[\delta^{-1}]$ is a regular $D(\h^{reg})\#W$-bimodule.
\item The functor $\B\otimes_{H_c}\bullet:\OCat_c\rightarrow \OCat_{c-\psi}$ intertwines the KZ functors.
\end{enumerate}
\end{Lem}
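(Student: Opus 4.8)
The plan is to translate both conditions into statements about $W$-equivariant local systems on $\h^{reg}$. Recall that $H_c[\delta^{-1}]\cong D(\h^{reg})\#W\cong H_{c-\psi}[\delta^{-1}]$ and that localization $\bullet[\delta^{-1}]$ is exact; hence for a HC $H_{c-\psi}$-$H_c$-bimodule $\B$ the localization $\B[\delta^{-1}]$ is a $D(\h^{reg})\#W$-bimodule, and, by flatness of localization together with the identity $H_c[\delta^{-1}]\otimes_{H_c}M=M[\delta^{-1}]$, there is a natural isomorphism $(\B\otimes_{H_c}M)[\delta^{-1}]\cong \B[\delta^{-1}]\otimes_{D(\h^{reg})\#W}M[\delta^{-1}]$ for $M\in\OCat_c$. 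Since $\KZ_c$ is $\loc_c$ followed by the parameter-independent composite of $e(\bullet)$ and passage to the fiber with its monodromy, since $\loc_c$ is a localization functor, and since by \cite{Hecke_dim} its essential image $\mathcal{S}$ is, via monodromy, the subcategory $\underline{\mathcal{H}}_q\operatorname{-mod}$ of $\C B_W\operatorname{-mod}$ (here $q$ and $q'$ are identified through the rescaling built into $\param_{\Z}$), condition (2) is equivalent to the statement that $G:=\B[\delta^{-1}]\otimes_{D(\h^{reg})\#W}\bullet$, viewed as an endofunctor of $W$-equivariant local systems on $\h^{reg}$, restricts to a functor isomorphic to the identity on $\mathcal{S}$. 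With this reformulation $(1)\Rightarrow(2)$ is immediate, since the regular bimodule gives $G\cong\operatorname{id}$ on the whole category.

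For $(2)\Rightarrow(1)$ assume $G|_{\mathcal{S}}\cong\operatorname{id}_{\mathcal{S}}$. First, $\B[\delta^{-1}]\neq 0$, as $G$ fixes $\loc_c(P_{KZ})\neq 0$. Next, $\B[\delta^{-1}]$ is a HC bimodule over $D(\h^{reg})\#W$, and the only symplectic leaf of $(\h^{reg}\oplus\h^*)/W$ is the open dense one (every $x\in\h^{reg}$ has trivial stabilizer), so $\VA(\B[\delta^{-1}])$ is the whole space and $\B[\delta^{-1}]$ has a well-defined generic rank $r\geqslant 1$; as $G(N)$ then has rank $r\cdot\operatorname{rk}N$ for every local system $N$, fixing $\loc_c(P_{KZ})\neq 0$ forces $r=1$. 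A generic-rank-one HC $D(\h^{reg})\#W$-bimodule has no proper sub- or quotient bimodule (there is no smaller leaf on which one could be supported), hence is isomorphic to the regular bimodule twisted by a character $\chi$ of $B_W$ — this is exactly the localized shape of the bimodules $\B_{c',\bar{\chi}}$ occurring in the proof of Lemma \ref{Lem:B_def}, where $e\B_{c',\bar{\chi}}e[\delta^{-1}]=D(\h^{reg})^{W,\chi}$ — and under the monodromy equivalence $G$ becomes $N\mapsto\chi\otimes N$ on $\C B_W\operatorname{-mod}$. Finally, $\mathcal{S}=\underline{\mathcal{H}}_q\operatorname{-mod}$ contains a one-dimensional module: the character $\xi$ of $B_W$ with $\xi(T_H)=q_{H,1}$ is well defined (conjugate generators receive equal values) and kills the Hecke relations, hence factors through $\underline{\mathcal{H}}_q$. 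From $\chi\otimes\xi\cong\xi$, forced by $G|_{\mathcal{S}}\cong\operatorname{id}$, and the fact that one-dimensional $B_W$-representations are isomorphic only when equal, we get $\chi=1$, so $\B[\delta^{-1}]$ is the regular bimodule, which is (1).

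The step I expect to be the main obstacle is the structural input in the second paragraph: that a generic-rank-one HC $D(\h^{reg})\#W$-bimodule is a character twist of the regular bimodule, and that the associated tensoring functor on local systems is tensoring by the corresponding rank-one local system. Everything else is formal bookkeeping with localization and the monodromy description of $\KZ$. One could instead route the argument through the functor $\bullet_{\dagger,\{1\}}$ and property 8) of Section \ref{SS_dag_prop}, which identifies $\B_{\dagger,\{1\}}$ with the fiber of this rank-one local system; but even then condition (2) is what is needed to conclude that its monodromy is trivial.
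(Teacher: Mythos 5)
Your reduction of condition (2) to the statement that $G:=\B[\delta^{-1}]\otimes_{D(\h^{reg})\#W}\bullet$ is isomorphic to the identity on the essential image of $\loc$, and the direction $(1)\Rightarrow(2)$, are fine and consistent with how the paper reads the KZ functor. The problem is exactly the structural input you yourself flag: the claim that a generic-rank-one ``HC'' bimodule over $D(\h^{reg})\#W$ is the regular bimodule twisted by a rank-one local system, with $G$ becoming tensoring by that local system. Nothing in the paper provides this classification, and it is the crux of your argument rather than a routine step; note in particular that after inverting $\delta$ the adjoint action of $S(\h)^W$ is no longer locally nilpotent, so ``HC bimodule over $D(\h^{reg})\#W$'' is not even a notion the paper defines. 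Two further points: (i) the evidence you cite, $e\B_{c',\bar{\chi}}e[\delta^{-1}]=D(\h^{reg})^{W,\chi}$, is a twist by a character of the finite group $W$ --- and the paper observes it is isomorphic to the untwisted bimodule after multiplying by a product of the $\alpha_s$ --- so it says nothing about twists by characters of $B_W$; (ii) even granting the classification, your last step passes from ``$\chi=1$ as a character of $B_W$'' to ``$\B[\delta^{-1}]$ is regular'', which requires the rank-one local system to have regular singularities, since a connection on $\h^{reg}$ with trivial monodromy need not be trivial. (This second gap is repairable: because your $\xi$ has rank one, the isomorphism $L_\chi\otimes\xi\cong\xi$ of local systems forces $L_\chi\cong\mathcal{O}$ after tensoring with $\xi^\vee$, with no regularity needed --- but then you must argue with local systems rather than monodromy representations throughout. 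You would also need to know that the abelianization of $B_W$ is free on the classes of the $T_H$ for your $\xi$ to exist; it is simpler to take $\xi:=\KZ(\Delta_c(\operatorname{triv}))$.)

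The paper's proof of $(2)\Rightarrow(1)$ avoids the classification entirely. It applies the hypothesis only to $\Delta_c(\operatorname{triv})$, whose localization is $\mathcal{O}_{\h^{reg}/W}$, and uses the local nilpotency of the adjoint action of $\C[\h^{reg}]^W$ to upgrade the resulting isomorphism to a bimodule homomorphism $e\B e[\delta^{-1}]\rightarrow\Diff(\mathcal{O}_{\h^{reg}/W},\mathcal{O}_{\h^{reg}/W})=D(\h^{reg})^W$; this map is onto because $D(\h^{reg})^W$ is a simple bimodule, and its kernel $K$ satisfies $K\otimes_{D(\h^{reg})^W}\mathcal{O}_{\h^{reg}/W}=0$, which forces $K=0$ by a finiteness argument on the formal neighborhood of a point of $\h^{reg}/W$. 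To salvage your route you would have to prove the classification statement (a Kashiwara-type identification of bimodules with locally nilpotent adjoint $\C[\h^{reg}]^W$-action with $D$-modules on the diagonal, together with an argument that local nilpotency of $\operatorname{ad}(S(\h)^W)$ on the non-localized $\B$ forces $\mathcal{O}$-coherence of the resulting $D$-module). As written, the proof is incomplete.
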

\begin{proof}
In (2) we can replace the KZ functors with the localization functors. Now (1) obviously implies (2).
Let us prove the implication (2)$\Rightarrow$(1). We have $$e\mathcal{B}e[\delta^{-1}]\otimes_{D(\h^{reg})^W}e\mathsf{loc}\Delta_c(\operatorname{triv})\cong
e\mathsf{loc}\Delta_c(\operatorname{triv}).$$
Since the adjoint action of $\C[\h^{reg}]^W$ on $e\mathcal{B}e[\delta^{-1}]$ is locally nilpotent,
we see that the previous isomorphism gives rise to a $D(\h^{reg})^W$-bimodule homomorphism
$$e\mathcal{B}e[\delta^{-1}]\rightarrow \operatorname{Diff}(e\loc(\Delta_c(\operatorname{triv})),
e\loc(\Delta_c(\operatorname{triv}))),$$
where on the right hand side we have the space of differential maps.
But $e\loc(\Delta_c(\operatorname{triv})))=\mathcal{O}_{\h^{reg}/W}$ and so the space of the differential
maps we need is just $D(\h^{reg})^W$. Since this bimodule is simple, we conclude that
$e\mathcal{B}e[\delta^{-1}]\twoheadrightarrow D(\h^{reg})^W$. If $K$ is the kernel of this map,
then $K\otimes_{D(\h^{reg})^W}\mathcal{O}_{\h^{reg}/W}=0$.

Let us show that the last equality
implies $K=0$. Pick a point $x\in \h^{reg}/W$ and let $Z$ denote its formal
neighborhood in $\h^{reg}/W$. The restriction $K_Z:=\C[Z]\otimes_{\C[\h^{reg}/W]}K$
is a $D(Z)$-bimodule with $K_Z\otimes_{D(Z)}\C[Z]=0$. Note that $K_Z$ comes equipped
with a filtration compatible with the filtration on $D(Z)$ by the order of a
differential operator and such that $\gr K_Z$ is a $\C[T^*Z]$-module. It follows that
$K_Z$ is finitely generated over $\C[Z]\otimes \C[T^*_xZ]$ (where the first factor
acts by left multiplications and the right factor acts by right multiplications).
Therefore $K_Z$ is the direct sum of several copies of $D(Z)$. From
$K_Z\otimes_{D(Z)}\C[Z]=0$ we deduce that $K_Z=0$. It follows that $K=0$.
\end{proof}


Now let us describe  $\B_{c}(\psi)_{\dagger,W'}$, where $W'$ be a parabolic subgroup of $W$.

\begin{Prop}\label{Lem:B_socle}
We have $\B_{c}(\psi)_{\dagger,W'}=\B'_{c}(\psi)$, where the right hand side is the analog of $\B_c(\psi)$ for $W'$.
\end{Prop}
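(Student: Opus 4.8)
## Proof Plan for Proposition \ref{Lem:B_socle}

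The strategy is to characterize $\B'_c(\psi)$ by the same two properties that characterize $\B_c(\psi)$ in Lemma \ref{Lem:B_def} — simplicity and the prescribed localization — and to verify that $\B_c(\psi)_{\dagger,W'}$ enjoys both. The uniqueness part of Lemma \ref{Lem:B_def}, applied to $W'$ in place of $W$, will then finish the argument.

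First I would check property (2) of Lemma \ref{Lem:B_def} for $\B_c(\psi)_{\dagger,W'}$, i.e.\ that $\B_c(\psi)_{\dagger,W'}[\delta_{W'}^{-1}]$ is the regular $D(\h_{W'}^{reg})\#W'$-bimodule (where $\delta_{W'}$ is the discriminant for $W'$ acting on $\h_{W'}$, and one incorporates the extra Weyl-algebra factor $D(\h^{W'})$ as in the conventions of Section \ref{SS_dag_constr}). By property 3) of Section \ref{SS_dag_prop}, $\VA(\B_c(\psi)_{\dagger,W'})$ is the union of leaves $\Leaf_{W''}$ with $W''\subset W'$ conjugate in $W$ to a parabolic appearing in $\VA(\B_c(\psi))$; since $\B_c(\psi)[\delta^{-1}]$ is the regular bimodule, $\VA(\B_c(\psi))$ contains the open leaf $\Leaf_{\{1\}}$, so $\VA(\B_c(\psi)_{\dagger,W'})$ contains the open leaf in $(\h_{W'}\oplus\h_{W'}^*)/W'$. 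Moreover, by property 4) of Section \ref{SS_dag_prop}, $\bullet_{\dagger,W'}$ preserves generic rank, and the generic rank of $\B_c(\psi)$ (a bimodule which away from the discriminant is $D(\h^{reg})\#W$, hence $eH_ce$-locally free of rank $1$) is $1$; thus $\B_c(\psi)_{\dagger,W'}$ has generic rank $1$, so its localization at $\delta_{W'}$ is a rank-one $D(\h_{W'}^{reg})^{W'}$-bimodule supported everywhere, and the argument in the proof of Lemma \ref{Lem:loc_equiv} (differential maps of $\OCat_{\h_{W'}^{reg}/W'}$) identifies it with the regular bimodule.

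Next I would verify property (1): that $\B_c(\psi)_{\dagger,W'}$ is simple. Here I would use property 6) of Section \ref{SS_dag_prop}: passing to the quotient category $\HC_{\Leaf_{\{1\}}}$ (killing HC bimodules with proper associated variety, i.e.\ those annihilated by $[\delta^{-1}]$), the functor $\bullet_{\dagger,\{1\}}$ — and more relevantly its factorization through $\bullet_{\dagger,W'}$ followed by $(\bullet)_{\dagger,\{1\}}$ for $W'$, using the transitivity of the restriction functors — is a full embedding with image closed under subquotients. Since $\B_c(\psi)$ is the unique simple composition factor of $\B_{c,\psi}$ not killed by $[\delta^{-1}]$, and $\bullet_{\dagger,W'}$ is exact (property 1) of Section \ref{SS_dag_prop}) and intertwines tensor products (property 2)), one computes $\B_{c,\psi\,\dagger,W'}$ as the corresponding composite of the elementary bimodules $\B_{c',\bar\chi}$ for $W'$ — their restrictions are again of that elementary type by property 5) of Section \ref{SS_dag_prop} and the compatibility of the Berest--Chalykh shift isomorphism with restriction — so $\B_{c,\psi\,\dagger,W'}$ has a unique simple composition factor not killed by $[\delta_{W'}^{-1}]$, and by exactness this factor is $\B_c(\psi)_{\dagger,W'}$ itself (the other composition factors of $\B_{c,\psi}$ restrict into the Serre subcategory supported on the boundary of $\Leaf_{\{1\}}$, which maps into bimodules with proper associated variety for $W'$).

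The main obstacle I anticipate is the bookkeeping around transitivity and compatibility of the restriction functors $\bullet_{\dagger,W'}$: one needs that restricting $\B_{c_i,\epsilon_i\bar\chi_i}$ to $W'$ yields $\B'_{c_i,\epsilon_i\bar\chi_i'}$ for the appropriate character $\chi_i'$ of $W'$ (the restriction of $\chi_i$), with the Cherednik parameters shifting correctly under the map $\param\to\param^1$; this rests on matching the Berest--Chalykh isomorphism $\varphi\colon eH_\param e\xrightarrow{\sim} e_\chi H_\param e_\chi$ with its $W'$-counterpart through the completion isomorphism (\ref{eq:compl_iso}), together with property 5) of Section \ref{SS_dag_prop} relating $(e\B e)_{\dagger,W'}$ and $e_{W'}(\B_{\dagger,W'})e_{W'}$. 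Once this compatibility is in place, everything else is a formal consequence of the listed properties of $\bullet_{\dagger,W'}$ and the uniqueness in Lemma \ref{Lem:B_def}.
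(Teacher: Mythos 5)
Your high-level strategy --- verify the two characterizing properties of Lemma \ref{Lem:B_def} for $\B_c(\psi)_{\dagger,W'}$ and invoke the uniqueness --- is exactly the paper's, but both of your verifications have genuine gaps, and the second one skips what is actually the bulk of the paper's proof.

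For property (2), generic rank one is not enough. A rank-one HC bimodule over $D(\h_{W'}^{reg})\#W'$ is, after localization, the pushforward to the diagonal of a rank-one flat connection, which need not be trivial (the adjoint action of a vector field on such a pushforward is multiplication by the connection form, hence still nilpotent). The ``differential maps'' argument in Lemma \ref{Lem:loc_equiv} does not start from rank one: it starts from the isomorphism $e\B e[\delta^{-1}]\otimes_{D(\h^{reg})^W}e\,\loc\Delta_c(\operatorname{triv})\cong e\,\loc\Delta_c(\operatorname{triv})$, i.e.\ from hypothesis (2) of that lemma. So you must first show that $\B_c(\psi)_{\dagger,W'}\otimes_{H_c(W')}\bullet$ intertwines the KZ (equivalently, localization) functors. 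The paper does this via property 8) of Section \ref{SS_dag_prop} ($\B_{\dagger,W'}\otimes\Res_W^{W'}(\bullet)\cong\Res_W^{W'}(\B\otimes\bullet)$) together with $\KZ'\circ{}^{\OCat}\Res_W^{W'}\cong{}^{\mathcal{H}}\Res_W^{W'}\circ\KZ$, plus the fact that the essential image of $\Res_W^{W'}$ contains a projective generator; none of this appears in your verification of (2).

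For property (1), knowing that $\B_c(\psi)_{\dagger,W'}$ is a subquotient of $(\B_{c,\psi})_{\dagger,W'}$ and that the latter has a unique composition factor with full associated variety only tells you that $\B_c(\psi)_{\dagger,W'}$ has a unique full-support composition factor; it does not exclude composition factors with \emph{proper} associated variety, and exactness of $\bullet_{\dagger,W'}$ cannot exclude them (restriction does not kill proper-support bimodules in general --- it only does so for supports disjoint from $\overline{\Leaf}_{W'}$). Property 6) suffers from the same limitation, since it is a statement about the quotient category modulo bimodules with proper associated variety. The missing ingredient is the paper's steps (1) and (2): one shows directly that $\B_c(\psi)_{\dagger,W'}$ has no sub-bimodule and no quotient bimodule with proper associated variety, by using flatness of $\C[\h^{reg-W'}/W']^{\wedge_Y}$ over $\C[\h/W]$ to transport a nonzero annihilator of an ideal $J$ (resp.\ a proper-support quotient) back to $\B_c(\psi)$ itself, contradicting the simplicity and full support of $\B_c(\psi)$. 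Without some substitute for this argument your proof of simplicity does not close.
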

\begin{proof}
The proof is in three steps.
\begin{enumerate}
\item The socle of the bimodule $\B_{c}(\psi)_{\dagger,W'}$ is a simple HC bimodule whose associated variety
coincides with $(\h_{W'}\oplus \h_{W'}^*)/W'$.
\item The head of the bimodule $\B_{c}(\psi)_{\dagger,W'}$ is a simple HC bimodule whose associated variety
coincides with $(\h_{W'}\oplus \h_{W'}^*)/W'$.
\item The bimodule $\B_{c}(\psi)_{\dagger,W'}$ satisfies the analogs of (1) and (2) in Lemma \ref{Lem:B_def}.
\end{enumerate}

Let us show (1). It is enough to show that there are no HC bimodules with proper associated variety in the socle of
$\B_{c}(\psi)_{\dagger,W'}$. Indeed, then the socle is simple because the generic rank of
$\B_{c}(\psi)_{\dagger,W'}$ is $1$.

So assume the converse: there is a subbimodule in $\B_{c}(\psi)_{\dagger,W'}$ with proper associated
variety. So there is a parabolic subgroup $W''\subset W'$ such that the ideal $J'\subset \C[\h_+]^{W'}$
of the stratum corresponding to $W''$ has nonzero annihilator in $\B_{c}(\psi)_{\dagger,W'}$.
From the construction of $\bullet_{\dagger,W'}$ it follows that the ideal $J\subset \C[\h]^W$
corresponding to $W''$ has nonzero annihilator in $\C[\h^{reg-W'}/W']^{\wedge_Y}\otimes_{\C[\h/W]}\B_c(\psi)$.
But since $\C[\h^{reg-W'}/W']^{\wedge_Y}$ is a flat $\C[\h/W]$-algebra,
we see that \begin{align*}&\Hom_{\C[\h/W]}(\C[\h/W]/J, \C[\h^{reg-W'}/W']^{\wedge_Y}\otimes_{\C[\h/W]}\B_c(\psi))=\\
&\C[\h^{reg-W'}/W']^{\wedge_Y}\otimes_{\C[\h/W]}\Hom_{\C[\h/W]}(\C[\h/W]/J,\B_c(\psi)).\end{align*} So $J$
has nonzero annihilator in $\B_c(\psi)$ as well. The union of the annihilators of the ideals
$J^m, m\in \Z_{>0},$ in $\B_c(\psi)$ is a  subbimodule in $\B_{c}(\psi)$ that needs to coincide with
$\B_c(\psi)$ because the latter is simple. The associated variety
of $\B_c(\psi)$ is contained in $\overline{\Leaf}_{W''}$ because we have an epimorphism
$(H_{c-\psi}/J^k)^{\oplus r}\twoheadrightarrow \B_c(\psi)$ of left $H_{c-\psi}$-modules. Contradiction
with the fact that the associated variety of $\B_c(\psi)$ is $(\h\oplus \h^*)/W$.

Let us proceed to (2). Assume the contrary: there is an epimorphism $\B_c(\psi)_{\dagger,W'}\twoheadrightarrow \B'$,
where $\B'$ is a $H_{c-\psi}(W')$-$H_c(W')$-bimodule with proper associated variety. So, in the notation of
Lemma \ref{Lem:restr_fun_defi}, $\mathcal{F}'(\B_c(\psi))\twoheadrightarrow \iota\circ\mathcal{G}(\B')$
(an epimorphism of $H_{c-\psi}^{\wedge_Y}$-$H_c(\psi)^{\wedge_Y}$-bimodules), where $\iota$
stands for the Morita equivalence between the categories of $H_{?}^{\wedge_Y}$- and $H_?(W',\h)^{\wedge_Y}$-bimodules.
Note that the composition $\B_c(\psi)\rightarrow \iota\circ\mathcal{G}(\B')$ cannot be zero because
$\B_c(\psi)$ generates $\mathcal{F}(\B_c(\psi))$. On the other hand, $\iota\circ\mathcal{G}(\B')$
is annihilated by some nontrivial ideal in $\C[\h]^W$. Since $\B_c(\psi)$ is simple, it also
needs to be annihilated by that ideal. This contradiction proves (2).

It remains to show that $\B_c(\psi)_{\dagger,W'}$ satisfies (1) and (2) of Lemma \ref{Lem:B_def}.
(1) of that Lemma is a consequence of the observation that the generic
rank of $\B_{c}(\psi)$ is 1 combined with (1) and (2) of the present proof.
(2) follows from Lemma \ref{Lem:loc_equiv}, and the functor isomorphisms
$\operatorname{KZ}'\circ \,^{\mathcal{O}}\operatorname{Res}_W^{W'}\cong \,^{\mathcal{H}}\Res_W^{W'}
\circ \operatorname{KZ}$ and $\mathcal{B}_{\dagger,W'}\otimes_{H_c(W')}{}^{\OCat}\Res_W^{W'}(\bullet)\cong
{}^{\OCat}\Res_W^{W'}(\B_c(\psi)\otimes_{H_c}\bullet)$. More precisely, the functor isomorphisms
imply that the functor $\B_{c}(\psi)_{\dagger,W'}\otimes_{H_c(W')}\bullet$ intertwines
 $\operatorname{KZ}'$  on the essential images of $\Res_W^{W'}$. But, as we have seen in the proof of Lemma
\ref{Lem:tens_HC_prgen}, the essential image contains a projective generator. So
$\B_{c}(\psi)_{\dagger,W'}\otimes_{H_c(W')}\bullet$ intertwines the functors $\operatorname{KZ}'$.
\end{proof}

Now let us describe the bimodule $\B_c(\psi)_{\dagger,W_H}$ under the following assumptions
\begin{itemize}
\item The parameters $c,c-\psi$ (or, more precisely, their restrictions to $W_H\cap S$) lie in opposite open
chambers for $W_H$.
\item The parameters $c,c-\psi$ are spherical.
\end{itemize}

Recall an isomorphism $e_{W_H}H_ce_{W_H}\cong \A_\lambda$ from Section \ref{SS_bimod_rk1}. Let $\lambda^{opp}$ be the parameter corresponding to $c-\psi$ and $\theta$ be a stability condition such that $(\lambda^{opp},\theta)$ satisfies the abelian localization.

\begin{Prop}\label{Prop:rank1}
We have an isomorphism $e_{W_H}\B_c(\psi)_{\dagger,W_H}e_{W_H}\cong \A_{\lambda,\lambda^{opp}-\lambda}^{(\theta)}$.
\end{Prop}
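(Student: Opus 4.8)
The strategy is to characterize $\A_{\lambda,\lambda^{opp}-\lambda}^{(\theta)}$ by the same two properties that determine $\B_c(\psi)$ via Lemma~\ref{Lem:B_def} (transported to the cyclic group $W_H$), namely simplicity and the localization behavior, and then to invoke Proposition~\ref{Lem:B_socle} which says $\B_c(\psi)_{\dagger,W_H}$ is precisely the $W_H$-analog $\B'_c(\psi)$. After applying the spherical idempotent $e_{W_H}$ and using property 5) of Section~\ref{SS_dag_prop}, the left-hand side becomes $(e_{W_H}\B_c(\psi)e_{W_H})_{\dagger,W_H}$, and we are reduced to showing that this spherical bimodule, which is a HC $\A_{\lambda^{opp}}$-$\A_\lambda$-bimodule of generic rank $1$ whose localization to $\h^{reg}/W_H$ is the regular $D(\h^{reg})^{W_H}$-bimodule, must coincide with $\A_{\lambda,\lambda^{opp}-\lambda}^{(\theta)}$.

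First I would verify that $\A_{\lambda,\varphi}^{(\theta)}$ (with $\varphi=\lambda^{opp}-\lambda$, a character of $G$ coming from $\psi\in\underline{\param}_\Z$) is itself a HC bimodule with these properties: generic rank $1$ is clear since $\gr$ of $\A_{\lambda,\varphi}^\theta$ is a line bundle on $X^\theta$ hence generically trivial as an $\mathcal{O}_{X^\theta}$-module, and its restriction over the open part of $\C^2/W_H$ lying in $\h^{reg}/W_H$ is the regular $D(\h^{reg})^{W_H}$-bimodule because away from the exceptional fiber the quantum Hamiltonian reduction recovers $D(\h^{reg})^{W_H}$ and the $(G,\varphi)$-semiinvariants there are free of rank one. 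Simplicity is where the abelian localization hypothesis on $(\lambda^{opp},\theta)$ enters: under abelian localization $\Gamma$ is an equivalence, and the sheaf-theoretic bimodule $\A_{\lambda,\varphi}^\theta$ is a twist of the structure sheaf, hence has no proper nonzero subbimodules supported on the exceptional divisor; combined with rank $1$ this forces $\A_{\lambda,\varphi}^{(\theta)}$ to have simple socle and head, and the argument of Lemma~\ref{Lem:B_def} (the map to and from $\mathbf{1}^{\dagger,\{1\}}$) then pins it down as the unique simple HC bimodule restricting to the regular bimodule generically.

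The technical heart is matching the two sets of parameters correctly. I would use the explicit dictionary from Section~\ref{SS_bimod_rk1}: the formula $\lambda_i=\frac1\ell(1-2\sum_j c_j\exp(2\pi\sqrt{-1}ij/\ell))$ relating the Cherednik parameter $c$ (restricted to $W_H$) to $\lambda\in\g^*$, so that $c-\psi\mapsto\lambda^{opp}$, and the identification $\Hom(G,\C^\times)\cong\Z^\ell_0$ of characters $\varphi$ of $G$ with elements of $\param_\Z$, under which $-\psi\leftrightarrow\lambda^{opp}-\lambda=\varphi$. One must check that the sequence of ``one-step'' shift bimodules $\B_{c',\bar\chi_i}$ used to build $\B_{c,\psi}$, after applying $\bullet_{\dagger,W_H}$ and $e_{W_H}\cdot e_{W_H}$, becomes the corresponding composition of quantum Hamiltonian reduction bimodules $\A^{(\theta)}_{\lambda',\varphi_i}$; here Corollary~\ref{Cor:spherical}(2) and the fact that under abelian localization these sphericalized shift bimodules are Morita equivalences let us identify the composite with a single $\A^{(\theta)}_{\lambda,\varphi}$, since tensor products of such bimodules over the reductions correspond to addition of the $G$-characters $\varphi_i$.

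The main obstacle I expect is precisely this last compatibility between the ``Cherednik-side'' construction of $\B_c(\psi)$ as an iterated tensor product of Berest--Chalykh shift bimodules and the ``quiver-variety-side'' bimodule $\A^{(\theta)}_{\lambda,\varphi}$ built by quantum Hamiltonian reduction with a nontrivial character — i.e., showing that Holland's isomorphism $e_{W_H}H_c e_{W_H}\cong\A_\lambda$ intertwines the two families of bimodules compatibly with composition. One clean way around grinding through this is to avoid matching the constructions directly and instead argue by uniqueness on both sides: show each of $e_{W_H}\B_c(\psi)_{\dagger,W_H}e_{W_H}$ and $\A^{(\theta)}_{\lambda,\varphi}$ is the unique simple HC $\A_{\lambda^{opp}}$-$\A_\lambda$-bimodule whose localization to $\h^{reg}/W_H$ is regular and whose class agrees after inverting $\delta$, using Lemma~\ref{Lem:loc_equiv} to phrase the localization condition in terms of intertwining KZ functors, and then invoke Lemma~\ref{Lem:ring_rk1} to see that $\A^{(\theta)}_{\lambda,\varphi}\otimes^L_{\A_\lambda}\bullet$ realizes $R^{-1}$, which by Proposition~\ref{Prop:Ring_dual} intertwines the KZ functors — forcing the two bimodules to coincide.
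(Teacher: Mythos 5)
Your proposal follows essentially the same route as the paper: reduce to $W_H$ via Proposition \ref{Lem:B_socle}, check that both $e_{W_H}\B^H_c(\psi)e_{W_H}$ and $\A^{(\theta)}_{\lambda,\lambda^{opp}-\lambda}$ are simple HC bimodules localizing to the regular $D(\h^{reg})^{W_H}$-bimodule (the paper cites \cite[Lemma 5.1]{BL} for simplicity of the right-hand side and uses factoriality of $(\C^\times\times\C)/W_H$ for the line-bundle triviality, just as you sketch), and conclude by the uniqueness part of Lemma \ref{Lem:B_def}. The worries in your third paragraph about matching the iterated shift-bimodule construction with the reduction bimodules are indeed unnecessary once one argues by uniqueness, exactly as your final paragraph observes.
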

\begin{proof}
Let $\B^H_c(\psi)$ be the analog of $\B_c(\psi)$ for $W_H$ so that $\B_c(\psi)_{\dagger,W_H}=\B^H_c(\psi)$.
It remains to show that $e_{W_H}B^H_c(\psi)e_{W_H}=\A_{\lambda,\lambda^{opp}-\lambda}^{(\theta)}$.
Let us notice that both sides are simple bimodules. For the right hand side, this is \cite[Lemma 5.1]{BL}
and for the left  hand side this follows from the definition. Also when we localize $\delta_H$, we will get
the regular $D(\C^\times)^{W_H}$-bimodules. For the left hand side, this again follows from the definition.
For the right hand side, this follows  from the  construction: $\A_{\lambda,\lambda^{opp}-\lambda}^\theta$
quantizes a line bundle on $X^\theta$ and the restriction of any line bundle to $(\C^\times\times \C)/W_H$
is trivial because the latter variety is factorial. The isomorphism $e_{W_H}B^H_c(\psi)e_{W_H}=\A_{\lambda,\lambda^{opp}-\lambda}^{(\theta)}$ now follows from the uniqueness part of Lemma \ref{Lem:B_def}.
\end{proof}

\subsection{Proof of Theorem \ref{Thm:Ring_dual_HC}}\label{SS_Ringel_proof}
First, we will need a corollary of Proposition \ref{Prop:Ring_dual}.
Assume that $c,c-\psi$ lie in the opposite chambers.  Let $T_{c-\psi}$ denote the sum of all indecomposable tiltings in
$\OCat_{c-\psi}$ and $P_{c}$ be the sum of all indecomposable projectives in $\OCat_{c}(W)$.
Recall the functor $\loc:\OCat_?\rightarrow \LS_{rs}(\h^{reg}/W)$.

\begin{Cor}\label{Cor:loc_coinc}
We have $\loc(P_{c}(\lambda))\cong \loc(T_{c-\psi}(\lambda))$.
\end{Cor}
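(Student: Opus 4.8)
The plan is to deduce the Corollary directly from Proposition \ref{Prop:Ring_dual} and the standard behaviour of the Ringel duality on tilting and projective objects. First I would recall from Section \ref{SS_Ringel} that, under the identification $\OCat_{c-\psi}^\vee\cong\OCat_c$ provided by Proposition \ref{Prop:Ring_dual}, the covariant Ringel duality $R\colon D^b(\OCat_{c-\psi})\to D^b(\OCat_c)$ carries the indecomposable tilting objects of $\OCat_{c-\psi}$ to the indecomposable projective objects of $\OCat_c$, all concentrated in homological degree $0$; equivalently, $R^{-1}(P_c(\lambda))$ is an indecomposable tilting object of $\OCat_{c-\psi}$, placed in degree $0$.

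Next I would pin down the label. Since $R^{-1}$ maps $\Delta_c(\lambda)$ to $\nabla_{c-\psi}(\lambda)$ and is an equivalence of exact categories between standardly filtered objects of $\OCat_c$ and costandardly filtered objects of $\OCat_{c-\psi}$, applying $R^{-1}$ to the epimorphism $P_c(\lambda)\twoheadrightarrow\Delta_c(\lambda)$ with kernel $\Delta_c$-filtered by the $\Delta_c(\nu)$ with $\nu>^c\lambda$ shows that $R^{-1}(P_c(\lambda))$ surjects onto $\nabla_{c-\psi}(\lambda)$ with kernel $\nabla_{c-\psi}$-filtered by the $\nabla_{c-\psi}(\nu)$ with $\nu>^c\lambda$. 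Because $c$ and $c-\psi$ lie in opposite chambers, $>^c$ coincides with $<^{c-\psi}$, so this is exactly the defining property of the indecomposable tilting $T_{c-\psi}(\lambda)$; hence $R^{-1}(P_c(\lambda))\cong T_{c-\psi}(\lambda)$.

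Finally, I would apply $\KZ_{c-\psi}$. By Proposition \ref{Prop:Ring_dual} the functor $R^{-1}$ intertwines $\KZ_c$ and $\KZ_{c-\psi}$, and since $P_c(\lambda)$ and $R^{-1}(P_c(\lambda))=T_{c-\psi}(\lambda)$ both sit in degree $0$ while $\KZ$ is exact, we get $\KZ_{c-\psi}(T_{c-\psi}(\lambda))\cong\KZ_c(P_c(\lambda))$ as $\underline{\mathcal{H}}_q$-modules. As the functor $\KZ$ is the composite of $\loc$ with the equivalence $\LS_{rs}(\h^{reg}/W)\xrightarrow{\sim}\C B_W\operatorname{-mod}$ (passing to the monodromy representation), an isomorphism after $\KZ$ is the same as an isomorphism after $\loc$, and therefore $\loc(P_c(\lambda))\cong\loc(T_{c-\psi}(\lambda))$.

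The only slightly delicate point — and the step I would flag as the main obstacle — is the correct matching of labels in $R^{-1}(P_c(\lambda))\cong T_{c-\psi}(\lambda)$: this is where the hypothesis that $c$ and $c-\psi$ lie in opposite chambers (so that the two highest weight orders are opposite) is genuinely used, together with the fact that $R^{-1}$ is label preserving on (co)standardly filtered objects. Everything else is a formal consequence of Proposition \ref{Prop:Ring_dual} and the general formalism of Ringel duality recalled in Section \ref{SS_Ringel}.
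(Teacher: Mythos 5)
Your argument is correct and follows essentially the same route as the paper: the paper's (two-sentence) proof likewise deduces the claim from Proposition \ref{Prop:Ring_dual}, using that the Ringel duality identifies $P_c(\lambda)$ with $T_{c-\psi}(\lambda)$ and intertwines the KZ functors, and then passes from an isomorphism of KZ-images to one of $\loc$-images via the equivalence $\operatorname{im}\loc\cong \underline{\mathcal{H}}_q\operatorname{-mod}$. You merely spell out the label-matching step (which the paper leaves implicit), and your treatment of it is accurate.
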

\begin{proof}
Let us remark that the images of $\OCat_c,\OCat_{c-\psi}$ under $\loc$ coincide, for example,
because the Hecke algebras are the same.
Further, by Proposition \ref{Prop:Ring_dual}, the images of $\loc(P_{c}(\lambda)), \loc(T_{c-\psi}(\lambda))$
under the equivalence $\operatorname{im}\loc\cong \underline{\mathcal{H}}_q\operatorname{-mod}$  are isomorphic.
\end{proof}

\begin{proof}[Proof of Theorem \ref{Thm:Ring_dual_HC}]
Let us show that $\mathcal{B}_{c}(\psi)\otimes_{H_c}P_c\cong T_{c-\psi}$.

First of all, since $\mathcal{B}_c(\psi)[\delta^{-1}]=D(\h^{reg})\#W$ and $\loc(P_c)\cong
\loc(T_{c-\psi})$, we get an isomorphism $\loc(\mathcal{B}_c(\psi)\otimes_{H_c}P_c)\cong
\loc(T_{c-\psi})$. This gives rise to a homomorphism $\iota:\mathcal{B}_c(\psi)\otimes_{H_c}P_c
\rightarrow \KZ^{*}\circ \KZ(T_{c-\psi})$, where we write $\KZ^{*}$ for the right
adjoint functor of $\KZ$. Since the socle of $T_{c-\psi}$ does not contain 
simples annihilated by $\KZ$, we see that $T_{c-\psi}\subset\KZ^{*}\circ \KZ(T_{c-\psi})$. 
Note that $T_{c-\psi}\subset \operatorname{im}\iota$. Indeed, by Lemma \ref{Lem:tilt_basic}, either $\KZ^{*}\circ \KZ(T_{c-\psi})=T_{c-\psi}$ or $$\operatorname{codim}\VA(\KZ^{*}\circ \KZ(T_{c-\psi})/T_{c-\psi})=1$$ and, furthermore, the head of $T_{c-\psi}$ consists of simples
with associated variety $\h$. Now $T_{c-\psi}\subset \operatorname{im}\iota$ follows from
\begin{equation}\label{eq:loc_equal}
\mathsf{loc}(\operatorname{im}\iota)=\mathsf{loc}(T_{c-\psi})
\end{equation}

All simples in the head of $\mathcal{B}_c(\psi)\otimes_{H_c}P_c$ have associated variety
$\h$. Indeed, an epimorphism $\mathcal{B}_c(\psi)\otimes_{H_c}P_c\twoheadrightarrow
L$, where $L$ is a simple, gives rise to a nonzero homomorphism $\mathcal{B}_c(\psi)
\rightarrow L(P_c,L)$, where the target bimodule is the Harish-Chandra part of
$\operatorname{Hom}_\C(P_c,L)$ (this bimodule is HC by \cite[Section 5.7]{sraco}).
If $\VA(L)\neq \h$, then $\VA(L(P_c,L))\neq (\h\oplus \h^*)/W$.
This is impossible because $\mathcal{B}_c(\psi)$ is simple and $\VA(\mathcal{B}_c(\psi))=(\h\oplus\h^*)/W$.
This proves the claim in the beginning of the paragraph. Together with
(\ref{eq:loc_equal}) this implies
$\iota: \mathcal{B}_c(\psi)\otimes_{H_c}P_c\twoheadrightarrow T_{c-\psi}$.

By Lemma \ref{Lem:ring_rk1}, the derived tensor product with the bimodule $\A_{\lambda,\lambda^{opp}-\lambda}^{(\theta)}$
is the inverse Ringel duality. It follows that the object
${}^{\OCat}\Res_{W}^{W_H}(\mathcal{B}_c(\psi)\otimes_{H_c}P_c)\cong
\mathcal{B}_c(\psi)_{\dagger,W_H}\otimes_{H_c(W_H)}{}^{\OCat}\Res_{W}^{W_H}(P_c)$
is tilting so there are no finite dimensional modules in the
the socle of ${}^{\OCat}\Res_{W}^{W_H}(\mathcal{B}_c(\psi)\otimes_{H_c}P_c)$.
Together with (\ref{eq:loc_equal}) this implies
\begin{equation}\label{eq:res_vanish}
\Res_{W}^{W_H}(\ker\iota)=0, \forall H.
\end{equation}

The object $T_{c-\psi}$ has no extensions by simples with associated
variety of codimension more than $1$ in either direction, Lemma \ref{Lem:tilt_basic}.
So (\ref{eq:res_vanish}) implies that $\mathcal{B}_c(\psi)\otimes_{H_c}P_c=T_{c-\psi}\oplus \ker\iota$.
But we have already seen in this proof that $\operatorname{Hom}_{\mathcal{O}_{c-\psi}}(\mathcal{B}_c(\psi)\otimes_{H_c}P_c, \ker\iota)=0$. This finally implies that $\iota$ is an isomorphism.

Note that under this isomorphism,  the summand $\mathcal{B}_c(\psi)\otimes_{H_c}P_c(\lambda)$
coincides with $T_{c-\psi}(\lambda)$. This is because of Corollary \ref{Cor:loc_coinc}.

By the above, $\mathcal{B}_c(\psi)\otimes^L_{H_c}\bullet=\mathcal{B}_c(\psi)\otimes_{H_c}\bullet$
gives a functor  $\OCat_c(W)\operatorname{-proj}\rightarrow \OCat_{c-\psi}(W)\operatorname{-tilt}$.
Both this functor and $R^{-1}$ make the following diagram commutative.

\begin{picture}(90,30)
\put(2,2){$\KZ(\OCat_c\operatorname{-proj})$}
\put(5,22){$\OCat_c\operatorname{-proj}$}
\put(60,2){$\KZ(\OCat_{c-\psi}\operatorname{-tilt})$}
\put(63,22){$\OCat_{c-\psi}\operatorname{-tilt}$}
\put(9,20){\vector(0,-1){13}}
\put(67,20){\vector(0,-1){13}}
\put(22,23){\vector(1,0){39}}
\put(27,3){\vector(1,0){30}}
\put(39,4){$=$}
\end{picture}

Since $\KZ$ is fully faithful on both $\OCat_c\operatorname{-proj}, \OCat_{c-\psi}\operatorname{-tilt}$, we conclude
that $\mathcal{B}_c(\psi)\otimes^L_{H_c}\bullet\cong R^{-1}$.
\end{proof}

\begin{Cor}\label{Cor:Ringel}
Under the assumptions of Theorem \ref{Thm:Ring_dual_HC}, we have $\B_c(\psi)\otimes_{H_c}\Delta_c(\lambda)=\nabla_{c-\psi}(\lambda)$
and $\operatorname{Tor}^i_{H_c}(\B_c(\psi),\Delta_c(\lambda))=0$ for $i>0$.
\end{Cor}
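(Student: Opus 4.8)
The plan is to read off both assertions directly from Theorem \ref{Thm:Ring_dual_HC} and Proposition \ref{Prop:Ring_dual}. First I would recall that, for the identification $\OCat_{c-\psi}^\vee\cong\OCat_c$ supplied by Proposition \ref{Prop:Ring_dual}, the inverse Ringel duality $R^{-1}$ sends $\Delta_c(\lambda)$ to $\nabla_{c-\psi}(\lambda)$; and that Theorem \ref{Thm:Ring_dual_HC} (which uses this same identification) realizes $R^{-1}$ as the functor $\B_c(\psi)\otimes^L_{H_c}\bullet\colon D^b(\OCat_c)\to D^b(\OCat_{c-\psi})$. Putting these together gives $\B_c(\psi)\otimes^L_{H_c}\Delta_c(\lambda)\cong\nabla_{c-\psi}(\lambda)$ in $D^b(\OCat_{c-\psi})$.

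Next I would use that $\nabla_{c-\psi}(\lambda)$ lies in the abelian category $\OCat_{c-\psi}$, hence, viewed in the derived category, is concentrated in homological degree $0$. Consequently the complex $\B_c(\psi)\otimes^L_{H_c}\Delta_c(\lambda)$ has homology only in degree $0$, equal to $\nabla_{c-\psi}(\lambda)$ there. To translate this into the stated $\operatorname{Tor}$ statements I would pick a finite resolution $P^\bullet\twoheadrightarrow\Delta_c(\lambda)$ by projective objects of $\OCat_c$ (available since $\OCat_c$ has finite homological dimension), note by Lemma \ref{Lem:tens_HC_proj} that each $P^j$ satisfies $\operatorname{Tor}_i^{H_c}(\B_c(\psi),P^j)=0$ for $i>0$, and conclude that $\B_c(\psi)\otimes_{H_c}P^\bullet$ simultaneously represents $\B_c(\psi)\otimes^L_{H_c}\Delta_c(\lambda)$ and computes $\operatorname{Tor}_\bullet^{H_c}(\B_c(\psi),\Delta_c(\lambda))$, by the usual computation of a left derived functor through an acyclic resolution. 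Comparing homology in degree $0$ and in negative degrees then yields $\B_c(\psi)\otimes_{H_c}\Delta_c(\lambda)=\nabla_{c-\psi}(\lambda)$ and $\operatorname{Tor}_i^{H_c}(\B_c(\psi),\Delta_c(\lambda))=0$ for $i>0$.

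I do not expect a real obstacle here: the corollary is essentially a restatement of the two cited results. The only point needing care is the bookkeeping of homological degrees together with the choice of a resolution adapted to both the derived tensor product and the $\operatorname{Tor}$ groups, and Lemma \ref{Lem:tens_HC_proj} is precisely what makes resolutions by projectives in $\OCat_c$ adequate for that purpose.
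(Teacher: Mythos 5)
Your proposal is correct and matches what the paper intends: the corollary is stated without proof precisely because it follows by combining Proposition \ref{Prop:Ring_dual} ($R^{-1}$ sends $\Delta_c(\lambda)$ to $\nabla_{c-\psi}(\lambda)$) with Theorem \ref{Thm:Ring_dual_HC} ($R^{-1}\cong\B_c(\psi)\otimes^L_{H_c}\bullet$), plus the observation that the target is concentrated in degree $0$. Your use of Lemma \ref{Lem:tens_HC_proj} to justify that a resolution by projectives of $\OCat_c$ computes the $\operatorname{Tor}$'s over $H_c$ is exactly the right bookkeeping and is consistent with how the derived tensor product is handled in the proof of the theorem itself.
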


\section{Derived equivalences}\label{S_derived}
\subsection{Scheme of proof}
In this section, we prove Theorem \ref{Thm:der_equiv}. The proof is basically in three steps.
The first one is Theorem \ref{Thm:Ring_dual_HC}.


Second, let $\param^1$ be a hyperplane in $\param$ and $\psi\in \underline{\param}_{\Z}$ be such that $c\in \param^1,
c-\psi$ lie in opposite open chambers provided $c$ is Weil generic (let us note that Weil generic points
of a  hyperplane define equal $c$-orders). We will produce a HC $H_{\param^1-\psi}$-$H_{\param^1}$ bimodule
$\mathcal{B}_{\param^1}(\psi)$ whose Weil generic fiber coincides with $\mathcal{B}_c(\psi)$. This will be done
in Section  \ref{SS_HC_fam}.

Third, Section \ref{SS_degen}, we will prove that, for a Zariski generic $c\in \param^1$, the functor $\mathcal{B}_c(\psi)\otimes^L_{H_c}\bullet$
is a derived equivalence $D^b(\OCat_c)\rightarrow D^b(\OCat_{c-\psi})$.

After these three steps, Theorem \ref{Thm:der_equiv} follows from Proposition \ref{Prop:Chered_equi_impr}
and several easy observations.
We will prove the theorem carefully in  Section \ref{SS_der_proof_compl}.
In Section \ref{SS_count} we will provide an application
of Theorem \ref{Thm:der_equiv} to counting the simple objects in $\OCat$ with given
associated variety.

\subsection{Family of HC bimodules}\label{SS_HC_fam}
Let $\param^1$ be an affine subspace in $\param$ and $\psi\in \underline{\param}_{\Z}$ be such that $c\in \param^1,
c-\psi$ lie in opposite open chambers provided $c$ is Weil generic in $\param^1$. Our goal is to produce a HC bimodule
$\B_{\param^1}(\psi)\in \HC(H_{\param^1},-\psi)$ with $\B_{\param^1}(\psi)_c=\B_c(\psi)$ for a Weil generic
$c\in \param^1$.

The idea is as follows. The bimodule $\B_{\param^1,\psi}$ still makes sense and its specialization
to $c\in \param^1$ is $\B_{c,\psi}$. We need to ``cut'' $\B_{\param^1,\psi}$ removing everything with
proper associated variety (for a Weil generic $c$) from the head and from the socle. We will see that
there is an ideal $\tilde{I}$ in $H_{\param^1}$ such that $H_c/\tilde{I}_c$ is the maximal quotient
with proper associated variety for Weil generic $c$. Then we cut ``small'' bimodules from the socle
by using the induction and restriction functors and from the head by multiplying by $\tilde{I}$. To construct
$\tilde{I}$ we first produce $I\subset H_{\param^1}$ such that $I_c\subset H_c$ is the minimal
ideal of finite codimension in $H_c$ for a Weil generic $c$.

\begin{Lem}\label{Lem:fin_codim_ideal}
Let $\param^1\subset\param$ be an affine subspace. There is a two-sided ideal $I\subset H_{\param^1}$ with the following
two properties:
\begin{itemize}
\item[(i)] $H_{\param^1}/I$ is a finitely generated  $\C[\param^1]$-module.
\item[(ii)] For a Weil generic $c\in \param^1$, the specialization $I_c:=I\otimes_{\C[\param^1]}\C_c$ is the minimal
ideal of finite codimension in $H_c$.
\end{itemize}
\end{Lem}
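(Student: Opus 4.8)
The plan is to realize $H_{\param^1}/I$ as a ``universal finite-dimensional quotient algebra'' governing the finite-dimensional part of $\OCat_{\param^1}$. First recall the fiberwise picture. For $c\in\param^1$, a finitely generated $H_c$-module is finite-dimensional if and only if it lies in $\OCat_c$ and has associated variety $\{0\}$; moreover on such a module both $\h$ and $\h^*$ act nilpotently (use the Euler element: $\h$ lowers and $\h^*$ raises its eigenvalue, which in a finite-dimensional module ranges over a finite set). There are finitely many finite-dimensional simple $H_c$-modules $S$, and since $H_c/\operatorname{Ann}_{H_c}(S)\cong\operatorname{Mat}_{\dim S}(\C)$ each $\operatorname{Ann}_{H_c}(S)$ is a maximal two-sided ideal, so by the Chinese Remainder Theorem $J_{\min}(c):=\bigcap_S\operatorname{Ann}_{H_c}(S)$ is the minimal ideal of finite codimension, $H_c/J_{\min}(c)\cong\bigoplus_S\operatorname{Mat}_{\dim S}(\C)$ is semisimple, and the category $\OCat_c^{\mathrm{fd}}$ of finite-dimensional objects of $\OCat_c$ is identified with $(H_c/J_{\min}(c))\operatorname{-mod}$ (if $\OCat_c$ has no nonzero finite-dimensional object for generic $c$, then $J_{\min}(c)=H_c$ and $I=H_{\param^1}$ already works, so assume we are in the interesting case).

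I would first produce a coarse family: a two-sided ideal $J_0\subseteq H_{\param^1}$ with $H_{\param^1}/J_0$ finite over $\C[\param^1]$. One may take $J_0$ generated by the augmentation ideals $S(\h)^W_+$ and $S(\h^*)^W_+$; passing to the associated graded with $\deg\h=1$, $\deg\h^*=0$, $\deg W=0$ shows that $\gr(H_{\param^1}/J_0)$ is a quotient of $\C[\param^1]\otimes\bigl((S(\h^*)/S(\h^*)^W_+S(\h^*))\otimes(S(\h)/S(\h)^W_+S(\h))\bigr)\#W$, the tensor product of the two coinvariant algebras of $W$ being finite-dimensional. Next I would pass to a principal open $\param^1_f\subseteq\param^1$ on which the construction of $J_{\min}(c)$ works in a family: using the constructibility of associated varieties (Lemma~\ref{Lem:HC_supp} and Remark~\ref{Rem:supp_O}) and generic freeness, after shrinking $f$ one may assume that the finite set $\{\lambda\in\Irr(W):L_c(\lambda)\text{ is finite-dimensional}\}$ and the dimensions $\dim_\C L_c(\lambda)$ are independent of $c\in\param^1_f$, and that $\bigoplus_\lambda L_c(\lambda)$ (sum over this set, $c\in\param^1_f$) is the specialization at $c$ of an $H_{\param^1_f}$-module $Q$ free of finite rank over $\C[\param^1_f]$. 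Set $R:=\operatorname{im}\bigl(H_{\param^1_f}\to\End_{\C[\param^1_f]}(Q)\bigr)$: then $R$ is finite over $\C[\param^1_f]$, $H_{\param^1_f}\twoheadrightarrow R$, and since $\bigoplus_\lambda L_c(\lambda)$ is a faithful $H_c/J_{\min}(c)$-module one gets $R_c\cong H_c/J_{\min}(c)$ for $c\in\param^1_f$; shrinking $f$ once more, $R$ is free over $\C[\param^1_f]$.

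Now put $I:=\ker\bigl(H_{\param^1_f}\twoheadrightarrow R\bigr)\cap H_{\param^1}$, so $H_{\param^1}/I\hookrightarrow R$ and $(H_{\param^1}/I)[f^{-1}]=R$. For $c\in\param^1_f$, freeness of $R$ over $\C[\param^1_f]$ gives $\operatorname{Tor}_1^{\C[\param^1]}(H_{\param^1}/I,\C_c)=0$, whence $I_c\hookrightarrow H_c$ and $I_c=\ker(H_c\to R_c)=J_{\min}(c)$; this proves (ii) for every $c\in\param^1_f$, in particular for Weil generic $c$. For (i): since $R$ is finite over $\C[\param^1_f]$ and each fiber $R_c=H_c/J_{\min}(c)$ acts faithfully on a finite-dimensional object of $\OCat_c$, on which $\h^*$ acts nilpotently, the operators $\operatorname{ad}(x)$ for $x\in S(\h^*)^W$ are fiberwise nilpotent on $R$, hence (Cayley--Hamilton on the finite free $\C[\param^1_f]$-module $R$) nilpotent on $R$, and similarly for $S(\h)^W$; therefore the sub-bimodule $H_{\param^1}/I$ of $R$ is a Harish-Chandra $H_{\param^1}$-bimodule, and its associated variety is a union of symplectic leaves. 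Applying $\bullet_{\dagger,W'}$ for a proper parabolic $W'\subsetneq W$ and using property~3) of Section~\ref{SS_dag_prop} together with $R_{\dagger,W'}=0$ — its associated variety would have to consist of leaves of $W'$ conjugate to the zero leaf of $W$, of which there are none, since $\VA(R)\subseteq\{0\}$ as $R$ is finite over $\C[\param^1_f]$ — one obtains $(H_{\param^1}/I)_{\dagger,W'}\hookrightarrow R_{\dagger,W'}=0$. Hence $\VA(H_{\param^1}/I)\subseteq\{0\}$, so $H_{\param^1}/I$ is finite over $\C[\param^1]$, which is (i).

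The step I expect to be the main obstacle is the one left implicit in the second paragraph: showing that, over a dense open of $\param^1$, the finite-dimensional part of $\OCat_c$ assembles into a flat family — that $\dim_\C L_c(\lambda)$ does not jump and that the finite-dimensional simple objects deform to $\C[\param^1_f]$-free modules. This requires combining the highest-weight structure of $\OCat_{\param^1}$ over the ring $\C[\param^1]$ with control of associated varieties in a family; everything else reduces to standard commutative algebra (generic freeness, Cayley--Hamilton, the closure/intersection step) or to the restriction functors for Harish-Chandra bimodules and other facts recorded earlier in the paper.
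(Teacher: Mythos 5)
Your argument breaks at the very first step: the identification of the minimal ideal of finite codimension. You set $J_{\min}(c):=\bigcap_S\operatorname{Ann}_{H_c}(S)$ (intersection over the finite-dimensional simples) and claim via CRT that this is the minimal ideal of finite codimension. What CRT gives is that $H_c/\bigcap_S\operatorname{Ann}(S)$ is the \emph{maximal semisimple} finite-dimensional quotient; it is not minimal among ideals of finite codimension whenever there is a non-split extension $0\to L_1\to M\to L_2\to 0$ of finite-dimensional modules, since then $\operatorname{Ann}(M)\cap\bigcap_S\operatorname{Ann}(S)$ is a strictly smaller ideal of finite codimension (an element killing all simples maps $M$ into $L_1$ but need not kill $M$). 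The correct description, and the one the paper actually uses, is that the minimal ideal of finite codimension is the annihilator of a projective generator of the category $H_c\operatorname{-mod}_{fin}$ of finite-dimensional modules, which has enough projectives as a Serre subcategory of $\OCat_c$. There is no reason for $H_c\operatorname{-mod}_{fin}$ to be semisimple at a Weil generic point of an arbitrary affine subspace $\param^1$ (take $\param^1$ to be a single point to see the claim fail outright), so your $I_c$ is in general strictly larger than the minimal ideal and (ii) fails. Repairing this by replacing the simples $L_c(\lambda)$ with their projective covers in $H_c\operatorname{-mod}_{fin}$ makes the step you yourself flag as ``the main obstacle'' substantially worse: you would need not just the finite-dimensional simples but the entire finite-dimensional block structure (projective covers included) to assemble into a flat family over a dense open subset of $\param^1$, and nothing in the paper's toolkit delivers that directly.

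For comparison, the paper's proof sidesteps families of modules entirely. It defines $I(k)\subset H_{\param^1}$ as the ideal generated by the Amitsur--Levitski standard identities $\sum_{\sigma\in\mathfrak{S}_{2k}}\operatorname{sgn}(\sigma)a_{\sigma(1)}\cdots a_{\sigma(2k)}$, so that every representation of dimension $\leqslant k$ factors through $H_{\param^1}/I(k)$; shows $\VA(H_{\param^1}/I(k))=\{0\}$ (hence finiteness over $\C[\param^1]$) by observing that otherwise the simple infinite-dimensional algebra $D(\h^{W_b})$ would embed into a PI quotient via the Bezrukavnikov--Etingof completion isomorphism; and then proves that the decreasing chain $I(1)\supset I(2)\supset\cdots$ stabilizes Weil-generically using constructibility of the supports of the HC bimodules $I(k-1)/I(k)$ together with the ``enough projectives in $H_c\operatorname{-mod}_{fin}$'' argument --- which is exactly where the correct characterization of the minimal ideal enters. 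If you want to salvage your approach, you must at minimum replace your $J_{\min}(c)$ by the annihilator of the projective generator of $H_c\operatorname{-mod}_{fin}$ and then supply the flatness-in-families statement for those projectives, neither of which is routine.
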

\begin{proof}
Consider the ideal $I(k)\subset H_{\param^1}$ generated by the elements of the form
\begin{equation}\label{eq:identity}\sum_{\sigma\in \mathfrak{S}_{2k}}\operatorname{sgn}(\sigma)a_{\sigma(1)}\ldots a_{\sigma(2k)}
\end{equation}
for arbitrary $a_1,\ldots,a_{2k}\in H_{\param^1}$. By the Amitsur-Levitski theorem, any $k$-dimensional
representation of $H_{\param^1}$ factors through $H_{\param^1}/I(k)$. Also it is clear from the definition
that $I(1)\supset I(2)\supset\ldots$. The quotients $I(k-1)/I(k)$ are HC $H_{\param^1}$-bimodules
(because any two-sided ideal in $H_{\param^1}$ is HC, and any quotient of a HC bimodule is again HC).
So their supports in $\param^1$ are constructible subsets, Lemma \ref{Lem:HC_supp}.

Now we claim that $\VA(H_{\param^1}/I(k))=\{0\}$. This is proved by analogy with the proof of \cite[Theorem 7.2.1]{miura}.
Namely, suppose that $b\in \h\setminus \{0\}$ is in $\VA(H_{\param^1}/I(k))$.
Consider the algebra $H_{\param^1}^{\wedge_b}$ and its two-sided ideal $I(k)^{\wedge_b}$.
This ideal is proper by the choice of $b$, and, on the other hand, $H_{\param^1}^{\wedge_b}/I(k)^{\wedge_b}$
satisfies the identity (\ref{eq:identity}). By results of Bezrukavnikov and Etingof
recalled in Section \ref{SS_ind_res_O}, we have a homomorphism $D(\h^{W_b})\rightarrow H_{\param^1}^{\wedge_b}$
and hence a homomorphism $D(\h^{W_b})\rightarrow H_{\param^1}^{\wedge_b}/I(k)^{\wedge_b}$. The algebra
$D(\h^{W_b})$ is simple and so this homomorphism is injective. Therefore $D(\h^{W_b})$
satisfies (\ref{eq:identity}), which is a contradiction. So, indeed,
$\VA(H_{\param^1}/I(k))=\{0\}$. This implies that $H_{\param^1}/I(k)$ is finitely
generated over $\C[\param^1]$.
In particular, for any $c$, the ideal $I_c(m)\subset H_c$ has finite codimension for any $m$.

We claim that only finitely many of
the supports of $I(k-1)/I(k)$   are dense in $\param^1$.
Indeed, assume the contrary: there is an infinite sequence $k_1<k_2<\ldots$
such that the support of $I(k_i-1)/I(k_i)$ is dense in $\param^1$.
Let $\param^1_i$ be a Zariski open subset of $\param^1$ with the properties that $I(k_i-1),I(k_i)$
are free over $\C[\param^1_i]$ and $I(k_i-1)/I(k_i)$ is free nonzero over
$\C[\param^1_i]$. The existence of $\param^1_i$ follows from Lemma \ref{Lem:HC_supp} and our assumption
on the support of $I(k_i-1)/I(k_i)$.

Take $c\in \cap_i \param^1_i$. Then  we have a sequence of ideals, $H_c\supset I_c(k_1)
\supsetneq I_c(k_2)\supsetneq\ldots$. Being a Serre subcategory in the category $\OCat$, the category $H_c\operatorname{-mod}_{fin}$ of finite dimensional $H_c$-modules has enough projectives (and there are finitely many of those). So $H_c/I_c(k_i)$ is the quotient of the direct sum of certain projectives in
$H_c\operatorname{-mod}_{fin}$. In particular, $I_c(k_i)$
contains the annihilator of the direct sum of all projectives in $H_c\operatorname{-mod}_{fin}$ that is an ideal of finite codimension. It follows that there is $j$ such that $I_c(k_i)=I_c(k_j)$ for all $i\geqslant j$. Therefore
the fiber of $I(k_{i-1})/I(k_i)$ at $c$ is zero for all $i>j$. This contradicts the choice of $c$.
So, for some $m$, the support of
$I(k-1)/I(k)$ for $k>m$ is contained in some proper Zariski closed subset of $\param^1$ (depending on $k$).

We set $I:=I(m)$. Condition (i) has been already established, while (ii) follows from the choice of $m$. 
\end{proof}

Now let us define a two-sided ideal $\tilde{I}\subset H_{\param^1}$. Let $W'$ be a parabolic subgroup
of $W$. For a two-sided ideal $J\subset H_{\param^1}(W')$ such that $H_{\param^1}(W')/J$
is finitely generated over  $\C[\param^1]$, define the ideal $J^{\dagger,H,W'}$
as the kernel of $H_{\param^1}\rightarrow (H_{\param^1}(W')/J)^{\dagger, W'}$.

Let $I(W')$ stand for the ideal in $H_{\param^1}(W')$ defined similarly to $I\subset H_{\param^1}$.
We set
$$\tilde{I}:=\left(\bigcap_{W'\neq \{1\}} I(W')^{\dagger,H,W'}\right)^n,$$
where the superscript means the $n$th power. Note that $H_{\param^1}/I(W')^{\dagger,H,W'}$
has proper associated variety for any $W'$. So  $H_{\param^1}/\tilde{I}$ has proper
associated variety as well.

\begin{Lem}\label{Lem:tilde_I_prop}
For a Weil generic $c$ and any HC $H_{c-\psi}$-$H_c$-bimodule $M$ with proper associated variety,
we have $M\tilde{I_c}=0$ and $\tilde{I}_c^2=\tilde{I}_c$.
\end{Lem}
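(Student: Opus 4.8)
The plan is to deduce both assertions from a single statement: multiplying a Harish-Chandra bimodule of proper associated variety by the ideal $J:=\bigcap_{W'\neq\{1\}}I(W')^{\dagger,H,W'}$ strictly lowers the dimension of its associated variety, and then to iterate (recall $\tilde I=J^n$, where $n$ is taken large enough; any $n\geqslant 2\dim\h$ suffices, as the argument will show). Fix $c$ Weil generic in $\param^1$. By Lemma \ref{Lem:HC_supp} applied to the finitely many family HC bimodules occurring below, we may assume that all of them are free over $\C[\param^1]$ near $c$, so that specializing at $c$ commutes with taking kernels, intersections and products; in particular $\tilde I_c=J_c^n$, $J_c=\bigcap_{W'\neq\{1\}}(I(W')^{\dagger,H,W'})_c$, and $(I(W')^{\dagger,H,W'})_c=\ker\!\big(H_c\to(H_c(W')/I(W')_c)^{\dagger,W'}\big)$, where $I(W')_c\subset H_c(W')$ is the minimal finite-codimension ideal (Lemma \ref{Lem:fin_codim_ideal}, using that $c$ is Weil generic).

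The key computation is that $(J_c)_{\dagger,W'}\subseteq I(W')_c$ for every parabolic $W'\neq\{1\}$. To see this, apply the exact functor $\bullet_{\dagger,W'}$ to the inclusion $J_c\subseteq(I(W')^{\dagger,H,W'})_c=\ker\!\big(H_c\to(H_c(W')/I(W')_c)^{\dagger,W'}\big)$. By the construction of $\bullet_{\dagger,W'}$ in Section \ref{SS_dag_constr} one has $(H_c)_{\dagger,W'}=H_c(W')$; moreover $H_c(W')/I(W')_c$ is finite dimensional, hence lies in $\HC_0^\Xi(H_c(W'))$, and by properties 6)--7) of Section \ref{SS_dag_prop} (i.e. \cite[Theorem 3.4.5]{sraco}) the functors $\bullet_{\dagger,W'}$ and $\bullet^{\dagger,W'}$ are mutually quasi-inverse on finite-dimensional bimodules, so that $\big((H_c(W')/I(W')_c)^{\dagger,W'}\big)_{\dagger,W'}\cong H_c(W')/I(W')_c$ and the map induced from $H_c\to(H_c(W')/I(W')_c)^{\dagger,W'}$ becomes the canonical projection $H_c(W')\to H_c(W')/I(W')_c$. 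Hence $\big((I(W')^{\dagger,H,W'})_c\big)_{\dagger,W'}=I(W')_c$, and since $\bullet_{\dagger,W'}$ is exact (so preserves the inclusion and the intersection defining $J_c$) the claim follows.

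Now let $M$ be a HC $H_{c-\psi}$-$H_c$-bimodule with $\VA(M)$ proper and let $\Leaf_{W'}$ be a leaf that is open (maximal) in $\VA(M)$; properness forces $W'\neq\{1\}$. By property 3) of Section \ref{SS_dag_prop} and maximality of $\Leaf_{W'}$ --- no parabolic conjugate to a proper subgroup of $W'$ occurs in $\VA(M)$ --- the bimodule $M_{\dagger,W'}$ is finite dimensional (the HC analogue of the last sentence of Section \ref{SS_ind_res_O}). Its right annihilator is then a finite-codimension two-sided ideal of $H_c(W')$, hence contains $I(W')_c$; together with the previous paragraph this gives $M_{\dagger,W'}\cdot(J_c)_{\dagger,W'}=0$. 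Since $\bullet_{\dagger,W'}$ is exact and intertwines tensor products, $(M\cdot J_c)_{\dagger,W'}=M_{\dagger,W'}\cdot(J_c)_{\dagger,W'}=0$, so $\Leaf_{W'}\not\subseteq\VA(M\cdot J_c)$; as this holds for every open leaf of $\VA(M)$ and $\VA(M\cdot J_c)\subseteq\VA(M)$, we conclude $\dim\VA(M\cdot J_c)<\dim\VA(M)$. Because $M\cdot J_c$ is again a HC $H_{c-\psi}$-$H_c$-bimodule of proper associated variety, this iterates and gives $M\cdot J_c^k=0$ once $k>\dim\VA(M)$; as $\dim\VA(M)\leqslant 2\dim\h-2$ whenever $\VA(M)$ is proper, we obtain $M\cdot\tilde I_c=M\cdot J_c^n=0$, which is the first assertion. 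For $\tilde I_c^2=\tilde I_c$ it suffices to show $J_c^n\subseteq J_c^{2n}$. Here $H_c/J_c$ embeds into $\bigoplus_{W'\neq\{1\}}H_c/(I(W')^{\dagger,H,W'})_c$, and each summand embeds into $(H_c(W')/I(W')_c)^{\dagger,W'}$, which is supported on $\overline{\Leaf}_{W'}$; hence $H_c/J_c$, and therefore its iterated extension $H_c/J_c^{2n}$, has proper associated variety, of dimension $\leqslant 2\dim\h-2<n$. Applying the vanishing just established to $M=H_c/J_c^{2n}$ yields $(H_c/J_c^{2n})\cdot J_c^n=0$, i.e. $J_c^n\subseteq J_c^{2n}$, and the reverse inclusion is clear.

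The main obstacle is the key computation of the second paragraph: one must handle with care the adjunction $\bullet_{\dagger,W'}\dashv\bullet^{\dagger,W'}$, the identification $(H_c)_{\dagger,W'}=H_c(W')$, and the precise meaning of $I(W')^{\dagger,H,W'}$, so as to be certain that the exact functor $\bullet_{\dagger,W'}$ converts $I(W')^{\dagger,H,W'}$ back into $I(W')$. Once that is in place, and granted the behaviour of $\bullet_{\dagger,W'}$ on associated varieties (Section \ref{SS_dag_prop}) and the minimality of $I(W')_c$ (Lemma \ref{Lem:fin_codim_ideal}), the rest is formal, apart from the routine bookkeeping needed to pass between the family over $\param^1$ and the fibre at a Weil generic $c$.
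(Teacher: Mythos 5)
Your argument is essentially the paper's: for each parabolic $W'$ with $\Leaf_{W'}$ open in $\VA(M)$ the restriction $M_{\dagger,W'}$ is finite dimensional, hence killed on the right by the minimal finite-codimension ideal $I(W')_c$ (Weil genericity), so multiplying by $J_c:=\bigcap_{W'\neq\{1\}}(I(W')^{\dagger,H,W'})_c$ strips off all open leaves of $\VA(M)$; iterating gives the vanishing, and idempotency follows by applying the vanishing to $H_c/\tilde I_c^2$, which has proper associated variety. The one mechanical difference is how the leaf-stripping is justified: the paper maps $M$ into $\bigoplus_i(M_{\dagger,W_i})^{\dagger,W_i}$, notes the target is annihilated by $\bigcap_i I_c(W_i)^{\dagger,H,W_i}$ and that the kernel has support of dimension smaller by $2$ (property 7 of Section \ref{SS_dag_prop}), whereas you compute $(J_c)_{\dagger,W'}\subseteq I(W')_c$ and use exactness and monoidality of $\bullet_{\dagger,W'}$. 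Your key inclusion is exactly the content of the paper's later Lemma \ref{Lem:id_restr} (namely $(\J^{\dagger,H,W'})_{\dagger,W'}\subseteq\J$), so the two routes rest on the same adjunction and are equally valid. One slip to fix: the paper's $\tilde I$ is $J^n$ with $n=\dim\h$, while your final count (``$MJ_c^k=0$ once $k>\dim\VA(M)$'', forcing $n\geqslant 2\dim\h$) only proves the statement for a larger power of $J$, i.e.\ for a different ideal than the one in the lemma. Your own intermediate conclusion --- that $\VA(MJ_c)$ contains none of the open leaves of $\VA(M)$, so its dimension drops by at least $2$ because associated varieties of HC bimodules are unions of symplectic leaves --- already yields $MJ_c^{m}=0$ for $m=\dim\VA(M)/2\leqslant \dim\h-1$, hence $M\tilde I_c=0$ with the paper's exponent; use that sharper bound, as the paper does in its induction on $m$.
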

\begin{proof}
Take $c$ so that (ii) of Lemma \ref{Lem:fin_codim_ideal} is satisfied for all possible $W'$
and such that $H_{\param^1}(W)/\tilde{I}, \tilde{I}$ are flat over some Zariski open neighborhood
of $c$ in $\param^1$.

Let $W_1,\ldots,W_\ell$ be the parabolic subgroups corresponding to the irreducible components
of $\VA(M)$. Consider the morphism $M\rightarrow \bigoplus_{i=1}^\ell(M_{\dagger,W_i})^{\dagger,W_i}$
and let $K$ denote its kernel. The $H_{c}(W_i)$-bimodule $M_{\dagger,W_i}$ is finite dimensional
and so is annihilated by $I_c(W_i)$. So $(M_{\dagger,W_i})^{\dagger,W_i}$ is annihilated by
$I_c(W_i)^{\dagger,H,W_i}$. It follows that $M(\bigcap_{i=1}^{\ell} I_c(W_i)^{\dagger,H,W_i})\subset K$.
Note that $\VA(K)\subset \VA(M)\setminus \bigcup_{i=1}^\ell \overline{\Leaf}_{W_i}$.
In particular, $\dim \VA(K)\leqslant \dim \VA(M)-2$. Now we  use the induction on
$m:=\dim \VA(M)/2$ to show that $$M\left(\bigcap_{W'\neq \{1\}} I_c(W')^{\dagger,H,W'}\right)^m=0.$$
In particular, $M\tilde{I}_c=0$.

The equality $\tilde{I}_c^2=\tilde{I}_c$ (for a Weil generic $c$) follows from the observation that $H_c/\tilde{I}_c^2$
has proper support.
\end{proof}

Now let $\param^1,\psi$ be as in the beginning of the section. Our goal is to produce a bimodule
$\B_{\param^1}(\psi)\in \HC(H_{\param^1},-\psi)$.
We start with the $H_{\param^1-\psi}$-$H_{\param^1}$-bimodule $\B_{\param^1,\psi}$.
Consider the natural homomorphism
\begin{equation}\label{eq:bimod_hom}
\B_{\param^1,\psi}\rightarrow (\B_{\param^1,\psi,\dagger,\{1\}})^{\dagger,\{1\}}.\end{equation}
Let $\hat{\B}_{\param^1,\psi}$ denote the image. Let $U\subset \param^1$ be  a non-empty Zariski open subset such that the
kernel, the image and the cokernel of (\ref{eq:bimod_hom}) are flat over $U$, the existence of $U$
follows from Lemma \ref{Lem:HC_supp}.

We claim that for $c\in U$,
the bimodule $\hat{\B}_{c,\psi}$ has no subbimodules with proper associated variety.
Since the functor $\bullet^{\dagger,\{1\}}$ is left exact, the specialization of
$(\B_{\param^1,\psi,\dagger,\{1\}})^{\dagger,\{1\}}$ at $c$ is naturally a submodule
in $(\B_{c, \psi,\dagger,\{1\}})^{\dagger,\{1\}}$.
Therefore
\begin{equation}\label{eq:bimod_incl}
\hat{\B}_{c,\psi}\subset (\B_{c, \psi,\dagger,\{1\}})^{\dagger,\{1\}}\end{equation}
Since $\bullet_{\dagger,\{1\}}$ kills all bimodules with proper associated variety
and $\bullet^{\dagger,\{1\}}$ is a right adjoint of $\bullet_{\dagger,\{1\}}$, we see
that the bimodule $(\B_{c, \psi,\dagger,\{1\}})^{\dagger,\{1\}}$ has no subbimodules
with proper associated variety. It  follows from (\ref{eq:bimod_incl}) that
$\hat{\B}_{c,\psi}$ has no subbimodules with  proper support.

We set $\B_{\param^1}(\psi):=\hat{\B}_{\param^1,\psi}\tilde{I}$. We claim that for $c$ as in Lemma
\ref{Lem:tilde_I_prop},
$\B_{\param^1}(\psi)_c$ has no quotients with proper associated variety. Indeed, by Lemma \ref{Lem:tilde_I_prop},
such a quotient of $\B_{\param^1}(\psi)_c$ has to be annihilated by $\tilde{I}_c$ that is impossible because
$\tilde{I}_c^2=\tilde{I}_c$. So we conclude that, for a Weil
generic $c$, the following holds:
\begin{itemize}
\item The specialization $\B_{\param^1}(\psi)_c$ has no submodules and quotients with proper associated variety.
\item $\B_{\param^1}(\psi)_c$ is a subquotient of $\B_{c,\psi}$.
\end{itemize}
But $\B_{c,\psi}$ has a unique composition factor with full associated variety and this factor is $\B_{c}(\psi)$.
The equality $\B_{\param^1}(\psi)_c=\B_c(\psi)$, for a Weil generic $c\in \param^1$, follows.

Below we write $\B_c(\psi)$ for $\B_{\param^1}(\psi)_c$ when $c$ is Zariski generic.

\subsection{Degeneration}\label{SS_degen}
Let  an affine subspace $\param^1$ in $\param$ and $\psi\in \underline{\param}_\Z$ be such that
\begin{itemize}
\item For a Weil generic $c\in \param^1$, the parameters $c$ and $c-\psi$ lie in opposite open chambers.
\item For a Zariski generic $c\in \param^1$, the parameters $c$ and $c-\psi$ are spherical.
\end{itemize}

Our goal in this subsection is to prove the following result.

\begin{Prop}\label{Prop:equi_Zariski}
There is a non-empty Zariski open subset $U\subset \param^1$ such that
$\mathcal{B}_{c}(\psi)\otimes^L_{H_c}\bullet: D^b(\OCat_c)\rightarrow D^b(\OCat_{c-\psi})$
is an equivalence of triangulated categories for any $c\in U$.
\end{Prop}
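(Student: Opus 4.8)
The plan is to leverage the work already done: at a Weil generic point $c \in \param^1$, Theorem~\ref{Thm:Ring_dual_HC} tells us $\mathcal{B}_c(\psi) \otimes^L_{H_c} \bullet$ realizes the inverse Ringel duality $R^{-1}$, which is certainly a derived equivalence $D^b(\OCat_c) \xrightarrow{\sim} D^b(\OCat_{c-\psi})$. The point is to upgrade this from Weil generic to Zariski generic $c$. Since being an equivalence is detected by the unit and counit of an adjunction, the strategy is: (1) identify an adjoint functor, realized by a companion Harish-Chandra bimodule in the opposite direction; (2) show the adjunction (co)unit morphisms are isomorphisms at a Weil generic point (where we already know it from Theorem~\ref{Thm:Ring_dual_HC}); (3) spread this out to a Zariski open $U$ using the constructibility of supports of Harish-Chandra bimodules (Lemma~\ref{Lem:HC_supp}) and the flatness statements there.

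More concretely, first I would construct $\mathcal{B}_{\param^1}(-\psi) \in \HC(H_{\param^1-\psi}, \psi)$ by the mirror of the construction in Section~\ref{SS_HC_fam} (swapping the roles of $c$ and $c-\psi$), so that for Zariski generic $c$ its specialization $\mathcal{B}_c(-\psi)$ is the bimodule going from $H_{c-\psi}$ to $H_c$. There are natural adjunction morphisms of $H_{\param^1}$-bimodules $\mathcal{B}_{\param^1}(-\psi) \otimes^L_{H_{\param^1-\psi}} \mathcal{B}_{\param^1}(\psi) \to H_{\param^1}$ and $\mathcal{B}_{\param^1}(\psi) \otimes^L_{H_{\param^1-\psi}} \mathcal{B}_{\param^1}(-\psi) \to H_{\param^1-\psi}$ (these exist because after inverting $\delta$ both bimodules become the regular $D(\h^{reg})\#W$-bimodule, and one can build the morphism over the whole family, or at least on a Zariski open set). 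By Theorem~\ref{Thm:Ring_dual_HC} applied at a Weil generic point, together with $R^{-1}$ being an equivalence, these morphisms are isomorphisms after specializing to a Weil generic $c$; equivalently, their cones are complexes of Harish-Chandra bimodules whose homology has support (in $\param^1$) avoiding a Weil generic point, hence contained in a countable union of proper closed subsets. But each homology bimodule $\operatorname{Tor}_i$, $\operatorname{Ext}^i$ (which are HC by Proposition~\ref{Prop:HC_Ext}) has \emph{constructible} support in $\param^1$ by Lemma~\ref{Lem:HC_supp}, and a constructible set that misses a Weil generic point must be contained in a \emph{proper} closed subset. There are only finitely many such homology bimodules (the Tor/Ext amplitude is bounded since $\OCat$ has finite homological dimension), so their supports are contained in a single proper closed subset; let $U$ be its complement. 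For $c \in U$ the (co)unit maps specialize to isomorphisms, so $\mathcal{B}_c(\psi)\otimes^L_{H_c}\bullet$ and $\mathcal{B}_c(-\psi)\otimes^L_{H_{c-\psi}}\bullet$ are mutually inverse equivalences on the derived categories of $H_c$- and $H_{c-\psi}$-modules, and restricting to category $\OCat$ (using Proposition~\ref{Prop_HC_prop}(3), which guarantees tensoring with a HC bimodule preserves $\OCat$) gives the desired equivalence $D^b(\OCat_c) \xrightarrow{\sim} D^b(\OCat_{c-\psi})$.

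One technical point deserving care is whether the adjunction morphisms between the \emph{cut-down} family bimodules $\mathcal{B}_{\param^1}(\pm\psi)$ (as opposed to the uncut $\mathcal{B}_{\param^1,\pm\psi}$) exist over the whole family or only generically; I expect it suffices to define them on a Zariski open set where all the relevant bimodules and their Tor's are flat over $\C[\param^1]$, which is harmless since we only want a Zariski open conclusion anyway. A second point is the interaction with category $\OCat$: I need that $\mathcal{B}_c(\psi)\otimes^L_{H_c}\bullet$ sends $D^b(\OCat_c)$ into $D^b(\OCat_{c-\psi})$ and is inverse to $\mathcal{B}_c(-\psi)\otimes^L_{H_{c-\psi}}\bullet$ \emph{as functors between these bounded derived categories of category $\OCat$}; this follows because the unit/counit isomorphisms of the ambient module categories restrict, and because $\OCat$ has finite homological dimension so $D^b(\OCat)$ embeds fully faithfully into the derived category of all modules (or one simply works with the projective generator $P_c$ and uses Lemma~\ref{Lem:tens_HC_proj} to compute the derived tensor product as an honest tensor product on projectives).

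The main obstacle is the constructibility-to-closedness step: one must be careful that ``isomorphism at a Weil generic point'' really forces the obstruction (the cone's homology) to be supported on a \emph{proper Zariski closed} subset rather than merely a countable union of such. This is exactly where Lemma~\ref{Lem:HC_supp}(2) is essential — it says the support of each homology HC bimodule is constructible, and a constructible subset of an irreducible variety is either dense (hence contains a Weil generic point) or contained in a proper closed subset. Combined with finiteness of the homological amplitude, this pins down a single Zariski open $U$. Everything else is a formal consequence of the adjunction machinery and the results already established, principally Theorem~\ref{Thm:Ring_dual_HC}, Proposition~\ref{Prop:HC_Ext}, and Lemma~\ref{Lem:HC_supp}.
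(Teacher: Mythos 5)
Your overall strategy is the right one and matches the paper's: reduce the statement to the vanishing of the cones of certain bimodule-level morphisms, observe that these cones vanish at a Weil generic point of $\param^1$ because there the functor is the inverse Ringel duality (Theorem \ref{Thm:Ring_dual_HC}), and then spread out to a Zariski open set using the fact that the homology bimodules of the cones are Harish-Chandra over $\param^1$, hence generically flat with constructible support (Proposition \ref{Prop:HC_Ext} and Lemma \ref{Lem:HC_supp}). Your final ``constructibility-to-closedness'' discussion and the reduction to category $\OCat$ via $P_c$, Lemma \ref{Lem:tens_HC_proj} and Lemma \ref{Lem:tens_HC_prgen} are also correct and are exactly what the paper does.

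The genuine gap is your choice of quasi-inverse. The bimodule $\B_{\param^1}(-\psi)$ obtained by swapping the roles of $c$ and $c-\psi$ specializes, at a Weil generic point, to the bimodule realizing the inverse Ringel duality in the \emph{other} direction, $D^b(\OCat_{c-\psi})\rightarrow D^b(\OCat_c)$, for the identification $\OCat_{c-\psi}\cong\OCat_c^\vee$. The composition of the two functors therefore sends $P_c(\lambda)\mapsto T_{c-\psi}(\lambda)\mapsto I_c(\lambda)$, i.e., it is the square of the Ringel duality (a Serre-type functor taking projectives to injectives), which is \emph{not} the identity unless $\OCat_c$ is semisimple --- and the whole point of the proposition is the case where $c$ lies on a wall and $\OCat_c$ is not semisimple. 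So the morphisms $\B(-\psi)\otimes^L\B(\psi)\rightarrow H$ you propose cannot be isomorphisms even at the Weil generic point, and step (2) of your plan fails. The fix is to forget about constructing a companion bimodule altogether and use the canonical right adjoint $R\Hom_{H_{c-\psi}}(\B_c(\psi),\bullet)=R\Hom_{H_{c-\psi}}(\B_c(\psi),H_{c-\psi})\otimes^L_{H_{c-\psi}}\bullet$, whose unit $H_c\rightarrow R\End_{H_{c-\psi}}(\B_c(\psi))$ and counit $\B_c(\psi)\otimes^L R\Hom_{H_{c-\psi}}(\B_c(\psi),H_{c-\psi})\rightarrow H_{c-\psi}$ exist for free. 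One then shows that ``equivalence on $D^b(\OCat)$'' implies these unit/counit maps are isomorphisms of bimodules (their cones are complexes of HC bimodules killed by $\otimes^L P_c$, hence zero by Lemmas \ref{Lem:tens_HC_proj} and \ref{Lem:tens_HC_prgen}), applies this at the Weil generic point, and only then runs your spreading-out argument, which works verbatim with these canonical cones.
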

%
%
%
\begin{proof}
%
The proof is in several steps.

{\it Step 1}. Let $D^b_\OCat(H_?)$ (where $?$ means $c$ or $c'$)
denote the full subcategory in $D^b(H_?\operatorname{-mod})$
consisting of all complexes with homology in $\OCat_?$, this category is naturally identified
with $D^b(\OCat_?)$ by \cite[Proposition 4.4]{etingof_affine}. Also consider the category $D^b_{\HC}(H_{c'}\operatorname{-}H_c)$ of
all complexes of $H_{c'}$-$H_c$-bimodules with HC homology. Then $\bullet\otimes^L_{H_c}\bullet:
D^b(H_{c'}\operatorname{-}H_c)\times D^b(H_c)\rightarrow D^b(H_{c'})$ restricts to
$$D^b_{\HC}(H_{c'}\operatorname{-}H_c)\times D^b_\OCat(H_c)\rightarrow D^b_{\OCat}(H_{c'})$$
and a similar statement holds for $R\Hom_{H_c'}(\bullet,\bullet)$ thanks to Proposition \ref{Prop:HC_Ext}.

{\it Step 2}. Let $\mathcal{B}\in \HC(H_c,c'-c)$.
A right adjoint to $\mathcal{B}\otimes^L_{H_c}\bullet$
is given by $R\Hom_{H_{c'}}(\mathcal{B},\bullet)=R\Hom_{H_{c'}}(\mathcal{B},H_{c'})\otimes^L_{H_{c'}}\bullet$.
Note that  $R\Hom_{H_{c'}}(\mathcal{B}, \mathcal{B}\otimes^L_{H_c}\bullet)=R\End_{H_{c'}}(\B)\otimes^L \bullet$.

We claim that the following two claims are equivalent.
\begin{itemize}
\item[(a)] The functor $\mathcal{B}\otimes^L_{H_c}\bullet: D^b(\OCat_c)\rightarrow D^b(\OCat_{c'})$
is a category equivalence.
\item[(b)] The adjunction unit $H_c\rightarrow R\End_{H_{c'}}(\B)$ and the adjunction
counit $\B\otimes^L_{H_{c}}R\Hom_{H_{c'}}(\B, H_{c'})\rightarrow H_{c'}$ are isomorphisms.
\end{itemize}
It is clear that (b) implies (a). Let us prove that (a) implies (b).


We have (adjunction) isomorphisms of functors
$$\operatorname{Id}\rightarrow R\End_{H_{c'}}(\mathcal{B})\otimes^L_{H_c}\bullet,
\B\otimes^L_{H_{c'}}R\Hom_{H_{c'}}(\mathcal{B},H_{c'})\otimes^L_{H_{c'}}\bullet\rightarrow \operatorname{Id}.$$
Consider the cone $C$ of $H_c\rightarrow R\End_{H_{c'}}(\B)$, this is
an object in $D^b_{HC}(H_c\operatorname{-}H_c)$ since the category of HC bimodules is a Serre
subcategory in the category of all bimodules. We see that $C\otimes^L_{H_c}P_c=0$. Thanks to Lemma \ref{Lem:tens_HC_proj},
we have $H_i(C\otimes^L P_c)=H_i(C)\otimes_{H_c}P_c$.
By Lemma \ref{Lem:tens_HC_prgen}, $H_i(C)=0$.

The claim that $\B\otimes^L_{H_{c}}R\Hom_{H_{c'}}(\B, H_{c'})\xrightarrow{\sim} H_{c'}$ is proved similarly.

{\it Step 3}.
Let us check that (b) holds for $\B:=\B_{c}(\psi)$ and a Zariski generic $c$. Note that it holds for
a Weil generic $c\in \param^1$ because (a) holds there. Let us prove that
the counit morphism in (b) is an isomorphism for Zariski generic $c$. Let $C'_{\param^1}$ be the cone
of the natural homomorphism $\B_{\param^1}(\psi)\otimes^L_{H_{\param^1}}
R\Hom_{H_{\param^1-\psi}}(\B_{\param^1}(\psi),H_{\param^1-\psi})\rightarrow H_{\param^1-\psi}$ so that $C'_{\param^1}$
is an object in $D^b_{HC}(H_{\param^1-\psi}\operatorname{-}H_{\param^1}\operatorname{-bimod})$.
So $H_i(C'_{\param^1})$ is a HC bimodule. Hence it is generically free over $\C[\param^1]$.
Also note that $C'_{\param^1}\otimes^L_{\C[\param^1]}\C_c$ is the cone $C'_c$ of
$\B_{c}(\psi)\otimes^L_{H_{c'}}
R\Hom_{H_{c'}}(\B_{c}(\psi),H_{c'})\rightarrow H_{c'}$. So we conclude, that for a Zariski
generic $c$, we have that $H_i(C'_c)$ is the specialization of $H_i(C'_{\param^1})$ to $c$.
It follows that this specialization is zero for a Weil generic $c$. Hence it is zero
for a Zariski generic $c$ as well and $C'_c=0$.
\end{proof}

\begin{Rem}
We would like to remark that $\B_c(\psi)\otimes^L_{H_c}\Delta_c(\lambda)$ has no higher homology
and its class in $K_0$ coincides with that of $\nabla_{c-\psi}(\lambda)$, when $c$
is Zariski generic. The proof is similar to that of Step 3 and is based on Corollary
\ref{Cor:Ringel}.
\end{Rem}

\subsection{Proof of the main result}\label{SS_der_proof_compl}
Now we are ready to prove Theorem \ref{Thm:der_equiv}. Pick a parameter $c$. The order $\leqslant^c$
is refined by $\leqslant^{\tilde{c}}$ for $\tilde{c}\in c+\underline{\param}_{\Z}$ lying in an open chamber.
Thanks to  Proposition \ref{Prop:Chered_equi_impr}, we may replace $c$ with $\tilde{c}$ without changing
the abelian category and assume that $c$ is Zariski generic and hence lies in an open chamber $\mathfrak{C}$. Also it is enough to establish a
derived equivalence in the case when $c'$ lies in a chamber $\mathfrak{C}'$
that shares a wall $\Pi_0$ with  $\mathfrak{C}$ (in the general case, we take the composition of a sequence of equivalences,
each crossing a single wall). Next, we may assume that $c'-c\in \underline{\param}_{\Z}$. Indeed, otherwise
we can modify $c$ by subtracting an element  $\psi'\in \param_{\Z}$ such
that a bimodule $\B_{c,\psi'}$ with $c'-c+\psi'\in \underline{\param}_{\Z}$ is a Morita equivalence
and $c-\psi'\in \mathfrak{C}$, this follows from Corollary \ref{Cor:spherical} and the construction
of $\B_{c,\psi'}$.

Then replacing both $c,c'$ with points of $(c+\underline{\param}_{\Z})\cap \mathfrak{C}, (c'+\underline{\param}_{\Z})
\cap \mathfrak{C}'$  (so that the categories $\OCat_c,\OCat_{c'}$ stay the
same by Proposition \ref{Prop:Chered_equi_impr})
we may assume that $c,c'$ lie on hyperplanes $\Pi,\Pi'$ parallel to the wall $\Pi_0$
separating $\mathfrak{C},\mathfrak{C}'$ such that
Zariski generic parameters in $\Pi,\Pi'$ are spherical.
Now, by Proposition \ref{Prop:equi_Zariski}, we can modify $c,c'$ by the same element of $\Pi_0\cap \underline{\param}_{\Z}$ (this intersection is
a lattice in $\Pi_0$ because $\Pi_0$ is defined over $\Q$) staying in the same chambers
so that $\B_c(\psi)\otimes^L_{H_c}\bullet$ is an equivalence.

Finally, we need to show that the equivalence $\B_c(\psi)\otimes^L_{H_c}\bullet$ intertwines the KZ
 functors. It follows from the construction of $\B_c(\psi)$ that $\B_c(\psi)[\delta^{-1}]=\B_{c,\psi}[\delta^{-1}]=D(\h^{reg})\#W$. So $\B_c(\psi)\otimes^L_{H_c}\bullet$ intertwines
the functors $\loc$ and hence $\KZ$.  This completes the proof of Theorem \ref{Thm:der_equiv}.

\subsection{Application to counting}\label{SS_count}
The associated variety of a simple in $\OCat_c$ coincides with $W\h^{W'}$ for some
parabolic subgroup $W'\subset W$, see \cite[Section 3.8]{BE}. Let $n_{W'}(c)$ denote the number of
simples with associated variety $W\h^{W'}$.

\begin{Prop}\label{Prop:simple_count}
Let $\psi\in \param_{\Z}$. Then $n_{W'}(c)=n_{W'}(c-\psi)$ for all $c$.
\end{Prop}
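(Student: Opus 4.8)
The plan is to deduce Proposition~\ref{Prop:simple_count} from Theorem~\ref{Thm:der_equiv} by showing that the explicit derived equivalence $\Phi:=\B_c(\psi)\otimes^L_{H_c}\bullet$ used in its proof respects the stratification of $\OCat$ by associated variety. First I would carry out the reductions of Section~\ref{SS_der_proof_compl}: the quantity $n_{W'}$ depends only on the abelian category $\OCat_c$ together with its restriction functors, hence is unchanged by the highest weight equivalences of Proposition~\ref{Prop:Chered_equi_impr} (which intertwine the ${}^\OCat\Res^{W'}_W$, so preserve associated varieties of simples) and by the Morita equivalences $\B_{c,\bar\chi}\otimes_{H_c}\bullet$ of Corollary~\ref{Cor:spherical} (which preserve associated varieties, being Morita equivalences that localize over $\h^{reg}$ to $D(\h^{reg})\#W$, together with their inverses). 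Since the elements $\bar\chi$ span $\param_\Z$, this lets me replace $\psi$ by an element of $\underline{\param}_\Z$, replace $c$ by a point of an open chamber, and, composing equivalences across walls one at a time, assume that $c-\psi$ lies in the chamber opposite to that of $c$ across a single wall $\Pi_0$. Moving $c$ within its chamber as in Section~\ref{SS_der_proof_compl}, I may further assume that $c$ is Zariski generic on a hyperplane $\Pi\parallel\Pi_0$, that $c,c-\psi$ are spherical, that $\Phi$ is a derived equivalence $D^b(\OCat_c)\xrightarrow{\sim}D^b(\OCat_{c-\psi})$ (Proposition~\ref{Prop:equi_Zariski}), and that the restrictions of $c,c-\psi$ to every parabolic $W'\subseteq W$ are generic enough that the corresponding functor for $W'$ is again an equivalence.

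Next I would recast the statement. For a parabolic $W'$ put $f_{W'}(c):=\#\{\lambda\in\Irr W:{}^\OCat\Res^{W'}_W(L_c(\lambda))\neq 0\}$. Since ${}^\OCat\Res^{W'}_W(M)$ vanishes exactly when $\VA(M)$ misses the stratum $W\h^{W'}$, and $\VA(L)=\overline{W\h^{W''}}$ for a single conjugacy class $[W'']$ for each simple $L$ (by \cite[Section 3.8]{BE}), one has $W\h^{W'}\subseteq\VA(L)$ iff $[W'']\leqslant[W']$ in the poset of conjugacy classes of parabolics ordered by subconjugacy, so $f_{W'}(c)=\sum_{[W'']\leqslant[W']}n_{W''}(c)$. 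By M\"obius inversion over this poset it thus suffices to prove $f_{W'}(c)=f_{W'}(c-\psi)$ for all $W'$. Equivalently, writing $Z_{W'}:=\bigcup_{[W'']\not\leqslant[W']}W\h^{W''}$ (a closed $W$-stable union of strata, as the poset of parabolics is closed under passing to contained parabolics) and $\OCat^{Z}_?$ for the Serre subcategory $\{M:\VA(M)\subseteq Z\}$, one has $\ker{}^\OCat\Res^{W'}_W=\OCat_c^{Z_{W'}}$, so it suffices to show that $\OCat_c^{Z_{W'}}$ and $\OCat_{c-\psi}^{Z_{W'}}$ have the same number of simple objects.

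The key point is that $\Phi$ carries $D^b_{\OCat_c^Z}(\OCat_c)$ onto $D^b_{\OCat_{c-\psi}^Z}(\OCat_{c-\psi})$ for every closed union of strata $Z$. Indeed, by Proposition~\ref{Lem:B_socle} we have $\B_c(\psi)_{\dagger,W'}=\B'_c(\psi)$, the analogue of $\B_c(\psi)$ for $W'$, and combining this with the compatibility of ${}^\OCat\Res^{W'}_W$ with Tor's recorded just after Lemma~\ref{Lem:dag_Tor} (and its Ext-analogue for $\Phi^{-1}=R\Hom_{H_{c-\psi}}(\B_c(\psi),\bullet)$, using Proposition~\ref{Prop:HC_Ext}) gives functor isomorphisms ${}^\OCat\Res^{W'}_W\circ\Phi\cong\Phi'\circ{}^\OCat\Res^{W'}_W$ and likewise for $\Phi^{-1}$, where $\Phi':=\B'_c(\psi)\otimes^L_{H_c(W')}\bullet$. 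Since ${}^\OCat\Res^{W'}_W$ is exact and commutes with homology, a complex $M$ has all homology in $\OCat^Z$ iff ${}^\OCat\Res^{W'}_W(M)\simeq 0$ for every $W'$ with $W\h^{W'}\not\subseteq Z$; and as each $\Phi'$ is, by the reductions of the first paragraph applied to $W'$, again an equivalence of the same kind (hence conservative), this holds for $M$ iff it holds for $\Phi M$. Therefore $\Phi$ restricts to an equivalence $D^b_{\OCat_c^Z}(\OCat_c)\xrightarrow{\sim}D^b_{\OCat_{c-\psi}^Z}(\OCat_{c-\psi})$; since these categories carry the $t$-structure restricted from $\OCat_?$ with heart $\OCat_?^Z$, their $K_0$'s have rank equal to the number of simples with $\VA\subseteq Z$, and we conclude $f_{W'}(c)=f_{W'}(c-\psi)$, hence $n_{W'}(c)=n_{W'}(c-\psi)$ for the reduced parameters and so, by the first paragraph, for all $c$ and all $\psi\in\param_\Z$.

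The step I expect to be the main obstacle is keeping track, through each reduction, that associated varieties of simple objects are genuinely preserved, and, relatedly, verifying that the auxiliary functors $\Phi'$ on the parabolic subgroups really are equivalences; this is exactly where the identification $\B_c(\psi)_{\dagger,W'}=\B'_c(\psi)$ of Proposition~\ref{Lem:B_socle} and the fact that $\B'_c(\psi)$ is the output of the construction of Section~\ref{SS_HC_fam} for $W'$ do the work, and where one must check that the restricted parameters satisfy the hypotheses of Proposition~\ref{Prop:equi_Zariski}. An alternative packaging uses the perversity of $\Phi=R^{-1}$ with respect to the filtration by dimension of support (Lemma~\ref{Lem:D_perv} together with the support-preserving equivalence $\OCat_c^{r,opp}\cong\OCat_{c-\psi}$): perversity yields abelian equivalences of the successive quotients, and the same ${}^\OCat\Res$-compatibility refines these to separate the individual strata, again giving $n_{W'}(c)=n_{W'}(c-\psi)$.
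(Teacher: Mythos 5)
Your proposal is correct and follows essentially the same route as the paper: reduce via the Morita and highest-weight equivalences to the case where $\B_c(\psi)\otimes^L_{H_c}\bullet$ is an equivalence, then use the compatibility of ${}^{\OCat}\Res_W^{W'}$ with Tor's and Ext's (end of Section \ref{SS_Tor_Ext}) to see that the equivalence and its quasi-inverse preserve the subcategories of complexes supported on a given union of strata, and count simples there. The paper's proof is just a terser version of this, leaving the M\"obius-inversion bookkeeping implicit.
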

\begin{proof}
If $\psi=-\bar{\chi}$ and $\B_{c,\bar{\chi}}$ is a Morita equivalence, the claim is clear. When $c,c':=c-\psi$
satisfy the assumptions of Proposition \ref{Prop:Chered_equi_impr}, the equivalence of that proposition preserves
the supports, see  \cite[6.4.9]{GL}. So we can assume that $\B_c(\psi)\otimes^L_{H_c}\bullet:D^b(\OCat_c)\rightarrow
D^b(\OCat_{c-\psi})$ is an equivalence. Let $D^b_{W'}(\OCat_c)$ denote the full subcategory of $D^b(\OCat_c)$
consisting of all complexes with homology having associated variety inside $W\h^{W'}$. From the compatibility
of the restriction functors and Tor's (see the end of Section \ref{SS_Tor_Ext})
it follows that $\B_c(\psi)\otimes^L_{H_c}\bullet$ maps $D^b_{W'}(\OCat_c)$ to $D^b_{W'}(\OCat_{c-\psi})$. A quasi-inverse
functor is $R\operatorname{Hom}_{H_{c'}}(\B_c(\psi),H_{c'})\otimes^L_{H_{c'}}\bullet$
so it maps $D^b_{W'}(\OCat_{c-\psi})$ to $D^b_{W'}(\OCat_c)$. We conclude that the categories
$D^b_{W'}(\OCat_{c-\psi})$ and $D^b_{W'}(\OCat_c)$ are equivalent that implies $n_{W'}(c)=n_{W'}(c-\psi)$.
\end{proof}

\section{Perverse equivalences}\label{S_perv}
\subsection{Main result}\label{SS_main_perv}
In this section we are going to prove that there are perverse equivalences between some categories
$D^b(\OCat_c),D^b(\OCat_{c'})$. Namely, suppose an affine subspace $\param^1\subset\param$
and $\psi\in \underline{\param}_{\Z}$ are such that
\begin{enumerate}
\item Both $\param^1, \param^1-\psi$ contain Zariski open subsets of spherical elements.
\item For a Weil generic $c\in \param^1$, the parameters $c,c-\psi$ lie in opposite
open chambers.
\end{enumerate}
Then, for a Zariski generic $c$, the functor $\varphi_c:=\B_c(\psi)\otimes^L_{H_c}\bullet:D^b(\OCat_c)\rightarrow D^b(\OCat_{c'})$
is an equivalence of triangulated categories. We are going to show that (possibly after restricting
to a smaller Zariski open subset)  the equivalence $\varphi_c$ is perverse. This is an analog of
\cite[Theorem 7.2]{BL}.

The corresponding filtrations are produced similarly to \cite[Section 7]{BL}. Namely, recall the ideals
$I(W')^{\dagger,H,W'}\subset H_{\param^1}(W)$ defined before Lemma \ref{Lem:tilde_I_prop}.
Define the ideal $\J_k\subset H_{\param^1}$ as follows:
$$\J_k:=\left(\bigcap_{W'} I(W')^{\dagger,H,W'}\right)^{k},$$
where the intersection is taken over all parabolic subgroups $W'$ with $\dim \h^{W'}\leqslant k-1$.
The ideal $\J_k$ has the following important property. Let $c$ be a Weil generic element of $\param^1$,
then the specialization $\J_{k,c}$ coincides with the minimal ideal $J\subset H_c$ such that
$\dim \VA(H_c/J)<2k$, this is proved analogously to Lemma \ref{Lem:tilde_I_prop}. 
Note that, in this case, $\J_{k,c}^2=\J_{k,c}$, in particular,
the subcategory in $\OCat_c$ of all modules annihilated by $\J_{k,c}$ is closed under extensions
and hence is a Serre subcategory. Also note that $H_c=\J_{0,c}\supset\J_{1,c}\supset\J_{2,c}\supset\ldots
\supset \J_{n,c}\supset \J_{n+1,c}=\{0\}$ for all parameters $c$. Let $\Cat^1=\OCat_c$ and $\Cat^1_j$ be the full subcategory in $\OCat_c$
consisting of all modules annihilated by $\J_{n+1-j,c}$. So we get a filtration of $\Cat^1$
by Serre subcategories.

Define  ideals $\J'_i\subset H_{\param^1-\psi}$ and subcategories $\Cat^2_j\subset \Cat^2:=\OCat_{c-\psi}$
in a similar way.

\begin{Thm}\label{Thm:perv}
There is a non-empty Zariski open subset $U\subset \param^1$ such that $\J_{k,c}^2=\J_{k,c}$
for all $k$ and the equivalence $\B_c(\psi)\otimes^L_{H_c}\bullet$ is perverse with respect
to the filtrations introduced above.
\end{Thm}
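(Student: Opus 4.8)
The strategy is to degenerate the perversity statement from the Weil generic parameter, where $\varphi_c$ is already known to be perverse, to a Zariski generic one, following the scheme of \cite[Section 7]{BL}. First I would fix $U\subset\param^1$ so that: (a) $\B_{\param^1}(\psi)$, all the ideals $\J_{k,\param^1}, \J'_{k,\param^1-\psi}$, and all the finitely many HC bimodules $H_i$ of the relevant cones are flat over $\C[U]$; (b) $\B_c(\psi)\otimes^L_{H_c}\bullet$ is an equivalence for $c\in U$ by Proposition \ref{Prop:equi_Zariski}; (c) $\J_{k,c}^2=\J_{k,c}$ and $\J'_{k,c-\psi}{}^2=\J'_{k,c-\psi}$ for all $k$ and $c\in U$ (the square-idempotency descends from the Weil generic case because $H_{\param^1}/\J_{k,\param^1}^2$ has associated variety of the same dimension as $H_{\param^1}/\J_{k,\param^1}$, so is generically flat of the same rank). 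With (c) in hand the subcategories $\Cat^1_j\subset\OCat_c$ and $\Cat^2_j\subset\OCat_{c-\psi}$ are genuine Serre subcategories, so the three axioms (I)--(III) make sense.

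The key tool for verifying axioms (I)--(III) is the restriction-functor description of the filtration, exactly as in \cite[Section 7]{BL}. For a parabolic $W'$ with $\dim\h^{W'}=n-j$ and a module $M\in\OCat_c$, whether $M\in\Cat^1_j$ is detected by the vanishing of ${}^{\OCat}\Res_W^{W''}(M)$ for all $W''$ with $\dim\h^{W''}\geqslant n-j+1$; moreover an irreducible component of $\VA(M)$ of dimension $n-j$ corresponds to a $W'$ for which ${}^{\OCat}\Res_W^{W'}(M)$ is finite-dimensional nonzero. The compatibility of $\bullet_{\dagger,W'}$ with Tor's (Lemma \ref{Lem:dag_Tor} and the remark after it) gives
$${}^{\OCat}\Res_W^{W'}\!\big(\B_c(\psi)\otimes^L_{H_c}M\big)\cong \B_c(\psi)_{\dagger,W'}\otimes^L_{H_c(W')}{}^{\OCat}\Res_W^{W'}(M),$$
and by Proposition \ref{Lem:B_socle} the bimodule $\B_c(\psi)_{\dagger,W'}$ equals $\B'_c(\psi)$ — the analogous bimodule for $W'$ — whose derived tensor product, by Theorem \ref{Thm:Ring_dual_HC} applied to $W'$ (or rather its deformed/generic version), is again a derived equivalence sending finite-dimensional modules to finite-dimensional modules. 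Thus $\varphi_c$ maps $D^b_{\Cat^1_j}(\OCat_c)$ into $D^b_{\Cat^2_j}(\OCat_{c-\psi})$, and the quasi-inverse $R\Hom_{H_{c-\psi}}(\B_c(\psi),H_{c-\psi})\otimes^L_{H_{c-\psi}}\bullet$ does likewise in the other direction; this gives axiom (I).

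For axioms (II) and (III) I would argue by decreasing induction on $j$, degenerating from the Weil generic point. Concretely, for $M\in\Cat^1_j$ one knows by flatness that $H_k(\varphi_c M)$ is the specialization at $c$ of the corresponding homology over $\param^1$, whose Weil generic fibre vanishes for $k<j$ and lies in $\Cat^2_{j+1}$ for $k>j$ by the known perversity of the Weil generic equivalence; hence the same holds for Zariski generic $c$. The induced functor $M\mapsto H_j(\varphi_c M)$ on $\Cat^1_j/\Cat^1_{j+1}$ is then checked to be an equivalence onto $\Cat^2_j/\Cat^2_{j+1}$ by passing through the restriction functors to the parabolics $W'$ with $\dim\h^{W'}=n-j$: on each such $W'$ the statement reduces, via the isomorphism above and Proposition \ref{Lem:B_socle}, to the fact that $\B'_c(\psi)\otimes_{H_c(W')}\bullet$ is an equivalence on finite-dimensional modules (equivalently, an isomorphism on $K_0$ compatible with the labelling), which follows from Corollary \ref{Cor:Ringel} and the Ringel duality identification in Theorem \ref{Thm:Ring_dual_HC}. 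The main obstacle I anticipate is making the degeneration of axioms (II)--(III) rigorous: one must ensure that taking homology of the cone commutes with specialization (this is the role of the flatness over $U$) and that the finitely many quotient categories $\Cat^1_j/\Cat^1_{j+1}$ and the equivalences between them also behave well in the family — essentially a relative version of \cite[Proposition 4.1]{BL}, which is the technical heart of the argument.
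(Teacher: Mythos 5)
Your overall strategy (degenerate from the Weil generic point of $\param^1$, exploit the compatibility of $\bullet_{\dagger,W'}$ with Tor's, and use generic flatness of HC families) is the same as the paper's, but two essential points are missing or incorrect.

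First, you characterize membership in $\Cat^1_j$ by the vanishing of restriction functors, i.e.\ by dimension of support. That is only valid at a Weil generic parameter. The ideal $\J_{n+1-j,c}$ is the specialization of a family over $\param^1$, and at a special (even Zariski generic) $c$ it can be strictly smaller than the minimal ideal cutting out the corresponding support condition; one only gets the inclusion $\Cat^1_j\subset\{M:\dim\VA(M)\leqslant n-j\}$. The theorem concerns the $\J$-filtration, not the support filtration --- this is precisely the sense in which these filtrations ``degenerate'' the support filtrations --- so your verification of axiom (I) addresses a different statement.

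Second, and more seriously, your degeneration argument for axioms (II)--(III) is applied to an arbitrary $M\in\Cat^1_j$. Such an $M$ exists only at the special parameter $c$ and does not lift to a flat family over $\param^1$, so ``the corresponding homology over $\param^1$'' is not defined; flatness of $\B_{\param^1}(\psi)$ and the ideals does not repair this. The paper's resolution, which is the actual technical heart, is to reformulate everything in terms of the finitely many HC bimodules $\operatorname{Tor}_i^{H_c}(\B_c(\psi),H_c/\J_{j,c})$ and the corresponding Hom-bimodules: these do form generically flat families over $\param^1$, so their vanishing, annihilation and invertibility properties can be degenerated from the Weil generic point (where they follow from the perversity of the Ringel duality, with the passage from $j=1$ to general $j$ done via $\bullet_{\dagger,W'}$ and the identity $(\J_{j,c})_{\dagger,W'}=\J_{1,c}(W')$). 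The conclusions are then transferred to an arbitrary $M$ killed by $\J_{j,c}$ through the base-change identity
$$\B_c(\psi)\otimes^L_{H_c}M=\bigl(\B_c(\psi)\otimes^L_{H_c}H_c/\J_{j,c}\bigr)\otimes^L_{H_c/\J_{j,c}}M$$
and its $R\Hom$ analogue. Without this reformulation the degeneration step cannot be carried out for general objects of $\OCat_c$, so the proposal as written has a genuine gap.
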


If $c,c'$ lie in the opposite chambers, Theorem \ref{Thm:perv} follows  from Lemma \ref{Lem:D_perv}.
Indeed, the filtrations on $\OCat_c,\OCat_{c'}$ are the filtrations by dimensions of support as in Section
\ref{SS_Ringel}. This is a consequence of \cite[Theorems 1.2,1.3]{B_ineq}. Recall that the equivalence $\OCat_{c}^{opp,r}\cong \OCat_{c'}$ preserves supports, see
the end of Section \ref{SS_KZ_Ringel}. This implies Theorem \ref{Thm:perv} in this case.

In general, we will, roughly speaking, show that in our situation the perversity is preserved under degeneration.
Note that since $\J_{k,c}^2=\J_{k,c}$ for a Weil generic $c$, this equality also holds for a Zariski generic $c$.

Below we will write $H'_{\param^1}$ for $H_{\param^1-\psi}$ and $H'_c$ for $H_{c-\psi}$.
Further, we set $\B_c:=\B_c(\psi)$.

\subsection{Computations with HC bimodules}
Here we are going to study various Tor's and Hom's between HC bimodules.

\begin{Prop}
For $c$ Zariski generic in $\param^1$, the following holds:
\begin{itemize}
\item[(a)] For all $i,j$, we have $\J'_{j,c}\operatorname{Tor}^{H_c}_i(\B_c, H_c/\J_{j,c})=0$.
\item[(b)] For all $i,j$, we have $\operatorname{Tor}^{H'_c}_i(H'_c/\J'_{j,c}, \B_c)\J_{j,c}=0$.
\item[(c)] We have   $\operatorname{Tor}^{H_c}_i(\B_c, H_c/\J_{j,c})=0$
  for $i<n+1-j$.
\item[(d)] We have $\J'_{j-1,c}\operatorname{Tor}^{H_c}_i(\B_c, H_c/\J_{j,c})=
\operatorname{Tor}^{H_c'}_i(H'_c/\J'_{j,c},\B_c)\J_{j-1,c}=0$
for $i>n+1-j$.
\item[(e)]  Set $\B_{j,c}:=\operatorname{Tor}^{H_c}_{n+1-j}(\B_c, H_c/\J_{j,c})$.
The kernel and the cokernel of the natural homomorphism $$\B_{j,c}\otimes_{H_c}
\operatorname{Hom}_{H'_c}(\B_{j,c}, H'_c/\J'_{j,c})\rightarrow H'_c/\J'_{j,c}$$
are annihilated by $\J'_{j-1,c}$ on the left and on the right.
\item[(f)] The kernel and the cokernel of the natural homomorphism
$$\operatorname{Hom}_{H_{c}}(\B_{j,c}, H_{c}/\J_{j,c})\otimes_{H'_c}\B_{j,c}
\rightarrow H_{c}/\J_{j,c}$$ 
are annihilated on the left and on the right by $\J_{j-1,c}$.
\end{itemize}
\end{Prop}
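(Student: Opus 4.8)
The whole statement is obtained by degenerating, from a Weil generic point of $\param^1$, the case where $c$ and $c-\psi$ lie in genuinely opposite open chambers; the plan follows the pattern of Sections~\ref{SS_HC_fam}--\ref{SS_degen}. First I would form, over $\C[\param^1]$, the HC $H_{\param^1}$-bimodules $\operatorname{Tor}^{H_{\param^1}}_i(\B_{\param^1}(\psi),H_{\param^1}/\J_{j})$, the bimodule $\B_{j,\param^1}:=\operatorname{Tor}^{H_{\param^1}}_{n+1-j}(\B_{\param^1}(\psi),H_{\param^1}/\J_{j})$, the relevant $\operatorname{Hom}$-bimodules, and the kernels and cokernels of the adjunction morphisms appearing in (e) and (f). By Proposition~\ref{Prop:HC_Ext} these are all HC bimodules, hence generically free over $\C[\param^1]$ by Lemma~\ref{Lem:HC_supp}, and their formation commutes with specialization to a Zariski generic $c\in\param^1$. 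Now each of (a)--(f) asserts the vanishing of one of these bimodules, or of a sub-bimodule of the form $\mathcal{J}M$ or $M\mathcal{J}$ with $\mathcal{J}$ one of $\J_{k},\J'_{k}$, which is again a HC bimodule and hence again generically free; its non-vanishing locus is a proper closed subset of $\param^1$, so it suffices to verify (a)--(f) at a Weil generic $c\in\param^1$. (One uses here, as in Section~\ref{SS_main_perv}, that for such $c$ the ideal $\J_{k,c}$ is idempotent and is the minimal ideal with $\dim\VA(H_c/\J_{k,c})<2k$; these facts transport to Zariski generic $c$ the same way.)

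So assume $c$ is Weil generic in $\param^1$. Then $c,c-\psi$ lie in opposite open chambers and are spherical, so by Theorem~\ref{Thm:Ring_dual_HC} the functor $\varphi:=\B_c\otimes^L_{H_c}\bullet$ is, under the identification $\OCat_{c-\psi}^\vee\cong\OCat_c$, the inverse Ringel duality, realized by the homological duality $D=R\operatorname{Hom}_{H_c}(\bullet,H_c)[n]$. By Lemma~\ref{Lem:D_perv}, together with \cite[Theorems 1.2,1.3]{B_ineq} identifying the filtration of $\OCat_c$ by the $\J_{\bullet,c}$ with the filtration by dimension of support, $\varphi$ is perverse; concretely, for $M\in\OCat_c$ annihilated by $\J_{n+1-m,c}$ one has $\operatorname{Tor}^{H_c}_k(\B_c,M)=0$ for $k<m$, the module $\operatorname{Tor}^{H_c}_k(\B_c,M)$ is annihilated by $\J'_{n+2-m,c}$ for $k>m$, and $M\mapsto\operatorname{Tor}^{H_c}_m(\B_c,M)$ induces an equivalence of the subquotient categories. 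The bridge to the bimodule-level statements is the projective generator $P_c$ of $\OCat_c$: since $\operatorname{Tor}^{H_c}_{>0}(-,P_c)$ vanishes on HC bimodules (Lemma~\ref{Lem:tens_HC_proj}) and $\J_{j,c}^2=\J_{j,c}$, a spectral sequence argument gives $\operatorname{Tor}^{H_c}_i(\B_c,H_c/\J_{j,c})\otimes_{H_c}P_c\cong\operatorname{Tor}^{H_c}_i(\B_c,P_c/\J_{j,c}P_c)$, where $P_c/\J_{j,c}P_c$ is an object of $\OCat_c$ annihilated by $\J_{j,c}$; together with Lemma~\ref{Lem:tens_HC_prgen} this lets one read off the desired properties of $\operatorname{Tor}^{H_c}_i(\B_c,H_c/\J_{j,c})$ from those of $\varphi(P_c/\J_{j,c}P_c)$.

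With this in place the pieces fall out as follows. Statement (c) is property (II) of the perverse equivalence applied to $P_c/\J_{j,c}P_c$, which sits in filtration level $n+1-j$. The annihilation in (d) is property (III) (higher homology drops one filtration step), transported through $\otimes_{H_c}P_c$ and through the analogue for $\OCat_{c-\psi}$ with $D$ replaced by $D^{-1}$. For (a) and (b): $\operatorname{Tor}^{H_c}_i(\B_c,H_c/\J_{j,c})$ is a HC $H'_c$-$H_c$-bimodule whose right action already factors through $H_c/\J_{j,c}$, and whose associated variety has dimension $<2j$ because $\VA$ of a $\operatorname{Tor}$ of HC bimodules lies in the intersection of the associated varieties (checked via the restriction functors of Lemma~\ref{Lem:dag_Tor}); hence it is killed on the left by $\J'_{j,c}$ and on the right by $\J_{j,c}$ by minimality of these idempotent ideals, exactly as in the proof of Lemma~\ref{Lem:tilde_I_prop}, and the symmetric argument handles the $\operatorname{Tor}^{H'_c}_i(H'_c/\J'_{j,c},\B_c)$ of (b). Finally (e) and (f) express that $\B_{j,c}$, which by the previous paragraph implements the subquotient equivalence of property (III), is invertible modulo $\J'_{j-1,c}$- and $\J_{j-1,c}$-torsion, its inverse being computed by the $\operatorname{Hom}$-bimodule since $D^2=\operatorname{id}$ identifies the quasi-inverse of $\varphi$ with $R\operatorname{Hom}_{H'_c}(\B_c,H'_c)\otimes^L_{H'_c}\bullet$.

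The main obstacle is the Weil generic analysis, and within it the statements (c), (e), (f): because $H_c/\J_{j,c}$ is not an object of $\OCat_c$, the perversity of $D$ cannot be invoked for it directly, so one must carefully push the perverse structure through $\bullet\otimes_{H_c}P_c$ and through the restriction functors $\bullet_{\dagger,W'}$, keeping track of the idempotent ideals and of the exact match between the $\operatorname{Tor}$-degree $n+1-j$ and the perversity index. The reduction to the Weil generic case, the support estimates behind (a) and (b), the bookkeeping with the $\J_\bullet,\J'_\bullet$, and the passage between the bimodule and the $\OCat$ levels are all routine, being transcriptions of arguments already used for Lemmas~\ref{Lem:tilde_I_prop} and \ref{Lem:D_perv} and in Sections~\ref{SS_HC_fam}--\ref{SS_degen}.
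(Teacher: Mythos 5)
Your global scheme --- reduce to a Weil generic $c\in\param^1$ by generic flatness of the relevant HC bimodules (Proposition \ref{Prop:HC_Ext} plus Lemma \ref{Lem:HC_supp}), and at a Weil generic point exploit that $\B_c\otimes^L_{H_c}\bullet$ is the perverse homological duality --- is exactly the paper's; your treatment of (a),(b) (associated variety of dimension $<2j$ plus the analog of Lemma \ref{Lem:tilde_I_prop}) and of the degeneration step coincides with Steps 1 and 4 of the paper's proof. Where you diverge is in (c)--(f): the paper first proves these for $j=1$, where $H_c/\J_{1,c}$ is a finite dimensional algebra, hence lies in $\OCat_c$ as a left module, so the perversity applies to it directly with no bridge, the complex $\B_c\otimes^L_{H_c}H_c/\J_{1,c}$ is concentrated in degree $n$, and (e),(f) are just the statement that $\B_{1,c}$ is a Morita bimodule between the finite dimensional quotients. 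The general $j$ is then obtained by applying $\bullet_{\dagger,W'}$ for parabolic $W'$ with $\dim\h^{W'}=j-1$, using that $(\J_{j,c})_{\dagger,W'}=\J_{1,c}(W')$ (Lemma \ref{Lem:id_restr}) and that $\bullet_{\dagger,W'}$ intertwines Tor and Ext (Lemma \ref{Lem:dag_Tor}); the vanishing of all these restrictions bounds the support by $2(j-1)$, which yields (d),(e),(f) at once (on both sides, since for a HC bimodule a support bound gives annihilation by the ideals on either side), and (c) needs an extra induction on $j$.

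Your alternative --- transporting properties (II),(III) directly through $\otimes_{H_c}P_c$ for all $j$ --- does work for (c) and for the left half of (d): the degeneration of the spectral sequence via Lemma \ref{Lem:tens_HC_proj}, the exactness of $\otimes_{H_c}P_c$ on HC bimodules, and Lemma \ref{Lem:tens_HC_prgen} give exactly what you claim, and for (c) this is arguably more direct than the paper's induction. The genuine gap is in (e),(f). Property (III) is a statement about an equivalence of Serre quotients of the categories $\OCat$; statements (e),(f) are about specific adjunction morphisms of \emph{bimodules} involving $\Hom_{H'_c}(\B_{j,c},H'_c/\J'_{j,c})$, which is only the degree-zero truncation of an $R\Hom$, just as $\B_{j,c}$ is a single homology of $\B_c\otimes^L_{H_c}H_c/\J_{j,c}$. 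To run your argument you must (i) show that the quasi-inverse of the subquotient equivalence is induced by this underived Hom-bimodule, (ii) commute $\Hom_{H'_c}(\B_{j,c},\bullet)$ past $\otimes_{H'_c}P'_c$ and control the contributions of the higher Ext's and Tor's discarded by the truncations, and (iii) identify the resulting map with the functorial adjunction applied to $P'_c/\J'_{j,c}P'_c$ modulo $\Cat^2_{n+2-j}$. None of this is routine, and you do not indicate how to do it; the paper avoids it entirely by only ever invoking Morita theory for the finite dimensional quotients at $j=1$. A similar (smaller) issue affects the right-hand statement in (d): your appeal to ``the analogue with $D$ replaced by $D^{-1}$'' would require setting up the right-handed perversity on $\OCat^r$, whereas the support argument via $\bullet_{\dagger,W'}$ gives both sides simultaneously. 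You do name $\bullet_{\dagger,W'}$ in your closing paragraph as something one ``must push through,'' which is the right instinct, but as written your main argument for (e),(f) does not use it and is not complete.
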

\begin{proof}
The proof is in four steps. First, we prove (a),(b) for an arbitrary Weil
generic $c$.
Then we will check (c)-(f) for $j=1$ and a Weil generic
$c$. In Step 3 we will establish these four claims with an arbitrary $j$ and a Weil generic $c$. Finally,  we will prove
the claims (a)-(f) for an arbitrary $j$ and a Zariski generic $c$.
Recall that, for a Weil generic $c$, the functor $\B_c\otimes^L_{H_c}\bullet:D^b(\OCat_c)
\rightarrow D^b(\OCat_{c'})$ is a perverse equivalence with respect to the filtrations
by the dimensions of support.

{\it Step 1}. Here $c$ is Weil generic.  In order to establish (a),(b), notice that all Tor's involved have GK dimension
$<2j$. (a) and (b) follow from a straightforward analog of Lemma \ref{Lem:tilde_I_prop}
(that is proved in the same way).

{\it Step 2}. In this step $c$ is again Weil generic.  It follows that $\J_{1,c},\J'_{1,c}$ are the minimal ideals of finite codimension in $H_c, H'_c$.

Note that by the perversity of $\B_c\otimes^L_{H_c}\bullet$, we have $\B_c\otimes^L_{H_c}H_c/\J_{1,c}=
\B_{1,c}[-n]$ that implies  (c),(d).  Further, the bimodule $\B_{1,c}$ defines a Morita equivalence between $H_{c}/\J_{1,c}$ and $H'_c/\J_{1,c}'$ that immediately implies (e) and (f).

%

{\it Step 3}. Before proving (c)-(f) for a Weil generic $c$,
we need some preparation. For a parabolic subgroup $W'\subset W$,
let $\B_c(W')$ be an analog of $\B_c$ for $W'$. Recall, Proposition \ref{Lem:B_socle}, that $\B_c(W')=(\B_c)_{\dagger,W'}$.
Also let $\J_{j,c}(W'),\J'_{j,c}(W')$ denote the ideals defined analogously to $\J_{j,c},\J'_{j,c}$.
We will need the following lemma.

\begin{Lem}\label{Lem:id_restr}
Assume $c$ is Weil generic in $\param^1$.  Let $W'$ be a parabolic subgroup with
$\dim \h^{W'}=j-1$. Then $(\J_{j,c})_{\dagger,W'}=\J_{1,c}(W')$
and similarly, $(\J'_{j,c})_{\dagger,W'}=\J'_{1,c}(W')$.
\end{Lem}
\begin{proof}[Proof of Lemma \ref{Lem:id_restr}]
The ideal $\J_{1,c}(W')\subset H_c(W')$ is the minimal ideal of finite codimension.
So we have an inclusion $(\J_{j,c})_{\dagger,W'}\supset\J_{1,c}(W')$. On
the other hand, by the construction, $\J_{j,c}\subset \J_{1,c}(W')^{\dagger,H,W'}$.
Therefore $(\J_{j,c})_{\dagger,W'}\subset (\J_{1,c}(W')^{\dagger,H,W'})_{\dagger,W'}$.
The construction of the functors $\bullet_{\dagger,W'},\bullet^{\dagger,W'}$
easily implies that $(\J^{\dagger,H,W'})_{\dagger,W'}\subset \J$ for any ideal $\J$.
\end{proof}

Finally, we will use Lemma \ref{Lem:dag_Tor} saying that the functor $\bullet_{\dagger,W'}$ intertwines
Tor's and Ext's.

Let us prove (c)-(f). Pick  $W'$ with $\dim \h^{W'}=j-1$. We have
$$\operatorname{Tor}_i^{H_c}(\B_c,H_c/\J_{j,c})_{\dagger,W'}=\operatorname{Tor}_i^{H_c(W')}(\B_c(W'), H_c(W')/\J_{1,c}(W')).$$ In particular, if $i>n+1-j$, the right hand side vanishes.
So $\operatorname{Tor}_i^{H_c}(\B_c,H_c/\J_{j,c})$ has GK dimension less than $2(j-1)$,
which implies (d). The proofs of (e) and (f) are similar (here we also need to use that
$\bullet_{\dagger,W'}$ intertwines Hom's and observe that applying $\bullet_{\dagger,W'}$
to the natural homomorphisms we get similar natural homomorphisms but for $W'$).

Let us prove (c): $\operatorname{Tor}_i^{H_c}(\B_c,H_c/\J_{j,c})=0$ for $i<n+1-j$.
This is done by induction on $j$, the base $j=1$ was established in the previous step.
Arguing as in the proof of (d), we see that the GK dimension of $\operatorname{Tor}_i^{H_c}(\B_c,H_c/\J_{j,c})$
is less than $2(j-1)$. Let us pick the minimal $i$ such that $\operatorname{Tor}_i^{H_c}(\B_c,H_c/\J_{j,c})$
is nonzero and let $\dim \VA(\operatorname{Tor}_i^{H_c}(\B_c,H_c/\J_{j,c}))=2(k-1)$ so that $k<j$. 
Consider the derived tensor product $\B_c\otimes^L_{H_c} H_c/\J_{k,c}=(\B_c\otimes^L_{H_c} H_c/\J_{j,c})\otimes^L_{H_c/\J_{j,c}}H_c/\J_{k,c}$.
We see that  the $i$th homology of the left hand side is zero, while on the right hand
side we have $\operatorname{Tor}_i^{H_c}(\B_c,H_c/\J_{j,c})\otimes_{H_c/\J_{j,c}}H_c/\J_{k,c}=
\operatorname{Tor}_i^{H_c}(\B_c,H_c/\J_{j,c})$, a contradiction.

{\it Step 4}. Now assume $c$ is Zariski generic.  Let us prove (a), the other 5 parts are proved
similarly. Consider the HC bimodules $\B_{\param^1}, H_{\param^1}/\J_{j,\param}$. They are 
generically flat over $\param^1$, see Lemma \ref{Lem:HC_supp},
and their Zariski generic specializations coincide with $\B_c,H_c/\J_{j,c}$. Then $\operatorname{Tor}^{H_{\param^1}}_i(\B_{\param^1},H_{\param^1}/\J_{j,\param^1})$ is again a 
HC bimodule and so is generically flat. Its Zariski  generic specialization 
coincides with $\operatorname{Tor}^{H_{c}}_i(\B_{c}, H_{c}/\J_{j,c})$. It follows that the support of
$\operatorname{Tor}^{H_{\param^1}}_i(\B_{\param^1}, H_{\param^1}/\J_{j,\param^1})$ is not Zariski  
dense. (a) for a Zariski generic $c$ follows.
\end{proof}

\subsection{Proof of Theorem \ref{Thm:perv}}
Let us prove (I) and (II) in the definition of a perverse equivalence, Section \ref{SS_perv_intro}.
Note that, for $M\in H_c/\J_{j,c}\operatorname{-mod},
N\in H'_c/\J'_{j,c}\operatorname{-mod}$, we have
\begin{equation}\label{eq:tensor_subcat}
\B_c\otimes^L_{H_c}M=(\B_c\otimes^L_{H_c}H_c/\J_{j,c})\otimes^L_{H_c/\J_{j,c}}M.
\end{equation}
\begin{equation}\label{eq:RHom_subcat}
R\operatorname{\Hom}_{H'_c}(\B_c,N)=R\Hom_{H'_c/\J_{j,c}'}(H'_c/\J_{j,c}'\otimes^L_{H'_c}\B_c,N).
\end{equation}
The inclusion $\mathcal{F}(\Cat^1_{n+1-j})\subset \Cat^2_{n+1-j}$ follows from (\ref{eq:tensor_subcat})
and  (a). The inclusion $\mathcal{F}^{-1}(\Cat^2_{n+1-j})\subset \Cat^1_{n+1-j}$
follows from (b) and (\ref{eq:RHom_subcat}).

(II) follows from (c) and (\ref{eq:tensor_subcat}).

Let us prove (III).
Conditions (e),(f) imply that
$\B_{c,j}\otimes_{H_{c}/\J_{c,j}}\bullet$ defines an equivalence
$\Cat^1_{n+1-j}/\Cat^1_{n-j}\xrightarrow{\sim} \Cat^2_{n+1-j}/\Cat^2_{n-j}$
of abelian categories. A formal corollary of this is that
$\operatorname{Tor}^i_{H_c/\J_{c,j}}(\B_{c,j},M)\in \Cat^2_{n-j}$
for all $M\in \Cat^1_{n+1-j}$ and all $i>0$. (d) and (\ref{eq:tensor_subcat})
finish the proof of (III).

\section{Open problems}\label{S_open}
\subsection{Extension to general SRA}
We have constructed derived equivalences between categories $\mathcal{O}$. One can ask to construct such equivalences
between the categories of all modules. We conjecture that $D^b(H_c\operatorname{-mod})\xrightarrow{\sim}
D^b(H_{c-\psi}\operatorname{-mod})$ for any $\psi\in \param_{\Z}$ (the proof of Proposition \ref{Prop:equi_Zariski}
shows that, for a fixed $\psi$,  this is the case when $c$ is Zariski generic). This conjecture is true when $W=G(\ell,1,n)$, this was shown in \cite[Section 5]{GL}. Moreover, in \cite[Section 5]{GL} the conjecture was also proved for the Symplectic reflection algebras corresponding to the groups $\Gamma_n:=\mathfrak{S}_n\ltimes \Gamma_1^n$,
where $\Gamma_1$ is a finite
subgroup of $\operatorname{SL}_2(\C)$. Now let $\Gamma\subset \operatorname{Sp}(V)$
be an arbitrary symplectic reflection group.
We conjecture that $D^b(H_c\operatorname{-mod})\xrightarrow{\sim}
D^b(H_{c-\psi}\operatorname{-mod})$ for any $c\in \param$ and any $\psi$ in a lattice $\param_{\Z}$ defined
in this case as follows. The parameter space $\param$ is the direct sum $\bigoplus_{i=1}^r \param^i$, where
$r$ is the number of the conjugacy classes of subspaces of the form $V^s\subset V$ of codimension $2$
and $\param^i$ is the parameter space for the pointwise stabilizer $\Gamma^i$ of $V^s$ (acting on $V/V^s\cong \C^2$).
Since $\Gamma^i$ is a Kleinian group, we have a lattice $\param^i_{\Z}\subset \param^i$ used
in \cite[Section 5]{GL}. We set $\param_{\Z}:=
\bigoplus_{i=1}^r \param^i_{\Z}$. This definition is compatible both with the Cherednik case
and with the case of $\Gamma_n$.


\subsection{Wall-crossing bijections}
Let $\varphi_c: D^b(\OCat_c)\xrightarrow{\sim}D^b(\OCat_{c-\psi})$ be a perverse equivalence constructed above.
As any perverse equivalence, it induces a bijection $\operatorname{Irr}(\OCat_c)\rightarrow \operatorname{Irr}(\OCat_{c-\psi})$. The question is to compute this bijection (a generalization
of the Mullineux involution). The case of the groups $G(\ell,1,n)$ is addressed in \cite{Cher_supp}.

\end{document}